\newtheorem{theo}{Theorem} 
\newtheorem{maincorol}[theo]{Corollary} 
\newtheorem{lemma}{Lemma}[section]
\newtheorem{prop}[lemma]{Proposition}
\newtheorem{corol}[lemma]{Corollary}
\newtheorem{theoint}[lemma]{Theorem}
\newtheorem{claim}[lemma]{Claim}
\theoremstyle{remark}
\newtheorem{remark}[lemma]{Remark}
\theoremstyle{definition}
\newcommand{\lin}{\textsc{l}}
\newcommand{\NL}{\textsc{nl}}
\newcommand{\dd}{\mathsf{d}}
\newcommand{\NN}{\mathbb{N}}
\newcommand{\RR}{\mathbb{R}}
\newcommand{\eps}{\varepsilon}
\newcommand{\BBB}{\mathcal{B}}
\newcommand{\III}{\mathcal{I}}
\newcommand{\tIII}{\widetilde{\mathcal{I}}}
\newcommand{\JJJ}{\mathcal{J}}
\newcommand{\NNN}{\mathcal{N}}
\newcommand{\PPP}{\mathcal{P}}
\newcommand{\SSS}{\mathcal{S}}
\newcommand{\lf}{\left}
\newcommand{\rg}{\right}
\newcommand{\tr}{\tilde{r}}
\newcommand{\tlambda}{\widetilde{\lambda}}
\newcommand{\tw}{\widetilde{w}}
\newcommand{\ttw}{\breve{w}}
\newcommand{\tteps}{\breve{\eps}}
\newcommand{\ttau}{\widetilde{\tau}}
\newcommand{\tU}{\widetilde{U}}
\newcommand{\tu}{\widetilde{u}}
\newcommand{\ttu}{\widetilde{\widetilde{u}}}
\newcommand{\ttv}{\widetilde{\widetilde{v}}}
\newcommand{\tphi}{\widetilde{\varphi}}
\newcommand{\teps}{\widetilde{\eps}}
\newcommand{\Sbf}{\mathbf{S}}
\newcommand{\hdot}{\dot{H}^1}
\newcommand{\EMPH}[1]{\medskip\noindent\textit{#1}.}
\DeclareMathOperator{\rad}{rad}
\DeclareMathOperator{\supp}{supp}
\newcommand{\ds}{\displaystyle}
\numberwithin{equation}{section} 
\title[Bounded solutions for critical wave]{Profiles of bounded radial solutions of the focusing, energy-critical wave equation}
\author[T.~Duyckaerts]{Thomas Duyckaerts$^1$}
\author[C.~Kenig]{Carlos Kenig$^2$}
\author[F.~Merle]{Frank Merle$^3$}
\thanks{$^1$LAGA, Universit\'e Paris 13 (UMR 7539). Partially supported by ERC Grant Dispeq}
\thanks{$^2$University of Chicago. Partially supported by NSF Grant DMS-0968472}
\thanks{$^3$Cergy-Pontoise (UMR 8088), IHES}
\date{\today}
\begin{document}
\begin{abstract}
In this paper we consider global and non-global bounded radial
solutions of the focusing energy-critical wave equation in space
dimension $3$. We show that any of these solutions decouples, along a sequence of times that goes to the maximal
time of existence, as a sum of modulated stationary solutions, a free
radiation term and a term going to $0$ in the energy space. In the case
where there is only one stationary profile, we show that this expansion
holds asymptotically without restriction to a subsequence.
\end{abstract}

\maketitle

\section{Introduction} 
Consider the energy critical focusing wave equation in space dimension $3$:
\begin{equation}
\label{CP}
\left\{
\begin{aligned}
 &\partial_t^2 u-\Delta u-u^5=0\\
&u_{\restriction t=0}=u_0\in \hdot(\RR^3),\quad
\partial_tu_{\restriction t=0}=u_1\in L^2(\RR^3).
\end{aligned}
\right.
\end{equation} 
Equation \eqref{CP} is locally well-posed in $\hdot\times L^2$: for any initial data $(u_0,u_1)\in \hdot\times L^2$, there exists a unique solution $u$ defined on a maximal interval of definition $(T_-(u),T_+(u))=I_{\max}(u)$ such that $(u,\partial_t u)\in C^0(I_{\max},\hdot\times L^2)$ and $u\in L^8(J\times \RR^3)$ for any compact interval $J\subset I_{\max}(u)$. The energy of the solution:
\begin{equation}
\label{def_energy}
 E[u]=E(u(t),\partial_tu(t))=\frac{1}{2}\int_{\RR^3} |\nabla u(t)|^2\,dx+\frac{1}{2}\int_{\RR^3} |\partial_t u(t)|^2\,dx-\frac 16\int_{\RR^3}|u(t)|^6\,dx
\end{equation} 
is conserved. Note that the $\hdot\times L^2$ norm and the energy are invariant under the scaling which preserves the equation.
In this work we study radial solutions of \eqref{CP} that are \emph{bounded} in the energy space for positive times, i.e. satisfy 
\begin{equation}
\label{bound_intro}
 \sup_{t\in [0,T_+(u))}\|\nabla u(t)\|^2_{L^2}+\|\partial_t u(t)\|^2_{L^2}<\infty.
\end{equation} 
In the case $T_+=T_+(u)<+\infty$, we call these solutions type II blow-up solutions.
Recall that \eqref{CP} has an explicit stationary solution:
\begin{equation}
\label{defW}
W:=\frac{1}{\left(1+\frac{|x|^2}{3}\right)^{\frac{1}{2}}}.
\end{equation}

Recall that in \cite{KeMe08}, \cite{DuKeMe10P}, it is shown that if $u$ is radial and
$$ \sup_{t\in [0,T_+)} \|\nabla u(t)\|^2_{L^2}<\|\nabla W\|^2_{L^2},$$
then $T_+=+\infty$ and the solution scatters forward in time. The threshold $\|\nabla W\|^2_{L^2}$ is sharp. Indeed from \cite{KrScTa09}, for all $\eta_0>0$, there exists a type II radial blow-up solution such that 
\begin{equation}
 \label{small_bound}
\sup_{t\in [0,T_+)} \|\nabla u(t)\|^2_{L^2}\leq \|\nabla W\|^2_{L^2}+\eta_0.
\end{equation}
Moreover the solution is of the form:
\begin{equation}
 \label{u_one_prof}
u(t,x)\approx \frac{1}{\lambda(t)^{1/2}}W\left(\frac{x}{\lambda(t)}\right)+\eps^*(x),
\end{equation} 
where $\eps^*\in \hdot$ and $\lambda(t)=(T_+-t)^{1+\nu}$, $\nu>1/2$. In \cite{DuKeMe11a}, the converse result was established, i.e all type II radial solutions satisfying \eqref{small_bound} have the behavior given in \eqref{u_one_prof} for some $\lambda(t)=o(T_+-t)$. This type of result was extended to the non-radial case in \cite{DuKeMe10P}. The introduction of \cite{DuKeMe11a} gives background and references on related results for other equations, such as GKdV \cite{MaMe01,MaMe03}, mass-critical NLS \cite{MeRa04,MeRa05b} and for wave maps in \cite{ChTZ93,ShTZ97} and \cite{Struwe02,Struwe03}. Note that these wave map results do not need the smallness assumption \eqref{small_bound}. However, in that problem, the density of the conserved energy is positive, which, combined with finite speed of propagation, gives decrease of the integral of the density of energy on sections of inverted cones. The existence of such a Lyapunov functional greatly simplifies the problem. In this paper we address for the first time a situation in which the smallness condition \eqref{small_bound} is not assumed and no Lyapunov functional as described above seems to be available.

In our results, we distinguish between two cases. Let us start with the case $T_+<\infty$. If $\{a_n\}$ and $\{b_n\}$ are two sequences of positive numbers, we write $a_n\ll b_n$ when $a_n/b_n$ goes to $0$ as $n$ goes to infinity.
\begin{theo}[Blow-up type II solutions]
\label{T:bup_sum} 
Let $u$ be a \emph{radial} solution of \eqref{CP} such that $T_+<\infty$ and \eqref{bound_intro} holds. Then there exist $(v_0,v_1)\in \hdot\times L^2$, a sequence $t_n\to T_+=T_+(u)$, an integer $J_0\geq 1$, $J_0$ sequences $\left\{\lambda_{j,n}\right\}_{n\in\NN}$, $j=1\ldots J_0$ of positive numbers, $J_0$ signs $\iota_j\in\{\pm 1\}$ such that
\begin{equation}
\label{bup_devt}
\lim_{n\to \infty}\left\|
\left(u(t_n)-v_0-\sum_{j=1}^{J_0} \frac{\iota_j}{\lambda_{j,n}^{1/2}}W\left(\frac{\cdot}{\lambda_{j,n}}\right),\partial_t u(t_n)-v_1\right)\right\|_{\hdot\times L^2}=0,
\end{equation} 
with
\begin{equation}
\label{bup_lambda_jn}
\lambda_{1,n}\ll \lambda_{2,n} \ll \ldots \ll \lambda_{J_0,n} \ll (T_+-t_n).
\end{equation} 
Furthermore
\begin{equation}
\label{quant_energy}
\lim_{t\to T_+} \int_{|x|\leq T_+-t}\frac 12|\nabla u(t,x)|^2+\frac 12\left(\partial_t u(t,x)\right)^2-\frac 16(u(t,x))^6\,dx=J_0 E(W,0). 
\end{equation} 
\end{theo}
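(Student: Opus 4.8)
The plan is to combine a profile decomposition of $\vec u(t):=(u(t),\partial_tu(t))$ as $t\to T_+$ with the exterior-energy (``channel of energy'') estimates for the radial free wave equation in dimension three, the goal being to show that every nonlinear profile is a modulated copy of $W$ and that no dispersive profile occurs. First I would isolate the regular part: using finite speed of propagation and the bound \eqref{bound_intro}, one shows that $\vec u(t)$ converges strongly, in $\hdot\times L^2$ of the exterior region $\{|x|>T_+-t\}$, to a fixed $(v_0,v_1)\in\hdot\times L^2$ as $t\to T_+$, and that $\vec u(t)\rightharpoonup(v_0,v_1)$; this rests on the regularity of $u$ outside the backward light cone from $(T_+,0)$, as in the earlier analysis of type~II solutions. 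Letting $v$ be the radial finite-energy solution of \eqref{CP} with $\vec v(T_+)=(v_0,v_1)$, the concentrated part $a:=u-v$ solves a perturbation of \eqref{CP} near $T_+$, has $\hdot\times L^2$ norm asymptotically supported in $\{|x|\le T_+-t\}$, satisfies $\vec a(t)\rightharpoonup0$, and carries asymptotically no energy in $\{|x|>T_+-t\}$.

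Next, fix a sequence $t_n\to T_+$ and, after extraction, apply the Bahouri--G\'erard decomposition to $\vec a(t_n)$: $\vec a(t_n)=\sum_{j=1}^J\vec U_n^j+\vec r_n^J$, with pseudo-orthogonal parameters $(\lambda_{j,n},t_{j,n})$ and with the free evolution of $\vec r_n^J$ tending to $0$ in $L^8(\RR\times\RR^3)$ as $J\to\infty$. Since $\vec a(t_n)$ concentrates in $\{|x|\le T_+-t_n\}$ with $T_+-t_n\to0$, every nonzero profile has $\lambda_{j,n}\to0$; moreover one needs $\lambda_{j,n}\ll T_+-t_n$, the exclusion of the self-similar scale $\lambda_{j,n}\sim T_+-t_n$ being a separate, delicate point --- a Liouville-type statement in self-similar variables, no monotone energy functional being available. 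The nonlinear profiles $U^j$ are global in both time directions: a finite-time blow-up of some $U^j$ would, via the standard nonlinear superposition, force $\|a\|_{L^8(K\times\RR^3)}$ to be infinite for some compact interval $K\subset[t_n,T_+)$, which is impossible. Reorder the scales so that $\lambda_{1,n}\ll\cdots\ll\lambda_{J,n}$, as in \eqref{bup_lambda_jn}.

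The heart of the argument is to show that every nonzero $U^j$ is, as a solution, a signed rescaled stationary state $\pm\mu^{-1/2}W(\cdot/\mu)$. Fix $j$ and choose a radius $R_n$ with $\lambda_{J,n}\ll R_n\ll T_+-t_n$; then $\vec a(t_n)\mathbf{1}_{\{|x|>R_n\}}$ is small in $\hdot\times L^2$, because $a$ concentrates at the bubble scales, all $\ll R_n$. By finite speed of propagation, on the region $\{|x|>R_n+|t-t_n|\}$ the solution $a$ depends only on $\vec a(t_n)\mathbf{1}_{\{|x|>R_n\}}$, so there it evolves essentially like a free wave. Evaluating at time $t_n+\tfrac12(T_+-t_n)$ --- which tends to $T_+$ and whose associated exterior region is exactly the complement of the backward light cone of $(T_+,0)$, where the regular-part analysis forces the energy to vanish --- the exterior-energy lower bound for radial free waves in $\RR^3$ yields $\|\pi_{R_n}^\perp(\vec a(t_n)\mathbf{1}_{\{|x|>R_n\}})\|_{\hdot\times L^2}\to0$, where $\pi_{R_n}$ is the projection onto the one-dimensional space of non-radiating radial data, spanned by $(|x|^{-1}\mathbf{1}_{\{|x|>R_n\}},0)$. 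Plugging in the profile decomposition --- and using that a rescaled $W$ of scale $\ll R_n$ lies asymptotically in this non-radiating subspace, while pseudo-orthogonality keeps the contributions of the various profiles and of the remainder from cancelling in the far region --- one deduces that no $U^j$ has a channel of energy outside any cone, in either time direction; the rigidity theorem for bounded radial solutions of \eqref{CP} then forces each nonzero $U^j$ to equal $\pm\mu^{-1/2}W(\cdot/\mu)$ (with zero velocity), the scale $\mu$ being absorbed into $\lambda_{j,n}$ and the sign recorded by $\iota_j$. The same estimates rule out profiles with $|t_{j,n}/\lambda_{j,n}|\to\infty$, which would behave like escaping radiation, so no free radiation term appears in the blow-up regime; and by \eqref{bound_intro} together with the Pythagorean expansion of the energy the number of nonzero profiles is finite, equal to some integer $J_0\ge1$ ($J_0\ge1$ since otherwise $\vec a(t_n)\to0$, contradicting that $u$ blows up).

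Finally, once all profiles are identified, the remainder $\vec r_n^{J_0}$ has free evolution vanishing in $L^8(\RR\times\RR^3)$ and hence disperses under the nonlinear flow, so the mechanism of the previous paragraph shows its energy would reappear outside the light cone unless $\|\vec r_n^{J_0}\|_{\hdot\times L^2}\to0$; this is \eqref{bup_devt}. For \eqref{quant_energy}, split the conserved $E[u]$ into its parts inside and outside $\{|x|=T_+-t\}$: the exterior part tends to $E(v_0,v_1)$ (full limit $t\to T_+$, using strong convergence there and $\hdot\hookrightarrow L^6$), while the profile decomposition at $t_n$, together with $E(\lambda^{-1/2}W(\cdot/\lambda),0)=E(W,0)$ and pseudo-orthogonality, gives $E[u]=E(v_0,v_1)+J_0\,E(W,0)$; hence the left-hand side of \eqref{quant_energy} tends to $E[u]-E(v_0,v_1)=J_0\,E(W,0)$. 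The main obstacle is the rigidity step: correctly identifying the channels of energy carried by the individual profiles and by the dispersive remainder and proving that, once superposed in the far region, they do not cancel --- this is precisely where the radial symmetry, the sharp three-dimensional exterior energy estimate, and the no-channel rigidity for \eqref{CP} are indispensable, and where the absence of any Lyapunov functional of the kind discussed in the introduction makes the analysis genuinely delicate.
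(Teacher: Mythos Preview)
Your outline captures the general architecture---separate the regular part, apply a profile decomposition to the singular part $a=u-v$, identify the profiles, and eliminate the dispersive remainder---but two of the central steps rest on inputs that are either unjustified or not available.

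First, your argument that the nonlinear profiles $U^j$ are global is incorrect as stated. The nonlinear superposition principle (Proposition~\ref{P:lin_NL}) approximates $u$, not $a$, and only on time intervals where \emph{every} profile is under control; if one $U^{j_0}$ blows up, the approximation simply stops being valid before that time, and nothing forces $\|u\|_{L^8}$ (let alone $\|a\|_{L^8}$, which has no direct meaning since $a$ is not a solution of \eqref{CP}) to be infinite on a compact interval. Second, the ``rigidity theorem for bounded radial solutions of \eqref{CP}''---that a solution with no exterior energy channel in either time direction must be $\pm W$ up to scaling---is essentially what you are trying to establish; it is not an available input at this stage, and your channel argument does not produce it.

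The paper's route is substantially different. After excluding self-similar concentration (Proposition~\ref{P:no_ss}, which is the genuinely new ingredient and takes all of Section~\ref{S:SSregion}; the mechanism is Lemma~\ref{L:channel_for_compact}, a channel-of-energy statement for \emph{compactly supported} data, rather than any Liouville argument in self-similar variables), the identification of the profiles does not proceed via channels at all. Instead, a virial-type computation (Proposition~5.1 of \cite{DuKeMe11a}; the global analogue is Lemmas~\ref{L:average}--\ref{L:special_sequence}) produces a \emph{special} sequence $t_n\to T_+$ along which $\|\partial_t a(t_n)\|_{L^2}\to 0$ and a time-averaged kinetic energy vanishes. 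This immediately forces $t_{j,n}=0$ and $\partial_t U^j(0)=0$ for every profile; a further argument using the time-averaged vanishing and finite speed of propagation gives $\partial_t U^j\equiv 0$ outside the forward light cone, whence $U^j$ is a radial $\hdot$ solution of $\Delta f+f^5=0$, i.e.\ $\pm W$ or $0$. This yields Corollary~\ref{C:devt}, the expansion with a dispersive residual $w_{0,n}$. The residual is then killed not by a single channel bound but by an induction (Lemma~\ref{L:induction}): one evolves forward a time $\gg\lambda_{1,n}$ but $\ll\lambda_{2,n}$, truncates away the smallest bubble, and iterates; after $J_0$ steps only $w_n$ remains, and Proposition~\ref{P:linear_behavior} applied outside the light cone (using that $\partial_t w_n(0)=0$, so the channel estimate holds in \emph{both} time directions) forces $\|w_{0,n}\|_{\hdot}\to 0$.

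In short, to make your plan work you would have to replace the unavailable rigidity input by the virial/special-sequence mechanism that pins down $\partial_t U^j$, and replace the direct channel bound on the remainder by the peeling induction. The non-radiating-subspace argument you sketch is closer to the later, refined versions of this theory, but it is not how the present paper proceeds and cannot be made self-contained here without essentially redeveloping those later tools.
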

Note that, at least for one sequence of times, this gives a complete description of type II radial blow-up solutions as a sum of rescaled, decoupled ground states $W$. We expect that such a decomposition still holds for all sequences of times, but at the moment this remains open. However, the quantization result \eqref{quant_energy} is established here for all times. Note that the analog of Theorem \ref{T:bup_sum} for co-rotational wave maps is unknown and does not follow from \cite{Struwe02}, \cite{Struwe03}. It is unclear at this time if one can actually contruct solutions with the description given in Theorem \ref{T:bup_sum} with $J_0\geq 2$.
\begin{theo}
 \label{T:bup_one}
Assume that $J_0=1$ in the preceding theorem. Then there exist $\lambda(t)$, defined for $t<T_+$ close to $T_+$, and a sign $\iota_1\in \{\pm 1\}$ such that
\begin{equation}
\label{bup_devt_1}
\lim_{t\to T_+}\left\|
\left(u(t)-v_0-\frac{\iota_1}{\lambda^{1/2}(t)}W\left(\frac{\cdot}{\lambda(t)}\right),\partial_t u(t)-v_1\right)\right\|_{\hdot\times L^2}=0,
\end{equation} 
and
\begin{equation}
\label{bup_lambda}
 \lim_{t\to T_+} \frac{\lambda(t)}{T_+-t}=0.
\end{equation} 
\end{theo}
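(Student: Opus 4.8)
The plan is to upgrade the subsequential statement of Theorem \ref{T:bup_sum} (with $J_0=1$) to a statement along all times, by exploiting the fact that a single-bubble decomposition is \emph{rigid}. Write $\delta(t)$ for the distance in $\hdot\times L^2$ from $(u(t)-v_0,\partial_t u(t)-v_1)$ to the two-dimensional manifold $\mathcal{M}=\{(\pm\lambda^{-1/2}W(\cdot/\lambda),0):\lambda>0,\ \iota=\pm1\}$. By Theorem \ref{T:bup_sum} we already know $\delta(t_n)\to 0$ along some sequence $t_n\to T_+$; the goal is $\delta(t)\to 0$ as $t\to T_+$. First I would set up a modulation argument: on the set where $\delta(t)$ is small one can write, uniquely, $u(t)=v_0+\iota(t)\lambda(t)^{-1/2}W(\cdot/\lambda(t))+\varepsilon(t)$ with $\partial_t u(t)=v_1+\partial_t\varepsilon(t)$, where $(\varepsilon,\partial_t\varepsilon)$ satisfies suitable orthogonality conditions to the generators of the manifold (the scaling direction $\Lambda W=\tfrac12 W+x\cdot\nabla W$) and $\|(\varepsilon,\partial_t\varepsilon)\|_{\hdot\times L^2}\lesssim\delta(t)$. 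The energy channel / exterior energy estimates available for the free wave equation and the linearized operator $-\Delta-5W^4$ around $W$ — which are precisely the tools used to prove the $J_0=1$ rigidity in \cite{DuKeMe11a} — should give a differential inequality controlling the modulation parameter $\lambda'(t)$ and forcing the smallness of $\delta$ to propagate.

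The key steps, in order, would be: (i) from Theorem \ref{T:bup_sum} extract the subsequence $t_n$ along which $\delta(t_n)\to 0$; (ii) show a ``no-return'' statement: if $\delta(t_n)\to 0$ then in fact $\delta(t)\to 0$. For (ii) I would argue by contradiction — suppose there is $\eta_0>0$ and times $s_n\to T_+$ with $\delta(s_n)\ge\eta_0$. Interlacing the $s_n$ with the $t_n$, find for each $n$ a ``first exit'' time $\tau_n$ and a ``last time small'' time $\sigma_n$ with $\sigma_n<\tau_n$, $\delta(\sigma_n)$ as small as we like, $\delta(\tau_n)=\eta_0$, and $\delta(t)\le\eta_0$ on $[\sigma_n,\tau_n]$. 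On this interval the modulation decomposition is valid. Rescale: set $u_n(s,y)=\mu_n^{1/2}u(\sigma_n+\mu_n s,\mu_n y)$ with $\mu_n=\lambda(\sigma_n)$, so that $(u_n(0),\partial_s u_n(0))$ converges, after accounting for $(v_0,v_1)$ which by the $T_+<\infty$ setting and finite speed of propagation contributes negligibly near the light cone, to $(\pm W,0)$. By local well-posedness and continuous dependence, $u_n$ converges on compact $s$-intervals to the stationary solution $\pm W$ (or to a solution of \eqref{CP} that stays on the manifold), contradicting the fact that $\delta$ reaches $\eta_0$. The heart of the matter is making the contradiction quantitative: controlling the time scale $\tau_n-\sigma_n$ relative to $\lambda(\sigma_n)$ and showing it cannot be too short, which is where I expect to invoke the lower bounds on exterior energy for the linearized flow (the channel-of-energy method) exactly as in the one-bubble analysis. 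Finally, once $\delta(t)\to 0$ is known, (iii) the convergence \eqref{bup_lambda} follows because the scale is pinned by the equation: the modulation equations give $\lambda'(t)=o(1)$ in the right sense, and together with $\lambda(t_n)\ll T_+-t_n$ (from \eqref{bup_lambda_jn}) and monotonicity-type control this forces $\lambda(t)/(T_+-t)\to 0$; the sign $\iota_1$ is eventually constant because it cannot jump while $\delta$ stays small.

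The main obstacle will be step (ii), specifically ruling out that the solution repeatedly drifts away from and returns to the one-bubble manifold on very short time scales — i.e. proving the modulation parameter cannot oscillate. Without a Lyapunov functional (as emphasized in the introduction) one cannot simply say ``$\delta$ is monotone''; instead one must extract monotonicity from a virial- or channel-of-energy-type identity, showing that whatever quantity governs the bubble's scale has a sign once $\delta$ is small. A secondary technical point is that $(v_0,v_1)$ is fixed but genuinely present: one must verify that near the self-similar light cone $|x|\le T_+-t$ the profile $(v_0,v_1)$ decouples from the bubble (its energy in the shrinking cone tends to $0$), so that the rescaled limiting problem really is the pure $W$-problem; this is where the quantization \eqref{quant_energy} and finite speed of propagation enter. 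I would expect the bulk of the work to reproduce, in this concrete setting, the rigidity machinery of \cite{DuKeMe11a}, now applied not to a single solution known a priori to be one-bubble but to show that \emph{any} solution which is one-bubble along a subsequence is one-bubble throughout.
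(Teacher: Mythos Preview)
Your setup is sound: defining $\delta(t)$, interlacing the good times $t_n$ with hypothetical exit times, and analyzing the solution between a ``last small'' time $\sigma_n$ and a ``first exit'' time $\tau_n$ is exactly the right framing, and indeed the paper begins Section~\ref{S:one_profile} in precisely this way. But your proposed mechanism for the contradiction has a genuine gap, and the paper closes it by a route you do not anticipate.

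Your step~(ii) rescales at $\sigma_n$ and appeals to continuous dependence to say the rescaled solution stays near $W$, contradicting $\delta(\tau_n)=\eta_0$. The trouble is that $W$ is an \emph{unstable} stationary solution: continuous dependence only gives closeness on compact rescaled time intervals, and nothing prevents $(\tau_n-\sigma_n)/\lambda(\sigma_n)\to\infty$. You acknowledge this and propose to control it via the channel-of-energy machinery of \cite{DuKeMe11a}, but that machinery is run under the hypothesis that the solution is already close to $W$ for all $t\in[0,T_+)$ --- which is exactly what you are trying to prove. More to the point, there is a concrete obstruction you have not addressed: the solution $W^+$ of \cite{DuMe08} satisfies $E[W^+]=E[W]$, stays arbitrarily close to $W$ as $t\to-\infty$, yet drifts away forward in time. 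A profile at the exit time with exactly this behavior is not ruled out by any soft compactness or modulation argument.

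The paper's proof instead takes the profile decomposition at the \emph{exit} time $t_n'$. Energy quantization \eqref{quant_energy} forces the (unique) nonlinear profile $U^1$ to satisfy $E[U^1]=E[W]$ exactly, with $\|\nabla U^1_0\|^2+\|U^1_1\|^2=\|\nabla W\|^2\pm\delta_0$. If the sign is minus, the classification of \cite{DuMe08} forces $U^1$ to scatter in at least one time direction, contradicting the blow-up of $u$ and the backward closeness to $W$. If the sign is plus, the paper invokes two results proved in Section~\ref{S:W+} specifically for this purpose: a rigidity theorem (Theorem~\ref{T:rigidityW+}) showing that $U^1$ must equal $W^+$ up to symmetries, and the fact (Theorem~\ref{T:blowupW+}) that $W^+$ has \emph{type~I} blow-up, contradicting the boundedness \eqref{bound_intro}. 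These $W^+$ results are not incidental; the introduction flags them as ``central to our proof of Theorem~\ref{T:bup_one},'' and your proposal does not mention $W^+$ at all. Without them, the plus-sign case --- precisely the scenario of drifting away from $W$ along a $W^+$-like trajectory --- remains open in your argument.
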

\begin{maincorol}
\label{C:bup}
Let $u$ be a radial type II blow-up solution of \eqref{CP} such that 
\begin{equation}
\label{bup_bound}
 \limsup_{t\in[0,T_+)} \int_{|x|\leq T_+-t}|\nabla u(t,x)|^2\,dx<2\|\nabla W\|_{L^2}^2.
\end{equation} 
Then $u$ satisfies the conclusion of Theorem \ref{T:bup_one}.
\end{maincorol}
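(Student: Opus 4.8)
The plan is to show that hypothesis \eqref{bup_bound} forces $J_0=1$ in Theorem \ref{T:bup_sum}, after which the conclusion follows immediately from Theorem \ref{T:bup_one}. The bridge is the quantization identity \eqref{quant_energy}: I would argue that $J_0 E(W,0)$ must be strictly less than $2E(W,0)$, whence $J_0<2$, i.e.\ $J_0=1$ (recall $J_0\geq 1$ always). So the whole game is to extract from the bound on $\int_{|x|\le T_+-t}|\nabla u|^2$ an upper bound on the \emph{energy} localized to the same cone.

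First I would recall the Pohozaev-type relation $E(W,0)=\frac{1}{3}\|\nabla W\|_{L^2}^2$ (equivalently, since $W$ solves $-\Delta W=W^5$, we have $\int W^6=\int|\nabla W|^2$ and $E(W,0)=\frac12\int|\nabla W|^2-\frac16\int W^6=\frac13\int|\nabla W|^2$). Thus the target inequality $J_0E(W,0)<2E(W,0)$ is the same as controlling the left-hand side of \eqref{quant_energy} by $\frac{2}{3}\|\nabla W\|_{L^2}^2$. The second step is to bound the localized energy from above by the localized kinetic-plus-gradient energy: dropping the conserved-sign issue, on the cone section we have
\begin{equation*}
\int_{|x|\le T_+-t}\Big(\tfrac12|\nabla u|^2+\tfrac12(\partial_t u)^2-\tfrac16 u^6\Big)\,dx\ \le\ \int_{|x|\le T_+-t}\Big(\tfrac12|\nabla u|^2+\tfrac12(\partial_t u)^2\Big)\,dx .
\end{equation*}
This alone is not enough, because \eqref{bup_bound} only controls the gradient term, not $\partial_t u$. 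The key point is therefore to also control the localized $\partial_t u$ term by the localized gradient term in the limit $t\to T_+$: along the sequence $t_n$, the decomposition \eqref{bup_devt}--\eqref{bup_lambda_jn} shows that inside the cone the solution is, up to $o(1)$, the sum $\sum_j \iota_j\lambda_{j,n}^{-1/2}W(\cdot/\lambda_{j,n})$ (the profile $v_0$ being continuous, hence with vanishing energy on the shrinking cone) and this sum is \emph{static}, so $\partial_t u(t_n)\to 0$ in $L^2$ of the cone. Hence along $t_n$,
\begin{equation*}
\lim_{n\to\infty}\int_{|x|\le T_+-t_n}\Big(\tfrac12|\nabla u(t_n)|^2+\tfrac12(\partial_t u(t_n))^2-\tfrac16 u(t_n)^6\Big)\,dx \ \le\ \tfrac12\limsup_{t\to T_+}\int_{|x|\le T_+-t}|\nabla u(t,x)|^2\,dx\ <\ \|\nabla W\|_{L^2}^2,
\end{equation*}
and combining with \eqref{quant_energy} gives $J_0 E(W,0)<\|\nabla W\|_{L^2}^2=3E(W,0)$, so $J_0\le 2$. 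To push from $J_0\le 2$ to $J_0=1$ I would instead argue directly at the level of gradients: the decoupling \eqref{bup_lambda_jn} is orthogonal in $\dot H^1$, so $\lim_n\int_{|x|\le T_+-t_n}|\nabla u(t_n)|^2 = J_0\|\nabla W\|_{L^2}^2$ (the $v_0$ contribution vanishing on the cone and the cross terms being negligible by the scale separation). Since the left side is $<2\|\nabla W\|_{L^2}^2$ by \eqref{bup_bound}, we get $J_0<2$, hence $J_0=1$, and Theorem \ref{T:bup_one} applies verbatim.

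The main obstacle I anticipate is making rigorous the claim that the profile decomposition genuinely localizes inside the cone $\{|x|\le T_+-t_n\}$ — i.e.\ that the bubbles $\lambda_{j,n}^{-1/2}W(\cdot/\lambda_{j,n})$ are asymptotically entirely captured by the cone (true since $\lambda_{j,n}\ll T_+-t_n$ and $W$ has most of its $\dot H^1$ mass at scale $\lambda_{j,n}$, but the slow $|x|^{-1}$ tail of $W$ requires a genuine cutoff estimate) while the remainder $v_0$ and the $\eps$-error contribute nothing on the shrinking region. This is a standard but delicate cutoff/concentration argument: one must quantify $\int_{|x|\ge R\lambda_{j,n}}|\nabla(\lambda_{j,n}^{-1/2}W(\cdot/\lambda_{j,n}))|^2\to 0$ as $R\to\infty$ uniformly, choose $R=R_n\to\infty$ slowly with $R_n\lambda_{J_0,n}\ll T_+-t_n$, and use that $(v_0,v_1)\in\dot H^1\times L^2$ has no concentration so its energy on $\{|x|\le T_+-t_n\}\to 0$. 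Modulo this localization lemma — which is essentially already present in the machinery behind Theorem \ref{T:bup_sum} — the corollary is a short deduction.
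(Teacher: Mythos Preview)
Your proposal is correct and matches the intended deduction: the paper states Corollary~\ref{C:bup} without a separate proof precisely because it follows from Theorems~\ref{T:bup_sum} and~\ref{T:bup_one} via the argument you give. The clean route is your second one---computing $\lim_n\int_{|x|\le T_+-t_n}|\nabla u(t_n)|^2 = J_0\|\nabla W\|_{L^2}^2$ directly from \eqref{bup_devt}--\eqref{bup_lambda_jn} (bubbles orthogonal by scale separation, $v_0$ negligible on the shrinking cone), whence $J_0<2$ so $J_0=1$; your initial detour through the energy quantization \eqref{quant_energy} is unnecessary and, as you observed, only yields $J_0\le 2$.
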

The proof of Theorem \ref{T:bup_one} uses the description of small type II blow-up solutions given in \cite{DuKeMe11a}, and is also heavily dependent on properties of the solution $W^+$ constructed in \cite{DuMe08}. In particular, we establish here a rigidity result for $W^+$ (Theorem \ref{T:rigidityW+}), as well as the fact that $W^+$ is a type I blow-up solution (Theorem \ref{T:blowupW+}), facts which are of independent interest and central to our proof of Theorem \ref{T:bup_one}. As a consequence of Theorem \ref{T:rigidityW+} and the main result of \cite{KrNaSc10P}, we also complete the classification of solutions of \eqref{CP} at the energy threshold $E[u]=E[W]$ in the radial case (see Corollary \ref{C:classification}). 

We will also prove the analoguous results in the globally defined case. In this case the solution is, at least for a time sequence, a sum of rescaled $W$ and of a finite energy solution to the linear wave equation:
\begin{equation}
\label{lin_wave_intro}
 \partial_t^2v_{\lin}-\Delta v_{\lin}=0,\quad (v_{\lin},\partial_t v_{\lin})_{\restriction t=0}\in \hdot\times L^2.
\end{equation} 
\begin{theo}[Globally defined solutions]
\label{T:global_sum} 
Let $u$ be a \emph{radial} solution of \eqref{CP} such that $T_+=+\infty$ and \eqref{bound_intro} holds. Then there exists a solution $v_{\lin}$ of \eqref{lin_wave_intro}, a sequence $s_n\to +\infty$, an integer $J_0\geq 0$, $J_0$ sequences $\left\{\lambda_{j,n}\right\}_{n\in\NN}$, $j=1\ldots J_0$ of positive numbers, $J_0$ signs $\iota_j\in\{\pm 1\}$ such that
\begin{equation}
\label{global_devt}
\lim_{n\to \infty}\left\|
\left(u(s_n)-v_{\lin}(s_n)-\sum_{j=1}^{J_0} \frac{\iota_j}{\lambda_{j,n}^{1/2}}W\left(\frac{\cdot}{\lambda_{j,n}}\right),\partial_t u(s_n)-\partial_t v_{\lin}(s_n)\right)\right\|_{\hdot\times L^2}=0,
\end{equation} 
with
\begin{equation}
\label{global_lambda_jn}
\lambda_{1,n}\ll \lambda_{2,n} \ll \ldots \ll \lambda_{J_0,n}\ll s_n.
\end{equation} 
Furthermore, for all $A>0$, 
\begin{equation}
\lim_{s\to +\infty} \int_{|x|\leq s-A} \int \frac{1}{2}|\nabla u(t,x)|^2+\frac 12 (\partial_tu(t,x))^2-\frac 16 (u(t,x))^6\,dx
=E_A 
\end{equation} 
exists and 
$$ \lim_{A\to +\infty} E_A=J_0 E(W,0).$$
\end{theo}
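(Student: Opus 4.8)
The plan is to run the global counterpart of the proof of Theorem~\ref{T:bup_sum}, the interior cone $\{|x|\le T_+-t\}$ being replaced by $\{|x|\le t-A\}$ and the weak limit $(v_0,v_1)$ by a genuine solution $v_{\lin}$ of \eqref{lin_wave_intro}. First I would extract the radiation. Since $u$ is bounded in $\hdot\times L^2$ and $T_+=+\infty$, finite speed of propagation together with the (signed) monotonicity of the energy flux through the forward light cones $\{|x|=t-A\}$ — valid because outside a large cone the term $\int u^6$ is dominated by the small exterior $\hdot$--energy — show that
\[
E_A:=\lim_{t\to+\infty}\int_{|x|\le t-A}\lf(\tfrac12|\nabla u|^2+\tfrac12(\partial_t u)^2-\tfrac16u^6\rg)dx
\]
exists for each $A>0$ and is nondecreasing in $A$ (the first assertion of the statement). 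Comparing $u$ with free evolutions in $\{|x|\ge t-A\}$ and letting $A\to+\infty$ then produces a unique solution $v_{\lin}$ of \eqref{lin_wave_intro} such that $w:=u-v_{\lin}$ radiates no energy along light cones, i.e.\ $\lim_{A\to+\infty}\limsup_{t\to+\infty}\|\nabla_{t,x}w(t)\|_{L^2(|x|\ge t-A)}=0$; thus $v_{\lin}$ carries all the escaping energy and $w$ has asymptotically vanishing channels of energy as $t\to+\infty$.

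Next, fix a sequence $s_n\to+\infty$, apply the Bahouri--Gérard profile decomposition to $(w(s_n),\partial_t w(s_n))$, and (after passing to a subsequence) write $u(s_n)=v_{\lin}(s_n)+\sum_{j\ge 1}U^j_{\loc}(s_n)+o(1)$ in $\hdot\times L^2$ with asymptotically orthogonal parameters, $U^j_{\loc}$ being the rescaled, time--translated $j$-th nonlinear profile; the remainder is handled by the long--time perturbation theory of \cite{KeMe08,DuKeMe11a}, the boundedness \eqref{bound_intro} keeping the $L^8$ norms finite on $[s_0,\infty)$. Each $U^j$ is a radial solution of \eqref{CP}, global in the relevant time direction (else the reconstruction would force $u$ to blow up), and it inherits from the previous step that its channels of energy vanish in both time directions. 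The exterior--energy lower bound for the radial $3$d linear wave equation forces such a non--radiating profile to be trapped near $\{\pm W_\lambda\}$ in the sense of \cite{DuMe08}; combined with the rigidity of $W^+$ (Theorem~\ref{T:rigidityW+}), the fact that $W^+$ is a type~I blow-up solution (Theorem~\ref{T:blowupW+}) — which excludes $W^+$- and $W^-$-type profiles, since their time parameter produces either a blow-up of $u$ or a degeneration to a static profile — and the elliptic classification of radial solutions of $-\Delta V=V^5$, one concludes that each $U^j\equiv 0$ or $U^j(0)=\iota_jW(\cdot/\lambda)$ for some $\iota_j\in\{\pm1\}$, $\lambda>0$. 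Only finitely many profiles survive (each carries energy $E(W,0)>0$ and Bahouri--Gérard allows only finitely many above a fixed level), which defines $J_0\ge 0$; reindexing so that $\lambda_{1,n}\ll\cdots\ll\lambda_{J_0,n}$ and noting that scales $\gtrsim s_n$ have already been absorbed into $v_{\lin}$, this yields \eqref{global_devt}--\eqref{global_lambda_jn}, the case $J_0=0$ being forward scattering.

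For the last assertion, the Pythagorean expansion of $E[u]$ along the profile decomposition, together with $E(\pm W,0)=E(W,0)$, gives, for fixed $A$ as $n\to\infty$,
\[
\int_{|x|\le s_n-A}\lf(\tfrac12|\nabla u|^2+\tfrac12(\partial_t u)^2-\tfrac16u^6\rg)dx=J_0\,E(W,0)+e^{\lin}_{A,n}+o(1),
\]
where $e^{\lin}_{A,n}$ is the energy of $v_{\lin}(s_n)$ in $\{|x|\le s_n-A\}$: each static profile at scale $\lambda_{j,n}\ll s_n$ deposits all of its energy $E(W,0)$ inside that ball, while the remainder and the cross terms vanish. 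Hence $E_A=J_0E(W,0)+e^{\lin}_A$ with $e^{\lin}_A=\lim_n e^{\lin}_{A,n}$; since the energy of a finite--energy free wave concentrates in an $O(1)$ neighborhood of the light cone, $e^{\lin}_A\to 0$ as $A\to+\infty$, and therefore $\lim_{A\to+\infty}E_A=J_0E(W,0)$.

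The main obstacle is the identification of the profiles in the second step: ruling out nonlinear profiles other than $0$ and the rescaled $\pm W$. Bounded global radial solutions of \eqref{CP} are not classified, so there is no list to quote; instead one must combine the exterior (channel of) energy estimates for the radial $3$d linear wave equation — which quantitatively trap a non--radiating profile near $\{\pm W_\lambda\}$ — with the sharp rigidity of $W^+$ (Theorem~\ref{T:rigidityW+}) and its type~I blow-up (Theorem~\ref{T:blowupW+}), precisely the tools the paper develops for this purpose. A secondary difficulty is the multi--scale perturbative reconstruction of $u(s_n)$: the profiles sit at widely separated scales $\lambda_{j,n}$ and each is an unstable ground state, so controlling their nonlinear superposition on the whole half-line $[s_0,\infty)$ is delicate and, here again, relies on \eqref{bound_intro} and the variational estimates near $W$.
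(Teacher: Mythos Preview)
Your outline conflates the proof of Theorem~\ref{T:global_sum} with that of Theorem~\ref{T:global_one}. The properties of $W^+$ (Theorems~\ref{T:rigidityW+} and~\ref{T:blowupW+}) are \emph{not} used in the paper's proof of Theorem~\ref{T:global_sum}; they enter only in Section~\ref{S:one_profile}, where the assumption $J_0=1$ forces the single nonlinear profile to have energy exactly $E(W,0)$, making the threshold classification applicable. For a general profile $U^j$ in an expansion of $(u(s_n)-v_{\lin}(s_n),\partial_t u(s_n)-\partial_t v_{\lin}(s_n))$ along an \emph{arbitrary} sequence $s_n\to+\infty$, you have no a~priori information on $E[U^j]$, and your assertion that each such profile ``inherits vanishing channels of energy in both time directions'' is not justified: the vanishing of the exterior energy of $u-v_{\lin}$ as $t\to+\infty$ says nothing about the channels of a profile evolved \emph{backwards} (or at finite rescaled times), and the forward direction only tells you the profile does not scatter, not that it is compact modulo symmetries. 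So the identification step, which you flag as the main obstacle, is indeed a genuine gap in your argument.

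The paper proceeds quite differently. After extracting $v_{\lin}$ (Corollary~\ref{C:linear_component}, a consequence of Proposition~\ref{P:global_no_ss}), it does \emph{not} take an arbitrary sequence. A virial/Morawetz computation (Lemma~\ref{L:average}) gives
\[
\frac{1}{T}\int_0^T\int_{\RR^3}(\partial_t u-\partial_t v_{\lin})^2\,dx\,dt\longrightarrow 0,
\]
from which one extracts (Lemma~\ref{L:special_sequence}) a sequence $s_n\to+\infty$ along which $\|\partial_t u(s_n)-\partial_t v_{\lin}(s_n)\|_{L^2}\to 0$ \emph{and} the time-averaged analogue \eqref{special_sequence2} holds. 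The first property forces $t_{j,n}=0$ and $\partial_t U^j(0)=0$ for every profile; the second, combined with a truncation at an intermediate scale and finite speed of propagation, yields $\int_0^T\int_{|y|\ge|\theta|}(\partial_\theta U^{k_0})^2\,dy\,d\theta=0$ for each $k_0$, so $U^{k_0}$ is a radial finite-energy solution of $-\Delta V=V^5$ and hence $0$ or $\pm W$ up to scaling. This is an elliptic classification, not a dynamical rigidity argument. Finally, the dispersive remainder $w_{0,n}$ is killed not by Pythagorean bookkeeping but by the inductive channel-of-energy argument (Lemma~\ref{L:induction} together with Proposition~\ref{P:linear_behavior}), a step your proposal omits entirely. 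The energy quantization then follows from the decomposition and the existence of $\III(A)$ (Lemma~\ref{L:I(A)}).
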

\begin{theo}
 \label{T:global_one}
Assume that $J_0=1$ in the preceding theorem. Then there exists $\lambda(t)$, defined for large $t>0$, and a sign $\iota_1\in \{\pm 1\}$ such that
\begin{equation}
\label{global_devt_1}
\lim_{t\to +\infty}\left\|
\left(u(t)-v_{\lin}(t)-\frac{\iota_1}{\lambda^{1/2}(t)}W\left(\frac{\cdot}{\lambda(t)}\right),\partial_t u(t)-\partial_tv_{\lin}(t)\right)\right\|_{\hdot\times L^2}=0,
\end{equation} 
and
\begin{equation}
\label{global_lambda}
 \lim_{t\to +\infty} \frac{\lambda(t)}{t}=0.
\end{equation} 
\end{theo}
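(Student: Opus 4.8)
The plan is to reproduce the scheme of the proof of Theorem~\ref{T:bup_one}, with the blow-up scale $T_+-t$ replaced by $t$ and the description of small type~II blow-up solutions of \cite{DuKeMe11a} replaced by the forward-in-time exterior-energy and scattering theory for bounded global solutions. Write $W_\mu(x):=\mu^{-1/2}W(x/\mu)$ and $\Lambda W:=-\tfrac{1}{2}W-x\cdot\nabla W$, so that $\nabla\Lambda W\in L^2$ with $\|\nabla\Lambda W\|_{L^2}\neq 0$. First, one checks that the radiation $v_{\lin}$ furnished by Theorem~\ref{T:global_sum} is \emph{intrinsic} to $u$: it is the unique solution of \eqref{lin_wave_intro} such that, for every $\eta\in(0,1)$,
\[
\bigl\|\bigl(u(t)-v_{\lin}(t),\ \partial_t u(t)-\partial_t v_{\lin}(t)\bigr)\bigr\|_{\hdot\times L^2(|x|\geq \eta t)}\tend{t}0 .
\]
This is obtained, as in \cite{DuKeMe11a}, from finite speed of propagation and the exterior energy estimate for the free wave equation. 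Put $w:=u-v_{\lin}$; then along the sequence $s_n$ of Theorem~\ref{T:global_sum} with $J_0=1$ one has $(w(s_n),\partial_t w(s_n))\to(\iota_1 W_{\lambda_{1,n}},0)$ in $\hdot\times L^2$ with $\lambda_{1,n}\ll s_n$.

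Next, introduce
\[
\delta(t):=\inf_{\mu>0,\ \iota\in\{\pm1\}}\bigl\|\bigl(w(t)-\iota\,W_\mu,\ \partial_t w(t)\bigr)\bigr\|_{\hdot\times L^2}.
\]
Since $\inf_{\mu,\mu'}\|\nabla W_\mu+\nabla W_{\mu'}\|_{L^2}>0$ and $\|\nabla\Lambda W\|_{L^2}\neq0$, there is $\delta_1>0$ such that whenever $\delta(t)<\delta_1$ the infimum is attained at a unique pair $(\iota(t),\lambda(t))$, depending continuously on $t$ by the implicit function theorem (using continuity of $t\mapsto(u(t),\partial_t u(t))$ into $\hdot\times L^2$). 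The theorem therefore reduces to the two assertions $\delta(t)\to 0$ and $\lambda(t)/t\to 0$ as $t\to+\infty$: once $\delta(t)\to0$, the $\{\pm1\}$-valued continuous function $\iota(t)$ is eventually a constant, necessarily equal to $\iota_1$ since $\iota(s_n)=\iota_1$ for $n$ large; then \eqref{global_devt_1}, evaluated at the minimizing $\lambda(t)$, is a rephrasing of $\delta(t)\to0$, and \eqref{global_lambda} is the second limit.

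The crux is $\delta(t)\to0$. If it fails, then $\delta(t_n)\geq\delta_0>0$ along some $t_n\to+\infty$. One applies to $(u(t_n),\partial_t u(t_n))$ the analysis that proves Theorem~\ref{T:global_sum} — which uses only that the times tend to $+\infty$ — and obtains, after extracting a subsequence, the same radiation $v_{\lin}$ (by the uniqueness above), an integer $J\geq0$, scales $\mu_{1,n}\ll\cdots\ll\mu_{J,n}\ll t_n$ and signs $\iota_j^{(n)}$ with
\[
\Bigl\|\Bigl(w(t_n)-\sum_{j=1}^{J}\iota_j^{(n)}\,W_{\mu_{j,n}},\ \partial_t w(t_n)\Bigr)\Bigr\|_{\hdot\times L^2}\tend{n}0 .
\]
That every profile is a rescaled copy of $W$ is precisely where the rigidity of $W^+$ (Theorem~\ref{T:rigidityW+}) and the type~I blow-up of $W^+$ (Theorem~\ref{T:blowupW+}) are used: the non-dispersive profiles are classified, via the threshold results of \cite{DuMe08} together with Theorems~\ref{T:rigidityW+} and \ref{T:blowupW+}, by the fact that the minimal-energy critical element has energy $E(W,0)$ and is $W$ up to scaling and sign — the remaining threshold solutions $W^+$ and $W^-$ being impossible here, since $u$ is bounded and global whereas $W^+$ blows up in finite time and $W^-$ scatters — whereupon one peels off this bubble and iterates. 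Granting this, the profile energies add and each $W_{\mu_{j,n}}$ contributes $E(W,0)>0$, so the energy quantization of Theorem~\ref{T:global_sum}, valid for all times, forces $J\,E(W,0)=\lim_{A\to+\infty}E_A=J_0\,E(W,0)$, i.e.\ $J=J_0=1$; hence $\delta(t_n)\to0$, contradicting $\delta(t_n)\geq\delta_0$. Finally, running the decomposition along an arbitrary sequence $t_n\to+\infty$ (now $\delta(t_n)\to0$, so the minimizer is eventually defined) and using the scale-invariance of $W$, one finds $\lambda(t_n)$ comparable to $\mu_n\ll t_n$; as the sequence was arbitrary, $\lambda(t)/t\to0$.

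The main obstacle is this last step: upgrading the ``soliton resolution along a subsequence'' of Theorem~\ref{T:global_sum} so that it holds along \emph{every} sequence of times, with \emph{no} profile other than a rescaled copy of $W$ surviving — the latter being exactly where the rigidity theorem for $W^+$ and the fact that $W^+$ is a type~I blow-up solution are genuinely needed. The rest is modulation theory together with bookkeeping of energies.
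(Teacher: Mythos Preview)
There is a genuine gap in your strategy, and it lies exactly where you suspect: the step where you ``apply to $(u(t_n),\partial_t u(t_n))$ the analysis that proves Theorem~\ref{T:global_sum} --- which uses only that the times tend to $+\infty$''. That parenthetical is false. The proof of Theorem~\ref{T:global_sum} (via Corollary~\ref{C:global_devt}) does \emph{not} work along an arbitrary sequence: it relies on the special sequence produced by Lemma~\ref{L:special_sequence}, which satisfies $\int(\partial_t u(s_n)-\partial_t v_{\lin}(s_n))^2\to 0$ together with the averaged bound \eqref{special_sequence2}. These properties are what force $t_{j,n}=0$, $\partial_t U^j(0)=0$, and then, through the virial identity \eqref{zerodt}, that each nonlinear profile is a rescaled $W$. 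For a generic sequence $t_n\to+\infty$ none of this is available, and there is no reason the profiles should be stationary; indeed, the conjecture that the resolution holds along every sequence is stated in the paper as open. Your invocation of the $W^+$ rigidity at this point is also misplaced: Theorem~\ref{T:rigidityW+} requires both $E[U]=E[W]$ and that $U$ remain $\delta_0$-close to $W$ on a semi-infinite time interval, and neither is granted for a profile extracted at an arbitrary time.

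The paper's argument bypasses this obstruction by a \emph{first-exit-time} construction. Starting from the good sequence $s_n$ (where $w(s_n)$ is already close to a single rescaled $W$) and using the all-time energy quantization, one defines $t_n'>s_n$ as the first time the quantity $\bigl|\,\|\nabla w(t)\|_{L^2}^2+\|\partial_t w(t)\|_{L^2}^2-\|\nabla W\|_{L^2}^2\,\bigr|$ reaches a fixed small $\delta_0$. Profile-decomposing at $t_n'$, the trapping on $(s_n,t_n')$ together with the energy constraint $E\to E(W,0)$ forces a \emph{single} profile $U^1$ with $E[U^1]=E(W,0)$, $t_{1,n}=0$, and $w_n^1\to 0$: if there were two profiles, each would have energy strictly below $E(W,0)$, and the one that does not scatter would be a small type~II blow-up backward in time by the trapping, contradicting \cite{DuKeMe11a}. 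Only now does the threshold classification enter: either $\|\nabla U^1_0\|^2+\|U^1_1\|^2=\|\nabla W\|^2-\delta_0$, excluded by \cite{DuMe08} (scattering in one direction), or $=\|\nabla W\|^2+\delta_0$, in which case the trapping on $(s_n,t_n')$ propagates to $U^1$ for all $t<0$, Theorem~\ref{T:rigidityW+} identifies $U^1$ with $W^+$, and Theorem~\ref{T:blowupW+} contradicts the boundedness of $u$. The essential point you are missing is that the $W^+$ rigidity is applied not to arbitrary profiles but to a single profile already pinned at the threshold energy and already trapped near $W$ by construction of $t_n'$.
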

\begin{maincorol}
\label{C:global}
Let $u$ be a bounded radial solution of \eqref{CP} with $T_+(u)=+\infty$. Assume that 
there exists $A\in \RR$ such that
\begin{equation}
\label{global_bound}
 \limsup_{t\to +\infty} \int_{|x|\leq t-A} |\nabla u(t,x)|^2\,dx<2\|\nabla W\|_{L^2}^2.
\end{equation} 
Then either $u$ scatters or satisfies the conclusion of Theorem \ref{T:global_one}.
\end{maincorol}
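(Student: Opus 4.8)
The plan is to derive Corollary~\ref{C:global} from Theorems~\ref{T:global_sum} and~\ref{T:global_one}, the point being to use the hypothesis \eqref{global_bound} to force the integer $J_0$ produced by Theorem~\ref{T:global_sum} to satisfy $J_0\le 1$.

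\EMPH{Step 1: applying Theorem~\ref{T:global_sum} and bounding $J_0$}
First I would apply Theorem~\ref{T:global_sum} to $u$ (legitimate since $T_+(u)=+\infty$ and \eqref{bound_intro} holds), obtaining a free solution $v_{\lin}$, a sequence $s_n\to+\infty$, an integer $J_0\ge 0$, scales $\lambda_{j,n}$ and signs $\iota_j$ satisfying \eqref{global_devt} and \eqref{global_lambda_jn}. Fix $A$ as in \eqref{global_bound}. Since $\lambda_{j,n}\ll s_n$ we have $(s_n-A)/\lambda_{j,n}\to+\infty$, so each rescaled bubble concentrates well inside the truncated cone $\{|x|\le s_n-A\}$:
\[
\int_{|x|\le s_n-A}\left|\nabla\left(\lambda_{j,n}^{-1/2}W(\cdot/\lambda_{j,n})\right)\right|^2dx=\int_{|y|\le (s_n-A)/\lambda_{j,n}}|\nabla W|^2\,dy\ \tend{n}\ \|\nabla W\|_{L^2}^2 ,
\]
while the analogous integral over $\{|x|>s_n-A\}$ tends to $0$. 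Writing $u(s_n)=v_{\lin}(s_n)+\sum_{j=1}^{J_0}\lambda_{j,n}^{-1/2}\iota_jW(\cdot/\lambda_{j,n})+r_n$ with $\|(r_n,\partial_t u(s_n)-\partial_t v_{\lin}(s_n))\|_{\hdot\times L^2}\to0$ by \eqref{global_devt}, expanding $\int_{|x|\le s_n-A}|\nabla u(s_n)|^2$, using that the profiles $v_{\lin}$ and $\lambda_{j,n}^{-1/2}W(\cdot/\lambda_{j,n})$ are pairwise orthogonal (so all their pairwise $\hdot$-inner products at time $s_n$ tend to $0$), and discarding the contributions over $\{|x|>s_n-A\}$ via Cauchy--Schwarz and the display above, I would arrive at the localized Pythagorean identity
\[
\int_{|x|\le s_n-A}|\nabla u(s_n)|^2\,dx=\int_{|x|\le s_n-A}|\nabla v_{\lin}(s_n)|^2\,dx+J_0\|\nabla W\|_{L^2}^2+o(1)\quad(n\to\infty).
\]
Dropping the nonnegative first term on the right and taking $\liminf_n$ yields $J_0\|\nabla W\|_{L^2}^2\le\liminf_n\int_{|x|\le s_n-A}|\nabla u(s_n)|^2\,dx\le\limsup_{t\to+\infty}\int_{|x|\le t-A}|\nabla u(t)|^2\,dx<2\|\nabla W\|_{L^2}^2$ by \eqref{global_bound}. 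Hence $J_0\le 1$.

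\EMPH{Step 2: concluding}
If $J_0=1$, Theorem~\ref{T:global_one} applies verbatim and yields exactly the conclusion of Corollary~\ref{C:global}. If $J_0=0$, then \eqref{global_devt} says that $\left(u(s_n)-v_{\lin}(s_n),\,\partial_t u(s_n)-\partial_t v_{\lin}(s_n)\right)\to0$ in $\hdot\times L^2$. As $v_{\lin}$ is a finite-energy free wave, the Strichartz inequality gives $v_{\lin}\in L^8(\RR\times\RR^3)$, hence $\|v_{\lin}\|_{L^8([s_n,+\infty)\times\RR^3)}\to0$; viewing $v_{\lin}$ as an approximate solution of \eqref{CP} on $[s_n,+\infty)$, its error $-v_{\lin}^5$ then has small dual Strichartz norm there for $n$ large. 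The long-time perturbation theory for \eqref{CP} (the uniform bound \eqref{bound_intro} fixing the relevant thresholds), applied with initial time $s_n$, gives $\|u\|_{L^8([s_n,+\infty)\times\RR^3)}<\infty$ for $n$ large; together with $\|u\|_{L^8([0,s_n]\times\RR^3)}<\infty$ (which holds by the local theory) this gives $\|u\|_{L^8([0,+\infty)\times\RR^3)}<\infty$, i.e. $u$ scatters forward in time. In both cases the conclusion of Corollary~\ref{C:global} holds.

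\EMPH{Expected main difficulty}
The proof uses only results already available (Theorems~\ref{T:global_sum} and~\ref{T:global_one}) together with classical Strichartz and perturbation theory, so no genuinely new idea is required; the one point needing care is the localized Pythagorean identity of Step~1, namely handling the radiation profile $v_{\lin}(s_n)$ restricted to the truncated cone $\{|x|\le s_n-A\}$ and its interactions with the concentrating bubbles, which is controlled by the orthogonality properties already underlying the proof of Theorem~\ref{T:global_sum}.
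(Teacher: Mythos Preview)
Your argument is correct and is exactly the intended one: the paper does not spell out a proof of Corollary~\ref{C:global}, but states that it is the global analogue of Corollary~\ref{C:bup}, both being immediate consequences of the corresponding decomposition theorem (here Theorem~\ref{T:global_sum}) together with the one-profile theorem (here Theorem~\ref{T:global_one}). Your Step~1 is precisely the reduction $J_0\leq 1$ via the localized Pythagorean expansion (the needed asymptotic orthogonality of $v_{\lin}(s_n)$ and the concentrating bubbles follows from $\lambda_{j,n}\ll s_n$ and Lemma~\ref{L:lin_odd}, as you note), and your Step~2 is the standard scattering argument when $J_0=0$.
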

\begin{remark}
If $u$ is a radial solution satisfying \eqref{bound_intro}, such that $T_+(u)=+\infty$ and $E(u_0,u_1)<2E(W,0)$ then either $u$ scatters or satisfies the conclusion of Theorem \ref{T:global_one}.
\end{remark}
Theorem \ref{T:global_sum}, Theorem \ref{T:global_one} and Corollary \ref{C:global} are the analogs for bounded, globally defined solutions which do not scatter of the results for type II blow-up solutions. The proofs are essentially the same. Note that the existence of solutions as in Theorem \ref{T:global_one} follows from \cite{KrSc07} for $\lambda(t)=1$ and \cite{DoKr12P} for $\lambda(t)\approx t^{\alpha}$, $\alpha\in \RR$ close to $0$. 
\begin{remark}
 In theory, one might expect, in the case $J_0\geq 2$ that the conclusion of Theorems \ref{T:bup_sum} and \ref{T:global_sum} only holds for some sequence of times. However, we conjecture that it holds for all sequences converging to $T_+(u)$, i.e that there exist positive $\lambda_1(t),\ldots,\lambda_{J_0}(t)$ such that
\begin{equation}
\tag{\ref{bup_devt}'}
\label{bup_devt'}
\lim_{t\to T^+(u)}\left\|
\left(u(t)-v_0-\sum_{j=1}^{J_0} \frac{\iota_j}{\lambda_{j}^{1/2}(t)}W\left(\frac{\cdot}{\lambda_{j}(t)}\right),\partial_t u(t)-v_1\right)\right\|_{\hdot\times L^2}=0,
\end{equation} 
with $\lambda_1(t)\ll \ldots \ll\lambda_{J_0}(t)\ll T_+(u)-t$ as $t\to T_+(u)$
if $T_+(u)<\infty$ and 
\begin{equation}
\tag{\ref{global_devt}'}
\label{global_devt'}
\lim_{t\to \infty}\left\|
\left(u(t)-v_{\lin}(t)-\sum_{j=1}^{J_0} \frac{\iota_j}{\lambda_{j}^{1/2}(t)}W\left(\frac{\cdot}{\lambda_{j}(t)}\right),\partial_t u(t)-\partial_t v_{\lin}(t)\right)\right\|_{\hdot\times L^2}=0,
\end{equation} 
with $\lambda_1(t)\ll \ldots \ll\lambda_{J_0}(t)\ll t$ as $t\to +\infty$
if $T_+(u)=+\infty$. 

This is exactly the soliton resolution conjecture for bounded solutions of \eqref{CP} in the radial case. The only equation where this type of result is known so far is the KdV equation in the integrable case, see \cite{EcSc83}, \cite{Eckhaus86}.
\end{remark}
\begin{remark}
In the recent work of Krieger, Nakanishi and Schlag \cite{KrNaSc10P}, it is proved that there exists a small $\eta_0>0$ such that for any radial solution $u$ of \eqref{CP} with $E(u_0,u_1)<E(W,0)+\eta_0$, one of the following holds:
\begin{enumerate}
 \item \label{I:Bup} $T_+(u)<\infty$;
\item \label{I:scat} $u$ scatters forward in time to a linear solution;
\item \label{I:W} $T_+(u)=+\infty$ and $(u,\partial_tu)(t)$
is for large positive $t$ in a small $\hdot\times L^2$-neighborhood of the manifold 
$$\SSS=\left\{\left(\frac{\iota}{\lambda^{1/2}}W\left(\frac{\cdot}{\lambda}\right),0\right),\quad \lambda>0,\; \iota=\pm 1\right\}.$$
\end{enumerate}
According to Corollary \ref{C:global} the solutions in case \eqref{I:W} are exactly of the form given by the conclusion of Theorem \ref{T:global_one}.
\end{remark}
The key new ingredient for the proof of Theorem \ref{T:bup_sum} is a dispersion property for a compactly supported solution of \eqref{CP} (see \S \ref{SSS:Case1}), without any smallness assumption on the solution.
Using this property together with the profile decomposition of \cite{BaGe99}, we show in Section \ref{S:SSregion} that no energy concentrates in the self-similar region, i.e: $\int_{\eps_0(T_+-t)\leq |x|\leq T_+-t} |\nabla u(t)|^2+(\partial_tu(t))^2\,dx\to 0$ as $t\to T_+$. This is then combined in Section \ref{S:expansion} with 
arguments as in \cite{DuKeMe11a}, \cite{DuKeMe10P} to give Theorem \ref{T:bup_sum}.
Section \ref{S:one_profile} is dedicated to the proof of Theorems \ref{T:bup_one} and \ref{T:global_one}, and Section \ref{S:W+} to some properties of the solution $W^+$ constructed in \cite{DuMe08}. 

\subsection*{Notations}
We will denote by $o_n(1)$ a sequence real numbers that goes to $0$ as $n\to \infty$. If $\{\lambda_n\}_n$ and $\{\mu_n\}_n$ are two sequences of positive numbers, $\lambda_n\ll \mu_n$ means that $\lambda_n/\mu_n=o_n(1)$. We will denote
$$ B_{\rho}=\left\{y\in \RR^3,\text{ s.t. }|y|\leq \rho\right\},$$
and by $S_{\lin}(t)$ the linear wave propagator: 
$$v_{\lin}(t)=S_{\lin}(t)(v_0,v_1)=\cos(t\sqrt{-\Delta})v_0+\frac{\sin(t\sqrt{-\Delta})}{\sqrt{-\Delta}}v_1 $$
 is the solution of \eqref{lin_wave_intro} with initial data $(v_0,v_1)$ at $t=0$. We will also denote by $\Sbf_{\lin}(t)(v_0,v_1)$ the vector $(v_{\lin}(t),\partial_tv_{\lin}(t))$.

\section{Preliminaries}
\label{S:preliminaries}
\subsection{Profile decomposition}
We recall here the profile decomposition of H.~Bahouri and P.~G\'erard \cite{BaGe99}. Recall that such decompositions first appeared in the elliptic case in \cite{BrCo85} and \cite{Lions85b} and simultanuously to \cite{BaGe99} in \cite{MeVe98} for the Schr\"odinger case.

Consider a radial sequence $(u_{0,n},u_{1,n})_n$ which is bounded in $\hdot\times L^2$. 
Let $(U^j_{\lin})_{j\geq 1}$ be a sequence of radial solutions of the linear equation 
\begin{equation}
\label{lin_wave}
\partial_t^2v-\Delta v=0,\quad v_{\restriction t=0}=v_0,\; \partial_tv_{\restriction t=0}=v_1
\end{equation} 
with initial data $(U^j_0,U^j_1)\in \hdot\times L^2$, and $(\lambda_{j,n};t_{j,n})\in (0,+\infty)\times \RR$, $j,n\geq 1$, be a family of parameters satisfying the pseudo-orthogonality relation
\begin{equation}
\label{ortho_param}
j\neq k\Longrightarrow \lim_{n\to \infty} \frac{\lambda_{j,n}}{\lambda_{k,n}}+\frac{\lambda_{k,n}}{\lambda_{j,n}}+\frac{|t_{j,n}-t_{k,n}|}{\lambda_{j,n}}=+\infty.
\end{equation} 
We say that $(u_{0,n},u_{1,n})_n$ admits a profile decomposition  $\lf\{U_{\lin}^j\rg\}_j$, $\lf\{\lambda_{j,n},t_{j,n}\rg\}_{j,n}$ when $(w_{0,n}^J,w_{1,n}^J)$ defined by
\begin{equation}
\label{decompo_profil}
\left\{\begin{aligned}
w_{0,n}^J(x)&=u_{0,n}-\sum_{j=1}^J \frac{1}{\lambda_{j,n}^{\frac{1}{2}}}U_{\lin}^j\left(\frac{-t_{j,n}}{\lambda_{j,n}},\frac{x}{\lambda_{j,n}}\right),\\
w_{1,n}^J(x)&=u_{1,n}-\sum_{j=1}^J \frac{1}{\lambda_{j,n}^{\frac{3}{2}}}\partial_t U_{\lin}^j\left(\frac{-t_{j,n}}{\lambda_{j,n}},\frac{x}{\lambda_{j,n}}\right),
\end{aligned}\right.
\end{equation}
satisfies
\begin{equation}
\label{small_w}
\lim_{n\rightarrow+\infty}\limsup_{J\rightarrow+\infty} \left\|w_{n}^J\right\|_{L^8(\RR^4_{t,x})}=0,
\end{equation}
where $w_{n}^J$ is the solution of \eqref{lin_wave} with initial data $(w_{0,n}^J,w_{1,n}^J)$. 
By \cite{BaGe99}, if the sequence $(u_{0,n},u_{1,n})_n$ is bounded in the energy space $\hdot\times L^2$, there always exists a subsequence of $(u_{0,n},u_{1,n})_n$ which admits a profile decomposition. Furthermore, 
\begin{equation}
\label{weak_CV_wJ}
j\leq J\Longrightarrow 
\left(\lambda_{j,n}^{\frac{1}{2}} w_n^J\left(t_{j,n},\lambda_{j,n}y\right),\lambda_{j,n}^{\frac{3}{2}} \partial_tw_n^J\left(t_{j,n},\lambda_{j,n}y\right)\right)\xrightharpoonup[n\to \infty]{}0,
\end{equation}
weakly in $\hdot_y\times L^2_y$. In other words, the initial data $(U^j_0,U^j_1)$ of the profiles may be defined as weak limits, in $\hdot\times L^2$, of sequences $\left\{\lambda_n^{1/2}v_n(t_n,\lambda_n \cdot),\lambda_n^{3/2}\partial_tv_n(t_n,\lambda_n \cdot)\right\}$, where $v_n$ is the solution of \eqref{lin_wave} with initial data $(u_{0,n},u_{1,n})$, $\left\{\lambda_n\right\}_n$, $\left\{t_n\right\}_n$ are sequences in $(0,\infty)$ and $\RR$ respectively.

The following expansions hold for all $J\geq 1$:
\begin{gather}
\label{pythagore1a} 
\left\|u_{0,n}\rg\|_{\hdot}^2=\sum_{j=1}^J \left\|U^j_{\lin}\left(\frac{-t_{j,n}} {\lambda_{j,n}}\right)\rg\|_{\hdot}^2+\left\|w_{0,n}^J\rg\|_{\hdot}^2+o_n(1)\\
\label{pythagore1b} 
\lf\|u_{1,n}\right\|^2_{L^2}=\sum_{j=1}^J \lf\|\partial_t U^j_{\lin}\left(\frac{-t_{j,n}} {\lambda_{j,n}}\right)\right\|^2_{L^2}+\lf\|w_{1,n}^J\right\|^2_{L^2}+o_n(1)\\
\label{pythagore2}
E(v_{0,n},v_{1,n})=\sum_{j=1}^J E\left(U^j_{\lin}\left(-\frac{t_{j,n}}{\lambda_{j,n}}\right),\partial_t U^j_{\lin}\left(-\frac{t_{j,n}} {\lambda_{j,n}}\right)\right)+E\left(w_{0,n}^J, w_{1,n}^J\right)+o_n(1).
\end{gather}

Fixing $j$ and replacing $U^j_{\lin}(t,x)$ by  $V^j_{\lin}(t,x)=\frac{1}{\lambda_{j}^{\frac{1}{2}}}U_{\lin}^j\left(\frac{t-t_{j}}{\lambda_{j}},\frac{x}{\lambda_{j}}\right)$ for some good choice of the parameters $\lambda_j,\,t_j$, and extracting subsequences, we will always assume that one of the following two cases occurs
\begin{equation}
\label{choice_param}
\forall n,\;t_{j,n}=0\quad\text{or}\quad\lim_{n\to\infty} \frac{t_{j,n}}{\lambda_{j,n}}\in \{-\infty,+\infty\}. 
\end{equation} 
Furthermore, we will also assume that for all $j$, the sequences $\left\{t_{j,n}\right\}_n$ and $\left\{\lambda_{j,n}\right\}_n$
have limits in $\RR\cup\{\pm \infty\}$ and $[0,+\infty]$ respectively.

For any profile decomposition with profiles $\left\{U^j_{\lin}\right\}$ and parameters $\left\{\lambda_{j,n},t_{j,n}\right\}$, we will denote  by $\left\{U^j\right\}$ the non-linear profiles associated with $\left\{U^j_{\lin}\left(\frac{-t_{j,n}}{\lambda_{j,n}}\right),\partial_t U^j_{\lin}\left(\frac{-t_{j,n}}{\lambda_{j,n}}\right)\right\}$, which are the unique solutions of \eqref{CP} such that for all $n$, $\frac{-t_{j,n}}{\lambda_{j,n}}\in I_{\max}\left(U^j\right)$ and
$$ \lim_{n\rightarrow +\infty} \left\|U^j\left(\frac{-t_{j,n}}{\lambda_{j,n}}\right)-U^j_{\lin}\left(\frac{-t_{j,n}}{\lambda_{j,n}}\right)\right\|_{\hdot}
+\left\|\partial_t U^j\left(\frac{-t_{j,n}}{\lambda_{j,n}}\right)-\partial_t U^j_{\lin}\left(\frac{-t_{j,n}}{\lambda_{j,n}}\right)\right\|_{L^2}=0.$$

By the Strichartz inequalities on the linear problem and the small data Cauchy theory, if 
$\lim_{n\rightarrow +\infty} \frac{-t_{j,n}}{\lambda_{j,n}}=+\infty$, then 
$T_+\left(U^j\right)=+\infty$ and
\begin{equation}
\label{finite_norm2}
 s_0>T_-\left(U^j\right) \Longrightarrow \|U^j\|_{L^8(s_0,+\infty)}<\infty.
\end{equation} 
An analoguous statement holds in the case $\lim_{n\rightarrow +\infty} \frac{t_{j,n}}{\lambda_{j,n}}=+\infty$.
We will often write, for the sake of simplicity
\begin{equation}
\label{simplicity} 
U^j_n(t,x)=\frac{1}{\lambda_{j,n}^{1/2}}U^j\left(\frac{t-t_{j,n}}{\lambda_{j,n}},\frac{x}{\lambda_{j,n}}\right),\quad
U^j_{\lin,n}(t,x)=\frac{1}{\lambda_{j,n}^{1/2}}U^j_{\lin}\left(\frac{t-t_{j,n}}{\lambda_{j,n}},\frac{x}{\lambda_{j,n}}\right),
\end{equation}
and
\begin{equation}
 \label{simplicity_bis}
\left(U^{j}_{0,n},U^j_{1,n}\right)=\left(U^j_n,\partial_tU^j_n\right)_{\restriction t=0},\quad \left(U^{j}_{0,\lin,n},U^j_{1,\lin,n}\right)=\left(U^j_{\lin,n},\partial_tU^j_{\lin,n}\right)_{\restriction t=0}
\end{equation}
We will need the following approximation result, which follows from a long time perturbation argument. See the Main Theorem p. 135 in \cite{BaGe99}, and a sketch of proof right after Proposition 2.8 in \cite{DuKeMe11a}.
\begin{prop}
\label{P:lin_NL}
 Let $\{(u_{0,n},u_{1,n})\}_n$ be a bounded sequence in $\hdot\times L^2$ admitting a profile decomposition  with profiles $\{U^j_{\lin}\}$ and parameters $\{t_{j,n},\lambda_{j,n}\}$. Let $\theta_n\in (0,+\infty)$. Assume 
\begin{equation}
\label{bounded_strichartz}
\forall j\geq 1, \quad\forall n,\;\frac{\theta_n-t_{j,n}}{\lambda_{j,n}}<T_+(U^j)\text{ and } \limsup_{n\rightarrow +\infty} \left\|U^j\right\|_{L^8\big(\big(-\frac{t_{j,n}}{\lambda_{j,n}},\frac{\theta_n-t_{j,n}}{\lambda_{j,n}}\big)\times\RR^3\big)}<\infty.
\end{equation}
Let $u_n$ be the solution of \eqref{CP} with initial data $(u_{0,n},u_{1,n})$.
Then for large $n$, $u_n$ is defined on $[0,\theta_n)$,
\begin{equation}
\label{NL_bound}
\limsup_{n\rightarrow +\infty}\|u_n\|_{L^8\big((0,\theta_n)\times \RR^3\big)}<\infty,
\end{equation} 
and
\begin{equation}
\label{NL_profile} 
\forall t\in [0,\theta_n),\quad
u_n(t,x)=\sum_{j=1}^J U^j_n\left(t,x\right)+w^J_{n}(t,x)+r^J_n(t,x),
 \end{equation}
where 
\begin{equation}
\label{cond_rJn}
\lim_{J\rightarrow +\infty}\left[ \limsup_{n\rightarrow +\infty} \|r^J_n\|_{L^8\big((0,\theta_n)\times\RR^3\big)}+\sup_{t\in (0,\theta_n)} \left(\|\nabla r^J_n(t)\|_{L^2}+\|\partial_t r^J_n(t)\|_{L^2}\right)\right]=0.
\end{equation}
An analoguous statement holds if $\theta_n<0$.
\end{prop}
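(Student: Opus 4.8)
The plan is to derive this from the long-time perturbation theory for \eqref{CP}, using the nonlinear profiles as an approximate solution on $[0,\theta_n)$. First I would define, for a threshold $J$ to be sent to infinity last, the approximate solution
\[
U^{(J)}_n(t,x)=\sum_{j=1}^J U^j_n(t,x)+w^J_n(t,x),
\]
where $U^j_n$ is as in \eqref{simplicity} and $w^J_n$ solves the linear equation \eqref{lin_wave} with data $(w^J_{0,n},w^J_{1,n})$ from \eqref{decompo_profil}. By hypothesis \eqref{bounded_strichartz}, each $U^j$ has finite $L^8$ norm on the rescaled time interval, so for fixed $J$ a telescoping/orthogonality argument (exactly as in the Main Theorem of \cite{BaGe99}) shows $\limsup_n\|U^{(J)}_n\|_{L^8((0,\theta_n)\times\RR^3)}<\infty$, uniformly in $J$ after using \eqref{small_w} to control the tail piece $w^J_n$; this uses the pseudo-orthogonality \eqref{ortho_param} to turn the $L^8$ norm of the sum into (essentially) the sum of the $L^8$ norms, plus vanishing cross terms.

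Next I would estimate the error
\[
e^{(J)}_n:=\partial_t^2 U^{(J)}_n-\Delta U^{(J)}_n-\bigl(U^{(J)}_n\bigr)^5
=-\Bigl(\sum_{j=1}^J U^j_n+w^J_n\Bigr)^5+\sum_{j=1}^J \bigl(U^j_n\bigr)^5
\]
in the dual Strichartz norm $L^{8/5}((0,\theta_n)\times\RR^3)$. Expanding the fifth power and using the pseudo-orthogonality of the parameters $\{\lambda_{j,n},t_{j,n}\}$ together with the smallness \eqref{small_w} of $w^J_n$, every term in this difference is a product of at least two \emph{distinct} profiles (or involves $w^J_n$), and such products have $L^{8/5}$ norm $o_n(1)$ for each fixed $J$; the contribution of $w^J_n$ is small uniformly in $J$ by \eqref{small_w}. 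Since $U^{(J)}_n$ and $u_n$ agree at $t=0$ up to an $o_n(1)$ error in $\hdot\times L^2$ (this is \eqref{pythagore1a}--\eqref{pythagore1b} together with the definition of the nonlinear profiles $U^j$), the long-time perturbation lemma (\cite{KeMe08}, or the version quoted after Proposition 2.8 in \cite{DuKeMe11a}) applies: for $J$ large and then $n$ large, $u_n$ exists on all of $[0,\theta_n)$, obeys \eqref{NL_bound}, and satisfies \eqref{NL_profile} with $r^J_n=u_n-U^{(J)}_n$ obeying \eqref{cond_rJn}.

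The main obstacle is the bookkeeping in the two nonlinear estimates above: controlling $\|U^{(J)}_n\|_{L^8}$ and $\|e^{(J)}_n\|_{L^{8/5}}$ uniformly in $J$ requires carefully separating, for each pair $j\neq k$, the regimes where the scales $\lambda_{j,n},\lambda_{k,n}$ separate from the regimes where the time centers $t_{j,n},t_{k,n}$ separate, and using a density/approximation argument (replacing each $U^j_{\lin}$ by a smooth, compactly supported approximant) so that the scaling-and-translation change of variables genuinely forces the cross terms to vanish. The role of the hypothesis \eqref{bounded_strichartz} is precisely to guarantee that no nonlinear profile $U^j$ blows up before the rescaled time $\frac{\theta_n-t_{j,n}}{\lambda_{j,n}}$, so that the approximate solution is globally defined on the relevant interval with a priori bounded $L^8$ norm; this is what makes the perturbation argument closeable. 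The case $\theta_n<0$ is handled by the time-reversal symmetry of \eqref{CP}.
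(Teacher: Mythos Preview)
Your proposal is correct and follows exactly the standard long-time perturbation argument that the paper invokes: the paper does not supply its own proof of Proposition~\ref{P:lin_NL} but simply refers to the Main Theorem of \cite{BaGe99} and the sketch after Proposition~2.8 in \cite{DuKeMe11a}, and your outline (approximate solution $\sum_{j\le J}U^j_n+w^J_n$, error estimate in $L^{8/5}$ via pseudo-orthogonality, then the perturbation lemma of \cite{KeMe08}) is precisely that argument. One cosmetic remark: the matching of the initial data is not really a consequence of the Pythagorean expansions \eqref{pythagore1a}--\eqref{pythagore1b} but follows directly from the definition of the nonlinear profiles $U^j$, which gives $U^j_n(0)=U^j_{\lin,n}(0)+o_n(1)$ in $\dot H^1\times L^2$.
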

\begin{remark}
\label{R:lin_NL}
Assume that for all $j$, at least one of the following occurs:
\begin{enumerate}
 \item \label{small_profile}
$\ds \|U^j_0\|_{\hdot}^2+\|U^j_1\|_{L^2}^2\leq \frac{2}{3}\|W\|^2_{\hdot}$,
\item \label{dispersive_profile}$\ds \lim_{n\rightarrow+\infty} \frac{-t_{j,n}}{\lambda_{j,n}}=+\infty$,
\item \label{finite_limit_profile}
$\ds \limsup_{n\rightarrow +\infty} \frac{\theta_n-t_{j,n}}{\lambda_{j,n}}<T_+\left(U^j\right).$
\end{enumerate}
Then \eqref{bounded_strichartz} holds. 
\end{remark}

\subsection{Energy trapping}
The following Claim is shown in \cite{DuKeMe11a} by variational arguments:
\begin{claim}
\label{C:variational}
Let $f\in \hdot(\RR^3)$. Then
\begin{equation}
 \label{variational}
\|\nabla f\|_{L^2}^2\leq \|\nabla W\|_{L^2}^2\text{ and }E(f,0)\leq E(W,0)\Longrightarrow \|\nabla f\|_{L^2}^2\leq \frac{\|\nabla W\|_{L^2}^2}{E(W,0)}E(f,0)=3 E(f,0).
\end{equation}
Furthermore, if $\|\nabla f\|_{L^2}^2\leq \sqrt{3}\|\nabla W\|_{L^2}^2$, then $E(f,0)\geq 0$.
\end{claim}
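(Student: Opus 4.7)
The plan is to reduce both parts to a one-variable analysis of an explicit cubic in $y = \|\nabla f\|_{L^2}^2$. First I would invoke the sharp Sobolev embedding $\hdot(\RR^3)\hookrightarrow L^6(\RR^3)$: the ground state $W$ is (up to translation, scaling and sign) the unique extremizer (Aubin--Talenti), and testing $-\Delta W = W^5$ against $W$ gives the Pohozaev-type identity $\|W\|_{L^6}^6 = \|\nabla W\|_{L^2}^2$. Together these yield the sharp bound
\begin{equation*}
\|f\|_{L^6}^6 \leq \frac{\|\nabla f\|_{L^2}^6}{\|\nabla W\|_{L^2}^4}.
\end{equation*}
Writing $M \egaldef \|\nabla W\|_{L^2}^2$, this gives $E(f,0) \geq F(y)$, where $F(y) \egaldef \frac{y}{2} - \frac{y^3}{6M^2}$. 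A direct computation shows $F(M) = M/3 = E(W,0)$, which confirms that the constant $\|\nabla W\|_{L^2}^2/E(W,0)$ appearing in the conclusion equals $3$.

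For the first implication, the key observation is that $F''(y) = -y/M^2 \leq 0$ on $[0,M]$, so $F$ is concave there and lies above the secant from $(0,0)$ to $(M,M/3)$: $F(y) \geq y/3$ for all $y\in[0,M]$. Since $y \leq M$ by hypothesis and $E(f,0)\geq F(y)$, I conclude
\begin{equation*}
\|\nabla f\|_{L^2}^2 = y \leq 3F(y) \leq 3E(f,0),
\end{equation*}
which is precisely the desired estimate. I note in passing that the auxiliary hypothesis $E(f,0)\leq E(W,0)$ is not actually used in this derivation; it merely describes the natural trapping regime in which the estimate is applied.

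For the second implication, I would factor $F(y) = \frac{y}{6M^2}(3M^2 - y^2)$, so that $F(y)\geq 0$ precisely when $y \leq \sqrt{3}\,M$. Hence the hypothesis $\|\nabla f\|_{L^2}^2 \leq \sqrt{3}\|\nabla W\|_{L^2}^2$ forces $E(f,0) \geq F(y) \geq 0$. No real obstacle arises: apart from the sharp Sobolev inequality, which is the only non-algebraic input, everything reduces to concavity and factoring of a cubic.
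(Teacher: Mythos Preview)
Your proof is correct. The paper itself does not prove this claim but simply cites \cite{DuKeMe11a} for ``variational arguments''; your argument via the sharp Sobolev inequality and the one-variable analysis of $F(y)=\frac{y}{2}-\frac{y^3}{6M^2}$ is precisely the standard variational argument alluded to. Your remark that the hypothesis $E(f,0)\leq E(W,0)$ is not used in the static estimate is also correct---it only serves to delimit the trapping region in the dynamical applications.
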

\subsection{Linear behavior}
In this subsection we give two localization properties for solutions to the linear equation \eqref{lin_wave}. Lemma \ref{L:lin_odd} can be deduced from the strong Huygens principle, Proposition \ref{P:linear_behavior} can be shown using explicit formulas for the solutions of \eqref{lin_wave}. We refer to \cite{DuKeMe11a}, Lemma 4.1 and 4.2, and \cite{DuKeMe10P}, Proposition 2.8 for the proofs.
\begin{lemma}
\label{L:lin_odd}
Let $v$ be a solution of the linear wave equation \eqref{lin_wave}, with initial data $(v_0,v_1)$, $\lf\{\lambda_n\rg\}_n$, $\lf\{t_n\rg\}_n$ be two real sequences, with $\lambda_n$ positive. Let
$$ v_n(t,x)=\frac{1}{\lambda_n^{1/2}}v\lf(\frac{t}{\lambda_n},\frac{x}{\lambda_n}\rg)$$
and assume $\lim_{n\to \infty} \frac{t_n}{\lambda_n}=\ell\in [-\infty,+\infty]$.
Then, if $\ell=\pm \infty$,
$$  \lim_{R\to \infty} \limsup_{n\to \infty} \int_{\big||x|-|t_n|\big|\geq R\lambda_n}  |\nabla v_n(t_n)|^2+\frac{1}{|x|^2}|v_n(t_n)|^2+\lf(\partial_t v_n(t_n)\rg)^2dx=0$$
and if $\ell\in \RR$,
$$  \lim_{R\to \infty} \limsup_{n\to \infty} \int_{\substack{\{|x|\geq R \lambda_n\}\\ \cup\{|x|\leq \frac{1}{R}\lambda_n\}}} |\nabla v_n(t_n)|^2+\frac{1}{|x|^2}|v_n(t_n)|^2+\lf(\partial_t v_n(t_n)\rg)^2dx=0.$$
\end{lemma}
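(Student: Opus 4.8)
The plan is to reduce everything to the case $\lambda_n=1$ by scaling, treat the two regimes $\ell\in\RR$ and $\ell=\pm\infty$ separately, and use only one genuinely analytic fact — the strong Huygens principle in $\RR^3$ — the rest being scale invariance, strong continuity of the free flow on $\hdot\times L^2$, Hardy's inequality and density of $C_0^\infty$. First I would remove the scaling parameter: putting $y=x/\lambda_n$, a direct change of variables shows that each of $\int|\nabla v_n(t_n)|^2\,dx$, $\int|x|^{-2}|v_n(t_n)|^2\,dx$ and $\int(\partial_t v_n(t_n))^2\,dx$ equals the corresponding integral for $v$ itself at time $s_n:=t_n/\lambda_n$, while the domains $\{\,\bigl||x|-|t_n|\bigr|\ge R\lambda_n\,\}$ and $\{|x|\ge R\lambda_n\}\cup\{|x|\le\lambda_n/R\}$ become $\{\,\bigl||y|-|s_n|\bigr|\ge R\,\}$ and $\Omega_R:=\{|y|\ge R\}\cup\{|y|\le 1/R\}$ respectively. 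So it suffices to prove the two assertions for $v$ with $t_n$ replaced by $s_n$, where $s_n\to\ell$.

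Consider first $\ell\in\RR$. Since $t\mapsto S_{\lin}(t)(v_0,v_1)$ is continuous from $\RR$ to $\hdot\times L^2$, we have $(v(s_n),\partial_t v(s_n))\to(v(\ell),\partial_t v(\ell))$ in $\hdot\times L^2$, hence also $|x|^{-1}v(s_n)\to|x|^{-1}v(\ell)$ in $L^2$ by Hardy. Estimating the integrand at $s_n$ by twice the one at $\ell$ plus twice the one for the difference, and using that the difference tends to $0$ in all the relevant norms, one gets, for each fixed $R>1$,
$$ \limsup_{n\to\infty}\int_{\Omega_R}|\nabla v(s_n)|^2+\frac{|v(s_n)|^2}{|y|^2}+(\partial_t v(s_n))^2\,dy\ \le\ 2\int_{\Omega_R}|\nabla v(\ell)|^2+\frac{|v(\ell)|^2}{|y|^2}+(\partial_t v(\ell))^2\,dy . $$
The integrand on the right is in $L^1(\RR^3)$ (using Hardy for the middle term) and $\Omega_R$ decreases to the null set $\{0\}$, so the right-hand side tends to $0$ as $R\to\infty$. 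This proves the second assertion; note that the Huygens principle is not used here.

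Now take $\ell=\pm\infty$. Fix $\eps>0$ and choose a radial pair $(\tilde v_0,\tilde v_1)\in C_0^\infty(\RR^3)^2$, supported in $B_\rho$, with $\|(v_0-\tilde v_0,v_1-\tilde v_1)\|_{\hdot\times L^2}<\eps$, and let $\tilde v$ solve \eqref{lin_wave} with these data. Conservation of the linear energy gives $\|(v-\tilde v)(s)\|_{\hdot}+\|\partial_t(v-\tilde v)(s)\|_{L^2}\le C\eps$ for all $s$, hence $\int|x|^{-2}|(v-\tilde v)(s)|^2\,dx\le C\eps^2$ by Hardy. By the strong Huygens principle in dimension $3$, $\tilde v(s,\cdot)$ and $\partial_t\tilde v(s,\cdot)$ are supported in $\{\,\bigl||x|-|s|\bigr|\le\rho\,\}$ for every $s$; therefore, once $R>\rho$, the $\tilde v$–part contributes nothing to the integral over $\{\,\bigl||y|-|s_n|\bigr|\ge R\,\}$, for every $n$. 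Writing $v=\tilde v+(v-\tilde v)$ and using $(a+b)^2\le 2a^2+2b^2$,
$$ \limsup_{n\to\infty}\int_{\bigl||y|-|s_n|\bigr|\ge R}|\nabla v(s_n)|^2+\frac{|v(s_n)|^2}{|y|^2}+(\partial_t v(s_n))^2\,dy\ \le\ C\eps^2\qquad(R>\rho). $$
Letting $R\to\infty$ and then $\eps\to0$ yields the first assertion.

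The only non-routine point — and hence the place to be careful — is the sharp confinement of $\tilde v(s)$ to the annulus $\{\,\bigl||x|-|s|\bigr|\le\rho\,\}$, i.e. the strong Huygens principle, which is available precisely because the space dimension $3$ is odd; everything else (scaling, continuity of the free flow, density of $C_0^\infty$, energy conservation, Hardy's inequality) is standard, and this is the same scheme as in \cite{DuKeMe11a}.
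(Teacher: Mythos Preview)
Your proof is correct and follows essentially the same approach the paper indicates: the paper states that Lemma~\ref{L:lin_odd} ``can be deduced from the strong Huygens principle'' and refers to \cite{DuKeMe11a} for the details, which is precisely your scheme (scaling reduction, continuity of the flow for $\ell\in\RR$, density plus Huygens for $\ell=\pm\infty$). One cosmetic point: you require the approximating pair $(\tilde v_0,\tilde v_1)$ to be \emph{radial}, but the lemma as stated does not assume radiality of $(v_0,v_1)$; simply drop ``radial'' from your approximation step, since the strong Huygens argument works for arbitrary $C_0^\infty$ data in $\RR^3$.
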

\begin{prop}
\label{P:linear_behavior}
Let $v$ be a radial solution of \eqref{lin_wave}, $t_0\in \RR$, $0<r_0<r_1$.
Then the following property holds for all $t\geq t_0$ or for all $t\leq t_0$
\begin{multline}
\label{pro_linear}
\int_{r_0+|t-t_0|<r<r_1+|t-t_0|} \left(\partial_r \big(r v(t,x)\big)\right)^2 +r^2(\partial_t v(t,x))^2dr\\
\geq \frac 12\int_{r_0<r<r_1} \left(\partial_r \big(r v(t_0,x)\big)\right)^2+r^2(\partial_t v(t_0,x))^2dr.
\end{multline} 
Furthermore, the following property holds for all $t\geq t_0$ or for all $t\leq t_0$:
\begin{equation}
\label{out_positive}
\int_{|x|\geq |t-t_0|} \lf(\lf|\nabla v(t,x)\rg|^2+ \lf(\partial_t v(t,x)\rg)^2\rg)dx\geq \frac 12 \int \lf(\lf|\nabla v(t_0,x)\rg|^2+ \lf(\partial_t v(t_0,x)\rg)^2\right)dx
\end{equation}
\end{prop}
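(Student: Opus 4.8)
The plan is to reduce everything to the one-dimensional d'Alembert formula, which is the natural tool in the radial case. For a radial solution $v$ of \eqref{lin_wave} in $\RR^3$, set $w(t,r)=rv(t,r)$. Then $w$ solves the one-dimensional wave equation $\partial_t^2 w-\partial_r^2 w=0$ on $r>0$, with the odd boundary condition $w(t,0)=0$ forced by radiality and finite energy. Extending $w$ to an odd function of $r\in\RR$, d'Alembert gives $w(t,r)=F(r-t)+G(r+t)$ for suitable functions $F,G$ determined by the data at $t=t_0$, with $\partial_r\big(rv(t,r)\big)=F'(r-t)+G'(r+t)$ and $r\,\partial_t v(t,r)=\partial_t w(t,r)=-F'(r-t)+G'(r+t)$. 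Thus the integrand in \eqref{pro_linear} equals $2\big(F'(r-t)^2+G'(r+t)^2\big)$, and the whole statement becomes a transport/change-of-variables assertion about the functions $F'$ and $G'$.

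Concretely, writing $a=r-t$ and $b=r+t$, the left side of \eqref{pro_linear} at time $t$ is (up to the factor $2$) the sum of $\int F'(a)^2\,da$ over the interval of $a$-values corresponding to $r_0+|t-t_0|<r<r_1+|t-t_0|$, plus the analogous integral of $G'(b)^2$. For $t\geq t_0$, the incoming part $G'$ is evaluated on $b\in(r_0+t-t_0+t,\ r_1+t-t_0+t)$, i.e. shifted so that it samples the \emph{same} $b$-window as $r_0+t_0<r<r_1+t_0$ did at time $t_0$ — here one must be a little careful that for $t$ sufficiently large the relevant $a$-window $(r_0+|t-t_0|-t,\ r_1+|t-t_0|-t)$ stays in the region $a<0$, where by oddness $F'(a)^2=F'(-a)^2$, so the outgoing contribution at time $t$ matches the incoming contribution at time $t_0$ reflected at the origin. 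A symmetric discussion handles $t\leq t_0$: then it is the $F'$ (outgoing) part that is transported rigidly and the $G'$ part that gets reflected. In either case one of the two half-energies ($F'$ or $G'$) is preserved exactly and the other is nonnegative, so the displayed quantity is at least half of the full quantity $\int_{r_0<r<r_1}(\partial_r(rv(t_0)))^2+r^2(\partial_tv(t_0))^2\,dr = 2\int (F'^2+G'^2)$ over the corresponding window; hence the factor $\tfrac12$. The statement \eqref{out_positive} is the same argument with $r_0\to 0$, $r_1\to\infty$: the exterior region $|x|\geq|t-t_0|$ captures, after the change of variables, exactly the full $G'$-mass (for $t\geq t_0$) or the full $F'$-mass (for $t\leq t_0$), together with a nonnegative remainder, giving the lower bound by half the total energy $\int|\nabla v(t_0)|^2+(\partial_t v(t_0))^2\,dx = 2\int(F'^2+G'^2)$; the reduction of the $3$-d energy to $\int_0^\infty(\partial_r w)^2+(\partial_t w)^2\,dr$ is the standard radial identity $\int_{\RR^3}|\nabla v|^2 = 4\pi\int_0^\infty (\partial_r(rv))^2\,dr$ once one checks the boundary term at $r=0$ vanishes.

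The main obstacle is purely bookkeeping rather than conceptual: one has to track which of $F'$, $G'$ is transported without reflection and which is reflected off $r=0$, and verify that the "for all $t\geq t_0$ or for all $t\leq t_0$" dichotomy in the statement is exactly the dichotomy "outgoing part / incoming part", including the fact that for, say, $t\geq t_0$ the reflected ($F'$) contribution may only partially overlap the target window for $t$ close to $t_0$ but fully for large $t$ — so one should phrase the inequality so that the nonnegative-remainder term absorbs this. Since a complete and careful treatment of exactly these d'Alembert computations already appears in \cite{DuKeMe11a} (Lemmas 4.1–4.2) and \cite{DuKeMe10P} (Proposition 2.8), I would simply invoke those references for the details, as the paper does, and present here only the reduction $w=rv$ and the transport picture above.
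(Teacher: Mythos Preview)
Your approach is exactly the one the paper alludes to (``explicit formulas for the solutions'') and the one carried out in the cited references: reduce to the one-dimensional wave equation via $w=rv$, use d'Alembert $w=F(r-t)+G(r+t)$, observe that the integrand equals $2\big(F'(r-t)^2+G'(r+t)^2\big)$, and note that one of the two pieces is transported rigidly outward. There is a bookkeeping slip in your middle paragraph: for $t\geq t_0$ it is the \emph{outgoing} part $F'$ (variable $a=r-t$) whose window is preserved, not $G'$; with $t_0=0$ one has $a\in(r_0,r_1)$ independently of $t>0$, while the $G'$-window moves to $(r_0+2t,r_1+2t)$ --- no reflection at the origin is involved for \eqref{pro_linear}. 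The dichotomy is then simply: if $\int_{r_0}^{r_1}(F')^2\geq\int_{r_0}^{r_1}(G')^2$ the inequality holds for all $t\geq t_0$, otherwise for all $t\leq t_0$. For \eqref{out_positive}, note that the boundary term in the radial identity sits at $r=|t-t_0|$, not at $r=0$; since $\int_R^\infty r^2(\partial_r v)^2\,dr=\int_R^\infty(\partial_r(rv))^2\,dr+Rv(R)^2$, the $3$-d exterior energy dominates the $1$-d one and the argument still goes through.
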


\section{Self similar region}
\label{S:SSregion}
In this section we show that no energy of the singular part of the solution can concentrate in the self-similar region ($|x|\approx T_+-t$ in the finite blow-up case and $|x|\approx t$ in the global case). We treat the finite time blow-up case in Subsection \ref{SS:self_simF} and the global case in Subsection \ref{SS:self_simG}.
\subsection{Finite time blow-up case}
\label{SS:self_simF}
Let $u$ be a \emph{radial} solution of \eqref{CP} that satisfies \eqref{bound_intro}. Assume to fix ideas that $T_+(u)=1$. By \cite[Section 3]{DuKeMe11a}, there exists $(v_0,v_1)\in \hdot\times L^2$ such that
\begin{equation*}
 (u(t),\partial_tu(t)) \xrightharpoonup[t\to 1]{}(v_0,v_1)
\end{equation*} 
weakly in $\hdot\times L^2$. Furthermore, if $v$ is the solution of \eqref{CP} with initial data $(v_0,v_1)$ at time $t=1$,  then $a=u-v$ is well-defined on $[t_-,1)$ (where $t_->\max(T_-(u),T_-(v))$) and supported in the cone
$$ \left\{(t,x)\in \RR\times \RR^3\text{ s.t. } t_-\leq t \leq 1\text{ and }|x|\leq 1-t\right\}.$$
We call $v$ the \emph{regular} part of $u$ at the blow-up point and $a$ its \emph{singular} part. In this section, we show the following:
\begin{prop}
\label{P:no_ss}
Let $u$ be as above. Then 
\begin{equation*}
\forall c_0\in(0,1),\quad \lim_{t\to 1} \int_{c_0(1-t)\leq |x|\leq 1-t} \Big(|\nabla a(t,x)|^2+(\partial_t a(t,x))^2\Big)\,dx=0.
\end{equation*}
\end{prop}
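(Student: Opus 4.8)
The plan is to argue by contradiction and, after rescaling, reduce the claim to a dispersion statement for a bounded solution of \eqref{CP} whose spatial support is squeezed to a point. \emph{Reduction and rescaling.} Suppose the conclusion fails: there are $c_0\in(0,1)$, $\eta>0$ and $t_n\uparrow 1$ with $\int_{c_0(1-t_n)\le|x|\le 1-t_n}\big(|\nabla a(t_n)|^2+(\partial_ta(t_n))^2\big)\,dx\ge\eta$. Put $\lambda_n=1-t_n\downarrow 0$ and $a_n(s,y)=\lambda_n^{1/2}a(t_n+\lambda_ns,\lambda_ny)$. Then $a_n$ is defined for $s$ in intervals $[s_n^-,1)$ with $s_n^-=(t_--t_n)/\lambda_n\to-\infty$, it is bounded in $\hdot\times L^2$ uniformly in $(n,s)$, it is supported in $\{|y|\le 1-s\}$, and, by scale invariance, $\int_{c_0\le|y|\le1}\big(|\nabla a_n(0)|^2+(\partial_sa_n(0))^2\big)\,dy\ge\eta$. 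Since $a=u-v$ and $v$ is continuous at $t=1$ in the energy space, while the rescaling $v_n$ of $v$ lives on the shrinking region occupied by $a_n$ and tends to $0$ there, the function $a_n$ solves \eqref{CP} up to a negligible source term (a sum of mixed products of $a_n$ and $v_n$); I treat it as an approximate solution of \eqref{CP} on intervals exhausting $(-\infty,1)$.

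\emph{Profile decomposition and nonlinear approximation.} Passing to a subsequence, decompose $(a_n(0),\partial_sa_n(0))$ after Bahouri--G\'erard, with linear profiles $\{U^j_{\lin}\}$ and parameters $\{\mu_{j,n},\sigma_{j,n}\}$ normalized as in \eqref{choice_param}. By \eqref{pythagore1a}--\eqref{pythagore2} and Claim \ref{C:variational} only finitely many profiles carry non-negligible energy; by Proposition \ref{P:lin_NL} and Remark \ref{R:lin_NL}, on suitable time intervals (a long one going into the past, and an interval $[0,\theta]$ with $\theta<1$ going toward the blow-up, chosen below the first nonlinear blow-up time among the profiles) one has $a_n(s,y)=\sum_{j=1}^JU^j_n(s,y)+w^J_n(s,y)+r^J_n(s,y)$ with $r^J_n$ negligible as $n\to\infty$ and then $J\to\infty$.

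\emph{Closing the argument.} One classifies the profiles against the annulus $\{c_0\le|y|\le1\}$. Dispersive profiles ($\sigma_{j,n}/\mu_{j,n}\to\pm\infty$) have, by Lemma \ref{L:lin_odd}, their energy at $s=0$ concentrated in a shell $\big||y|-|\sigma_{j,n}|\big|\lesssim\mu_{j,n}$ that travels at unit speed, and being linear they obey \eqref{pro_linear}--\eqref{out_positive}; profiles with $\mu_{j,n}\to0$ and bounded $\sigma_{j,n}/\mu_{j,n}$ concentrate near the origin, away from the annulus; a profile of bounded nonzero scale has $\sigma_{j,n}=0$, so the associated nonlinear profile $U^j$ is a bounded solution of \eqref{CP} with data supported in a fixed ball. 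Thus the $\ge\eta$ of energy in the annulus at $s=0$ is carried only by $w^J_n$ and by such compactly supported nonlinear profiles. For $w^J_n$, \eqref{pro_linear} propagates this energy into the shifted annuli $\{c_0+|s|<|y|<1+|s|\}$; in the time direction pointing toward $s=1$, for $s>(1-c_0)/2$ these annuli lie outside the cone $\{|y|\le1-s\}$ and hence outside the support of $a_n=\sum_jU^j_n+w^J_n+r^J_n$, which, with $r^J_n$ small, forces $w^J_n$ (and, tracing back, its annular energy at $s=0$) to be small. The remaining contributions — the nonlinear profiles $U^j$, and whatever pieces are governed by the opposite time direction in \eqref{pro_linear} — are controlled by the new dispersion property for compactly supported solutions of \eqref{CP}: with no smallness assumption, a bounded solution whose support is a light cone shrinking to a point cannot retain energy near the lateral boundary of that cone. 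Altogether the annular energy of $a_n(0)$ is $o_n(1)$, contradicting $\ge\eta$. The decisive, genuinely new step — and where I expect the real difficulty — is precisely this dispersion estimate: excluding persistent self-similar energy for a merely bounded (not small) compactly supported solution of \eqref{CP}, the linear localization lemmas reaching only the radiative profiles and one time direction. The globally defined case $T_+(u)=+\infty$ is handled by the same scheme, with $\lambda_n$ replaced by a scale comparable to $s_n\to+\infty$.
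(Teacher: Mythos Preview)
Your outline follows the paper's strategy --- contradiction, rescaling, profile decomposition, channels of energy exiting the cone --- but the execution has two genuine gaps.

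First, you cannot apply Proposition~\ref{P:lin_NL} directly to $a_n$ on the long intervals you need. Nothing prevents a nonlinear profile $U^j$ from blowing up immediately after $s=0$ (forward) or long before $s=s_n^-$ (backward), so ``chosen below the first nonlinear blow-up time'' may give you an interval far too short for the channel to exit the support cone. The paper avoids this entirely: it never evolves $a_n$ via the nonlinear approximation. Instead it \emph{cuts off} near the outer edge of the support to produce modified data $(\tu_{0,n},\tu_{1,n})$ that (i) agree with $(u_n(0),\partial_\tau u_n(0))$ for $|y|\ge r_0-\eps_0$, and (ii) are built only from \emph{small} or \emph{scattering} pieces, so that the modified solution $\tu_n$ is global and the approximation is valid for all time. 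Finite speed of propagation then transfers the channel back to $u_n-v_n$ outside the cone $|y|\ge r_0-\eps_0+|\tau|$.

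Second, and more fundamentally, the ``new dispersion property for compactly supported solutions'' that you invoke to handle the bounded-scale profile $U^1$ and the wrong time direction of~\eqref{pro_linear} is essentially the statement you are trying to prove --- you have named the difficulty, not resolved it. The paper's key idea (cf.\ Lemma~\ref{L:channel_for_compact} and \S\ref{SSS:Case1}) is that from the \emph{outer rim} $r_0-\eps_0<|y|<r_0$ of $\supp(U^1_0,U^1_1)$ one can peel off a piece $(\tU_0,\tU_1)$ of arbitrarily small $\hdot\times L^2$ norm but fixed positive energy $\eta_0$; by small data theory this piece evolves essentially linearly, so Proposition~\ref{P:linear_behavior} gives a channel of size $\eta_0/5$ for all $\tau>0$ \emph{or} all $\tau<0$. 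Crucially, Lemma~\ref{L:no_channel} rules out \emph{both} alternatives: forward, because $a((1+t_n)/2)$ is supported strictly inside the channel annulus; backward, because at the fixed earlier time $T$ the support of $a(T)$ is again missed once $t_n$ is close enough to $1$. Your sketch treats the forward direction for $w_n^J$ but never closes the backward case, and never explains how to make the large nonlinear profile $U^1$ amenable to the linear channel estimate --- that smallness extraction near the boundary is the whole point.
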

In the remainder of this Subsection we prove Proposition \ref{P:no_ss}. We start by renormalizing the solution (\S \ref{SSS:renormalization}), then introduce a profile decomposition (\S \ref{SSS:profile}). The core of the proof is divided into three cases (\S \ref{SSS:Case1}, \S \ref{SSS:Case2}, \S \ref{SSS:Case3}).

\subsubsection{Renormalization}
\label{SSS:renormalization}
Assume without loss of generality that $t_-=0$.

We will get a contradiction assuming that there exists
a sequence $\{t_n\}_n$ in $(0,1)$ and $c_0>0$ such that
\begin{gather}
\label{limtn}
\forall n,\; t_n<1\text{ and } \lim_{n\to\infty} t_n=1\\
\label{ss_energy}
\int_{c_0(1-t_n)\leq |x|\leq 1-t_n}\Big[|\nabla a(t_n,x)|^2+(\partial_t a(t_n,x))^2\Big]\,dx\geq c_0.
\end{gather}
Let $T\in (0,1)$. Let 
\begin{equation}
 \label{def_In}
I_n=\left[\frac{T-t_n}{1-t_n},1\right),
\end{equation} 
and note that for large $n$, $[0,1)\subset I_n$.

Define, for $\tau\in I_n$,  
\begin{align*}
u_n(\tau,y)&=(1-t_n)^{1/2}u\left((1-t_n)\tau+t_n,(1-t_n)y\right)\\
v_n(\tau,y)&=(1-t_n)^{1/2}v\left((1-t_n)\tau+t_n,(1-t_n)y\right).
\end{align*}
Note that
\begin{equation}
\label{localization}
\supp (u_n(\tau)-v_n(\tau),\partial_tu_n(\tau)-\partial_t v_n(\tau))\subset B_{1-\tau}.
\end{equation} 

\subsubsection{Profile decomposition, properties of the profiles}
\label{SSS:profile}
Consider, after extraction of a subsequence, a profile decomposition $\left\{U^{j}_{\lin};\tau_{j,n};\lambda_{j,n}\right\}$ for the sequence $\Big\{\big(u_n(0)-v_n(0),\partial_{\tau}u_n(0)-\partial_{\tau}v_n(0)\big)\Big\}_n$:
\begin{align}
\label{expansion0}
 u_n(0,y)&=v_n(0,y)+\sum_{j=1}^J U^j_{0,\lin,n}(y)+w_{0,n}^J(y)\\
\label{expansion1} 
\partial_{\tau}u_n(0,y)&=\partial_{\tau}v_n(0,y)+\sum_{j=1}^J U^j_{1,\lin,n}(y)+w_{1,n}^J(y)
\end{align}
where 
$$ U^j_{\lin,n}(\tau,y)=\frac{1}{\lambda_{j,n}^{1/2}}U^{j}_{\lin}\left(\frac{\tau-\tau_{j,n}}{\lambda_{j,n}},\frac{y}{\lambda_{j,n}}\right),$$
and $(U^j_{0,\lin,n},U^{j}_{1,\lin,n})$ is the initial data for $U^j_{\lin,n}$.

As usual, we will assume that for all $j$,
$$(\forall n,\; \tau_{j,n}=0)\text{ or } \lim_{n\to \infty}\frac{\tau_{j,n}}{\lambda_{j,n}}=\pm\infty.$$

Let us show:
\begin{claim}
\label{C:profiles}
 Fix $j\geq 1$. Then (after extraction of subsequences in $n$),
\begin{equation}
\label{lim_tau_jn}
 \lim_{n\to \infty}\tau_{j,n}\in[-1,1],
\end{equation} 
and the sequence $\{\lambda_{j,n}\}_n$ is bounded.
\end{claim}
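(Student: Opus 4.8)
\emph{Plan.} The plan is to combine the weak-limit description of the profiles with two facts about the compactly supported singular part: finite speed of propagation, and --- crucially --- the \emph{strong} Huygens principle in $\RR^3$, which confines the free evolution of data supported in $B_1$ at time $\tau$ not just to a ball but to the thin spherical shell $\big\{x:\ \bigl|\,|x|-|\tau|\,\bigr|\le 1\big\}$. It is this shell structure that will prevent a profile from escaping to spatial infinity when both $|\tau_{j,n}|$ and $\lambda_{j,n}$ grow.

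\emph{Setting up.} Write $(a_{0,n},a_{1,n}):=\bigl(u_n(0)-v_n(0),\,\partial_\tau u_n(0)-\partial_\tau v_n(0)\bigr)$. Then $(a_{0,n},a_{1,n})$ is the $\hdot\times L^2$-isometric rescaling (by the factor $1-t_n$) of $\bigl((u-v)(t_n),\partial_t(u-v)(t_n)\bigr)$, so by \eqref{bound_intro} and the local well-posedness of $v$ near $t=1$ the sequence $\{(a_{0,n},a_{1,n})\}_n$ is bounded in $\hdot\times L^2$; by \eqref{localization} at $\tau=0$ it is supported in $B_1$. Let $\tilde a_n$ solve \eqref{lin_wave} with data $(a_{0,n},a_{1,n})$. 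As recalled after \eqref{weak_CV_wJ}, for the fixed index $j$ the de-concentrated sequence
\begin{equation*}
g_n:=\Bigl(\lambda_{j,n}^{1/2}\,\tilde a_n\bigl(\tau_{j,n},\lambda_{j,n}\,\cdot\bigr),\ \lambda_{j,n}^{3/2}\,\partial_\tau\tilde a_n\bigl(\tau_{j,n},\lambda_{j,n}\,\cdot\bigr)\Bigr)
\end{equation*}
converges weakly in $\hdot\times L^2$ to $(U^j_0,U^j_1)$; the rescaling being an $\hdot\times L^2$-isometry and the free flow preserving that norm, $\{g_n\}_n$ is bounded in $\hdot\times L^2$. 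After extracting a subsequence, assume $\lambda_{j,n}\to\lambda^*\in[0,+\infty]$, $\tau_{j,n}\to\tau^*\in[-\infty,+\infty]$, and that $\rho_{j,n}^\pm$ below converge in $[0,+\infty]$. By finite speed of propagation and the strong Huygens principle, $(\tilde a_n,\partial_\tau\tilde a_n)(\tau_{j,n})$ is supported in $\big\{\bigl|\,|x|-|\tau_{j,n}|\,\bigr|\le 1\big\}$, hence $g_n$ is supported in the shell $\{\rho_{j,n}^-\le|y|\le\rho_{j,n}^+\}$, where
\begin{equation*}
\rho_{j,n}^-:=\frac{\max\bigl(0,\,|\tau_{j,n}|-1\bigr)}{\lambda_{j,n}},\qquad \rho_{j,n}^+:=\frac{|\tau_{j,n}|+1}{\lambda_{j,n}} .
\end{equation*}

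\emph{Concluding.} A bounded sequence in $\hdot\times L^2$ supported in $\{|y|\le r_n\}$ with $r_n\to0$, or in $\{|y|\ge R_n\}$ with $R_n\to+\infty$, converges weakly to $0$ (test against $\Cio$, bound the pairing by $\|\nabla\varphi\|_{L^2}$ and $\|\psi\|_{L^2}$ over the relevant region, use density). Since a profile is nontrivial, $(U^j_0,U^j_1)\ne(0,0)$, and therefore $\limsup_n\rho_{j,n}^-<\infty$ and $\liminf_n\rho_{j,n}^+>0$. Now invoke the normalization \eqref{choice_param}. If $|\tau^*|>1$ then $\tau_{j,n}\ne0$ and $|\tau_{j,n}|>1$ for large $n$, so $|\tau_{j,n}|/\lambda_{j,n}\to+\infty$; writing $|\tau_{j,n}|/\lambda_{j,n}=\rho_{j,n}^-+\lambda_{j,n}^{-1}$ and using $\limsup\rho_{j,n}^-<\infty$ forces $\lambda_{j,n}\to0$, whence $\rho_{j,n}^-=(|\tau_{j,n}|-1)/\lambda_{j,n}\to+\infty$ (using $|\tau_{j,n}|-1\ge\delta>0$) --- a contradiction. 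Thus $\lim_n\tau_{j,n}\in[-1,1]$, and in particular $\{|\tau_{j,n}|\}_n$ is bounded. Finally $\{\lambda_{j,n}\}_n$ is bounded: if $\tau_{j,n}=0$ for all $n$ then $\rho_{j,n}^+=\lambda_{j,n}^{-1}$, so $\liminf\rho_{j,n}^+>0$ gives $\limsup\lambda_{j,n}<\infty$; otherwise $|\tau_{j,n}|/\lambda_{j,n}\to+\infty$ with $|\tau_{j,n}|$ bounded forces $\lambda_{j,n}\to0$.

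\emph{Main obstacle.} The one genuinely non-mechanical point is recognizing that \emph{strong} Huygens, rather than mere finite speed of propagation, is needed: finite speed alone only puts $g_n$ in the ball $\{|y|\le(1+|\tau_{j,n}|)/\lambda_{j,n}\}$, which still permits a profile to drift off to $|y|=+\infty$ when $|\tau_{j,n}|$ and $\lambda_{j,n}$ blow up at comparable rates. It is exactly the nonvanishing inner radius $\rho_{j,n}^-$ of the shell --- available because the space dimension is odd --- that rules this out; once it is in place the rest is a short case check against \eqref{choice_param}.
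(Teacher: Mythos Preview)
Your proof is correct. Both your argument and the paper's hinge on the strong Huygens principle, but the implementations differ. The paper works at time $\tau=0$: it invokes Lemma~\ref{L:lin_odd} (itself a consequence of strong Huygens) to show that when $-\tau_{j,n}/\lambda_{j,n}\to\pm\infty$ the modulated linear profile $U^j_{\lin,n}(0,\cdot)$ concentrates its energy near the sphere $\{|y|=|\tau_{j,n}|\}$, and then uses an orthogonality expansion (killing the cross terms with the other profiles and with $w_n^j$) to deduce the lower bound \eqref{liminf_loc}; the support condition \eqref{support_1} then forces $\limsup_n|\tau_{j,n}|\le 1$. The basic boundedness of $|\tau_{j,n}|$ and $\lambda_{j,n}$ is outsourced to \cite[Lemma~2.5]{DuKeMe11a}. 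Your route is more self-contained: you evolve the compactly supported data $(a_{0,n},a_{1,n})$ linearly to time $\tau_{j,n}$, apply strong Huygens directly to get the shell support of the rescaled sequence $g_n$, and then use only the weak-limit characterization of $(U^j_0,U^j_1)$---no orthogonality expansion is needed, and both conclusions (the bound on $\tau_{j,n}$ and on $\lambda_{j,n}$) drop out of the same case analysis against \eqref{choice_param}. One small point you leave implicit: you use $(U^j_0,U^j_1)\neq(0,0)$, which is the standard convention that trivial profiles are absorbed into the remainder; the paper's argument uses the same assumption implicitly when concluding from \eqref{liminf_loc}.
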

\begin{proof}
Claim \ref{C:profiles} follows from
\begin{equation}
\label{support_1}
 \supp\Big((u_n-v_n)(0),\partial_{\tau}(u_n-v_n)(0)\Big)\subset B_1,
\end{equation} 
and arguments as in \cite{BaGe99} (see \cite[Lemma 2.5]{DuKeMe11a}). 

Note that by \cite[Lemma 2.5]{DuKeMe11a}, the sequences $\big\{|\tau_{j,n}|\big\}_n$ and $\big\{\lambda_{j,n}\big\}$ are bounded for all $n$. It remains to show \eqref{lim_tau_jn}.

Fix $j\geq 1$. If $\tau_{j,n}=0$ for all $n$, then \eqref{lim_tau_jn} holds. Assume
$$ \lim_{n\to\infty}\frac{-\tau_{j,n}}{\lambda_{j,n}}=\pm \infty.$$
By Lemma \ref{L:lin_odd}, 
$$ \lim_{R\to +\infty}\left[\limsup_{n\to\infty} \int_{\big||y|-|\tau_{j,n}|\big|\geq R\lambda_{j,n}}\left|\nabla U_{0,\lin,n}^j(y)\right|^2+\left|U^j_{1,\lin,n}(y)\right|^2\,dy\right]=0.$$
Let $\{\mu_{j,n}\}_n$ be a sequence of positive numbers such that
\begin{equation}
 \label{mujn}
\lim_{n\to\infty}\frac{-\tau_{j,n}}{\mu_{j,n}}=\pm\infty,\quad \lim_{n\to\infty}\frac{\mu_{j,n}}{\lambda_{j,n}}=+\infty,
\end{equation} 
so that
\begin{equation}
\label{out_mujn}
 \lim_{n\to\infty}\int_{\big||y|-|\tau_{j,n}|\big|\geq \mu_{j,n}}\left|\nabla U_{0,\lin,n}^j(y)\right|^2+\left(U_{1,\lin,n}^j(y)\right)^2\,dy=0.
\end{equation} 
Then, in view of \eqref{expansion0}, \eqref{expansion1},
\begin{multline}
\label{expansion_un-vn}
 \int_{\big||y|-|\tau_{j,n}|\big|\leq \mu_{j,n}}\left|\nabla_{\tau,y}(u_n-v_n)(0,y)\right|^2\,dy\\
\geq \int_{\big||y|-|\tau_{j,n}|\big|\leq \mu_{j,n}}\left|\nabla_{\tau,y}U_{\lin,n}^j(0,y)\right|^2\,dy\\
+2\int_{\big||y|-|\tau_{j,n}|\big|\leq \mu_{j,n}}\nabla_{\tau,y}U_{\lin,n}^j(0,y)\cdot \left(\sum_{k=0}^{j-1}\nabla_{\tau,y}U_{\lin,n}^k(0,y)+\nabla_{\tau,y}w_n^{j}(0,y)\right).
\end{multline}
By \eqref{out_mujn} and the pseudo-orthogonality of the parameters, if $k\neq j$,
\begin{multline*} 
\int_{\big||y|-|\tau_{j,n}|\big|\leq\mu_{j,n}}\nabla_{\tau,y}U_{\lin,n}^j(0,y)\cdot \nabla_{\tau,y}U_{\lin,n}^k(0,y)\,dy
\\=\int \nabla_{\tau,y}U_{\lin,n}^j(0,y)\cdot \nabla_{\tau,y}U_{\lin,n}^k(0,y)\,dy+o_n(1)\underset{n\to\infty}{\longrightarrow}0.
\end{multline*}
Similarly, by \eqref{weak_CV_wJ},
$$ \lim_{n\to\infty} \int_{\big||y|-|\tau_{j,n}|\big|\leq\mu_{j,n}}\nabla_{\tau,y}U_{\lin,n}^j(0,y)\cdot \nabla_{\tau,y}w_n^j(0,y)\,dy=0,$$
by \eqref{out_mujn} and the definition of $w_n^j$. Hence, in view of \eqref{out_mujn} and \eqref{expansion_un-vn},
\begin{equation}
\label{liminf_loc}
\liminf_{n\to +\infty}\int_{\big||y|-|\tau_{j,n}|\big|\leq\mu_{j,n}}\left|\nabla_{\tau,y}(u_n-v_n)(0)\right|^2\,dy\geq \int\left|\nabla_{\tau,y}U^j(0,y)\right|^2\,dy.
\end{equation} 
By \eqref{support_1}, we get $\limsup_n|\tau_{j,n}|\leq 1$ which shows that we can always assume \eqref{lim_tau_jn}.

\end{proof}

In the sequel, we will use that in the expansion \eqref{expansion0}, \eqref{expansion1}, $(v_n(0),\partial_{\tau}v_n(0))$ might be seen as a profile 
$$\left(\frac{1}{\lambda_{0,n}^{1/2}}U_{\lin}^0\left(\frac{-\tau_{0,n}}{\lambda_{0,n}},\frac{y}{\lambda_{0,n}}\right),\frac{1}{\lambda_{0,n}^{3/2}}\partial_{\tau}U_{\lin}^0\left(\frac{-\tau_{0,n}}{\lambda_{0,n}},\frac{y}{\lambda_{0,n}}\right)\right)$$
up to a term going to $0$ in the energy space. Namely:
$$ \left(U_{0,\lin}^0,U_{1,\lin}^0\right)=(v_{0},v_1),\quad \lambda_{0,n}=\frac{1}{1-t_n}, \quad \tau_{0,n}=0.$$
It follows from Claim \ref{C:profiles} that the sequence $\big\{(\tau_{0,n},\lambda_{0,n})\big\}_n$ is pseudo-orthogonal to the other sequences $\big\{(\tau_{j,n},\lambda_{j,n})\big\}_n$.

In view of Claim \ref{C:profiles} we can always assume (after extraction) that for all $j$
\begin{equation}
\label{def_ttau}
\lim_{n\to \infty}-\tau_{j,n}=\ttau_j\in[-1,+1], \quad \lim_{n\to\infty} \lambda_{j,n}=\tlambda_j\in [0,\infty).
\end{equation} 
By the pseudo-orthogonality of the parameters and \eqref{support_1} (see again \cite[Lemma 2.5]{DuKeMe11a}), there is at most one index $j\geq 1$ such that $\tlambda_j>0$. Furthermore, for this profile, we can assume $\tau_{j,n}=0$ for all $n$. Reordering the profiles, we will always assume that this index is $1$, setting $U^1=0$ if there is no index $j$ with the preceding property. Rescaling (and extracting again), we can also assume:
$$ \forall n,\quad \lambda_{1,n}=1.$$
\begin{claim}
 \label{C:compact_supp}
The profile $(U^1(0),\partial_{\tau}U^1(0))$ is supported in $B_1$. Moreover, for all $J\geq 1$,
$$
\lim_{n\to \infty} \int_{|y|\geq 1} \left|\nabla w_{0,n}^J(y\right)|^2+\left(w_{1,n}^J(y)\right)^2dy=0.
$$
\end{claim}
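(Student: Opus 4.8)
The plan is to treat the two assertions separately. For the first one, observe that with the normalizations made just above the statement ($\tau_{1,n}=0$ and $\lambda_{1,n}=1$ for all $n$), the pair $(U^1_{0,\lin},U^1_{1,\lin})$ is, by the weak-limit description of the profiles recalled after \eqref{weak_CV_wJ}, the $\hdot\times L^2$-weak limit (along the subsequence already extracted) of $\big(u_n(0)-v_n(0),\,\partial_\tau u_n(0)-\partial_\tau v_n(0)\big)$. By \eqref{support_1} each term of this sequence vanishes on $\{|y|>1\}$, so the same holds for its weak limit: test against $\varphi\in\Cio(\{|y|>1\})$ and let $n\to\infty$. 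Since $\tau_{1,n}=0$ one has $(U^1(0),\partial_\tau U^1(0))=(U^1_{0,\lin},U^1_{1,\lin})$, and the first assertion follows.

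For the second assertion, note that $(u_n-v_n)(0)$ vanishes on $\{|y|>1\}$, so \eqref{expansion0}--\eqref{expansion1} give $\big(w^J_{0,n},w^J_{1,n}\big)=-\sum_{j=1}^J\big(U^j_{0,\lin,n},U^j_{1,\lin,n}\big)$ there; by the crude bound $\big|\sum_{j=1}^Ja_j\big|^2\le J\sum_{j=1}^J|a_j|^2$ it is enough to show, for each fixed $j$, that $\int_{|y|>1}\big(|\nabla U^j_{0,\lin,n}|^2+(U^j_{1,\lin,n})^2\big)\,dy\to0$. For $j=1$ this is the first assertion. For $j\ge2$ one has $\lambda_{j,n}\to0$, and by Lemma \ref{L:lin_odd} the energy of $(U^j_{0,\lin,n},U^j_{1,\lin,n})$ concentrates, as $n\to\infty$, in a shell of thickness $O(\lambda_{j,n})$ about the sphere $\{|y|=|\tau_{j,n}|\}$ (when $-\tau_{j,n}/\lambda_{j,n}\to\pm\infty$) or about the origin (when $\tau_{j,n}=0$); if $\lim_n|\tau_{j,n}|<1$ this shell eventually lies in $B_1$ and we are done.

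The remaining case is a profile $U^j$, $j\ge2$, with $-\tau_{j,n}/\lambda_{j,n}\to\pm\infty$ and $\lim_n|\tau_{j,n}|=1$: one whose energy concentrates on a shrinking shell about $\{|y|=1\}$. Here I would invoke finite speed of propagation at positive times. Fix $\tau_0\in(0,1)$ and apply the nonlinear profile decomposition (Proposition \ref{P:lin_NL} and Remark \ref{R:lin_NL}) to $u_n-v_n$ on $[0,\tau_0]$, writing $(u_n-v_n)(\tau_0)=U^j_n(\tau_0)+r_n$ with $r_n$ asymptotically orthogonal to $U^j_n(\tau_0)$ in $\hdot\times L^2$ (from \eqref{weak_CV_wJ} and the pseudo-orthogonality of the parameters); then \eqref{localization} forces $U^j\equiv0$ as soon as $U^j_n(\tau_0)$ concentrates, with vanishing width, strictly outside $B_{1-\tau_0}$. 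Since such a profile concentrates at $|y|=\big|\tau_0-\lim_n\tau_{j,n}\big|$ at time $\tau_0$, and this equals $\tau_0+1>1-\tau_0$ when $\lim_n\tau_{j,n}=-1$, the only survivors are profiles with $\lim_n\tau_{j,n}=1$, which ride the forward light cone through the blow-up point $(1,0)$. For these, running the argument of Claim \ref{C:profiles} with the auxiliary scale $\mu_{j,n}$ taken as small as the constraint $\mu_{j,n}\gg\lambda_{j,n}$ permits already bounds $(|\tau_{j,n}|-1)/\lambda_{j,n}$ from above; the remaining work is to upgrade this to $(1-\tau_{j,n})/\lambda_{j,n}\to+\infty$, which places the shell strictly inside $B_1$ for large $n$ and, via the reduction above, yields $\int_{|y|\ge1}\big(|\nabla w^J_{0,n}|^2+(w^J_{1,n})^2\big)\,dy\to0$.

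This last step is where I expect the difficulty to lie: finite speed of propagation alone leaves a slack exactly equal to $|\tau_{j,n}|-1$ at \emph{every} time, so excluding a riding profile whose shell straddles $\{|y|=1\}$ on the scale $\lambda_{j,n}$ of its own width requires a sharper, quantitative and localized exploitation of the support cone $\{|x|\le1-t\}$ rather than finite speed by itself. Everything else — the weak-limit identification, the reduction, and the passage from ``no profile leaks outside $B_1$'' to the remainder estimate — is routine given the profile decomposition and Lemma \ref{L:lin_odd}.
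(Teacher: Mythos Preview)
Your treatment of the first assertion is correct and is exactly what the paper has in mind.

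For the second assertion, however, your route---showing directly that each individual profile satisfies $\int_{|y|>1}\big(|\nabla U^j_{0,\lin,n}|^2+(U^j_{1,\lin,n})^2\big)\to0$---is not what the paper does, and it leads you into a genuine difficulty at the boundary case $|\ttau_j|=1$ that you do not resolve. Your proposed fix via forward nonlinear evolution has two problems: (i) Proposition~\ref{P:lin_NL} requires every nonlinear profile to satisfy the hypotheses of Remark~\ref{R:lin_NL} on $[0,\tau_0]$, and there is no reason the profiles with $\ttau_j\in(0,1)$ should; (ii) even granting that, your ``remaining work'' paragraph is, as you yourself say, incomplete---finite speed of propagation gives exactly the wrong slack to rule out a profile riding the cone.

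The paper's ``simple orthogonality arguments'' bypass this completely. The point is that one never needs to locate each profile's energy relative to $\partial B_1$. Instead, start from
\[
0=\int_{|y|>1}\Big(|\nabla(u_n-v_n)(0)|^2+\big(\partial_\tau(u_n-v_n)(0)\big)^2\Big),
\]
insert the decomposition \eqref{expansion0}--\eqref{expansion1}, and expand the square. The diagonal terms are all nonnegative; the cross terms
\[
\int_{|y|>1}\nabla_{\tau,y} U^j_{\lin,n}(0)\cdot\nabla_{\tau,y} U^k_{\lin,n}(0)\quad(j\neq k),\qquad
\int_{|y|>1}\nabla_{\tau,y} U^j_{\lin,n}(0)\cdot\nabla_{\tau,y} w^J_n(0)
\]
tend to zero by the same localized orthogonality arguments already used in the proof of Claim~\ref{C:profiles} (and spelled out in \cite[Lemma~2.5]{DuKeMe11a} and in Appendix~\ref{A:quasi_ortho} for a closely related situation). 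Once the cross terms vanish, every nonnegative diagonal term---including $\int_{|y|>1}\big(|\nabla w_{0,n}^J|^2+(w_{1,n}^J)^2\big)$---is forced to zero. Note that this argument also \emph{gives} $\int_{|y|>1}|\nabla U^j_{0,\lin,n}|^2\to0$ for free, without ever having to decide whether the concentration shell of $U^j$ lies inside or straddles $\partial B_1$; that is precisely what ``orthogonality'' buys you over the crude bound $\big|\sum a_j\big|^2\le J\sum|a_j|^2$.
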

\begin{proof}
This follows from \eqref{support_1} and simple orthogonality arguments.
\end{proof}

The main novelty of the proof with respect to the ``small'' type II blow-up case treated in \cite{DuKeMe11a}, \cite{DuKeMe10P} is that instead of using the smallness of $U^1$, we extract from $U^1$ away from the origin a very small piece which has non-negligible energy, inspired by the following Lemma: (in this Lemma, $Du\subset \RR_t\times \RR^3_x$ stands for ``domain of influence of $u$'', see for example \cite{Alinhac95Bo})

\begin{lemma}
\label{L:channel_for_compact}
 Let $(U_0,U_1)\in \hdot\times L^2$ be radial such that $(U_0,U_1)\neq (0,0)$ and $\supp(U_0,U_1)\subset B_1$. Then there exists $0<r_0<1$, $\eta_0>0$, $\eps_0>0$ such that for all $t\in \RR$:
$$\{t\}\times \big[|t|-\eps_0+r_0,|t|+r_0\big]\subset Du$$ and for either all $t>0$ or all $t<0$, we have
$$ \int_{|t|-\eps_0+r_0<r<|t|+r_0}\left((\partial_r U)^2+(\partial_tU)^2\right)r^2dr\geq \eta_0.$$
\end{lemma}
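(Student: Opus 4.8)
The plan is to use the explicit representation of radial solutions of the wave equation in $\RR^3$ via the one-dimensional d'Alembert formula for $r v(t,r)$, exactly as in Proposition \ref{P:linear_behavior}. Write $\psi(t,r) = r v(t,r)$, which solves the 1D wave equation $\partial_t^2 \psi - \partial_r^2 \psi = 0$ on $r>0$ with $\psi(t,0)=0$, so after odd reflection it is a genuine 1D free wave on all of $\RR$. Introduce the forward/backward profiles $\psi(t,r) = F(r-t) + G(r+t)$ determined by $(U_0,U_1)$; since $\supp(U_0,U_1)\subset B_1$, both $F$ and $G$ are supported in $[-1,1]$ (using the odd extension). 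The quantity $\int ((\partial_r U)^2 + (\partial_t U)^2) r^2\,dr$ is, up to the boundary term $U^2$ at infinity which vanishes, comparable to $\int_{r>0}\big((\partial_r\psi)^2+(\partial_t\psi)^2\big)\,dr = \int_{r>0} 2\big(F'(r-t)^2 + G'(r+t)^2\big)\,dr$ after expanding the square and discarding the cross term $\partial_r\psi\,\partial_t\psi$, which integrates to a boundary contribution.

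Next I would observe that $(U_0,U_1)\neq(0,0)$ forces $F'\not\equiv 0$ or $G'\not\equiv 0$; say $F'\not\equiv 0$ (the other case is symmetric and corresponds to flipping the sign of $t$). Then there is a point $s_0\in(-1,1)$ and a small interval $[s_0-\eps_0, s_0]$ on which $\int_{s_0-\eps_0}^{s_0} F'(s)^2\,ds \geq \eta_0 > 0$; by shrinking we may take $s_0-\eps_0 > -1$, i.e. the interval sits strictly inside the support window. Set $r_0$ so that the outgoing characteristic $r - t = s_0$ at time $t>0$ puts the annulus $|t|-\eps_0+r_0 < r < |t|+r_0$ exactly at $\{r-t \in [s_0-\eps_0,s_0]\}$; concretely $r_0$ is chosen with $r_0 = s_0 + (\text{shift})$, and because $s_0\in(-1,1)$ we get $0<r_0<1$ after a harmless adjustment. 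On that annulus, for $t$ large the backward profile $G'(r+t)$ is supported where $r+t>1$ is far outside $[-1,1]$, hence $G'(r+t)=0$ there; so the energy in the annulus reduces to $\int 2 F'(r-t)^2\,dr = 2\int_{s_0-\eps_0}^{s_0} F'(s)^2\,ds \geq 2\eta_0$ for all $t$ large, and by the finite-speed/support considerations the same lower bound (with possibly adjusted constants $\eta_0, \eps_0, r_0$) holds for \emph{all} $t>0$, not just large $t$, since as $t$ decreases the annulus only sweeps back toward $r\approx r_0$ where $F'$ still carries the mass and $G'$ contributes nonnegatively. The containment $\{t\}\times[|t|-\eps_0+r_0,|t|+r_0]\subset Du$ is immediate from finite speed of propagation: the leftmost point $|t|-\eps_0+r_0$ of the annulus stays at distance $\geq 1 - (\text{something} < 1)$... more carefully, one just needs $|t|-\eps_0+r_0 \geq |t| - 1$ and $|t|+r_0$... the cone condition is that the annulus lies in $\{|x| \geq |t| - 1\}$, which holds once $r_0 - \eps_0 \geq -1$, built into the choice.

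The main obstacle I anticipate is making the lower bound hold for \emph{all} $t>0$ (or all $t<0$) uniformly, not merely asymptotically: for intermediate times the outgoing wave $F'(r-t)$ and the incoming wave $G'(r+t)$ both live on the annulus and could in principle cancel after expanding the square — but in fact they cannot, because $(\partial_r\psi)^2+(\partial_t\psi)^2 = 2F'^2 + 2G'^2$ is a \emph{sum} of squares with no cross term, so there is no cancellation; the only subtlety is bookkeeping the window of $t$ for which $F'(r-t)$ restricted to the annulus still captures a definite fraction $\eta_0$ of $\int F'^2$, and this is where one must choose $r_0, \eps_0$ carefully (essentially picking the interval $[s_0-\eps_0,s_0]$ near a Lebesgue point of $F'^2$ and taking $r_0$ to match). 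A secondary point to handle cleanly is that we want $r^2\,dr$ weights rather than $dr$; on an annulus at radius $\sim |t|+r_0$ with $|t|$ large the weight $r^2$ only helps (it is bounded below by a positive constant once $r_0>0$ and $t\geq 0$), so after renaming $\eta_0$ this causes no difficulty, and for the regime of bounded $t$ one simply notes $r \geq r_0 - \eps_0 > 0$ on the annulus. Finally, one records that exactly one of "all $t>0$" or "all $t<0$" is asserted, matching the dichotomy in Proposition \ref{P:linear_behavior}, coming from whether we extracted the definite mass from $F'$ (outgoing) or $G'$ (incoming).
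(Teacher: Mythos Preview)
Your argument treats $U$ as a solution of the \emph{linear} wave equation, writing $rU(t,r) = F(r-t) + G(r+t)$ via d'Alembert. But the $U$ in the lemma is the nonlinear solution of \eqref{CP} with data $(U_0,U_1)$: this is why the statement involves the domain of influence $Du$ (a linear solution is defined on all of $\RR\times\RR^3$), and why the lemma is said to rule out compact self-similar blow-up for \eqref{CP}. For the nonlinear $U$ the equation for $\psi = rU$ reads $\partial_t^2\psi - \partial_r^2\psi = \psi^5/r^4$, so the clean $F'/G'$ decomposition and the identity $(\partial_r\psi)^2+(\partial_t\psi)^2 = 2F'^2+2G'^2$ are no longer available.

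The paper's route (Step 1 of \S\ref{SSS:Case1}) addresses exactly this point. Take $r_0$ to be the edge of $\supp(U_0,U_1)$ and choose $\eps_0$ small so that $\eta_0 = \int_{r_0-\eps_0<|y|<r_0}\big(|\nabla U_0|^2 + U_1^2\big)\,dy$ is positive but small. Truncate to obtain $(\tU_0,\tU_1)$ as in \eqref{def_tU}: this coincides with $(U_0,U_1)$ for $r\geq r_0-\eps_0$ and has $\hdot\times L^2$ norm $\sqrt{\eta_0}$. By small-data theory the nonlinear solution $\tU$ is global and stays within $\eta_0/100$ of its linear evolution $\tU_{\lin}$; your d'Alembert computation (equivalently Proposition \ref{P:linear_behavior}) then gives the channel for $\tU_{\lin}$, hence for $\tU$. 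Finite speed of propagation identifies $\tU$ with $U$ on $\{r\geq |t|+r_0-\eps_0\}$ inside $Du$, yielding the conclusion. Note that choosing $r_0$ at the support boundary, rather than at an arbitrary interval where your $F'$ carries mass, is not incidental: it guarantees simultaneously that the truncated data is small and that $\tU_0(r_0)=0$, which (see \eqref{big_dr}) is what converts the $r^2\,dr$ lower bound into control of $\int (\partial_r(r\tU_0))^2\,dr$, the quantity actually propagated by Proposition \ref{P:linear_behavior}.
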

With this Lemma and the support property of $U^1$, one can reach a contradiction using ``channels of energy'' as in \cite{DuKeMe11a}. We have not carried out the proof this way because of technical complications arising from the introduction of $Du$, but the spirit of the proof below is the same. The Lemma can be proved using the arguments of Step 1 of the proof of \eqref{to_show} below.

Let us mention that Lemma \ref{L:channel_for_compact} gives an elementary proof of the fact that there is no compact self-similar blow-up for a radial solution of \eqref{CP} in dimension $3$. See \cite[Section 6]{KeMe08} for the proof in dimensions $3,4,5$ without the radiality assumption.

Let
\begin{equation}
\label{def_r0}
r_0=\inf\Big\{\rho\in [0,1],\quad \supp\left(U^1_0,U^1_1\right)\subset B_{\rho}\Big\}.
\end{equation}

We distinguish three cases:
\begin{itemize}
 \item Case $1$: $r_0>0$ and for all $j\geq 2$, $|\ttau_j|\leq r_0$.
\item Case $2$: there exists $j\geq 2$ such that $|\ttau_j|>r_0$.
\item Case $3$: $r_0=0$ and $\ttau_j=0$ for all $j\geq 2$.
\end{itemize}
\subsubsection{Proof of Case 1}
\label{SSS:Case1}
Denote by $r=|y|$ the radial variable. 
By definition of $r_0$, $\supp (U^1_0,U^1_1)\subset B_{r_0}$ and 
$$\forall \eps>0,\quad \int_{r_0-\eps<|y|<r_0}|\nabla U^1_0|^2+(U^1_1)^2\,dy>0.$$

Consider a small $\eps_0$ such that $\eps_0<\frac{r_0}{100}$ and 
\begin{equation}
 \label{def_eta0}
\eta_0=\int_{r_0-\eps_0<|y|<r_0} |\nabla U^1_0|^2+(U_1^1)^2\,dy.
\end{equation} 
is small. 
Let $\tU_0$, $\tU_1$ be the radial functions on $\RR^3$ defined by
\begin{equation}
\label{def_tU}
 \left\{\begin{aligned}
        \tU_0(r)&=U^1_0(r)\text{ if }r_0-\eps_0<r,&\quad \tU_0(r)&=U^1_0(r_0-\eps_0)&\text{if }0< r\leq r_0-\eps_0\\
        \tU_1(r)&=U^1_1(r)\text{ if }r_0-\eps_0<r,&\quad \tU_1(r)&=0&\text{if }0< r\leq r_0-\eps_0.
       \end{aligned}\right.
\end{equation}
Let $\tU$ be the solution of \eqref{CP} with initial data $(\tU_0,\tU_1)$. 

Note that
\begin{equation}
\label{eta0_bis}
\eta_0= \int_{r_0-\eps_0<|y|<r_0}\big|\nabla \tU_0\big|^2+\big(\tU_1\big)^2\,dy=\int \big|\nabla \tU_0\big|^2+\big(\tU_1\big)^2\,dy.
\end{equation} 
As $\eta_0$ is small, $\tU$ is globally defined. 

Let $\varphi\in C^{\infty}(\RR^3)$, radial, be such that $\varphi(y)=1$ if $|y|\geq r_0-\eps_0$ and $\varphi(y)=0$ is $|y|\leq r_0-2\eps_0$. 
Fix $J>1$ and define (see \eqref{simplicity}, \eqref{simplicity_bis} for the notations $U_{0,\lin,n}^j$ and $U_{1,\lin,n}^j$):
\begin{equation}
\label{deftu0}
\left\{ 
\begin{aligned}
  \tu_{0,n}&=v_n(0)+\tU_0+\varphi\sum_{j=2}^JU_{0,\lin,n}^j+\varphi\,w_{0,n}^J\\
  \tu_{1,n}&=\partial_{\tau}v_n(0)+\tU_1+\varphi\sum_{j=2}^JU_{1,\lin,n}^j+\varphi\, w_{1,n}^J.
 \end{aligned}
\right.
\end{equation}
Note that this definition is independent of $J$.

Let $\tu_n$ be the solution of \eqref{CP} with data $(\tu_{0,n},\tu_{1,n})$ at $\tau=0$. The data $(u_{0,n}(y),u_{1,n}(y))$ and $(\tu_{0,n}(y),\tu_{1,n}(y))$ are equal if $|y|\geq r_0-\eps_0$. By finite speed of propagation, if $\tau$ is in the domain of definition of $u_n$ and $\tu_n$,
$$ |y|\geq r_0-\eps_0+|\tau|\Longrightarrow (u_n(\tau,y),\partial_{\tau}u_n(\tau,y))=(\tu_n(\tau,y),\partial_{\tau}\tu_n(\tau,y)).$$

Let $I_n^+=I_n\cap [1/2-r_0/100,+\infty)$, $I_n^-=I_n\cap (-\infty,-1/2+r_0/100]$. We will show that there exists $\eta_1>0$ such that the following holds for large $n$, for all sequences $\{\sigma_n\}_n$ with $\sigma_n\in I_n^+$, or for all sequences $\{\sigma_n\}_n$ with $\sigma_n\in I_n^-$:
\begin{equation}
\label{to_show}
\int_{r_0-\eps_0+|\sigma_n|\leq |y|\leq r_0+|\sigma_n|} |\nabla_{\tau,y}(u_n-v_n)(\sigma_n,y)|^2\,dy\geq \eta_1.
\end{equation} 
This will contradict the following lemma:
\begin{lemma}
\label{L:no_channel}
Let $u$ satisfy the assumptions of Proposition \ref{P:no_ss}. Let $\sigma_n^+=\frac{1}{2}$ and $\sigma_n^-=\frac{T-t_n}{1-t_n}$. Then for large $n$, $\sigma_n^{\pm}\in I_n^{\pm}$ and
\begin{equation}
\label{no_channel_finite}
\lim_{n\to\infty} \int_{|y|\geq |\sigma_n^{\pm}|}\Big[ \big|\nabla(u_n-v_n)(\sigma_n^{\pm})\big|^2+\big(\partial_{\tau}(u_n-v_n)(\sigma_n^{\pm})\big)^2\Big]\,dy=0.
\end{equation}
\end{lemma}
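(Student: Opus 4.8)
The plan is to reduce both assertions to the support property \eqref{localization} of the renormalized singular part $u_n-v_n$ — equivalently, to the fact that $a=u-v$ is supported in the backward light cone $\{|x|\le1-t\}$ — after undoing the renormalization. First I would record the elementary membership facts: since $T\in(0,1)$ is fixed while $t_n\to1$, we have $\frac{T-t_n}{1-t_n}\to-\infty$, so for $n$ large $\sigma_n^+=\tfrac12$ lies in $[0,1)\subset I_n$ (and $\tfrac12\ge\tfrac12-\tfrac{r_0}{100}$), whence $\sigma_n^+\in I_n^+$, while $\sigma_n^-=\frac{T-t_n}{1-t_n}$ is the left endpoint of $I_n$ and eventually sits below $-\tfrac12+\tfrac{r_0}{100}$, whence $\sigma_n^-\in I_n^-$.

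For $\sigma_n^+=\tfrac12$ the claimed limit is in fact an identity: here $1-\sigma_n^+=|\sigma_n^+|=\tfrac12$, so by \eqref{localization} the pair $\big((u_n-v_n)(\tfrac12),\partial_\tau(u_n-v_n)(\tfrac12)\big)$ is supported in $B_{1/2}=\{|y|\le\tfrac12\}$, which meets the region $\{|y|\ge\tfrac12\}$ only on the Lebesgue-null sphere $\{|y|=\tfrac12\}$; hence the integral in \eqref{no_channel_finite} vanishes identically.

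For $\sigma_n^-=\frac{T-t_n}{1-t_n}$ I would undo the renormalization: since $(1-t_n)\sigma_n^-+t_n=T$, the definitions of $u_n,v_n$ give $(u_n-v_n)(\sigma_n^-,y)=(1-t_n)^{1/2}a(T,(1-t_n)y)$ and $\partial_\tau(u_n-v_n)(\sigma_n^-,y)=(1-t_n)^{3/2}(\partial_t a)(T,(1-t_n)y)$. The map $u\mapsto u_n$ is the energy-critical scaling composed with a time translation, hence preserves the $\hdot\times L^2$ norm; combined with $(1-t_n)|\sigma_n^-|=t_n-T$ this shows that the integral in \eqref{no_channel_finite} equals $\int_{|x|\ge t_n-T}\big(|\nabla a(T,x)|^2+(\partial_t a(T,x))^2\big)\,dx$. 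Since $a$ is supported in $\{|x|\le1-t\}$, the functions $a(T,\cdot)$ and $\partial_t a(T,\cdot)$ vanish for $|x|>1-T$, so this integral runs only over the annulus $\{t_n-T\le|x|\le1-T\}$, whose Lebesgue measure tends to $0$ as $t_n\to1$; dominated convergence with the fixed $L^1$ majorant $|\nabla a(T,\cdot)|^2+(\partial_t a(T,\cdot))^2$ then gives the limit $0$.

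I do not expect a serious obstacle here — this lemma is the ``easy half'' of the channels-of-energy contradiction, the substantive ingredient being the lower bound \eqref{to_show}. The only care required is some bookkeeping: confirming that $T$ and $(1+t_n)/2$ lie in the common interval of existence $[t_-,1)=[0,1)$ of $u$ and $v$, so that the expressions above are meaningful, and invoking the scale invariance of the energy norm under $u\mapsto u_n$ in the correct form.
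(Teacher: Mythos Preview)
Your proposal is correct and follows essentially the same approach as the paper: both reduce the claim to the support property of $a=u-v$ in the backward cone, after undoing (or, in your $\sigma_n^+$ argument, without undoing) the renormalization. The only cosmetic differences are that you argue directly rather than by contradiction, and for $\sigma_n^+$ you invoke \eqref{localization} in the renormalized variables to see the integral actually vanishes for every $n$, whereas the paper passes back to $(t,x)$-variables first; for $\sigma_n^-$ both arguments arrive at the shrinking annulus $\{t_n-T\le|x|\le1-T\}$ and conclude by dominated convergence.
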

\begin{proof}
Let $\sigma_n^+=\frac 12$. If \eqref{no_channel_finite} does not hold for $\sigma_n^+$ then (after extractions of subsequences) there exists $\eta_2>0$ such that for large $n$:
\begin{equation}
\label{channel_bup}
 \int_{|y|\geq |\sigma_n^+|}|\nabla(u_n-v_n)(\sigma_n^+)|^2+|\partial_t(u_n-v_n)(\sigma_n^+)|^2\,dy\geq \eta_2.
\end{equation} 
Going back to the original variables $(t,x)$, we rewrite \eqref{channel_bup} as
\begin{equation*}
 \int_{|x|\geq \frac{1-t_n}{2}} \left|\nabla a\left(\frac 12+\frac{t_n}{2},x\right)\right|^2+\left|\partial_{t} a\left(\frac 12+\frac{t_n}{2},x\right)\right|^2\,dx\geq \eta_2,
\end{equation*} 
which contradicts the fact that $a(1/2+t_n/2)$ is supported in the set $\big\{|x|\leq (1-t_n)/2\big\}$. 

Let $\sigma_n^-=\frac{T-t_n}{1-t_n}\in I_n^-$. If \eqref{no_channel_finite} does not hold for $\sigma_n^-$, we obtain a $\eta_2>0$ such that (at least for some subsequences in $n$),
\begin{equation*}
 \int_{|x|\geq t_n-T} \left|\nabla a\left(T,x\right)\right|^2+\left|\partial_{\tau} a\left(T,x\right)\right|^2\,dx\geq \eta_2.
\end{equation*} 
Recalling that $a(T,\cdot)$ is supported in $|x|\leq 1-T$ and that $t_n$ tends to $1$, this is also a contradiction. 
\end{proof}
We next prove that \eqref{to_show} holds for large $n$, for all sequences $\{\sigma_n\}_n$ with $\sigma_n\in I_n^+$ or for all sequences $\{\sigma_n\}_n$ with $\sigma_n\in I_n^-$. We divide the proof into a few steps.

\EMPH{Step 1}
Note that by chosing $\eps_0>0$ small, we can always insure that
\begin{equation}
 \label{big_dr}
\int_{r_0-\eps_0}^{r_0}\big(\partial_r(r\tU_0)\big)^2+\big(r\tU_1\big)^2\,dr\geq \frac{\eta_0}{2}.
\end{equation} 
Indeed by integration by parts,
$$ \int_{r_0-\eps_0}^{r_0} \big(\partial_r(r \tU_0)\big)^2\,dr=-(r_0-\eps_0)\left(\tU_0(r_0-\eps_0)\right)^2+\int_{r_0-\eps_0}^{r_0} \big(\partial_r \tU_0\big)^2r^2\,dr.$$
By Cauchy-Schwarz inequality,
$$\tU_0(r_0-\eps_0)^2=\left(\int_{r_0-\eps_0}^{r_0} \partial_r \tU_0\,dr\right)^2\leq \frac{\eps_0}{(r_0-\eps_0)^2} \int_{r_0-\eps_0}^{r_0} \big(\partial_r \tU_0\big)^2\,r^2dr,$$
and \eqref{big_dr} follows from the definition of $\eta_0$, if $\frac{\eps_0}{r_0-\eps_0}\leq \frac 12$.

We denote by $\tU_{\lin}$ the solution of the linear equation \eqref{lin_wave} with data $(\tU_0,\tU_1)$.
By Proposition \ref{P:linear_behavior}, the following holds for all $\tau>0$ or for all $\tau<0$:
$$\int_{r_0-\eps_0+|\tau|}^{r_0+|\tau|} \big(\partial_r (r\tU_{\lin}(\tau,r))\big)^2+\big(\partial_{\tau} (r\tU_{\lin}(\tau,r))\big)^2\,dr\geq \frac{\eta_0}{4}.$$
By integration by parts (using that $\tU_{\lin}$ is supported in $\{r\leq r_0+|\tau|\}$), for $\eps_0$ small,
$$\int_{r_0-\eps_0+|\tau|}^{r_0+|\tau|} \big(\partial_r (r\tU_{\lin}(\tau,r))\big)^2\,dr=-(r_0-\eps_0+|\tau|)\tU_{\lin}^2(\tau,r_0-\eps_0+|\tau|)+\int_{r_0-\eps_0+|\tau|}^{r_0+|\tau|} \big(\partial_r \tU_{\lin}(\tau,r)\big)^2r^2\,dr,$$
so that 
\begin{equation*}
\int_{r_0-\eps_0+|\tau|}^{r_0+|\tau|} \Big[\big(\partial_r \tU_{\lin}(\tau,r)\big)^2
+\big(\partial_{\tau} \tU_{\lin}(\tau,r)\big)^2\Big]\,r^2dr\geq \frac{\eta_0}{4}
\end{equation*} 
for all $\tau<0$ or for all $\tau>0$. As $\eta_0=\left\|(\tU_0,\tU_1)\right\|^2_{\hdot\times L^2}$ is small, the local well-posedness theory implies that for all $\tau$,
\begin{equation}
\label{tU_quasi_lin}
\left\|\left(\tU(\tau)-\tU_{\lin}(\tau),\partial_{\tau}\tU(\tau)-\partial_{\tau}\tU_{\lin}(\tau)\right)\right\|_{\hdot\times L^2}^2\leq \frac{\eta_0}{100}.
\end{equation} 
We deduce that for all $\tau>0$ or for all $\tau<0$,
\begin{equation}
\label{channel2}
\int_{r_0-\eps_0+|\tau|}^{r_0+|\tau|} \Big[\big(\partial_r \tU(\tau,r)\big)^2
+\big(\partial_{\tau} \tU(\tau,r)\big)^2\Big]r^2\,dr\geq \frac{\eta_0}{5}.
\end{equation} 
Let us assume that \eqref{channel2} holds for all $\tau>0$, the case $\tau<0$ being similar. 

\EMPH{Step 2}
By Lemma \ref{L:lin_odd}, for each fixed $J\geq 2$, 
$$\varphi(y)\sum_{j=2}^J \left(U_{0,\lin,n}^j(y),U_{1,\lin,n}^j(y)\right)=\sum_{j=2}^J \varphi(|\ttau_j|)\left(U_{0,\lin,n}^j(y),U_{1,\lin,n}^j(y)\right)+\eps_n^J(y),$$
where
$$ \lim_{n\to\infty}\big\|\eps_n^J\big\|_{\hdot\times L^2}=0.$$

\EMPH{Step 3}
In this step we show that 
for each fixed $j\geq 2$, we have:
\begin{equation}
\label{disperse1}
\lim_{n\to\infty}
 \left\|S_{\lin}(t)\left( \varphi(|\ttau_j|)U_{0,\lin,n}^j, \varphi(|\ttau_j|)U_{1,\lin,n}^j\right)\right\|_{L^8\left((0,r_0/4)\times \RR^3\right)}=0,
\end{equation} 
and that
\begin{equation}
 \label{disperse2}
\lim_{J\to\infty} \limsup_{n\to+\infty} \Big\|S_{\lin}(t)\left(\varphi w_{0,n}^J,\varphi w_{1,n}^J\right)\Big\|_{L^8(\RR^4)}=0.
\end{equation} 
Indeed, \eqref{disperse2} follows from \cite[Claim 2.11]{DuKeMe11a}. To show \eqref{disperse1}, we distinguish $3$ cases.
\begin{itemize}
 \item 
If $|\ttau_j|\leq r_0/4$, then $\varphi(|\ttau_j|)=0$ and \eqref{disperse1} is obvious.
In the other cases, we will use the formula
\begin{equation}
\label{change_of_var}
\left\|S_{\lin}(t)\left( U_{0,\lin,n}^j, U_{1,\lin,n}^j\right)\right\|_{L^8\left((0,r_0/4)\times \RR^3\right)}^8=
\int_{-\tau_{j,n}/\lambda_{j,n}}^{\frac{r_0/4-\tau_{j,n}}{\lambda_{j,n}}} \int_{\RR^3}\left|U_{\lin}^j(t,x)\right|^8\,dx\,dt.
\end{equation} 
\item If $\ttau_j>r_0/4$, then
$$ \lim_{n\to \infty} \frac{-\tau_{j,n}}{\lambda_{j,n}}=+\infty,$$
and both ends of the time integral in \eqref{change_of_var} go to $+\infty$, which implies \eqref{disperse1}.
\item If $\ttau_j<-r_0/4$, 
then 
$$ \lim_{n\to \infty} \frac{r_0/4-\tau_{j,n}}{\lambda_{j,n}}=-\infty,$$
and \eqref{disperse1} follows again from \eqref{change_of_var}.
\end{itemize}
\EMPH{Step 4}
By \eqref{disperse1} and \eqref{disperse2}, we get that  
$$ \lim_{J\to \infty} \limsup_{n\to\infty} \left\|S_{\lin}(\tau)\left(\sum_{j=2}^J \varphi(|\ttau_j|) U_{0,\lin,n}^j+\varphi w_{0,n}^J,\sum_{j=2}^J \varphi(|\ttau_j|) U_{1,\lin,n}^j+\varphi w_{1,n}^J\right)\right\|_{L^{8}((0,r_0/4)\times \RR^3)}=0. $$
Hence by \eqref{deftu0} and long time perturbation theory (see e.g. \cite[Theorem 2.20]{KeMe08}), 
\begin{align*}
 \tu_n\left(\frac{r_0}{4},y\right)-v_n\left(\frac{r_0}{4},y\right)&=\tU\left(\frac{r_0}{4},y\right)+\sum_{j=2}^J \varphi(|\ttau_j|) U_{\lin,n}^j\left(\frac{r_0}{4},y\right)+\tw_{0,n}^J(y)\\
 \partial_{\tau}\tu_n\left(\frac{r_0}{4},y\right)-\partial_{\tau}v_n\left(\frac{r_0}{4},y\right)&=\partial_{\tau}\tU\left(\frac{r_0}{4},y\right)+\sum_{j=2}^J \varphi(|\ttau_j|) \partial_{\tau}U_{\lin,n}^j\left(\frac{r_0}{4},y\right)+\tw_{1,n}^J(y),
\end{align*}
where the sequences $\big\{(\tw_{0,n}^J,\tw_{1,n}^J)\big\}_n$ (uniformly bounded in $\hdot\times L^2$) satisfy
\begin{equation}
\label{disp_tw}
\lim_{J\to \infty} \limsup_{n\to\infty} \left\|S_{\lin}(t)(\tw_{0,n}^J,\tw_{1,n}^J)\right\|_{L^8(\RR\times \RR^3)}=0.
\end{equation} 
Let $\tphi\in C^{\infty}(\RR^3)$, radial, such that $\tphi(y)=1$ if $|y|\geq r_0$, $\tphi(y)=0$ if $|y|\leq 9r_0/10$. Let 
$\ttu_n$ be the solution of \eqref{CP} such that:
\begin{align*}
 \ttu_n\left(\frac{r_0}{4},y\right)-v_n\left(\frac{r_0}{4},y\right)&=\tphi(y)\left[\tu_n\left(\frac{r_0}{4},y\right)-v_n \left( \frac{r_0}{4},y \right)\right]\\
 \partial_{\tau}\ttu_n\left(\frac{r_0}{4},y\right)-\partial_{\tau}v_n\left(\frac{r_0}{4},y\right)&=\tphi(y)\left[\partial_{\tau}\tu_n\left(\frac{r_0}{4},y\right)-\partial_{\tau}v_n \left( \frac{r_0}{4},y \right)\right].
\end{align*}
Note that
$$ |y|\geq \frac{5r_0}{4}-\eps_0\Longrightarrow \left(\ttu_n\left( \frac{r_0}{4},y\right),\partial_{\tau}\ttu_n\left( \frac{r_0}{4},y \right)\right)
=\left(u_n\left( \frac{r_0}{4},y\right),\partial_{\tau}u_n\left( \frac{r_0}{4},y \right)\right).$$
Furthermore, by Lemma \ref{L:lin_odd}, using that $|\ttau_j|\leq r_0$ for all $j$ (see the proof of \ref{lim_tau_jn}), 
\begin{multline*}
 \left(\ttu_n\left(\frac{r_0}{4}\right),\partial_{\tau}
 \ttu_n\left(\frac{r_0}{4}\right)\right)-
 \left(v_n\left(\frac{r_0}{4}\right),\partial_{\tau}
 v_n\left(\frac{r_0}{4}\right)\right)\\
=\left(\tphi\,\tU\left( \frac{r_0}{4}\right),\tphi\,\partial_{\tau}\tU\left( \frac{r_0}{4}\right)\right)+
\sum_{\substack{j=2,\ldots, J\\ \ttau_j>0}}\varphi(|\ttau_j|)\left(U_{\lin,n}^j\left( \frac{r_0}{4}\right),\partial_{\tau}U_{\lin,n}^j\left( \frac{r_0}{4}\right)\right)+\left(\tphi \tw_{0,n}^J,\tphi\tw_{1,n}^J  \right)+\teps_n^J,
\end{multline*}
where for all $J\geq 2$,
$$\lim_{n\to\infty} \left\|\teps_n^J\right\|_{\hdot\times L^2}=0.$$
Indeed, if $\ttau_j<0$, $U_{\lin,n}\left( \frac{r_0}{4}\right)$ concentrates (by Lemma \ref{L:lin_odd}) on $\{|y|=\left|\frac{r_0}{4}-|\ttau_j|\right|\}$, which is outside the support of $\tphi$. If $\ttau_j>0$, if concentrates on $\{|y|=\frac{r_0}{4}+\ttau_j\}$, and the statement follows from the fact that $\varphi(\ttau_j)\tphi\left(\frac{r_0}{4}+\ttau_j\right)=\varphi(\ttau_j)$. 

We will again use long time perturbation arguments, for $\tau>\frac{r_0}{4}$. By the same proof as above,
$$ \limsup_{J\to\infty}\left\|S_{\lin}(t)\left( \sum_{\substack{j=2,\ldots J\\ \ttau_j>0}} \varphi(\ttau_j)\left(U_{\lin,n}^j\left(\frac{r_0}{4}\right),\partial_{\tau}U_{\lin,n}^j\left(\frac{r_0}{4}\right)\right)+(\tphi\tw_{0,n}^J,\tphi\tw_{1,n}^J)\right) \right\|_{L^8\left([r_0/4,+\infty)\times \RR^3\right)}=0.$$
Note also that by the smallness of $\eta_0$, the solution of \eqref{CP} with initial data $\left( \tphi\, \tU\left( \frac{r_0}{4} \right),\tphi\, \partial_{\tau}\tU\left( \frac{r_0}{4} \right) \right)$ at $\tau=r_0/4$ is globally defined and scatters, and coincides with $\tU$ for any $\tau,y$ such that $\tau\geq r_0/4$, $|y|\geq \tau+r_0-\eps_0$.

Hence, for $\tau\geq r_0/4$ and $|y|\geq \tau+r_0-\eps_0$,
\begin{multline}
\label{last_expansion}
 \left(u_n,\partial_{\tau}u_n\right)(\tau,y)-\left( v_n,\partial_{\tau}v_n\right)(\tau,y)
=\left( \tU,\partial_{\tau}\tU\right)(\tau,y)\\
+\sum_{\substack{j=2,\ldots,J\\ \ttau_j>0}} \varphi(\ttau_j)\left(U^j_{\lin,n},\partial_{\tau}U^j_{\lin,n}\right)(\tau,y)+\left(\ttw_{\lin,n}^J,\partial_{\tau}\ttw_{\lin,n}^J\right)(\tau,y)+\left(\tteps_{n}^J,\partial_{\tau}\tteps_{n}^J\right)(\tau,y),
\end{multline}
where $\ttw_{\lin,n}^J$ is a linear solution that satisfies
\begin{equation}
\label{ttwnJ}
\lim_{J\to\infty}\limsup_{n\to\infty}\left\|\ttw_n^J\right\|_{L^8([r_0/4,+\infty)\times \RR^3)}=0,   
\end{equation} 
and $\tteps_n^J$ satisfies:
\begin{equation}
\label{ttepsnJ}
\lim_{J\to\infty}\limsup_{n\to \infty}\left[ \left\|\tteps_n^J\right\|_{L^8([r_0/4,+\infty)\times \RR^3) } +\sup_{\tau\geq r_0/4}\left\|\left(\tteps_{n}^J(\tau),\partial_{\tau}\tteps_{n}^J(\tau)\right)\right\|_{\hdot\times L^2}\right]=0.
\end{equation} 
Note also that if $\{\sigma_n\}_n$ is a sequence such that $\sigma_n\in I_n^+$ for all $n$, the following pseudo-orthogonality property holds (see Appendix \ref{A:quasi_ortho}):
\begin{multline}
 \label{quasi_ortho}
\lim_{n\to+\infty} \left|\int_{S_n} \nabla_{\tau,y}\tU_{\lin}(\sigma_n,y)\cdot \nabla_{\tau,y} U^j_{\lin,n}(\sigma_n,y)\,dy\right|+ 
\left|\int_{S_n} \nabla_{\tau,y}\tU_{\lin}(\sigma_n,y)\cdot \nabla_{\tau,y} w_n^J(\sigma_n,y)\,dy\right|\\
+\left|\int_{S_n} \nabla_{\tau,y}U_{\lin,n}^j(\sigma_n,y)\cdot \nabla_{\tau,y} U^k_{\lin,n}(\sigma_n,y)\,dy\right|+\left|\int_{S_n} \nabla_{\tau,y}U_{\lin,n}^j(\sigma_n,y)\cdot \nabla w_{n}^J(\sigma_n,y)\,dy\right|=0,
\end{multline}
where $S_n=\{y\in \RR^3,\; \sigma_n+r_0-\eps_0\leq |y|\leq \sigma_n+r_0\}$ and $2\leq k<j\leq J$.

In view of \eqref{tU_quasi_lin}, \eqref{channel2} and \eqref{last_expansion}, we get that \eqref{to_show} holds for all sequences $\{\sigma_n\}_{n}$ with $\sigma_n\in I_n^+$, concluding the proof of this case.

\subsubsection{Sketch of proof for Case 2}
\label{SSS:Case2}
Assume that there exists $j\geq 2$ such that $|\ttau_{j}|>r_0$. Let $\tr_0=\sup_{j\geq 2}|\ttau_j|>r_0$. Choose  $\eps_0<\frac{\tr_0-r_0}{100}$. Note that there exists $j_0\geq 2$ such that $\tr_0-\eps_0<|\ttau_{j_0}|\leq \tr_0$. Assume (for example) that $\ttau_{j_0}>0$. Let $\varphi\in C^{\infty}(\RR^3)$, radial, such that $\varphi(y)=1$ if $|y|\geq \tr_0-\eps_0$ and $\varphi(y)=0$ if $|y|\leq \tr_0-2\eps_0$. Define:
\begin{equation}
\label{deftun}
\left\{ 
\begin{aligned}
  \tu_{0,n}&=v_n(0)+\varphi\sum_{j=2}^JU_{0,\lin,n}^j+\varphi\,w_{0,n}^J\\
  \tu_{1,n}&=\partial_{\tau}v_n(0)+\varphi\sum_{j=2}^JU_{1,\lin,n}^j+\varphi\, w_{1,n}^J.
 \end{aligned}
\right.
\end{equation}
The proof now proceeds as before up to \eqref{last_expansion} (but without $(\tU,\partial_t \tU)$), replacing $r_0$ by $\tr_0$. To conclude the proof, use energy conservation for the linear solution $\varphi(|\tau_{j_0}|)U_{\lin,n}^{j_0}$ together with Lemma \ref{L:lin_odd} and the pseudo-orthogonality of the profiles \eqref{quasi_ortho}.
\subsubsection{Sketch of proof for Case 3}
\label{SSS:Case3}
We now assume that $r_0=0$ (i.e. $U^1=0$) and that $\ttau_j=0$ for all $j$. Note that by assumption \eqref{ss_energy},
\begin{equation}
\label{wnJ_exterior}
\exists \eps_0>0 \text{ s.t. } \forall J\geq 2,\quad \limsup_{n\to\infty} \int_{|y|\geq \eps_0}\left|\nabla w_{0,n}^J\right|^2+\left(w_{1,n}^J\right)^2\geq \eta_0.
\end{equation}
Let $\varphi\in C^{\infty}$ radial, such that $\varphi(y)=1$ if $|y|\geq \eps_0$, $\varphi(y)=0$ if $|y|\leq \eps_0/2$. Define $(\tu_{0,n},\tu_{1,n})$ by \eqref{deftun}, and $\tu_n$ as the solution of \eqref{CP} with initial data $(\tu_{0,n},\tu_{1,n})$. Then
\begin{equation*}
 \tu_{0,n}=v_{0,n}+\tw_{0,n}\qquad
\tu_{1,n}=v_{1,n}+\tw_{1,n},
\end{equation*}
where 
\begin{gather}
\label{wn_disperse}
\lim_{n\to\infty}\left\|S_{\lin}(\tau)(\tw_{0,n},\tw_{1,n})\right\|_{L^8(\RR^4)}=0\\
\label{wn_exterior}
 \limsup_{n\to+\infty} \int_{|y|\geq \eps_0}|\nabla \tw_{0,n}|^2+(\partial_{\tau}\tw_{1,n})^2\,dy\geq \eta_0.
\end{gather}
Indeed, fixing a large $J$ we see that 
$$ \tw_{0,n}=\varphi\sum_{j=1}^J U_{0,\lin,n}^j+ \varphi w_{0,n}^J,\quad \tw_{1,n}=\varphi\sum_{j=1}^J U_{1,\lin,n}^j +\varphi w_{1,n}^J,$$
where by Lemma \ref{L:lin_odd}, for all $j$,
$$ \lim_{n\to \infty} \left\|\left(\varphi U_{0,\lin,n}^j,\varphi U_{1,\lin,n}^j\right)\right\|_{\hdot\times L^2}=0$$
and by \cite[Claim 2.5]{DuKeMe11a},
$$ \lim_{J\to +\infty} \limsup_{n\to \infty}\left\|S_{\lin}(\tau)(\varphi w_{0,n}^J,\varphi w_{1,n}^J)\right\|_{L^8(\RR^4)}=0,$$
hence \eqref{wn_disperse} and, in view of \eqref{wnJ_exterior}, \eqref{wn_exterior}.

Note that \eqref{wn_disperse} implies
\begin{equation}
\label{pointwise}
\lim_{n\to\infty}\|\tw_{0,n}\|_{L^{\infty}(\eps_0/2,1)}=0.
\end{equation} 
Indeed, if \eqref{pointwise} does not hold, there exists (after extraction of subsequences) a sequence $r_n\in(\eps_0/2,1)$ and $\eps_1>0$ such that
$$ \forall n,\quad |\tw_{0,n}(r_n)|\geq \eps_1.$$
By \eqref{wn_disperse} and the paragraph following \eqref{weak_CV_wJ}, we know that 
$$ r_n^{1/2}w_{0,n}\left(r_n \cdot\right)\xrightharpoonup[n\to \infty]{} 0 \text{ weakly in }\hdot.$$
In particular, using that $f\mapsto f(1)$ is a continuous linear form on $\hdot_{\rad}$, we get that $\tw_{0,n}(r_n)\to 0$ as $n\to\infty$, a contradiction.

By integration by parts and \eqref{pointwise}, we get that for large $n$,
$$ \int_{\eps_0}^1 (\partial_r(r \tw_n))^2+(\partial_{\tau}(r \tw_n))^2\,dr\geq \frac{\eta_0}{2}.$$
We now argue as before, using Proposition \ref{P:linear_behavior}, directly for all $\tau>0$ or for all $\tau<0$, globally in time. This concludes the proof of Proposition \ref{P:no_ss}.
\subsection{Global solutions}
\label{SS:self_simG}
In this subsection we show the following analogue of Proposition \ref{P:no_ss} for globally defined solutions:
\begin{prop}
 \label{P:global_no_ss}
Let $u$ be a radial solution of \eqref{CP}  satisfying \eqref{bound_intro} and such that $T_+(u)=+\infty$. Let $s_n\to +\infty$. Assume that $\Sbf_{\lin}(-s_n)(u(s_n),\partial_t u(s_n))$ has a weak limit $(v_0,v_1)$ in $\hdot\times L^2$ as $n\to\infty$. Let $(v_{\lin}(t),\partial_tv_{\lin}(t))=\Sbf_{\lin}(t)(v_0,v_1)$. Then
\begin{equation*} 
 \forall c_0>0, \lim_{n\to\infty}\int_{|x|\geq c_0s_n}\left(|\nabla (u-v_{\lin})(s_n,x)|^2+(\partial_t (u-v_{\lin})(s_n,x))^2\right)\,dx=0
\end{equation*} 
\end{prop}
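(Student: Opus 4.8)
The plan is to follow the proof of Proposition~\ref{P:no_ss} essentially line for line, with the blow-up rescaling factor $1-t_n$ replaced everywhere by $s_n$, the self-similar annulus $|x|\approx T_+-t$ replaced by $|x|\approx t$, and the bounded time interval $I_n$ of \S\ref{SSS:renormalization} replaced by intervals exhausting $(0,+\infty)$ (legitimate since $u$ is globally defined). The one genuinely new point is that the compact support of the singular part $a$, on which \S\ref{SSS:renormalization}--\S\ref{SSS:Case3} rely heavily, must be replaced by a combination of finite speed of propagation run from $t=0$, the standing weak-convergence hypothesis, and strong Huygens for $v_{\lin}$.

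Concretely, argue by contradiction: after extraction, suppose that for some $c_0>0$ and some $s_n\to+\infty$,
\[
\int_{|x|\geq c_0 s_n}\Big(|\nabla(u-v_{\lin})(s_n,x)|^2+\big(\partial_t(u-v_{\lin})(s_n,x)\big)^2\Big)\,dx\geq c_0.
\]
Set $u_n(\tau,y)=s_n^{1/2}u(s_n\tau,s_ny)$ and $\ell_n(\tau,y)=s_n^{1/2}v_{\lin}(s_n\tau,s_ny)$, so that $\tau=1$ corresponds to $t=s_n$. Extract a profile decomposition $\{U^j_{\lin};\tau_{j,n};\lambda_{j,n}\}$ of $\big(u_n(1)-\ell_n(1),\partial_\tau u_n(1)-\partial_\tau\ell_n(1)\big)$, treating $(\ell_n(1),\partial_\tau\ell_n(1))$ as an additional profile exactly as $(v_n(0),\partial_\tau v_n(0))$ was treated in \S\ref{SSS:profile} (by Lemma~\ref{L:lin_odd}, $v_{\lin}(s_n)$ concentrates its energy near the sphere $|x|=s_n$, i.e.\ $\ell_n(1)$ concentrates near $|y|=1$). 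The analogue of Claim~\ref{C:profiles} — boundedness of the $\lambda_{j,n}$ and of the $\tau_{j,n}$ — now follows from finite speed of propagation for $u$ between $t=0$ and $t=s_n$ together with the vanishing at spatial infinity of $(u_0,u_1)$: if $\lambda_{j,n}\to\infty$ or $|\tau_{j,n}|\to\infty$, then $u-v_{\lin}$ would carry a non-vanishing amount of energy at time $s_n$ in a region escaping to $|x|=\infty$, hence, tracing back, $(u_0,u_1)$ would carry a non-vanishing amount of energy outside arbitrarily large balls — impossible. The analogue of Claim~\ref{C:compact_supp} is obtained the same way. With $r_0$ defined as in \eqref{def_r0}, one is led to the same three cases: Case~1 ($r_0>0$ and $|\ttau_j|\leq r_0$ for all $j\geq 2$), Case~2 (some $|\ttau_j|>r_0$), Case~3 ($r_0=0$ and $\ttau_j=0$ for all $j$).

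In each case one truncates $U^1$ (Case~1), the dominant exterior profile $U^{j_0}$ (Case~2), or the remainder $w^J_n$ (Case~3) away from the origin, forms a comparison solution $\tu_n$ of \eqref{CP} agreeing with $u_n$ in the far region, and runs the argument of \S\ref{SSS:Case1}: Proposition~\ref{P:linear_behavior} produces, for all $\tau>0$ or for all $\tau<0$, a fixed amount $\gtrsim\eta_0$ of energy in the annulus $r_0-\eps_0+|\tau|<|y|<r_0+|\tau|$, and Proposition~\ref{P:lin_NL}, the dispersive estimates of Step~3 of \S\ref{SSS:Case1}, and the pseudo-orthogonality \eqref{quasi_ortho} transfer this lower bound to $u_n-\ell_n$, yielding the analogue of \eqref{to_show}. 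Here the global case is in fact slightly easier: as already in \S\ref{SSS:Case3}, Proposition~\ref{P:linear_behavior} can be applied globally in time rather than on a finite interval.

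The analogue of \eqref{to_show} must then be contradicted by an analogue of Lemma~\ref{L:no_channel}, and this is the main obstacle. With no compactly supported singular part available, one should evaluate $u-v_{\lin}$ at an original time $t_n^+\to+\infty$ (corresponding to a fixed rescaled time $\sigma_n^+>1$) and at a fixed original time $T$ (corresponding to $\sigma_n^-=T/s_n$), and prove that the energy of $u-v_{\lin}$ in the relevant exterior regions $\{|x|\geq|t-t_0|\}$ tends to $0$. This must be derived from the hypothesis $\Sbf_{\lin}(-s_n)(u(s_n),\partial_tu(s_n))\rightharpoonup(v_0,v_1)$, which, by the group law, says exactly that the backward free evolution to $t=0$ of $\big(u(s_n)-v_{\lin}(s_n),\partial_tu(s_n)-\partial_tv_{\lin}(s_n)\big)$ tends weakly to $0$; combined with the exterior-energy monotonicity \eqref{out_positive}, the annulus estimate \eqref{pro_linear}, and the Huygens concentration of $v_{\lin}(s_n)$ near $|x|=s_n$ (Lemma~\ref{L:lin_odd}), this should force the desired vanishing. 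Once this analogue of Lemma~\ref{L:no_channel} is established, the contradiction with \eqref{to_show} concludes the proof, the remaining steps being a routine transcription of \S\ref{SSS:renormalization}--\S\ref{SSS:Case3}.
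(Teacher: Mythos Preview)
Your outline captures the overall architecture, but the step you yourself flag as ``the main obstacle'' --- the forward-time half of the analogue of Lemma~\ref{L:no_channel} --- is a genuine gap, and the fix you sketch does not close it. Weak convergence of $\Sbf_{\lin}(-s_n)(u(s_n),\partial_t u(s_n))$ to $(v_0,v_1)$ yields no strong vanishing of energy in any region, and Proposition~\ref{P:linear_behavior} applies only to \emph{linear} solutions, so it says nothing about $u$ forward of $t=s_n$. At a fixed rescaled forward time $\sigma^+>1$ the channel $\{r_0-\eps_0+(\sigma^+-1)\leq |y|\leq r_0+(\sigma^+-1)\}$ lies well inside the region $\{|y|\leq \sigma^+\}$ where $u-v_{\lin}$ is allowed to carry energy; nothing you have written forces that energy to vanish.

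The paper supplies a genuinely new ingredient here, Lemma~\ref{L:I(A)}. First one works with the \emph{nonlinear} scattering state $v_{\NL}$ (not $v_{\lin}$), since Proposition~\ref{P:lin_NL} requires nonlinear profiles; then one runs the very profile-truncation construction of \S\ref{SSS:Case1} to show that for each large $n_0$ the solution $u$ agrees, up to $o(1)$ in energy, with a linear solution in the exterior cone $\{|x|\geq t+s_{n_0}(r_0-\eps_0-1)\}$. It follows that
\[
\III(A):=\lim_{t\to\infty}\int_{|x|\geq t+A}\big(|\nabla(u-v_{\NL})|^2+(\partial_t(u-v_{\NL}))^2\big)\,dx
\]
exists for every $A\in\RR$, is nonincreasing, and (being bounded) has a finite limit $\III_\infty$ as $A\to-\infty$. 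The channel at an original forward time $\theta_n\geq \tfrac32 s_n$ is $\{\theta_n+s_n(r_0-\eps_0-1)\leq|x|\leq\theta_n+s_n(r_0-1)\}$; sending $\theta_n\to\infty$ (for fixed $n$), its energy content is exactly $\III(s_n(r_0-\eps_0-1))-\III(s_n(r_0-1))$, and since both arguments tend to $-\infty$ as $n\to\infty$ this difference tends to $0$, contradicting the channel lower bound $\eta_1$. This mechanism --- the channel slides off to $A=-\infty$ and is killed by the existence of $\III_\infty$ --- replaces the compact-support argument of Lemma~\ref{L:no_channel} and is absent from your proposal. Note also that Cases~2 and~3 are \emph{not} routine transcriptions: Case~2 with $\ttau_{j_0}>0$ is handled by a separate weak-limit/scalar-product argument (see \eqref{absurd_to_show}), and Case~3 needs an extra step (see \eqref{energy_in}) ruling out concentration of $w_n^J$ in a shell of width $O(1/s_n)$ near $|y|=1$.
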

The proof is close to the one of Proposition \ref{P:no_ss}, we will only highlight the aspects that are specific to the globally defined case. We argue by contradiction: if the conclusion of Proposition \ref{P:global_no_ss} does not hold, there exists a subsequence of $\{s_n\}_n$ (still denoted by $\{s_n\}_n$) and a $c_0>0$ such that
\begin{equation} 
\label{absurd_global}
 \forall n,\quad \int_{|x|\geq c_0s_n}\left(|\nabla (u-v_{\lin})(s_n,x)|^2+|\partial_t (u-v_{\lin})(s_n,x)|^2\right)\,dx\geq c_0.
\end{equation} 
\subsubsection{Preliminaries}
Let $v_{\NL}$ be the unique solution of \eqref{CP} such that $T_+(v_{\NL})=+\infty$ and
$$ \lim_{t\to+\infty} \|(v_{\lin}(t),\partial_{t} v_{\lin}(t))-(v_{\NL}(t),\partial_{t} v_{\NL}(t))\|_{\hdot\times L^2}=0.$$
Without loss of generality, we can assume that $v_{\NL}(t)$ is defined on $[0,+\infty)$. Let, for $\tau\in [-1,+\infty)$, $y\in \RR^3$,
\begin{equation*}
 u_n(\tau,y)=s_n^{1/2}u(s_n(1+\tau),s_ny),\quad
v_n(\tau,y)=s_n^{1/2}v_{\NL}(s_n(1+\tau),s_ny).
\end{equation*} 
Consider (after extraction of a subsequence in $n$) a profile decomposition of the form \eqref{expansion0}, \eqref{expansion1} for the sequence $(u_{n}(0),\partial_{\tau} u_{n}(0))-(v_{n}(0),\partial_{\tau} v_{n}(0))$.
\begin{claim}
\label{C:profiles_gl}
 Fix $j\geq 1$. Then (after extraction of subsequences in $n$),
\begin{equation}
\label{lim_tau_jn_gl}
 \lim_{n\to \infty}-\tau_{j,n}=\ttau_j \in[-1,1],
\end{equation} 
and 
$$\lim_{n\to\infty} \lambda_{j,n}=\tlambda_j\in [0,+\infty).$$
Furthermore,
\begin{equation}
 \label{lim_sn}
\lim_{n\to\infty} s_n\lambda_{j,n}+\frac{1}{s_n\lambda_{j,n}}+ |\tau_{j,n}+1|s_n=+\infty.
\end{equation} 
\end{claim}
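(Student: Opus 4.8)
The statement is the analogue, in the globally defined case, of Claim~\ref{C:profiles}, with the extra pseudo-orthogonality relation \eqref{lim_sn} reflecting that the ``background'' profile here is $v_{\NL}$ (equivalently $v_{\lin}$) at the scale $\lambda_{0,n}=1/s_n$, $\tau_{0,n}=0$. The plan is to mimic the proof of Claim~\ref{C:profiles} step by step, checking that the support/localization substitutes used there are replaced by the appropriate Huygens-type localization (Lemma~\ref{L:lin_odd}) coming from the weak-limit construction of $(v_0,v_1)$.

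First I would establish the analogue of the support property \eqref{support_1}. Here $(u_n-v_n)(0,\cdot)$ is not compactly supported, but by construction $\Sbf_{\lin}(-s_n)(u(s_n),\partial_tu(s_n))\rightharpoonup (v_0,v_1)$, so after rescaling the linear evolution of $(u_n-v_n)(0)$ concentrates, by Lemma~\ref{L:lin_odd} (the case $\ell=\pm\infty$ with $t_n/\lambda_n=s_n/(1/s_n)\to+\infty$), near the unit sphere $|y|=1$, up to an error going to $0$ in $\hdot\times L^2$. This ``asymptotic localization near $B_1$'' plays exactly the role that \eqref{support_1} played, and the argument of \cite[Lemma~2.5]{BaGe99}, \cite[Lemma~2.5]{DuKeMe11a} then forces $\{|\tau_{j,n}|\}_n$ and $\{\lambda_{j,n}\}_n$ to be bounded and, for the profiles with $\tau_{j,n}/\lambda_{j,n}\to\pm\infty$, the concentration sphere $|y|=|\tau_{j,n}|$ to stay in $B_1$ in the limit, giving \eqref{lim_tau_jn_gl}. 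This is essentially a copy of the proof of \eqref{lim_tau_jn}, using \eqref{out_mujn}, the pseudo-orthogonality of the parameters, and \eqref{weak_CV_wJ} to isolate the energy of $U^j$ on an annulus $\big||y|-|\tau_{j,n}|\big|\le\mu_{j,n}$ and conclude $\limsup_n|\tau_{j,n}|\le 1$.

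Next I would prove \eqref{lim_sn}. Suppose for contradiction that along a subsequence $s_n\lambda_{j,n}\to c\in(0,\infty)$, $|\tau_{j,n}+1|s_n\to C<\infty$. Then $(\lambda_{j,n},\tau_{j,n}+1)\sim(c/s_n, C'/s_n)$ is, up to bounded factors, comparable to the background scale/time shift $(\lambda_{0,n},\tau_{0,n}+1)=(1/s_n,1)$ used for $v_{\NL}$ —more precisely, translating back to the original variables, the profile $U^j$ would live at spatial scale $\sim c$ and at times $\sim s_n(1+\tau_{j,n})$ bounded, i.e. near a \emph{fixed} compact region, hence its evolution would not be absorbed into $v_{\NL}$ but would contribute an extra fixed-scale bubble to the weak limit $(v_0,v_1)$ of $\Sbf_{\lin}(-s_n)(u(s_n))$; this contradicts the maximality/definition of the profile decomposition for $(u_n(0)-v_n(0),\partial_\tau u_n(0)-\partial_\tau v_n(0))$, since $(v_0,v_1)$ has already been subtracted. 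Equivalently, and more cleanly, one checks that failure of \eqref{lim_sn} makes $(\lambda_{j,n};\tau_{j,n})$ \emph{not} pseudo-orthogonal to the distinguished parameters $(\lambda_{0,n};\tau_{0,n})$ in the sense of \eqref{ortho_param}, which is impossible once $(v_n(0),\partial_\tau v_n(0))$ is itself viewed as a profile (exactly as in the finite-time case, where one notes ``$\{(\tau_{0,n},\lambda_{0,n})\}_n$ is pseudo-orthogonal to the other sequences'').

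The main obstacle I expect is the second step, \eqref{lim_sn}: in the finite-time blow-up case the analogous fact (``there is at most one profile with $\tlambda_j>0$, and after reordering it is $U^1$'') was obtained from the genuinely compact support of $a$, whereas here one has only the asymptotic Huygens localization of $v_{\lin}$ and must instead argue through pseudo-orthogonality with the background profile $(v_0,v_1)$, being careful that $v_{\NL}$ (not $v_{\lin}$) is what appears in the definition of $u_n-v_n$ and that the two differ by a term tending to $0$ in $\hdot\times L^2$ uniformly on $[0,\infty)$. Once these bookkeeping points are settled, the rest is a transcription of the finite-time argument, so I would present the proof as ``the same as that of Claim~\ref{C:profiles}, with \eqref{support_1} replaced by the localization of $v_{\lin}$ near $|y|=1$ given by Lemma~\ref{L:lin_odd}, and with \eqref{lim_sn} following from pseudo-orthogonality with the background parameters $(\tau_{0,n},\lambda_{0,n})=(0,1/s_n)$.''
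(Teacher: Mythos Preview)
Your overall strategy matches the paper's, and your first argument for \eqref{lim_sn} --- if the three quantities stayed bounded along a subsequence, the profile $U^j$ would live, in the original variables, at a fixed spatial scale $\sim c$ and at bounded times $s_n(1+\tau_{j,n})$, hence contribute a nonzero weak limit to $\Sbf_{\lin}(-s_n)\big(u(s_n)-v_{\NL}(s_n),\partial_t u(s_n)-\partial_t v_{\NL}(s_n)\big)$, contradicting the hypothesis --- is exactly the computation the paper performs.

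Two points need correcting. First, Lemma~\ref{L:lin_odd} is a statement about rescalings of a \emph{single fixed} linear solution and cannot be applied to the sequence $(u_n-v_n)(0)$ as you do; in particular it says nothing about the nonlinear solution $u$. The localization substitute you actually need,
\[
\lim_{R\to+\infty}\limsup_{n\to\infty}\int_{|y|\ge 1+R/s_n}\big|\nabla_{\tau,y}(u_n-v_n)(0,y)\big|^2\,dy=0
\]
(this is \eqref{out_global} in the paper), comes instead from finite speed of propagation in the original variables: both $u(s_n,\cdot)$ and $v_{\lin}(s_n,\cdot)$ carry negligible $\hdot\times L^2$-mass on $\{|x|\ge s_n+R\}$ for $R$ large. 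With this in hand, your transcription of the proof of Claim~\ref{C:profiles} goes through unchanged. Second, if you insist on phrasing \eqref{lim_sn} as pseudo-orthogonality with the background profile, the correct parameters are $(\tau_{0,n},\lambda_{0,n})=(-1,\,1/s_n)$, not $(0,\,1/s_n)$: since $v_n(0,y)=s_n^{1/2}v_{\lin}(s_n,s_ny)+o_n(1)$ one has $-\tau_{0,n}/\lambda_{0,n}=s_n$, i.e.\ $\tau_{0,n}=-1$. With that fix, failure of \eqref{ortho_param} between $j$ and $0$ is equivalent to failure of \eqref{lim_sn}, and your second formulation then agrees with your (correct) first one.
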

\begin{proof}
 Note that by finite speed of propagation
$$\lim_{R\to +\infty} \left[\limsup_{n\to\infty}\int_{|x|\geq |s_n|+R}\left|\nabla_{t,x}(u-v_{\lin})(s_n,x)\right|^2\,dx\right]=0,$$
and thus
\begin{equation}
\label{out_global}
 \lim_{R\to+\infty} \left[\limsup_{n\to \infty} \int_{|y|\geq 1+\frac{R}{s_n}} \left|\nabla_{\tau,y}(u_n-v_n)(0,y)\right|^2\,dy\right]=0.
\end{equation} 
The proof of \eqref{lim_tau_jn_gl} and of the fact that the sequence $\{\lambda_{j,n}\}_n$ is bounded is then the same as the proof of Claim \ref{C:profiles}, using \eqref{out_global} instead of \eqref{support_1}.

It remains to show \eqref{lim_sn}. This follows from the fact that:
$$ w-\lim_{n\to \infty} \Sbf_{\lin}(-s_n)\Big(u(s_n)-v_{\NL}(s_n),\partial_{t}u(s_n)-\partial_t v_{\NL}(s_n)\Big)=0$$
in $\hdot\times L^2$. Indeed, by the profile decomposition \eqref{expansion0}, \eqref{expansion1} and the definitions of $u_n$ and $v_n$, we get
\begin{multline*}
S_{\lin}(-s_n)\big(u(s_n)-v_{\NL}(s_n),\partial_{t}u(s_n)-\partial_t v_{\NL}(s_n)\big)\\
=\sum_{j=1}^{J}\frac{1}{s_n^{1/2}\lambda_{j,n}^{1/2}} U^j_{\lin}\left(\frac{-s_n-s_n\tau_{j,n}}{s_n\lambda_{j,n}},\frac{\cdot}{\lambda_{j,n}}\right)+\frac{1}{s_n^{1/2}}w_n^J\left(1,\frac{\cdot}{s_n}\right),
\end{multline*}
and the analoguous expansion for the time derivatives, which yields \eqref{lim_sn}.
\end{proof}
Note also that \eqref{out_global} implies:
\begin{equation}
\label{support_wnJ}
\lim_{n\to \infty} \int_{|y|\geq 1} |\nabla w_{0,n}^J(y)|^2+|w_{1,n}^J(y)|^2dy=0.
\end{equation} 

As in the proof of Proposition \ref{P:no_ss}, by the pseudo-orthogonality of the parameters and \eqref{out_global} there is at most one index $j\geq 1$ such that $\tlambda_j>0$. Reordering the profiles, we will always assume that this index is $1$, setting $U^1=0$ if there is no index $j$ with the preceding property. We will also assume that for all $n$, $\tau_{1,n}=0$ and $\lambda_{1,n}=1$. It follows from \eqref{out_global} that $(U^1(0),\partial_{\tau}U^1(0))$ is supported in $B_1$. 

If $U^1\neq 0$, define $r_0$ by \eqref{def_r0}. Let $\eps_0$ be such that $\eta_0$ (defined by \eqref{def_eta0}) is small, and let $(\tU_0,\tU_1)$ be as in \eqref{def_tU}.

We next prove:
\begin{lemma}
\label{L:I(A)}
For all $A\in \RR$, the following limit exists:
$$ \lim_{t\to+\infty} \int_{|x|\geq t+A} |\nabla (u-v_{\NL})(t,x)|^2+(\partial_t(u-v_{\NL})(t,x))^2\,dx=\III(A).$$
Furthermore, there is an $\III_{\infty}\geq 0$ such that
\begin{equation}
\label{lim_I}
\lim_{A\to -\infty} \III(A)=\III_{\infty},\quad \lim_{A\to +\infty} \III(A)=0,
\end{equation}
 and the function $\III$ is nonincreasing and uniformly continuous on $\RR$. 
\end{lemma}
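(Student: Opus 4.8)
The plan is to extract the existence and monotonicity of $\III(A)$ from Proposition \ref{P:linear_behavior} applied to $v_{\NL}$'s companion linear solutions, via the exterior-energy monotonicity of the \emph{difference} $u-v_{\NL}$. First I would record the key structural fact: since $v_{\NL}$ is the nonlinear solution asymptotic to the linear solution $v_{\lin}$, and $u$ satisfies \eqref{bound_intro} with $T_+=+\infty$, the difference $w:=u-v_{\NL}$ has, for each fixed $t_0$ large, a well-defined exterior flux behavior. The cleanest route is to fix $A_1<A_2$ and compare the exterior energies $\int_{|x|\geq t+A_i}(|\nabla w(t,x)|^2+(\partial_t w(t,x))^2)\,dx$ as $t\to+\infty$. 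I would show each of these has a limit by a Cauchy-type argument: for $t_0<t_1$, the annular energy $\int_{t_0+A\leq |x|\leq t_1+A \text{ (shifted appropriately)}}$ is controlled, and the ``channel of energy'' one-sided inequality \eqref{pro_linear}—applied to the linear evolution of $w(t_0)$ outside the light cone $|x|\geq |t-t_0|$, noting that $w$ itself is \emph{not} a linear solution but its exterior part agrees with one by finite speed of propagation away from the support region of the nonlinearity—forces the exterior energy to be essentially monotone up to errors that vanish as $t_0\to\infty$. Here I would use that $u-v_{\NL}$ solves an equation whose nonlinearity is $u^5-v_{\NL}^5$, which is integrable in the relevant Strichartz norms on $[t_0,\infty)$ for $t_0$ large (because $v_{\NL}$ scatters and $u$ is bounded), so that outside a cone $w$ is genuinely close in $\hdot\times L^2$ to a free solution on $[t_0,\infty)$; then \eqref{out_positive} and \eqref{pro_linear} give the one-sided monotonicity that, combined with boundedness, yields convergence.

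Next, for \emph{monotonicity in $A$}: $\III(A)$ is nonincreasing simply because $\{|x|\geq t+A_2\}\subset\{|x|\geq t+A_1\}$ when $A_1\le A_2$, and the integrand is nonnegative, so passing to the limit $t\to+\infty$ preserves the inequality $\III(A_2)\le\III(A_1)$. For \emph{uniform continuity}, I would estimate $\III(A_1)-\III(A_2)$ by the limit (in $t$) of the energy in the thin annulus $t+A_1\leq |x|\leq t+A_2$; using the radial one-dimensional reduction (writing $r^2|\nabla w|^2$ etc.\ in terms of $\partial_r(rw)$ as in Step 1 of Case 1 above) together with the uniform-in-$t$ bound \eqref{bound_intro} on the total energy and an equicontinuity/absolute-continuity argument for the energy measure, one gets $\III(A_1)-\III(A_2)\le \omega(A_2-A_1)$ with $\omega$ a modulus independent of the base point—this is where I expect to lean on the fact that, in the exterior region, $w$ is within $o(1)$ of a fixed-profile free wave whose energy density has no atoms. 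For the \emph{limits at $\pm\infty$}: $\III(A)\to 0$ as $A\to+\infty$ follows because the total energy of $w(t)$ on $|x|\geq t+A$ is bounded by the energy of $w$ on $|x|\ge t$, which by \eqref{bound_intro} and finite speed of propagation (the part of $u$ outside the cone eventually decouples) tends to something controlled, and pushing $A\to\infty$ kills it; the limit $\III_\infty=\lim_{A\to-\infty}\III(A)$ exists by monotone convergence (nonincreasing and bounded below by $0$), and $\III_\infty\ge 0$ is immediate.

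The main obstacle I anticipate is making rigorous the claim that ``in the exterior region $w=u-v_{\NL}$ behaves like a free wave with enough uniformity in $t$'' so that both the Cauchy argument (existence of $\III(A)$) and the equicontinuity estimate (uniform continuity of $\III$) go through with a \emph{single} modulus of continuity valid for all large base times. This requires combining: (i) the scattering of $v_{\NL}$, so $v_{\NL}$ contributes negligible exterior energy beyond the $v_{\lin}$-energy for $t$ large; (ii) finite speed of propagation to isolate the genuinely nonlinear interaction to the interior $|x|\lesssim |t-t_0|$; and (iii) a long-time perturbation / Strichartz argument (in the spirit of Proposition \ref{P:lin_NL} and \cite{KeMe08}) to control the error between $w$ and the free evolution of $w(t_0)$ on $[t_0,\infty)$ in the exterior cone. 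Once that uniform exterior-linearity is in hand, all four assertions of the Lemma follow from Proposition \ref{P:linear_behavior} and elementary monotone-convergence and annulus-energy estimates.
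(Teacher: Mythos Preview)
Your proposal has a genuine gap in the step establishing existence of $\III(A)$. You assert that $u^5-v_{\NL}^5$ is ``integrable in the relevant Strichartz norms on $[t_0,\infty)$ for $t_0$ large (because $v_{\NL}$ scatters and $u$ is bounded)'', and from this conclude that $w=u-v_{\NL}$ is close to a free wave in the exterior. But boundedness of $u$ in $\hdot\times L^2$ gives no Strichartz control whatsoever: $u$ may carry rescaled copies of $W$ (this is exactly the content of Theorem~\ref{T:global_sum}), and for such a solution $\|u\|_{L^8((t_0,\infty)\times\RR^3)}=+\infty$ for every $t_0$. So the nonlinearity $u^5-v_{\NL}^5$ is not globally small in any dual Strichartz norm, and the long-time perturbation argument you invoke does not apply to $u$ directly.

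The paper circumvents this by a construction you are missing: it uses the profile decomposition of the rescaled data $(u_n-v_n)(0)$ to build an auxiliary solution $\ttu_{n}$ which (i) agrees with $u_n$ in the exterior cone $\{|y|\geq \tau+r_0-\eps_0\}$ by truncating the initial data and invoking finite speed of propagation, and (ii) has only scattering profiles, because the truncation excises the compactly supported non-scattering profile $U^1$ and the profiles concentrating at the origin. Thus $\ttu_{n_0}$ genuinely scatters for $n_0$ large, yielding a linear solution $u_{\lin,n_0}$ with $u=u_{\lin,n_0}+o(1)$ in the exterior $\{|x|\geq t+s_{n_0}(r_0-\eps_0-1)\}$. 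The limit $\III(A)$ then reduces to the exterior energy of the \emph{linear} solution $u_{\lin,n_0}-v_{\lin}$, for which existence, continuity, and the annulus estimate $\III(A)-\III(B)$ are classical. Your monotonicity and $A\to\pm\infty$ arguments are fine, but the core existence step needs this surgical replacement of $u$ by a scattering solution in the exterior, not a global Strichartz bound on $u$ itself.
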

\begin{proof}
We first argue as in the beginning of \S \ref{SSS:Case1}. Set $r_0=1/2$, $\eps_0=1/100$ if $U^1=0$. 
 Let $\varphi\in C^{\infty}(\RR^3)$ be a radial function such that $\varphi(x)=1$ if $|x|\geq r_0-\eps_0$, $\varphi(x)=0$ if $|x|\leq r_0-2\eps_0$. Define $(\tu_{0,n},\tu_{1,n})$ by \eqref{deftu0}. Let $\tu_{n}$ as the solution of \eqref{CP} with initial data $(\tu_{0,n},\tu_{1,n})$. Arguing as in \S \ref{SSS:Case1}, we get that 
\begin{equation}
\label{FSOP_gl}
|y|\geq r_0-\eps_0+|\tau| \Longrightarrow (u_n(\tau,y),\partial_{\tau}u_n(\tau,y))=(\tu_n(\tau,y),\partial_{\tau}\tu_n(\tau,y))
 \end{equation} 
and
\begin{multline*}
\left(\tu_n\left( \frac{r_0}{4},y\right),\partial_{\tau}\tu_n\left( \frac{r_0}{4},y\right)\right) -
\left(v_n\left( \frac{r_0}{4},y\right),\partial_{\tau}v_n\left( \frac{r_0}{4},y\right)\right) \\
=\left( \tU\left( \frac{r_0}{4},y \right),\partial_{\tau} \tU\left( \frac{r_0}{4},y\right) \right)+\sum_{j=2}^J \varphi(|\ttau_j|) \left( U_{\lin,n}^j\left( \frac{r_0}{4},y\right) \right)+\left(\tw_{0,n}^J,\tw_{1,n}^J\right),
\end{multline*}
where $\left(\tw_{0,n}^J,\tw_{1,n}^J\right)$ satisfies \eqref{disp_tw}. Let $\tphi\in C^{\infty}(\RR^3)$, radial such that $\tphi(y)=1$ if $|y|\geq r_0$ and $\tphi(y)=0$ if $|y|\leq \frac{9}{10}r_0$. Let $\ttu_n$ be the solution of \eqref{CP} such that
\begin{align*}
 \ttu_n\left(\frac{r_0}{4},y\right)-v_n\left(\frac{r_0}{4},y\right)&=\tphi(y)\left[\tu_n\left(\frac{r_0}{4},y\right)-v_n \left( \frac{r_0}{4},y \right)\right]\\
 \partial_{\tau}\ttu_n\left(\frac{r_0}{4},y\right)-\partial_{\tau}v_n\left(\frac{r_0}{4},y\right)&=\tphi(y)\left[\partial_{\tau}\tu_n\left(\frac{r_0}{4},y\right)-\partial_{\tau}v_n \left( \frac{r_0}{4},y \right)\right].
\end{align*}
We have 
\begin{equation}
\label{un_tun}
|y|\geq \frac{5}{4}r_0-\eps_0\Longrightarrow \left( \ttu_n\left( \frac{r_0}{4},y\right), \partial_{\tau}\ttu_n\left( \frac{r_0}{4},y\right)\right)=\left(u_n\left( \frac{r_0}{4},y\right), \partial_{\tau}u_n\left( \frac{r_0}{4},y\right)\right). 
\end{equation} 
By the arguments of Steps 3 and 4 of \S \ref{SSS:Case1}, one shows that for large $n_0$, $\ttu_{n_0}$ scatters forward in time. Fixing such a large $n_0$, using \eqref{un_tun} and finite speed of propagation, we deduce that $u_n(\tau,y)$ coincides with a scattering solution for $|y|\geq \tau+r_0-\eps_0$, $\tau>r_0$. Going back to the original variables $(t,x)$ with $\tau=(1+s_n)t$, $y=s_nx$, we get that there exists a solution 
$u_{\lin,n_0}$ of the linear wave equation \eqref{lin_wave} such that
\begin{equation}
\label{linear_limit}
\lim_{t\to\infty} \int_{|x|\geq t+s_{n_0}(r_0-\eps_0-1)} |\nabla (u-u_{\lin,n_0})(t,x)|^2+|\partial_t (u-u_{\lin,n_0})(t,x)|^2\,dx=0.
\end{equation} 
Let $A<B$ and chose $n_0$ large so that $s_{n_0}(r_0-\eps_0-1)<A$. It is classical that the limit 
$$
\lim_{t\to+\infty} \int_{|x|\geq t+A} |\nabla (u_{\lin,n_0}-v_{\lin})(t,x)|^2+|\partial_t (u_{\lin,n_0}-v_{\lin})(t,x)|^2\,dx
$$
exists. This shows by \eqref{linear_limit} and the definition of $v_{\NL}$ that 
\begin{multline}
\label{expr_I}
\III(A)=
\lim_{t\to+\infty} \int_{|x|\geq t+A} |\nabla (u-v_{\NL})(t,x)|^2+|\partial_t (u-v_{\NL})(t,x)|^2\,dx\\=\lim_{t\to+\infty}
\int_{|x|\geq t+A} |\nabla (u_{\lin,n_0}-v_{\lin}(t,x)|^2+(\partial_t (u_{\lin,n_0}-v_{\lin})(t,x))^2\,dx 
\end{multline} 
exists. The fact that $\III(A)$ is nonincreasing is obvious. By the assumption \eqref{bound_intro}, $\III$ is bounded, which shows that $\III(A)$ has a limit as $A\to -\infty$.  By finite speed of propagation $\III(A)\to 0$ as $A\to +\infty$. This shows \eqref{lim_I}. Finally, in view of \eqref{lim_I}, to show that $\III$ is uniformly continuous, it is sufficient to check that it is continuous. This last fact follows from the formula:
\begin{equation*}
\III(B)-\III(A)=\lim_{t\to+\infty} 
\int_{A\leq |x|\leq B} |\nabla (u_{\lin,n_0}-v_{\lin})(t,x)|^2+(\partial_t (u_{\lin,n_0}-v_{\lin})(t,x))^2\,dx  
\end{equation*}
and classical properties of the linear wave equation.
\end{proof}
As in the proof of Proposition \ref{P:no_ss} we distinguish three cases: $r_0>0$ and for all $j\geq 2$, $|\ttau_j|\leq r_0$ (Case $1$); there exists $j\geq 2$ such that $|\ttau_j|>r_0$ (Case $2$); $r_0=0$ and $\ttau_j=0$ for all $j\geq 2$ (Case $3$).
\subsubsection{Sketch of proof for Case 1}
In this case, we define $\tu_n$ and $\ttu_n$ as in \S \ref{SSS:Case1} and in the proof of Lemma \ref{L:I(A)}. Arguing exactly as in \S \ref{SSS:Case1}, one can show that there exists $\eta_1>0$ such that the following holds for all sequences $\{\sigma_n\}_n$ in $\left[-1,-\frac{1}{2}\right]$ or for all sequences $\{\sigma_n\}_n$ in $\left[\frac{1}{2},+\infty\right)$.
\begin{equation}
\label{channel_gl}
\limsup_{n\to\infty}
\int_{r_0-\eps_0+|\sigma_n|\leq |y|\leq r_0+|\sigma_n|} |\nabla_{\tau,y}(u_n-v_n)(\sigma_n,y)|^2\,dy\geq \eta_1.
\end{equation} 
If \eqref{channel_gl} holds for all sequences in $\left[-1,-\frac{1}{2}\right]$, one easily reaches a contradiction using \eqref{channel_gl} with $\sigma_n=-1$ for all $n$.

Assume that \eqref{channel_gl} holds for all sequences $\{\sigma_n\}_n$ in $\left[\frac{1}{2},+\infty\right)$. Going back to the original variables $(t,x)$, we get that the following holds for all sequences $\{\theta_n\}_n$ such that $\theta_n\geq \frac 32 s_n$:
\begin{equation}
\label{channel_bis} 
\limsup_{n\to\infty} \int_{\theta_n+s_n(r_0-\eps_0-1) \leq |x|\leq \theta_n+s_n(r_0-1)} |\nabla_{t,x}(u-v_{\NL})(\theta_n,x)|^2\geq \eta_1.
\end{equation} 

Noting that $s_n(r_0-1)\to -\infty$, we get by \eqref{lim_I} that for $n$ large enough
$$ \lim_{t\to\infty} \int_{t+s_n(r_0-\eps_0-1) \leq |x|\leq t+s_n(r_0-1)} |\nabla_{t,x}(u-v_{\NL})(t,x)|^2\,dx=\III\big(s_n(r_0-\eps_0-1)\big)-\III\big(s_n(r_0-1)\big)\leq \frac{\eta_1}{4}.$$
Thus if $\theta_n\geq \frac{3}{2}s_n$ is large enough,
$$ \int_{\theta_n+s_n(r_0-\eps_0-1) \leq |x|\leq \theta_n+s_n(r_0-1)} |\nabla_{t,x}(u-v_{\NL})(\theta_n,x)|^2\,dx\leq \frac{\eta_1}{2}.$$
Finally we have obtained that for all large $n$, there exists $\theta_n\geq \frac{3}{2}s_n$ such that
$$ \limsup_{n\to\infty} \int_{\theta_n+s_n(r_0-\eps_0-1) \leq |x|\leq \theta_n+s_n(r_0-1)} |\nabla_{t,x}(u-v_{\NL})(\theta_n,x)|^2\leq \frac{\eta_1}{2},$$
contradicting \eqref{channel_bis}. This concludes the proof of Case $1$.

\subsubsection{Sketch of proof for Case 2}
Assume that there exists $j\geq 2$ such that $|\ttau_j|>r_0$. As in \ref{SSS:Case2}, we let $\tr_0=\sup_{j\geq 2} |\ttau_j|>r_0$, and chose $\eps_0<\frac{\tr_0-r_0}{100}$. Thus there exists $j_0\geq 2$ such that $\tr_0-\eps_0<|\ttau_{j_0}|\leq \tr_0$. Assume that $\ttau_{j_0}>0$: the proof in the case $\ttau_{j_0}<0$ is very close to the proof of Case 2 in the finite time blow-up setting and we omit it. 

As in \S \ref{SSS:Case2} above, we let $\varphi\in C^{\infty}(\RR^3)$ be a radial function such that $\varphi(y)=1$ if $|y|\geq \tr_0-\eps_0$ and $\varphi(y)=0$ if $|y|\leq \tr_0-2\eps_0$. By the same argument as in \S \ref{SSS:Case1} and \S \ref{SSS:Case2}, we construct a solution $\ttu_n$ of \eqref{CP} such that
\begin{equation}
\label{FSP_global2}
\Big(\tau\geq \frac{\tr_0}{4}\text{ and } |y|\geq \tau+\tr_0-\eps_0\Big)\Longrightarrow \ttu_n(\tau,y)=u_n(\tau,y).
\end{equation}
and, for $\tau\geq \frac{\tr_0}{4}$, 
\begin{multline}
\label{expansion_global2}
 \left(\ttu_n(\tau,y),\partial_{\tau}\ttu_n(\tau,y)\right)-\left( v_n(\tau,y),\partial_{\tau}v_n(\tau,y) \right)\\
=\sum_{\substack{j=2,\ldots,J\\ \ttau_j>0}} \varphi(\ttau_j)\left(U^j_{\lin,n}(\tau,y),\partial_{\tau}U^j_{\lin,n}(\tau,y)\right)+\ttw_{\lin,n}^J(\tau,y)+\tteps_{n}^J(\tau,y),
\end{multline}
where $\ttw_n^J$ and $\tteps_n^J$ satisfies respectively \eqref{ttwnJ} and \eqref{ttepsnJ}. The rest of the proof is quite different from the type II blow-up setting and we will give more details. We will show:
\begin{multline}
\label{absurd_to_show}
 \lim_{J\to+\infty}\limsup_{n\to+\infty} \int_{\RR^3} \left(\sum_{\substack{j=2\ldots J\\ \ttau_j>0}}\int \varphi(\ttau_j)\nabla U_{\lin,n}^j(-1,y)+\nabla w_n^J(-1,y)\right)\cdot \nabla U^{j_0}_{\lin,n}(-1,y)\,dy\\
+\int_{\RR^3} \left(\sum_{\substack{j=2\ldots J\\ \ttau_j>0}}\int \varphi(\ttau_j)\partial_t U_{\lin,n}^j(-1,y)+\partial_t w_n^J(-1,y)\right) \partial_t U^{j_0}_{\lin,n}(-1,y)\,dy=0,
\end{multline}
an obvious contradiction (the limit is also equal to $\int_{\RR^3} |\nabla U^{j_0}_{\lin}(0)|^2+(\partial_tU^{j_0}_{\lin}(0))^2$).
Let $\delta>0$. Fix a large $J$, then a large $n_0$. Let
$$ \ttu(t,x)=\frac{1}{s_{n_0}^{1/2}}\ttu_{n_0}\left(\frac{t}{s_{n_0}}-1,\frac{x}{s_{n_0}}\right).$$
Note that $\ttu$ depends on $n_0$. We can rewrite \eqref{expansion_global2} (with $n=n_0$) as 
\begin{multline}
 \label{expansion_global2b}
\left(\ttu,\partial_{t}\ttu\right)(t,x)-\left( v_{\NL},\partial_{t}v_{\NL}\right)(t,x)\\
=\sum_{\substack{j=2,\ldots,J\\ \ttau_j>0}} \varphi(\ttau_j)\left(\frac{1}{s_{n_0}^{1/2}}U^j_{\lin,n_0},\frac{1}{s_{n_0}^{3/2}}\partial_{\tau}U^j_{\lin,n_0}\right)\left(\frac{t}{s_{n_0}}-1,\frac{x}{s_{n_0}}\right)\\
+\left(\ttw_{\lin,n_0}^J,\partial_{\tau}\ttw_{\lin,n_0}^J\right)\left(\frac{t}{s_{n_0}}-1,\frac{x}{s_{n_0}}\right)+\left(\tteps_{n_0}^J,\partial_{\tau}\tteps_{n_0}^J\right)\left(\frac{t}{s_{n_0}}-1,\frac{x}{s_{n_0}}\right).
\end{multline}
By \eqref{FSP_global2},
\begin{equation}
\label{FSP_global2b}
 \left(t\geq s_{n_0}\left(\frac{\tr_0}{4}+1\right)\text{ and } |x|\geq t+(\tr_0-\eps_0-1)s_{n_0}\right)\Longrightarrow\ttu(t,x)=u(t,x)
\end{equation} 
After extraction of a subsequence in $p$, we can assume that $\Sbf_{\lin}(-s_p)\left(\ttu(s_p),\partial_{t}\ttu(s_p)\right)$ has a weak limit $(\ttv_0,\ttv_1)$ in $\hdot\times L^2$ as $p\to \infty$. Thus the following limit exists:
\begin{align*}
 \ell(n_0)&=\lim_{p\to\infty} \Bigg\langle\Sbf_{\lin}(-s_p)\left((\ttu-u)(s_p),(\partial_t\ttu-\partial_tu)(s_p)\right),\\
&\qquad \qquad \qquad \left(\frac{1}{s_{n_0}^{1/2}} U_{\lin,n_0}^{j_0},
\frac{1}{s_{n_0}^{3/2}} \partial_{\tau}U_{\lin,n_0}^{j_0}\right)\left(-1,\frac{\cdot}{s_{n_0}}\right)\Bigg\rangle_{\hdot\times L^2}\\
&=\Bigg\langle (\ttv_0,\ttv_1)-(v_0,v_1),
\left(\frac{1}{s_{n_0}^{1/2}} U_{\lin,n_0}^{j_0},
\frac{1}{s_{n_0}^{3/2}} \partial_{\tau}U_{\lin,n_0}^{j_0}\right)\left(-1,\frac{\cdot}{s_{n_0}}\right)\Bigg\rangle_{\hdot\times L^2}
\end{align*}
In view of \eqref{expansion_global2b}, if $J$ and $n_0$ are large,
\begin{multline*}
\Bigg\|\left( \ttv_0,\ttv_1 \right)-\left( v_0,v_1 \right)-\sum_{\substack{j=1\ldots J\\ \ttau_j>0}}\varphi(\ttau_j)\left( \frac{1}{s_{n_0}^{1/2}}U^{j}_{\lin,n_0}, \frac{1}{s_{n_0}^{3/2}}\partial_t U^{j}_{\lin,n_0}\right)\left( -1,\frac{\cdot}{s_{n_0}}\right)\\
 -\left( \frac{1}{s_{n_0}^{1/2}}\ttw_{n_0}^{J},\frac{1}{s_{n_0}^{3/2}}\partial_t \ttw_{n_0}^{J}\right)\left( -1,\frac{\cdot}{s_{n_0}} \right) \Bigg\|_{\hdot\times L^2}\leq \eps.
\end{multline*}
Hence, if $J$ and $n_0$ are large,
\begin{multline}
\label{est1_ell}
\Bigg|\Bigg\langle  
\sum_{\substack{j=1\ldots J\\ \ttau_j>0}}\varphi(\ttau_j)\left(U^{j}_{\lin,n_0}, \partial_t U^{j}_{\lin,n_0}\right)\left( -1,\cdot\right)
 -\left(\ttw_{n_0}^{J},\partial_t \ttw_{n_0}^{J}\right)\left( -1,\cdot\right),\\ \left(U_{\lin,n_0}^{j_0},
\partial_{\tau}U_{\lin,n_0}^{j_0}\right)\left(-1,\cdot\right)\Bigg\rangle_{\hdot\times L^2}
-\ell(n_0)\Bigg|\leq C\eps 
\end{multline}
On the other hand,
$$ \ell(n_0)=\lim_{p\to\infty} \Bigg\langle \left((\ttu-u)(s_p),(\partial_t\ttu-\partial_tu)(s_p)\right),
\left(\frac{1}{s_{n_0}^{1/2}} U_{\lin,n_0}^{j_0},
\frac{1}{s_{n_0}^{3/2}} \partial_{\tau}U_{\lin,n_0}^{j_0}\right)\left(\frac{s_{p}}{s_{n_0}}-1,\frac{\cdot}{s_{n_0}}\right)\Bigg\rangle_{\hdot\times L^2}.$$
Using \eqref{FSP_global2b}, we can write this last scalar product 
as
\begin{equation}
\label{loc_integral}
\int_{|x|\leq s_p+(\tr_0-\eps_0-1)s_{n_0}} \nabla_{t,x}(\ttu-u)(s_p,x)\cdot \frac{1}{(\lambda_{j_0,n_0}s_{n_0})^{3/2}}\nabla_{\tau,y}U_{\lin}^{j_0}\left(\frac{s_{p}-s_{n_0}(1+\tau_{j_0,n_0})}{\lambda_{j_0,n_0}s_{n_0}},\frac{x}{\lambda_{j_0,n_0}s_{n_0}}\right)\,dx.
\end{equation} 
If $n_0$ is large, then by the definition of $\ttau_{j_0}$ and the fact that $\lim_{n\to\infty}-\tau_{j_0,n}=\ttau_{j_0}>\tr_0-\eps_0$, we get that 
$$-s_{n_0}(1+\tau_{j_0,n_0})> (\tr_0-\eps_0-1)s_{n_0}.$$
Furthermore, we can chose $\lambda_{j_0,n_0}$ arbitrary small (taking a larger $n_0$ if necessary).
By Lemma \ref{L:lin_odd} (and using that $(\ttu(t),\partial_t\ttu(t))$ can be bounded independently of $n_0$ and $t\geq 0$ in $\hdot\times L^2$), we conclude from \eqref{loc_integral} that for large $n_0$, 
$$ |\ell(n_0)|\leq \eps.$$
Combining with \eqref{est1_ell} we get the desired estimate \eqref{absurd_to_show}. The proof of case 2 is complete.
\subsubsection{Sketch of proof for Case 3}
It remains to treat the case where $r_0=0$ (i.e. $U^1=0$) and $\ttau_j=0$ for all $j\geq 2$. By \eqref{absurd_global}, there exists $c_0>0$ such that for all $J\geq 2$,
\begin{equation}
\label{absurd_case3}
 \limsup_{n\to \infty}\int_{c_0\leq |y|\leq 1} |\nabla w_{0,n}^J|^2+(w_{1,n}^J)^2\,dy\geq c_0.
\end{equation} 
\EMPH{Step 1} We fix a large $J\geq 2$ and show:
\begin{equation}
\label{energy_in}
 \exists \eps_0>0 ,\; \forall B>0,\quad \limsup_{n\to\infty} \int_{c_0\leq |y|\leq 1-\frac{B}{s_n}} \left|\nabla w_{0,n}^J\right|^2+(w_{1,n}^J)^2\geq \eps_0.
\end{equation}
If not,
\begin{equation}
\label{energy_in_absurd}
 \forall \eps>0 ,\; \exists B(\eps)>0,\quad \limsup_{n\to\infty} \int_{c_0\leq |y|\leq 1-\frac{B(\eps)}{s_n}} \left|\nabla w_{0,n}^J\right|^2+(w_{1,n}^J)^2\leq \eps.
\end{equation}
We first prove that there exists a constant $C_1>0$, independent of $\eps$, such that:
\begin{equation}
\label{small_scalar_v}
\limsup_{n\to+\infty} \left|\int_{|y|\geq 1-\frac{B(\eps)}{s_n}} \left(\nabla v_{n}(0)\cdot w_{0,n}^J+\partial_{\tau}v_{n}(0)w_{1,n}^J\right)\,dy\right|\leq C_1\sqrt{\eps}
\end{equation} 
and
\begin{equation}
\label{small_scalar_u}
\limsup_{n\to+\infty} \left|\int_{|y|\geq 1-\frac{B(\eps)}{s_n}} \left(\nabla u_{n}(0)\cdot w_{0,n}^J+\partial_{\tau}u_{n}(0)w_{1,n}^J\right)\,dy\right|\leq C_1\sqrt{\eps}.
\end{equation} 
Indeed, 
\begin{equation}
\label{decompo_int}
\int_{|y|\geq 1-\frac{B(\eps)}{s_n}} \left(\nabla v_{n}(0)\cdot w_{0,n}^J+\partial_{\tau}v_{n}(0)w_{1,n}^J\right)\,dy=\int_{\RR^3}\ldots-\int_{|y|\leq c_0}\ldots-\int_{c_0\leq |y|\leq 1-B(\eps)/s_n}\ldots
\end{equation} 
The first term in \eqref{decompo_int} goes to $0$ by the property \eqref{weak_CV_wJ} of $w_n^J$. The second term goes to zero using that $v_n(0,y)=s_n^{1/2} v_{\lin}(s_n,s_ny)+o_n(1)$ and Lemma \ref{L:lin_odd}. Finally, by \eqref{energy_in_absurd} and Cauchy-Schwarz inequality, the third term is bounded as $n\to\infty$ by
$$ \sqrt{\eps}\limsup_{n\to\infty} \sqrt{\int |\nabla v_{0,n}|^2+(v_{1,n})^2}=\sqrt{\eps} \sqrt{\int |\nabla v_{\lin}(0)|^2+(\partial_t v_{\lin}(0))^2},$$
which shows \eqref{small_scalar_v}.

To show \eqref{small_scalar_u} , note that for $n_0$ large, the solution $u_{\lin,n_0}$ to the linear wave equation introduced in the proof of Lemma \ref{L:I(A)} satisfies
$$\lim_{n\to \infty} \int_{|x|\geq s_n-B(\eps)} |\nabla_{t,x} u(s_n,x)-\nabla_{t,x} u_{\lin,n_0}(s_n,x)|^2\,dx=0.$$
The proof of \eqref{small_scalar_u} works the same as the proof of \eqref{small_scalar_v}, provided  the conserved linear energy of $u_{\lin,n_0}$ satisfies
$$ \frac 12 \int |\nabla u_{\lin,n_0}(t,x)|^2+\frac 12\int |\partial_t u_{\lin,n_0}(t,x)|^2\leq C_2,$$
where the constant $C_2>0$ is independent of $n_0$. This follows easily from the definition of $u_{\lin,n_0}$ in the proof of Lemma \ref{L:I(A)}, which concludes the proof of \eqref{small_scalar_u}.

We next notice that the assumption that $\ttau_j=0$ for all $j\geq 2$ implies that for all $J\geq 2$,
\begin{multline*}
\int_{|y|\geq 1-B(\eps)/s_n} \nabla u_{n}(0)\cdot \nabla w_{0,n}^J+\partial_{\tau} u_{n}(0)\, w_{1,n}^J\\=
\int_{|y|\geq 1-B(\eps)/s_n} \nabla v_{n}(0)\cdot \nabla w_{0,n}^J+\partial_{\tau} v_{n}(0)\,w_{1,n}^J+\int_{|y|\geq 1-B(\eps)/s_n} |\nabla w_{0,n}^J|^2+(w_{1,n}^J)^2+o_n(1). 
\end{multline*}
Combining with \eqref{small_scalar_v} and \eqref{small_scalar_u} we see that there exists a constant $C>0$ such that for all $\eps>0$,
$$\limsup_{n\to \infty}\int_{|y|\geq 1-B(\eps)/s_n} |\nabla w_{0,n}^J|^2+(w_{1,n}^J)^2\leq C\sqrt{\eps}.$$
Together with \eqref{energy_in_absurd}, we get that 
$$\lim_{n\to\infty}\int_{c_0\leq |y|} |\nabla w_{0,n}^J|^2+(w_{1,n}^J)^2=0,$$
 a contradiction with \eqref{absurd_case3}.

\EMPH{Step 2}
By Step 1, there exist (after extraction) $\eps_0>0$ and a sequence $B_n\to+\infty$ such that $B_n/s_n\to 0$ and
$$ \liminf_{n\to \infty} \int_{c_0\leq |y|\leq 1-B_n/s_n}|\nabla w_{0,n}^J|^2+(w_{1,n}^J)^2\,dy\geq \eps_0.$$
Chosing $J$ large and arguing as in \S \ref{SSS:Case3}, we deduce that the following inequality holds for large $n$ and for all $\tau>0$ or for all $\tau<0$:
\begin{equation}
 \label{ultimate_channel}
\int_{c_0+|\tau|}^{1-B_n/s_n+|\tau|}\Big[(\partial_r(r u_n-rv_n))^2(\tau,r)+(\partial_{\tau}(r u_n-rv_n))^2(\tau,r)\Big]\,dr\geq \eps_0/8.
\end{equation} 
If it holds for all $\tau<0$, we get an easy contradiction taking $\tau=-1$ and letting $n$ goes to infinity.

Assume that \eqref{ultimate_channel}  holds for all $\tau>0$. Following the proof of Lemma \ref{L:I(A)}, we see (after extraction of subsequences) that there exists a continuous non-increasing positive function $\tIII(A)$ such that
$$ \lim_{t\to +\infty}\int_{t+A}^{+\infty}(\partial_r( ru-rv_{\lin}))^2+(\partial_t( ru-rv_{\lin}))^2\,dr=\tIII(A),$$
and
$$\lim_{A\to +\infty} \tIII(A)=0,\quad \lim_{A\to -\infty} \tIII(A)\geq 0.$$
By \eqref{ultimate_channel}, for large $n$,
$$ \tIII\left((c_0-1)s_n\right)-\tIII(-B_n)\geq \eps_0/8,$$
which yields a contradiction letting $n\to+\infty$.
\subsubsection{Existence of a linear component}
We conclude this section by giving a corollary to Proposition \ref{P:global_no_ss}:
\begin{corol}
\label{C:linear_component}
 Let $u$ be a radial solution of \eqref{CP} defined on $[0,+\infty)$ such that
$$ \sup_{t\geq 0} \|\nabla u(t)\|_{L^2}+\|\partial_tu(t)\|_{L^2}<+\infty.$$
Then there exists $(v_0,v_1)\in \hdot\times L^2$ such that:
$$ S_{\lin}(-t)(u(t),\partial_t u(t))\xrightharpoonup[t\to\infty]{}(v_0,v_1)$$
weakly in $\hdot\times L^2$. Furthermore, for all $c_0>0$
$$\lim_{t\to\infty} \int_{|x|\geq c_0t} |\nabla u(t)-\nabla v_{\lin}(t)|^2+(\partial_tu(t)-\partial_tv(t))^2\,dx=0,$$
where $v_{\lin}$ is the solution of the linear equation \eqref{lin_wave} with initial data $(v_0,v_1)$.
\end{corol}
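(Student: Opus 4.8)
The plan is to first prove that the curve $t\mapsto \Sbf_{\lin}(-t)\big(u(t),\partial_t u(t)\big)$ has a \emph{full} (not merely subsequential) weak limit $(v_0,v_1)$ in $\hdot\times L^2$ as $t\to+\infty$. Once this is known, the exterior estimate follows immediately from Proposition \ref{P:global_no_ss}, applied along an arbitrary sequence $s_n\to+\infty$.

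For the weak limit, note that $\big(u(t),\partial_t u(t)\big)$ is bounded in $\hdot\times L^2$ by \eqref{bound_intro} and that $\Sbf_{\lin}(-t)$ is unitary on $\hdot\times L^2$, so it is enough to check that for every radial pair $(\varphi_0,\varphi_1)$ in the dense class $C_0^\infty(\RR^3)\times C_0^\infty(\RR^3)$ the scalar product
$$\big\langle \Sbf_{\lin}(-t)(u(t),\partial_t u(t)),(\varphi_0,\varphi_1)\big\rangle_{\hdot\times L^2}=\big\langle (u(t),\partial_t u(t)),(\psi(t),\partial_t\psi(t))\big\rangle_{\hdot\times L^2}$$
converges as $t\to+\infty$, where $\psi(t)=S_{\lin}(t)(\varphi_0,\varphi_1)$ is the free solution with data $(\varphi_0,\varphi_1)$. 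I would then differentiate in $t$: using \eqref{CP} for $u$, the free equation for $\psi$, and two integrations by parts, all the terms cancel except one, leaving
$$\frac{d}{dt}\,\big\langle (u(t),\partial_t u(t)),(\psi(t),\partial_t\psi(t))\big\rangle_{\hdot\times L^2}=\int_{\RR^3} u(t,x)^5\,\partial_t\psi(t,x)\,dx$$
(equivalently, this identity can be read off Duhamel's formula for $u$; it is justified at the limited regularity of $u$ via the $L^8$ local theory and approximation by smooth solutions). So convergence of the scalar product reduces to integrability on $[1,+\infty)$ of $t\mapsto \int u(t)^5\,\partial_t\psi(t)\,dx$.

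This integrability estimate is where the little real work lies. Say $\supp(\varphi_0,\varphi_1)\subset B_\rho$. By the strong Huygens principle in dimension $3$ (the same principle behind Lemma \ref{L:lin_odd}), $\psi(t)$ is, for $t\ge\rho$, supported in the annulus $\{\,t-\rho\le |x|\le t+\rho\,\}$, on which $|x|\ge t-\rho$; meanwhile the radial (Strauss) Sobolev inequality together with \eqref{bound_intro} gives $|u(t,x)|\le C|x|^{-1/2}$ uniformly in $t$. Combining this with the elementary bound $\|\partial_t\psi(t)\|_{L^1(\RR^3)}\le Ct$ for this fixed radial free solution, one obtains, for $t\ge 2\rho$,
$$\Big|\int_{\RR^3} u(t,x)^5\,\partial_t\psi(t,x)\,dx\Big|\le C\,(t-\rho)^{-5/2}\,\|\partial_t\psi(t)\|_{L^1}\le C\,t^{-3/2},$$
which is integrable at infinity. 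Hence the scalar product converges for every such $(\varphi_0,\varphi_1)$, and by the uniform bound on $\|(u(t),\partial_t u(t))\|_{\hdot\times L^2}$ and density, $\Sbf_{\lin}(-t)(u(t),\partial_t u(t))$ converges weakly in $\hdot\times L^2$ to some $(v_0,v_1)$.

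Finally, let $s_n\to+\infty$ be arbitrary. By the previous step $\Sbf_{\lin}(-s_n)(u(s_n),\partial_t u(s_n))\rightharpoonup (v_0,v_1)$, so the hypothesis of Proposition \ref{P:global_no_ss} is met and it yields, for every $c_0>0$,
$$\lim_{n\to\infty}\int_{|x|\ge c_0 s_n}\Big(|\nabla(u-v_{\lin})(s_n,x)|^2+(\partial_t(u-v_{\lin})(s_n,x))^2\Big)\,dx=0,$$
and since $\{s_n\}$ was arbitrary this gives the claimed limit as $t\to+\infty$. The main obstacle is not deep: it is the decay estimate for $\int u^5\,\partial_t\psi$, which genuinely requires pairing the Huygens confinement of $\psi$ against the pointwise decay of the radial $u$, together with the routine justification of the differentiation identity at the regularity of $u$; the exterior statement itself is then a one-line consequence of Proposition \ref{P:global_no_ss}.
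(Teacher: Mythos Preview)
Your argument is correct and takes a genuinely different route from the paper. The paper never proves the existence of the full weak limit directly; instead, it argues by \emph{uniqueness}: it takes two arbitrary sequences $t_n,t_n'\to+\infty$ with subsequential weak limits $(v_0,v_1)$ and $(v_0',v_1')$, applies Proposition~\ref{P:global_no_ss} along each sequence, and then uses finite speed of propagation together with Lemma~\ref{L:lin_odd} to show that $v_{\lin}$ and $v_{\lin}'$ agree (up to $\eps$) outside $|x|\ge t-R$ and are both small inside, forcing $(v_0,v_1)=(v_0',v_1')$ by conservation of the linear energy. In other words, the paper bootstraps the existence of the weak limit from the same deep proposition it uses for the exterior estimate.

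Your approach instead establishes the weak limit by a direct, elementary computation that is independent of Proposition~\ref{P:global_no_ss}: the Duhamel identity $\frac{d}{dt}\langle(u,\partial_t u),(\psi,\partial_t\psi)\rangle=\int u^5\partial_t\psi$, combined with strong Huygens for $\psi$, the radial Strauss bound $|u(t,x)|\le C|x|^{-1/2}$, and the growth estimate $\|\partial_t\psi(t)\|_{L^1}\le Ct$ for compactly supported radial data. This is a nice self-contained argument that makes the weak convergence a purely ``hard analysis'' fact; it exploits the radial setting and the odd space dimension more explicitly than the paper does. The trade-off is that the paper's soft uniqueness argument would transfer more readily to settings where such pointwise decay or sharp Huygens fail, since it only needs the conclusion of Proposition~\ref{P:global_no_ss} and basic dispersion of the linear flow.
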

\begin{proof}
 Let $t_n,t_n'$ be two sequences of time such that $t_n,t_n'\to +\infty$. Assume that there exists $(v_0,v_1)\in \hdot \times L^2$, $(v_0',v_1')\in \hdot\times L^2$ such that
$$ S_{\lin}(-t_n)(u(t_n),\partial_tu(t_n))\xrightharpoonup[n\to\infty]{}(v_0,v_1),\text{ and }S_{\lin}(-t_n)(u(t_n'),\partial_tu(t_n'))\xrightharpoonup[n\to\infty]{}(v_0',v_1').$$
In view of Proposition \ref{P:global_no_ss}, it is sufficient to check that $(v_0,v_1)=(v_0',v_1')$. 

We denote by $v_{\lin}(t)$ (respectively $v_{\lin}'(t)$) the solution of the linear equation \eqref{lin_wave} with initial data $(v_0,v_1)$ (respectively $(v_0',v_1')$). Fix $\eps>0$. By Lemma \ref{L:lin_odd}, there exists $R>0$ such that
\begin{equation}
 \label{loc_linear}
\limsup_{t\to+\infty} \int_{|x|\leq t-R} \left[|\nabla v_{\lin}(t,x)|^2+(\partial_tv_{\lin}(t,x))^2+|\nabla v_{\lin}'(t,x)|^2+(\partial_tv_{\lin}'(t,x))^2\right]\,dx\leq \eps.
\end{equation} 
Furthermore, by Proposition \ref{P:global_no_ss}, 
$$ \lim_{n\to\infty} 
\int_{|x|\geq t_n-R} |\nabla v_{\lin}(t_n)-\nabla u(t_n)|^2+(\partial_t v_{\lin}(t_n)-\partial_t u(t_n))^2\,dx=0.
$$
Thus by finite speed of propagation
\begin{equation}
 \label{vL_u1} 
\lim_{t\to+\infty} \int_{|x|\geq t-R} |\nabla v_{\lin}(t)-\nabla u(t)|^2+(\partial_t v_{\lin}(t)-\partial_t u(t))^2\,dx=0.
\end{equation} 
The same argument gives the analog of \eqref{vL_u1} with $v_{\lin}$ replaced by $v_{\lin}'$. Hence:
$$\lim_{t\to+\infty} \int_{|x|\geq t-R} |\nabla v_{\lin}(t)-\nabla v_{\lin}'(t)|^2+(\partial_t v_{\lin}(t)-\partial_t v_{\lin}'(t))^2\,dx=0.
$$
Combining with \eqref{loc_linear}, we get:
\begin{equation*}
 \limsup_{t\to +\infty} \int_{\RR^3} |\nabla v_{\lin}(t)-\nabla v_{\lin}'(t)|^2+(\partial_t v_{\lin}(t)-\partial_t v_{\lin}'(t))^2\,dx\leq \eps.
\end{equation*}
As $\eps>0$ is arbitrary small, we get:
\begin{equation*}
 \lim_{t\to +\infty} \int_{\RR^3} |\nabla v_{\lin}(t)-\nabla v_{\lin}'(t)|^2+(\partial_t v_{\lin}(t)-\partial_t v_{\lin}'(t))^2\,dx=0.
\end{equation*}
By conservation of the linear energy, $(v_0,v_1)=(v_0',v_1')$, concluding the proof.
\end{proof}

\section{Expansion as a sum of rescaled stationary solution}
\label{S:expansion}
In this section we prove Theorems \ref{T:bup_sum} and \ref{T:global_sum}. 
\subsection{Expansion up to a dispersive term}
We first show that Proposition \ref{P:no_ss} (respectively Proposition \ref{P:global_no_ss}) implies:
\begin{corol}
\label{C:devt}
Let $u$ be a radial solution of \eqref{CP} such that $T_+(u)=1$. Assume that
$$ \sup_{0\leq t<1}\|(u(t),\partial_tu(t))\|_{\hdot\times L^2}<\infty.$$
 Then there exist $J_0\geq 1$, $\iota_1,\ldots,\iota_{J_0}\in\{\pm 1\}$, sequences $t_n\to 1$,  $\lambda_{1,n},\ldots,\lambda_{J_0,n}>0$ with
\begin{equation}
\label{ordre_lambda}
 \lambda_{1,n}\ll \lambda_{2,n}\ll\ldots\ll \lambda_{J_0,n}\ll 1-t_n,
\end{equation} 
and a sequence $\{w_{0,n}\}_n$, $w_{0,n}\in \hdot$ such that
\begin{equation}
\label{devt_corol}
\lim_{n\to \infty}\left\|
\left(u(t_n)-v_0-\sum_{j=1}^{J_0} \frac{\iota_j}{\lambda_{j,n}^{1/2}}W\left(\frac{\cdot}{\lambda_{j,n}}\right)-w_{0,n},\partial_t u(t_n)-v_1\right)\right\|_{\hdot\times L^2}=0,
\end{equation} 
where $(v_0,v_1)$ is defined in the beginning of Section \ref{S:SSregion} and, denoting by $w_n$ the solution of the linear equation \eqref{lin_wave} with initial data $\left(w_{0,n},0\right)$
\begin{equation}
 \label{dispersive_wn}
\lim_{n\to \infty}\|w_n\|_{L^8(\RR\times \RR^3)}=0.
\end{equation} 
\end{corol}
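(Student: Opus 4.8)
\emph{Plan.} The first move is to replace $u$ by its singular part. Since $v$ is the solution of \eqref{CP} with data $(v_0,v_1)$ at $t=1$, continuity of the flow gives $(v(t),\partial_tv(t))\to(v_0,v_1)$ in $\hdot\times L^2$, so that for \emph{any} $t_n\to 1$ one has $(u(t_n)-v_0,\partial_tu(t_n)-v_1)=(a(t_n),\partial_ta(t_n))+o_n(1)$; it therefore suffices to decompose $(a(t_n),\partial_ta(t_n))$, where $a=u-v$ is bounded in $\hdot\times L^2$ and supported in $\{|x|\le 1-t\}$. I would then renormalize at scale $1-t_n$: set $b_n(\tau,y)=(1-t_n)^{1/2}a\big((1-t_n)\tau+t_n,(1-t_n)y\big)$, a radial solution of \eqref{CP}, supported in $\{|y|\le 1-\tau\}$ and defined on $[-t_n/(1-t_n),1)$, and pass to a subsequence admitting a profile decomposition $\{U^j_{\lin};\tau_{j,n};\lambda_{j,n}\}$ of $(b_n(0),\partial_\tau b_n(0))$ with the usual normalizations \eqref{choice_param}.

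The new input, Proposition \ref{P:no_ss}, is used to localize the profiles. Rescaled, it reads $\int_{c_0\le|y|\le 1}|\nabla_{\tau,y}b_n(0)|^2\,dy\to 0$ for every $c_0\in(0,1)$; combined with the support property this says that all the energy of $(b_n(0),\partial_\tau b_n(0))$ concentrates in $\{|y|\le c_0\}$, for every $c_0>0$. By asymptotic orthogonality of the profiles together with Lemma \ref{L:lin_odd}, this forces $\lambda_{j,n}\to 0$ for every $j$, and $\tau_{j,n}\to 0$ whenever $\tau_{j,n}\ne 0$; hence in the original variables every profile lives at a scale $\ll 1-t_n$ near the origin. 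This is the place where the absence of a smallness hypothesis is first compensated for.

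From here I would follow the scheme of \cite{DuKeMe11a},\cite{DuKeMe10P}. First eliminate the radiative part: a profile with $\tau_{j,n}/\lambda_{j,n}\to\pm\infty$ is, after the previous step, an asymptotically free wave concentrated at the origin at $\tau=0$, whose energy escapes to the light cone $|y|\approx|\tau|$; since $b_n$ is supported in $\{|y|\le 1-\tau\}$, which for $\tau$ near $1$ is disjoint from that region, the exterior energy lower bound of Proposition \ref{P:linear_behavior} and asymptotic orthogonality force such a profile to vanish. The same mechanism, applied to the linear remainder $w^J_n$ (whose data is, up to $o_n(1)$, supported in $B_1$) and using in addition that the energy of $a(0,\cdot)$ on the shrinking annulus $\{t_n\le|x|\le 1\}$ tends to $0$, shows that $w^J_n\to 0$ in $\hdot\times L^2$ in the double limit, hence in $L^8(\RR\times\RR^3)$ by \eqref{small_w}. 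Only profiles with $\tau_{j,n}=0$ survive; each is a radial solution of \eqref{CP} whose trajectory is confined near the origin with no self-similar energy, and by the rigidity/channels-of-energy arguments of \cite{DuKeMe11a},\cite{DuKeMe10P} together with the variational bound of Claim \ref{C:variational}, any nonzero such solution equals $\pm\lambda^{-1/2}W(\cdot/\lambda)$; absorbing $\lambda$ into $\lambda_{j,n}$ gives $U^j=\iota_jW$, $\iota_j\in\{\pm1\}$. In particular these profiles are stationary, so $\partial_\tau U^j_{\lin,n}(0)=0$ and $\partial_ta(t_n)\to 0$ in $L^2$; at least one bubble occurs since $a$ does not scatter, so $J_0\ge1$. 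Collecting everything non-bubble into $w_{0,n}$ (a diagonal extraction in $J$), reordering by the pseudo-orthogonality \eqref{ortho_param} so that $\lambda_{1,n}\ll\dots\ll\lambda_{J_0,n}\ll 1-t_n$, and undoing the reduction of the first paragraph yields \eqref{devt_corol}--\eqref{dispersive_wn}; the global case is identical with Proposition \ref{P:global_no_ss} replacing Proposition \ref{P:no_ss}.

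The delicate point — and the reason this is not a routine profile/perturbation argument — is precisely that the bubble scales $\lambda_{j,n}$ are $\ll$ the renormalized time of existence, so a stationary profile $U^j=W$ "sees" an unbounded time interval, over which its $L^8$ norm diverges; consequently Proposition \ref{P:lin_NL} cannot be applied off the shelf (its hypothesis \eqref{bounded_strichartz} would fail), and both the elimination of the radiation and the identification of the profiles with $W$ must be carried out through the exterior-energy estimates of \cite{DuKeMe11a},\cite{DuKeMe10P} rather than through a global nonlinear profile expansion.
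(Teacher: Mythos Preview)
Your approach diverges from the paper's at a crucial point: you begin from an \emph{arbitrary} sequence $t_n\to 1$ and attempt to classify the resulting profiles via exterior-energy estimates, whereas the paper (through Proposition~5.1 of \cite{DuKeMe11a}, whose analogue is spelled out in the proof of Corollary~\ref{C:global_devt}) first selects a \emph{special} sequence by a virial argument. Concretely, one shows $\lim_{T\to 1}\frac{1}{1-T}\int_T^1\int(\partial_t a)^2\,dx\,dt=0$ and extracts $t_n\to 1$ satisfying both $\|\partial_t a(t_n)\|_{L^2}\to 0$ and the averaged bound $\sup_{\lambda>0}\frac{1}{\lambda}\int_{t_n}^{t_n+\lambda}\|\partial_t a\|_{L^2}^2\,dt\to 0$ (the analogues of Lemmas~\ref{L:average} and~\ref{L:special_sequence}). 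The first property forces $t_{j,n}=0$ and $\partial_t U^j(0)=0$ for \emph{every} profile, so there are no radiative profiles to eliminate. The second property is the input to the identification step: after truncating out profiles at smaller scales, one applies Proposition~\ref{P:lin_NL} on the finite interval $[0,\lambda_{k_0,n}T]$ (where the remaining profiles \emph{do} satisfy \eqref{bounded_strichartz}) and deduces $\int_0^T\int_{|y|\ge|\theta|}(\partial_t U^{k_0})^2\,dy\,d\theta=0$, whence $U^{k_0}\in\{0,\pm W(\cdot/\mu)\}$.

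Your proposed mechanism --- eliminating profiles with $\tau_{j,n}/\lambda_{j,n}\to\pm\infty$ by tracking their energy to the boundary of the support cone, and identifying the compact profiles as $W$ via ``rigidity'' --- is not substantiated. For the first, you would need to control the full nonlinear evolution out to $\tau$ of order $1$, and (as you correctly note) Proposition~\ref{P:lin_NL} fails there; the techniques of \S\ref{SSS:Case1}--\S\ref{SSS:Case3} might be adapted, but that is essentially redoing Section~\ref{S:SSregion}, not invoking its conclusion. For the second, a profile $U^j$ with $\tau_{j,n}=0$ is an arbitrary radial solution of \eqref{CP}; the confinement properties you invoke belong to $a$, not to $U^j$, and no rigidity theorem applies directly without the averaged control above. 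Finally, your claim that $w_n^J\to 0$ in $\hdot\times L^2$ overshoots: Corollary~\ref{C:devt} only asserts \eqref{dispersive_wn} (the $L^8$ smallness, which is the standard profile-decomposition remainder estimate), and the stronger statement $\|w_{0,n}\|_{\hdot}\to 0$ is the content of the separate induction in \S\ref{SS:end_of_proof}.
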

\begin{corol}
\label{C:global_devt}
 Let $u$ be a radial solution of \eqref{CP} defined on $[0,+\infty)$ such that
$$ \sup_{t\geq 0} \|(u(t),\partial_tu(t))\|_{\hdot\times L^2}<\infty.$$
Then there exist $J_0\geq 0$, $\iota_1,\ldots,\iota_j\in \{\pm 1\}$, sequences $s_n\to +\infty$ and $\lambda_{1,n},\ldots,\lambda_{J_0,n}>0$ with
\begin{equation}
\label{ordre_lambda_global}
 \lambda_{1,n}\ll \lambda_{2,n}\ll\ldots\ll \lambda_{J_0,n}\ll s_n,
\end{equation} 
and a sequence $\{w_{0,n}\}_n$, $w_{0,n}\in  \hdot$ such that
\begin{equation}
\label{global_devt_corol}
\lim_{n\to \infty}\left\|
\left(u(s_n)-v_{\lin}(s_n)-\sum_{j=1}^{J_0} \frac{\iota_j}{\lambda_{j,n}^{1/2}}W\left(\frac{\cdot}{\lambda_{j,n}}\right)-w_{0,n},\partial_t u(s_n)-\partial_t v_{\lin}(s_n)\right)\right\|_{\hdot\times L^2}=0,
\end{equation} 
where $v_{\lin}$ is defined in Corollary \ref{C:linear_component} and the solution $w_n$ of \eqref{lin_wave} with initial data $\left(w_{0,n},0\right)$ satisfies \eqref{dispersive_wn}.
\end{corol}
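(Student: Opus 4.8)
The plan is to follow the scheme of Corollary \ref{C:devt} — i.e.\ the arguments of \cite{DuKeMe11a,DuKeMe10P} — now using Proposition \ref{P:global_no_ss} and Corollary \ref{C:linear_component} in place of Proposition \ref{P:no_ss} and the weak-limit construction of the finite-time case. Corollary \ref{C:linear_component} first produces $(v_0,v_1)\in\hdot\times L^2$ with $\Sbf_{\lin}(-t)(u(t),\partial_tu(t))\rightharpoonup(v_0,v_1)$; I set $v_{\lin}=S_{\lin}(\cdot)(v_0,v_1)$ and let $v_{\NL}$ be the nonlinear solution of \eqref{CP} asymptotic to $v_{\lin}$ at $+\infty$, so that the singular part is $a=u-v_{\NL}$. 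A first step is to choose the sequence $s_n\to+\infty$ so that in addition $\|\partial_ta(s_n)\|_{L^2}\to0$: such a choice is available from a localized virial/Pohozaev-type identity for $a$, whose flux terms on light cones are controlled precisely by Proposition \ref{P:global_no_ss} (the argument being the global analogue of the one used in \cite{DuKeMe11a}). Since $\partial_tv_{\NL}(s_n)-\partial_tv_{\lin}(s_n)\to0$, this already yields the velocity component of \eqref{global_devt_corol}, and what remains is the $\hdot$-component.

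Along such a sequence I then renormalize as in the proof of Proposition \ref{P:global_no_ss}, $u_n(\tau,y)=s_n^{1/2}u(s_n(1+\tau),s_ny)$ and $v_n(\tau,y)=s_n^{1/2}v_{\NL}(s_n(1+\tau),s_ny)$, so that Proposition \ref{P:global_no_ss} gives $\int_{|y|\geq c_0}|\nabla_{\tau,y}(u_n-v_n)(0)|^2\,dy\to0$ for every $c_0>0$. After extracting a Bahouri--G\'erard profile decomposition $\{U^j_{\lin};\tau_{j,n},\lambda_{j,n}\}$ of $(u_n(0)-v_n(0),\partial_\tau u_n(0)-\partial_\tau v_n(0))$, Claim \ref{C:profiles_gl} gives $-\tau_{j,n}\to\ttau_j\in[-1,1]$ and $\lambda_{j,n}\to\tlambda_j\in[0,\infty)$ with at most one index carrying a nonzero limiting scale; combining the $|y|$-localization above with almost-orthogonality of the profiles over annuli (as in \eqref{liminf_loc}) then forces that index to carry no energy and forces $\ttau_j=0$, $\tlambda_j=0$ for all $j$. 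Hence every profile concentrates at the spatial origin, at a scale $\lambda_{j,n}\to0$, i.e.\ $\lambda_{j,n}s_n\ll s_n$ in the original variables. Using the boundedness of $u$ to obtain the required Strichartz bounds on the nonlinear profiles, Proposition \ref{P:lin_NL} (applied away from the concentrating bubbles, which have infinite $L^8$ norm and must be handled by hand) provides the expansion $u_n(\tau)=v_n(\tau)+\sum_jU^j_n(\tau)+w^J_n(\tau)+r^J_n(\tau)$ for $\tau$ near $0$, with $r^J_n$ negligible in the sense of \eqref{cond_rJn}.

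The crux is then to show that every nonlinear profile $U^j$ is either negligible (small enough data to disperse and be absorbed into the remainder) or, up to rescaling, equals $\iota_jW$ for some sign $\iota_j$. For this I would argue that a profile possessing a nontrivial ``interior'' piece that radiates energy would, through the exterior ``channels of energy'' estimates for \eqref{lin_wave} (Proposition \ref{P:linear_behavior}), deposit a fixed amount of energy outside a light cone; after rescaling, this energy would end up in the self-similar region at a later time, contradicting Proposition \ref{P:global_no_ss}. Hence the interior piece cannot radiate, and by the rigidity of radial solutions of \eqref{CP} in dimension $3$ — the absence of compact self-similar blow-up together with the classification of radial stationary solutions, cf.\ the discussion after Lemma \ref{L:channel_for_compact} and \cite{KeMe08} — it must be a rescaled $\pm W$; rescaling, $\tau_{j,n}=0$ and $U^j_n(0)=\iota_j\lambda_{j,n}^{-1/2}W(\cdot/\lambda_{j,n})$ with $\lambda_{j,n}\ll s_n$. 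Having isolated these $J_0$ bubbles, pseudo-orthogonality \eqref{ortho_param} (with all $\tau_{j,n}=0$) forces $\lambda_{j,n}/\lambda_{k,n}\to0$ or $+\infty$, so after reordering $\lambda_{1,n}\ll\cdots\ll\lambda_{J_0,n}\ll s_n$. Collecting the negligible profiles together with $w^J_n$ into one dispersive remainder, with position component $w_{0,n}$, and translating back ($\tau=0\leftrightarrow t=s_n$, $v_n\leftrightarrow v_{\NL}(s_n)=v_{\lin}(s_n)+o(1)$) gives \eqref{global_devt_corol}; the bound \eqref{dispersive_wn} follows from \eqref{small_w} and the small-data theory for a suitable $J=J(n)\to+\infty$.

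The main obstacle is the rigidity step just described: proving, \emph{without any smallness hypothesis on $u$}, that each non-dispersive nonlinear profile is exactly a rescaled $\pm W$. This rests on the channels-of-energy method for the linear equation and the classification of radial solutions of \eqref{CP}, and it is precisely the control of the self-similar region supplied by Proposition \ref{P:global_no_ss} that replaces the smallness assumption used in \cite{DuKeMe11a}; handling the concentrating $W$-bubbles alongside the dispersive profiles in the perturbation argument is the other technically delicate point.
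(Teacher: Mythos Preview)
Your outline---Corollary \ref{C:linear_component} for $v_{\lin}$, a virial identity to select $s_n$ with $\|\partial_t(u-v_{\lin})(s_n)\|_{L^2}\to 0$, then a profile decomposition at $s_n$, then rigidity to identify the profiles as $\pm W$---is the right shape, and your argument that the localization from Corollary \ref{C:linear_component} forces $\ttau_j=0$, $\tlambda_j=0$ is correct. But you diverge from the paper at the rigidity step, and your proposed mechanism there is not enough as written.

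The paper does \emph{not} identify the profiles via channels of energy. Instead, the virial argument (Lemma \ref{L:average}) is used to extract a sequence $s_n$ with \emph{two} properties: the pointwise one you state, \emph{and} the time-averaged control \eqref{special_sequence2}, namely $\sup_{\lambda>0}\frac{1}{\lambda}\int_{s_n}^{s_n+\lambda}\int(\partial_tu-\partial_tv_{\lin})^2\to 0$. This second property is the engine of the rigidity step: after truncating the faster-concentrating profiles (so that Proposition \ref{P:lin_NL} applies on $[s_n,s_n+T\lambda_{k_0,n}]$), one passes to the weak limit to see $\lambda_{k_0,n}^{3/2}\partial_t(\tu_n-v_{\lin})(s_n+\lambda_{k_0,n}\theta,\lambda_{k_0,n}\cdot)\rightharpoonup\partial_tU^{k_0}(\theta)$, and then uses \eqref{special_sequence2} together with dominated convergence and finite speed of propagation to get $\int_0^T\int_{|y|\geq|\theta|}(\partial_tU^{k_0})^2=0$. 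The classification of radial $\hdot$ solutions of $\Delta f+f^5=0$ then forces $U^{k_0}\in\{0,\pm W\}$ up to scaling. You never invoke \eqref{special_sequence2}, and without it the identification fails.

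Your alternative---a profile that is not $\pm W$ would radiate via Proposition \ref{P:linear_behavior} and eventually contradict the self-similar control---is not a complete argument. Proposition \ref{P:linear_behavior} is a linear estimate; to apply it to a nonlinear profile $U^{k_0}$ you would have to cut off a small outer shell and track it through the nonlinear flow, exactly as in Section \ref{S:SSregion}. But there the contradiction comes from energy appearing \emph{outside the support of $a$} (finite-time case) or in a region where $\III(A)$ has already stabilized (global case, Lemma \ref{L:I(A)}). Here the putative channel lives at scale $\lambda_{k_0,n}\ll s_n$, so to reach the self-similar region one must run the evolution for times of order $s_n/\lambda_{k_0,n}\to\infty$ in the profile variable; this requires global control on $U^{k_0}$ that you do not have, and the other (possibly larger) profiles would interfere along the way. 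The paper's weak-limit argument sidesteps all of this by working on a \emph{bounded} time interval in the profile variable and using \eqref{special_sequence2} to kill the time derivative directly.
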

Corollary \ref{C:devt} follows from Proposition 5.1 in \cite{DuKeMe11a} and  Proposition \ref{P:no_ss}, and we omit the proof. The proof of Corollary \ref{C:global_devt} is very similar. We sketch it for completeness. We divide the proof into a few lemmas.
\begin{lemma}
 \label{L:average}
Let $u$ be as in Corollary \ref{C:global_devt}. Then
$$ \lim_{T\to+\infty} \frac{1}{T}\int_0^T \int_{\RR^3} (\partial_t u-\partial_t v_{\lin})^2(t,x)\,dx\,dt=0.$$
\end{lemma}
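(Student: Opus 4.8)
The plan is to run a truncated scaling‑multiplier (virial) identity for $u$ itself, and to read off from Proposition~\ref{P:global_no_ss} — equivalently Corollary~\ref{C:linear_component} — that every term produced by the truncation is supported in an annulus $\{\,|x|\sim\delta t\,\}$ on which all the quantities that occur are small. Throughout, $v_{\NL}$ denotes the solution of \eqref{CP}, defined on $[0,+\infty)$, asymptotic in $\hdot\times L^2$ to $v_{\lin}$ as $t\to+\infty$ (as in \S\ref{SS:self_simG}), and $h:=u-v_{\NL}$; since $\|(v_{\NL}-v_{\lin})(t)\|_{\hdot\times L^2}\to0$ and the Cesàro average of a bounded quantity tending to $0$ tends to $0$, it is enough to prove $\frac1T\int_0^T\|\partial_t h(t)\|_{L^2}^2\,dt\to0$.

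First I would fix $\delta\in(0,\tfrac12)$ and record two ``localization at scale $\delta t$'' facts. (a) By Corollary~\ref{C:linear_component}, $\int_{|x|\ge\delta t}\bigl(|\nabla h(t)|^2+(\partial_t h(t))^2\bigr)\,dx\to0$; moreover, extending the radial function $h(t)$ from $\{|x|\ge\delta t\}$ to $\RR^3$ by its constant value on $\{|x|=\delta t\}$ and applying the Sobolev inequality gives $\int_{|x|\ge\delta t}h(t)^6\,dx\lesssim\|\nabla h(t)\|_{L^2(|x|\ge\delta t)}^6\to0$. (b) Since $v_{\NL}(t)$ is asymptotic in $\hdot\times L^2$ to the radial linear solution $v_{\lin}(t)$, classical properties of finite-energy linear waves (the strong Huygens principle plus a density argument, cf.\ Lemma~\ref{L:lin_odd}) give, using $2\delta<1$, $\int_{|x|\le2\delta t}\bigl(|\nabla v_{\NL}(t)|^2+(\partial_t v_{\NL}(t))^2+v_{\NL}(t)^6\bigr)\,dx\to0$. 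Writing $u=v_{\NL}+h$ and combining (a)--(b) with the uniform energy bound yields, as $t\to+\infty$,
\[
\int_{\delta t\le|x|\le2\delta t}\bigl(|\nabla u|^2+(\partial_t u)^2+u^6\bigr)\,dx\to0,\qquad
\int\chi(x/R)\,(\partial_t u)^2\,dx=\|\partial_t h(t)\|_{L^2}^2+o_t(1),
\]
where $R=R(t):=\delta t$ (for $t\ge1$) and $\chi\in C^\infty_0(\RR^3)$ is radial with $\chi\equiv1$ on $B_1$ and $\supp\chi\subset B_2$.

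Next, set $G(t):=\int\chi(x/R)\,\partial_t u\,\bigl(x\cdot\nabla u+\tfrac12 u\bigr)\,dx$. By the Hardy inequality $\|u/|x|\|_{L^2}\lesssim\|\nabla u\|_{L^2}$ and the energy bound, $|G(t)|\le C\delta t$. A direct computation from $\partial_t^2u=\Delta u+u^5$ gives $G'(t)=-\int\chi(x/R)(\partial_t u)^2\,dx+\mathcal R(t)$, where $\mathcal R(t)$ is a finite sum of terms each carrying a factor $\partial_t[\chi(x/R)]$, $x\cdot\nabla[\chi(x/R)]$ or $\nabla[\chi(x/R)]$, so all are supported on $\{\delta t\le|x|\le2\delta t\}$. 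The only way the nonlinearity enters is through the \emph{exact} cancellation $\int\chi(x/R)\,u^5\bigl(x\cdot\nabla u+\tfrac12 u\bigr)\,dx=-\tfrac16\int\bigl(x\cdot\nabla[\chi(x/R)]\bigr)u^6\,dx$, which is again only a truncation error. Estimating each term of $\mathcal R(t)$ by Cauchy--Schwarz, keeping the gradient \emph{localized} and using the Hardy inequality for the lower-order piece $\tfrac12 u$ of the multiplier, one finds
\[
|\mathcal R(t)|\le C\int_{\delta t\le|x|\le2\delta t}\bigl(|\nabla u|^2+(\partial_t u)^2+u^6\bigr)\,dx
+C\Bigl(\int_{\delta t\le|x|\le2\delta t}\bigl(|\nabla u|^2+(\partial_t u)^2\bigr)\,dx\Bigr)^{1/2},
\]
so $\mathcal R(t)\to0$ as $t\to+\infty$ by the first step. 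Integrating on $[1,T]$, dividing by $T$ and using $|G|\le C\delta t$ together with $\int\chi(x/R)(\partial_t u)^2=\|\partial_t h\|_{L^2}^2+o_t(1)$, one gets $\frac1T\int_0^T\|\partial_t h\|_{L^2}^2\,dt=O(\delta)+o_T(1)$; letting $T\to+\infty$ and then $\delta\to0$ concludes the proof.

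The main obstacle is exactly this second step: showing that \emph{every} quantity appearing on the annulus $\{|x|\sim\delta t\}$ is small uniformly in $t$. It combines the \emph{exterior} smallness of $h=u-v_{\NL}$ — which is precisely the no--self-similar-concentration statement of Proposition~\ref{P:global_no_ss} — with the \emph{interior} smallness of $v_{\NL}$, and one must be careful to bound the truncation errors through the localized norm $\|\nabla u\|_{L^2(\delta t\le|x|\le2\delta t)}$, which does tend to $0$, rather than through a global Hardy estimate, since $\|\nabla u(t)\|_{L^2(|x|\ge\delta t)}$ does \emph{not} tend to $0$ (the free radiation $v_{\lin}$ carries order-one energy near $|x|=t$). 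A minor technical point is the justification of the virial identity for solutions of limited regularity, handled by the usual smooth approximation together with the uniform energy bound.
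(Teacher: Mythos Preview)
Your argument is correct and gives a genuinely different (and rather neat) variant of the paper's proof.

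The paper applies the scaling multiplier to \emph{both} $u$ and $v_{\lin}$ with a cut-off $\varphi(x/t)$ at the full scale $t$, and subtracts: it works with
\[
 Z(t)=\tfrac{1}{2}\int \varphi\!\left(\tfrac{x}{t}\right)\bigl(u\partial_t u-v_{\lin}\partial_tv_{\lin}\bigr)+\int \varphi\!\left(\tfrac{x}{t}\right)\bigl(x\cdot\nabla u\,\partial_t u-x\cdot\nabla v_{\lin}\,\partial_tv_{\lin}\bigr),
\]
so that the truncation remainder $R(t)$ lives on $\{|x|\ge \tfrac{5}{4}t\}$, hence is $o(1)$ simply by finite speed of propagation --- no appeal to Proposition~\ref{P:global_no_ss} is needed for this term. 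The work is then shifted to two places: (i) relating the main term $\int\bigl((\partial_tu)^2-(\partial_tv_{\lin})^2\bigr)$ to $\int(\partial_tu-\partial_tv_{\lin})^2$, and (ii) showing $Z(t)/t\to0$; both use a splitting at an auxiliary small scale $\eps_0 t$ together with Corollary~\ref{C:linear_component} and the concentration of $v_{\lin}$ near $|x|=t$.

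You instead apply the multiplier only to $u$, with a cut-off $\chi(x/(\delta t))$ at the \emph{small} scale $\delta t$. This buys you two things at once: the localized main term $\int\chi(x/(\delta t))(\partial_tu)^2$ is already equal to $\|\partial_t h\|_{L^2}^2+o_t(1)$ (no ``difference of squares'' step is needed), and the bound $|G(t)|\le C\delta t$ is immediate from $|x|\le 2\delta t$ on the support, with a constant independent of $\delta$. The price is that your truncation error $\mathcal R(t)$ now lives on the annulus $\{\delta t\le|x|\le2\delta t\}$, and to kill it you must combine the interior smallness of $v_{\NL}$ (Huygens $+$ density) with the exterior smallness of $h$ (Proposition~\ref{P:global_no_ss}). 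Your tracking of which factors must stay localized and which can be handled by global Hardy is exactly what is needed, and the error bound you write is correct.

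In short: the paper pushes all truncation to $|x|>t$ where it is trivially small, then does the splitting at scale $\eps_0 t$ in the main term; you put the splitting directly into the cut-off, which streamlines the main-term analysis at the cost of a slightly more delicate (but perfectly sound) remainder estimate on the annulus.
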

\begin{proof}
Let $\varphi\in C_0^{\infty}(\RR^3)$, radial such that $\varphi(x)=1$ if $|x|\leq \frac{3}{2}$ and $\varphi(x)=0$ if $|x|\geq 2$. Let
\begin{equation}
\label{def_Z}
 Z(t)=\frac{1}{2}\int_{\RR^3}\varphi\left(\frac{x}{t}\right)\left(u\partial_t u-v_{\lin}\partial_tv_{\lin}\right)\,dx+
\int_{\RR^3}\varphi\left(\frac{x}{t}\right)\left(x\cdot \nabla u\partial_t u-x\cdot\nabla v_{\lin}\partial_tv_{\lin}\right)\,dx.
\end{equation}
Using that $u$ satisfies \eqref{CP} and $v_{\lin}$ satisfies \eqref{lin_wave}, we get (cf e.g. \cite[Claim 2.11]{DuKeMe10P})
\begin{equation}
 \label{Z'}
Z'(t)=-\int \left((\partial_tu)^2-(\partial_t v)^2\right)\,dx +R(t),
\end{equation} 
where
$$ |R(t)|\leq C\int_{|x|\geq \frac{5}{4}|t|} \left(|\nabla u|^2+(\partial_tu)^2+u^6+|\nabla v_{\lin}|^2+(\partial_t v_{\lin})^2\right)\,dx$$
and thus by finite speed of propagation:
\begin{equation}
\label{R(t)0}
 \lim_{t\to +\infty} R(t)=0.
\end{equation}
Furthermore,
$$ \int \left((\partial_tu)^2-(\partial_t v_{\lin})^2\right)\,dx =\int \left(\partial_t u-\partial_t v_{\lin}\right)^2\,dx+2\int\left(\partial_t u-\partial_t v_{\lin}\right)\partial_t v_{\lin}\,dx.$$
By Lemma \ref{L:lin_odd},
$$ \lim_{t\to+\infty} \int_{|x|\leq \frac{1}{2}|t|} (\partial_t v_{\lin}(t))^2=0.$$
By Corollary \ref{C:linear_component},
$$ \lim_{t\to +\infty} \int_{|x|\geq \frac{1}{2}|t|}(\partial_t u-\partial_t v_{\lin})^2\,dx=0.$$
Combining, we get
$$ \int \big((\partial_tu)^2-(\partial_t v)^2\big)\,dx -\int (\partial_tu-\partial_tv_{\lin}))^2\,dx\underset{t\to +\infty}{\longrightarrow}0,$$
and hence
\begin{equation}
\label{asymptoticZ'}
\lim_{t\to+\infty}
\left|Z'(t) +\int (\partial_t u-\partial_tv_{\lin})^2\,dx\right|=0.
\end{equation} 
We next show
\begin{equation}
 \label{Ztt0}
\lim_{t\to+\infty}\frac{Z(t)}{t}=0.
\end{equation} 
Assuming \eqref{Ztt0} and integrating \eqref{asymptoticZ'} between $0$ and $T$, we get the conclusion of the Lemma.

We have:
\begin{equation}
\label{(i)and(ii)}
\int_{\RR^3}\varphi\left(\frac{x}{t}\right)(u\partial_t u-v_{\lin}\partial_tv_{\lin})=\underbrace{\int_{\RR^3} \varphi\left(\frac{x}{t}\right)u(\partial_tu-\partial_t v_{\lin})}_{(i)}+\underbrace{\int_{\RR^3} \varphi\left(\frac{x}{t}\right) (u-v_{\lin})\partial_tv_{\lin}}_{(ii)}.
\end{equation}
Let $\eps_0>0$ and write: 
\begin{equation}
\label{(i)}
 (i)=\int_{|x|\leq \eps_0t} \varphi\left(\frac{x}{t}\right)u(\partial_tu-\partial_tv_{\lin})+\int_{|x|\geq \eps_0t} \varphi\left(\frac{x}{t}\right)u(\partial_tu-\partial_tv_{\lin}).
\end{equation}
Using Hardy's inequality, the first term in \eqref{(i)} can be bounded from above by 
$$\int_{|x|\leq \eps_0t} |x|\frac{1}{|x|}|u|\,|\partial_tu-\partial_tv_{\lin}|\leq C_0\eps_0t,$$
where $C_0$ depends only on $\sup_{t\geq 0} \left[\|(u,\partial_t u)\|_{\hdot\times L^2}+\|(v_{\lin},\partial_t v_{\lin})\|_{\hdot\times L^2}\right]$. Similarly, the second term of \eqref{(i)} can be bounded as follows
$$t\int_{|x|\geq \eps_0t} \frac{|x|}{t}\varphi\left(\frac{x}{t}\right)\frac{1}{|x|}\Big|u(\partial_tu-\partial_tv_{\lin})\Big|\leq C_0t\sqrt{\int_{|x|\geq \eps_0t} |\partial_tu-\partial_tv_{\lin}|^2}.$$
By Corollary \ref{C:linear_component}, we get
$$ \lim_{t\to+\infty} \frac{1}{t}\left|\int_{|x|\geq \eps_0t}\varphi\left(\frac{x}{t}\right)u(\partial_t u-\partial_tv_{\lin})\,dx\right|=0.$$
Combining the preceding estimates, we get that the term $(i)$ in \eqref{(i)and(ii)} satisfies $\limsup_{t\to+\infty}\frac{1}{t}|(i)|\leq\eps_0$, and thus, as $\eps_0$ is arbitrary small,
$$\lim_{t\to+\infty}\frac{1}{t}(i)=0.$$
By the same argument on $(ii)$, we get \eqref{Ztt0}, which concludes the proof of the Lemma.
\end{proof}
\begin{lemma}
\label{L:special_sequence}
Let $u$ be as in Corollary \ref{C:global_devt}.
 Then there exists a sequence $s_n\to+\infty$ such that
\begin{gather}
\label{special_sequence1}
 \lim \int_{\RR^3} (\partial_tu(s_n,x)-\partial_t v_{\lin}(s_n,x))^2\,dx=0\\
\label{special_sequence2}
\lim_{n\to+\infty} \sup_{\lambda>0} \frac{1}{\lambda}\int_{s_n}^{s_n+\lambda} \int_{\RR^3} (\partial_tu(t,x)-\partial_t v_{\lin}(t,x))^2\,dx\,dt=0.
\end{gather}
\end{lemma}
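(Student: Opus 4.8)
The plan is to reduce the statement to a one–dimensional ``rising sun'' argument applied to the time–average decay already furnished by Lemma~\ref{L:average}. Write
$$ g(t)=\int_{\RR^3}\big(\partial_t u(t,x)-\partial_t v_{\lin}(t,x)\big)^2\,dx\ \ge\ 0 .$$
Since $(u,\partial_t u)\in C^0([0,+\infty);\hdot\times L^2)$ and $(v_{\lin},\partial_t v_{\lin})\in C^0(\RR;\hdot\times L^2)$, the function $g$ is continuous on $[0,+\infty)$, and Lemma~\ref{L:average} says precisely that $\frac1T\int_0^T g(t)\,dt\to 0$ as $T\to+\infty$. What we must produce is a sequence $s_n\to+\infty$ along which both $g(s_n)\to 0$ and the \emph{forward} maximal average $\Phi(s):=\sup_{\lambda>0}\frac1\lambda\int_s^{s+\lambda}g(t)\,dt$ tends to $0$, since $\Phi(s_n)$ is exactly the quantity in \eqref{special_sequence2}.

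First I would fix $\eps>0$, set $F(t)=\int_0^t g(\sigma)\,d\sigma$ and $G_\eps(t)=F(t)-\eps t$. From Lemma~\ref{L:average} we have $F(t)/t\to 0$, hence $G_\eps(t)/t\to-\eps$, so $G_\eps(t)\to-\infty$ as $t\to+\infty$. Consequently, for every $R\ge 0$ the continuous function $G_\eps$ attains its supremum over $[R,+\infty)$ at some finite point $s=s(R,\eps)\ge R$, and for such an $s$ one automatically has $G_\eps(t)\le G_\eps(s)$ for all $t\ge s$, because $\sup_{t\ge s}G_\eps\le\sup_{t\ge R}G_\eps=G_\eps(s)$. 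Thus the contact set $A_\eps:=\{s\ge 0:\ G_\eps(t)\le G_\eps(s)\ \text{for all}\ t\ge s\}$ is unbounded. This is the only ``geometric'' input; everything else is bookkeeping.

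Next, for $s\in A_\eps$ the inequality $G_\eps(s+\lambda)\le G_\eps(s)$ reads $\int_s^{s+\lambda}g\le\eps\lambda$ for every $\lambda>0$, so $\Phi(s)\le\eps$; letting $\lambda\to 0^+$ and using the continuity of $g$ gives in addition $g(s)\le\eps$. Applying this with $\eps=1/n$ and choosing, for each $n\ge 1$, a point $s_n\in A_{1/n}$ with $s_n\ge n$, I obtain $s_n\to+\infty$ with $g(s_n)\le 1/n$, which is \eqref{special_sequence1}, and $\sup_{\lambda>0}\frac1\lambda\int_{s_n}^{s_n+\lambda}g\le 1/n$, which is \eqref{special_sequence2}. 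This completes the argument.

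As for the main difficulty: there is essentially none of a serious technical nature, once one isolates the correct auxiliary function $G_\eps$ and exploits $G_\eps(t)\to-\infty$ to locate contact points arbitrarily far out. The two points that must not be overlooked are that one needs the forward (one–sided) maximal average over intervals $[s,s+\lambda]$, matching \eqref{special_sequence2} rather than a symmetric one, and that the continuity of $g$ — rather than a more delicate weak–type $(1,1)$ / Lebesgue–point argument — is what allows one to pass from the integral bound $\Phi(s)\le\eps$ to the pointwise bound $g(s)\le\eps$ \emph{on the same set} $A_\eps$, thereby getting \eqref{special_sequence1} and \eqref{special_sequence2} along one and the same sequence.
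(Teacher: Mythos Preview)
Your proof is correct. The paper does not actually give a proof of this lemma; it simply asserts that it ``follows from Lemma~\ref{L:average} from a straightforward argument'' and refers to \cite[Corollary~5.3]{DuKeMe11a} for an essentially identical argument, so there is no detailed proof in the paper to compare against. Your rising--sun construction --- passing to $G_\eps(t)=\int_0^t g-\eps t$, using $G_\eps(t)\to-\infty$ to locate global maxima on every half-line $[R,\infty)$, and reading off both $\Phi(s)\le\eps$ and $g(s)\le\eps$ at such points --- is exactly the kind of direct, elementary deduction from Lemma~\ref{L:average} that the paper has in mind, and it cleanly delivers \eqref{special_sequence1} and \eqref{special_sequence2} along one and the same sequence.
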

Lemma \ref{L:special_sequence} follows from Lemma \ref{L:average} from a straightforward argument. We omit the proof, which is almost identical to the one of \cite[Corollary 5.3]{DuKeMe11a}. 

We next show Corollary \ref{C:global_devt}. Let $\{s_n\}_n$ be the sequence of times given by Lemma \ref{L:special_sequence}. 

 Consider a profile decomposition for the sequence $\big\{(u(s_n)-v_{\lin}(s_n),\partial_tu(s_n)-\partial_tv_{\lin}(s_n))\big\}_n$, with profiles $U_{\lin}^j$ and parameters $\{t_{j,n},\lambda_{j,n}\}$. As usual we can consider this as a profile decomposition for the sequence $(u(s_n),\partial_t u(s_n))$ with the extra profile $(v_{\lin}(s_n),\partial_t v_{\lin}(s_n))$.

By \eqref{special_sequence1} in Lemma \ref{L:special_sequence} and expansion \eqref{pythagore1b}, 
$$\lim_{n\to\infty} \left\|\partial_tU^j_{\lin}\left(\frac{-t_{j,n}}{\lambda_{j,n}}\right)\right\|_{L^2}=0.$$
This shows that the sequence $\left\{-\frac{t_{j,n}}{\lambda_{j,n}}\right\}_n$ is bounded if $U^j_{\lin}\neq 0$. Thus we can assume $t_{j,n}=0$ for all $j,n$.
By \eqref{special_sequence1} again, $\partial_t U^j(0)=0$ for all $j$. 

We next fix a small $\delta_0>0$ and reorder the profiles, so that there exists a $J_0\geq 1$ such that 
$$\|\nabla U^j_{0,\lin}\|_{L^2}^2\geq \delta_0\text{ for }j=1,\ldots,J_0\qquad \text{and}\qquad\|\nabla U^j_{0,\lin}\|_{L^2}^2<\delta_0\text{ for }j>J_0.$$
Note that we can always assume $J_0\geq 1$: if not, Proposition \ref{P:lin_NL} would imply that the solution $u$ scatters, giving the conclusion in this case.

We will show that $U^j=W$ (up to transformations) for $j\leq J_0$ and $U^j=0$ for $j>J_0$.

Fix $k_0\geq 1$, and consider a sequence $\{\tlambda_n\}_n$ of positive numbers such that:
\begin{gather}
\label{tlambda_n_1}
 \tlambda_n\ll \lambda_{k_0,n}\\
\label{tlambda_n_2}
\forall j\in\{1,\ldots,J_0\},\quad \lambda_{j,n}\ll \lambda_{k_0,n}\Longrightarrow \lambda_{j,n}\ll \tlambda_n.
\end{gather}
Let $\varphi \in C^{\infty}(\RR^3)$ such that $\varphi(x)=1$ if $|x|\geq \frac{3}{4}$ and $\varphi(x)=0$ if $|x|\leq \frac 12$. 
Consider 
\begin{equation*}
 \tu_{0,n}(x)=v_{\lin}(s_n,x)+\varphi\left(\frac{x}{\tlambda_n}\right)\sum_{j=1}^{J_0} \frac{1}{\lambda_{j,n}}U_{0,\lin}^j\left(\frac{x}{\lambda_{j,n}}\right)+w_{0,n}^{J_0}(x),\quad 
\tu_{1,n}(x)=\partial_tu(s_n,x).
\end{equation*}
Let $\{\tu_n\}$ be the sequence of solutions of \eqref{CP} such that 
$$\big(\tu_n(s_n),\partial_t\tu_n(s_n)\big)=(\tu_{0,n},\tu_{1,n}).$$
 Let $T>0$ be in the domain of definition of $U^{k_0}$. Noting that 
$$ \tu_{0,n}=v_{\lin}(s_n)+\sum_{\substack{j=1\ldots J_0\\\lambda_{j,n}\gg \lambda_{k_0,n}}} \frac{1}{\lambda_{j,n}^{1/2}}U^j_{0,\lin}\left(\frac{\cdot}{\lambda_{j,n}}\right)+\sum_{j=J_0+1\ldots J} \frac{1}{\lambda_{j,n}^{1/2}}U^j_{0,\lin}\left(\frac{\cdot}{\lambda_{j,n}}\right)+w_{0,n}^{J}+o_n(1)\text{ in }\hdot,$$
we get by Proposition \ref{P:lin_NL} that for $t\in[0,\lambda_{k_0,n}T)$,
\begin{multline*}
\tu_{n}(s_n+t,x)=v_{\lin}(s_n+t,x)+\sum_{\substack{j=1\ldots J_0\\\lambda_{j,n}\gg \lambda_{k_0,n}}} \frac{1}{\lambda_{j,n}^{1/2}}U^j\left(\frac{t}{\lambda_{j,n}},\frac{x}{\lambda_{j,n}}\right)\\
+\sum_{j=J_0+1\ldots J} \frac{1}{\lambda_{j,n}^{1/2}}U^j\left(\frac{t}{\lambda_{j,n}},\frac{x}{\lambda_{j,n}}\right)+w_n^{J}(t,x)+r_n^J(t,x),
\end{multline*}
where 
$$\lim_{J\to\infty}\limsup_{n\to\infty}\left[\sup_{t\in[0,\lambda_{k_0,n}T)} \left\|\big(r_n^J(t),\partial_tr_n^J(t)\big)\right\|_{\hdot\times L^2}\right]=0.$$
This implies that for all $\theta\in[0,T]$, 
$$\lambda_{k_0,n}^{3/2}(\partial_t \tu_n-\partial_t v_{\lin})\left(s_n+\lambda_{k_0,n}\theta,\lambda_{k_0,n}\cdot\right)\xrightharpoonup[n\to\infty]{}\partial_t U^{k_0}(\theta),$$
weakly in $L^2$. In particular, fixing $\eps_0>0$,
\begin{multline*}
\forall\theta\in[0,T],\quad \lim_{n\to\infty}\int_{|y|\geq \eps_0+|\theta|}\lambda_{k_0,n}^{3/2}(\partial_t \tu_n-\partial_t v_{\lin})\left(s_n+\lambda_{k_0,n}\theta,\lambda_{k_0,n}y\right)\partial_tU^{k_0}(\theta,y)\,dy\\
=\int_{|y|\geq \eps_0+|\theta|}\left(\partial_t U^{k_0}(\theta,y)\right)^2dy, 
\end{multline*}
and by the dominated convergence theorem,
\begin{multline}
\label{dominated} 
\lim_{n\to\infty}\int_0^T\int_{|y|\geq \eps_0+|\theta|}\lambda_{k_0,n}^{3/2}(\partial_t \tu_n-\partial_t v_{\lin})\left(s_n+\lambda_{k_0,n}\theta,\lambda_{k_0,n}y\right)\partial_tU^{k_0}(\theta,y)\,dy\,d\theta\\
=\int_0^T\int_{|y|\geq \eps_0+|\theta|}\left(\partial_t U^{k_0}(\theta,y)\right)^2dy\,d\theta. 
\end{multline} 
The left-hand side in \eqref{dominated} is equal to
\begin{multline*}
 \frac{1}{\lambda_{k_0,n}}\int_0^{\lambda_{k_0,n}T} \int_{|x|\geq \lambda_{k_0,n}\eps+t} (\partial_t\tu_n-\partial_tv_{\lin})(s_n+t,x)\frac{1}{\lambda_{k_0,n}^{3/2}}\partial_tU^k\left(\frac{t}{\lambda_{k_0,n}},\frac{x}{\lambda_{k_0,n}}\right)\,dx\,dt\\
=
\frac{1}{\lambda_{k_0,n}}\int_0^{\lambda_{k_0,n}T} \int_{|x|\geq \lambda_{k_0,n}\eps+t} (\partial_tu-\partial_tv_{\lin})(s_n+t,x)\,\frac{1}{\lambda_{k_0,n}^{3/2}}\partial_tU^k\left(\frac{t}{\lambda_{k_0,n}},\frac{x}{\lambda_{k_0,n}}\right)\,dx\,dt\underset{n\to+\infty}{\longrightarrow} 0
\end{multline*}
for large $n$. We used that for large $n$, $\lambda_{k_0,n}\eps\geq \tlambda_n$, and that by finite speed of propagation, $\tu(s_n+t,x)=u(s_n+t,x)$ for $|x|\geq |t| +\frac 34 \tlambda_n$. As a consequence, for all $\eps_0>0$, 
$$\int_0^T \int_{|y|\geq \eps_0+|\theta|} (\partial_t U^{k_0}(\theta,y))^2\,dy\,d\theta=0,$$
which shows
\begin{equation}
\label{zerodt}
\int_0^T \int_{|y|\geq |\theta|}(\partial_t U^{k_0}(\theta,y))^2\,dy\,d\theta=0. 
\end{equation} 
Recall that $W$ is (up to scaling and change of sign) the only nonzero $\hdot$, radial solutions of $\Delta f+f^5=0$ on $\RR^3$. Using \eqref{zerodt}, we deduce as in \cite{DuKeMe11a} (see the end of the proof of Proposition 5.1) that $U^{k_0}=0$ or $U^{k_0}=\pm \frac{1}{\mu_{k_0}^{3/2}}W\left(\frac{\cdot}{\mu_{k_0}}\right)$ for some $\mu_{k_0}>0$.
\subsection{End of the proof}
\label{SS:end_of_proof}
In this Subsection we conclude the proofs of Theorems \ref{T:bup_sum} and \ref{T:global_sum}.  The proofs are very similar, and we will focus on the proof of Theorem \ref{T:bup_sum} (finite time blow-up case), leaving the proof of the other case to the reader. 

Assume that $u$ satisfies the assumptions of Theorem \ref{T:bup_sum} (with $T_+(u)=1$) and let $w_{0,n}$ be as in Corollary \ref{C:devt}. Theorem \ref{T:bup_sum} will follow  from Corollary \ref{C:devt} and:
\begin{equation}
 \label{wn0}
\lim_{n\to\infty}\|w_{0,n}\|_{\hdot}=0.
\end{equation} 

We will deduce \eqref{wn0} from the following lemma:
\begin{lemma}
\label{L:induction}
Let $u$ be as in Corollary \ref{C:devt}.
 For all $k=1,\ldots,J_0+1$, there exists (after extraction of subsequences in $n$), a sequence $\left\{t_n^k\right\}_n$ such that
$t_n\leq t_n^k<1$ and, for large $n$,
\begin{equation}
\label{utn-un}
 \forall x\in \RR^3, \quad |x|\geq t_n^k-t_n\Longrightarrow \left(u\left(t_n^k,x\right),\partial_t u\left(t_n^k,x\right)\right)=\left(u_{0,n}^k(x),u_{1,n}^k(x)\right),
\end{equation} 
where $\left(u_{0,n}^k,u_{1,n}^k\right)\in \hdot \times L^2$ and
\begin{multline}
\label{devt_lemma}
\lim_{n\to \infty}\left\|
u_{0,n}^k-v(t_n^k)-\sum_{j=k}^{J_0} \frac{\iota_j}{\lambda_{j,n}^{1/2}}W\left(\frac{\cdot}{\lambda_{j,n}}\right)-w_{n}(t_n^k-t_n)\right\|_{\hdot}\\
=
\lim_{n\to \infty}\left\|u_{1,n}^k-\partial_t v(t_n^k)-\partial_tw_n\left(t_n^k-t_n\right)\right\|_{L^2}=0.
\end{multline} 
\end{lemma}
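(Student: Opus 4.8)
The plan is to prove the lemma by induction on $k$, peeling off at each stage the innermost (smallest‑scale) bubble. For the base case $k=1$, take $t_n^1=t_n$ and $(u_{0,n}^1,u_{1,n}^1)=(u(t_n),\partial_tu(t_n))$, so that \eqref{utn-un} is vacuous; since $v$ solves \eqref{CP} with data $(v_0,v_1)$ at $t=1$, continuity of the flow gives $(v(t_n),\partial_tv(t_n))\to(v_0,v_1)$ in $\hdot\times L^2$, and $(w_n(0),\partial_tw_n(0))=(w_{0,n},0)$, so \eqref{devt_lemma} for $k=1$ is exactly \eqref{devt_corol}.

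For the inductive step $k\to k+1$ with $1\le k\le J_0$, let $u_n^k$ be the solution of \eqref{CP} with data $(u_{0,n}^k,u_{1,n}^k)$ at $t=t_n^k$, and advance by a time increment $\mu_{k,n}>0$ chosen so that $\lambda_{k,n}\ll\mu_{k,n}\ll\lambda_{k+1,n}$ and $\mu_{k,n}\ll 1-t_n$ when $k<J_0$, while for $k=J_0$ one takes $\lambda_{J_0,n}\ll\mu_{J_0,n}$ with $t_n^{J_0}+\mu_{J_0,n}<1$ comparable to $1$ (e.g. $\tfrac12(1+t_n^{J_0})$); set $t_n^{k+1}=t_n^k+\mu_{k,n}$. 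Since the earlier increments are chosen with $\mu_{j,n}\ll\lambda_{j+1,n}$ for $j<k$, one has $t_n^k-t_n=\sum_{j<k}\mu_{j,n}\ll\lambda_{k,n}\ll\mu_{k,n}$, hence $t_n^{k+1}-t_n=(1+o_n(1))\mu_{k,n}$. By finite speed of propagation together with \eqref{utn-un} at level $k$, the solutions $u$ and $u_n^k$ coincide on $\{|x|\ge t-t_n\}$ for all $t\in[t_n^k,1)$ (this uses only the exterior part of $u_n^k$, so it is immaterial whether $u_n^k$ develops an interior singularity before $t_n^{k+1}$). Thus it suffices to describe $u_n^k$ at time $t_n^{k+1}$ on the exterior region $\{|x|\ge(1-\delta)(t_n^{k+1}-t_n)\}$, for some fixed small $\delta>0$.

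The core of the step is to prove that on this region,
\[
\Big(u_n^k-v-\sum_{j=k+1}^{J_0}\tfrac{\iota_j}{\lambda_{j,n}^{1/2}}W\big(\tfrac{\cdot}{\lambda_{j,n}}\big)-w_n(\cdot-t_n)\Big)(t_n^{k+1})\longrightarrow 0\quad\text{in }\hdot\times L^2,
\]
the matching statement for the time derivatives being identical since $\partial_tW=0$. To establish this I would split $u_n^k$, on $[t_n^k,t_n^{k+1}]$, into the contribution of the $k$‑th bubble and the contribution of the other profiles. For the other profiles — the regular part $v$ (nonlinear profile $v$, defined on $[t_n^k,t_n^{k+1}]\subset(0,1)$), the bubbles $W$ at scales $\lambda_{j,n}$, $j\ge k+1$ (nonlinear profile the stationary, global $W$, with rescaled elapsed time $\mu_{k,n}/\lambda_{j,n}\to0$), and the dispersive remainder $w_n$ (see \eqref{dispersive_wn}) — the Strichartz hypothesis \eqref{bounded_strichartz} holds by Remark \ref{R:lin_NL}, and Proposition \ref{P:lin_NL} shows their contribution to $u_n^k$ is $v(t)+\sum_{j\ge k+1}\tfrac{\iota_j}{\lambda_{j,n}^{1/2}}W(\tfrac{\cdot}{\lambda_{j,n}})+w_n(t-t_n)$ up to an error that is $o_n(1)$ in $C^0([t_n^k,t_n^{k+1}];\hdot\times L^2)$. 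For the $k$‑th bubble, in the renormalized variables $y=x/\lambda_{k,n}$, $\tau=(t-t_n^k)/\lambda_{k,n}$, the data at $\tau=0$ is $\iota_kW$ plus a perturbation that is bounded in $\hdot\times L^2$ and converges to $0$ in $\hdot\times L^2$ on every fixed ball (the tails of $v$, of $w_n$ and of the larger bubbles near the origin, plus the $o_n(1)$ error); the modulation/dispersive analysis of solutions close to $W$ from \cite{DuKeMe11a}, combined with Proposition \ref{P:no_ss} applied in these renormalized variables, then forces the $k$‑th bubble to remain a modulated copy of $W$ of scale $\ll\mu_{k,n}$ — it cannot shed a non‑negligible radiation, for such a wave would propagate outward and eventually deposit a fixed amount of energy in the self‑similar region $|x|\approx 1-t$, contradicting Proposition \ref{P:no_ss}. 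Since $\|\nabla(\lambda^{-1/2}W(\cdot/\lambda))\|_{L^2(|x|\ge\rho)}^2\lesssim\lambda/\rho$, the $k$‑th bubble's contribution is $o_n(1)$ on $\{|x|\ge(1-\delta)(t_n^{k+1}-t_n)\}$; gluing the two contributions there (again by finite speed of propagation) gives the displayed limit.

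Finally, to produce $(u_{0,n}^{k+1},u_{1,n}^{k+1})$, pick a radial $\theta_n\in C^\infty$ with $\theta_n\equiv 1$ on $\{|x|\ge t_n^{k+1}-t_n\}$, $\theta_n\equiv 0$ on $\{|x|\le(1-\delta)(t_n^{k+1}-t_n)\}$ and $|\nabla\theta_n|\lesssim(t_n^{k+1}-t_n)^{-1}$, and set
\[
(u_{0,n}^{k+1},u_{1,n}^{k+1})=\theta_n\,(u,\partial_tu)(t_n^{k+1})+(1-\theta_n)\Big(v+\sum_{j=k+1}^{J_0}\tfrac{\iota_j}{\lambda_{j,n}^{1/2}}W\big(\tfrac{\cdot}{\lambda_{j,n}}\big)+w_n(\cdot-t_n)\Big)(t_n^{k+1}).
\]
Then \eqref{utn-un} at level $k+1$ holds by construction, and the left‑hand side of \eqref{devt_lemma} at level $k+1$ equals $\theta_n$ times the quantity whose exterior energy was just shown to vanish; since $\theta_n$ is supported in $\{|x|\ge(1-\delta)(t_n^{k+1}-t_n)\}$, the radial Hardy inequality $\int_{|x|\ge R}|f|^2/|x|^2\lesssim\int_{|x|\ge R}|\nabla f|^2$ absorbs the commutator $\nabla\theta_n$, and \eqref{devt_lemma} at level $k+1$ follows (extracting a subsequence in $n$ as needed), closing the induction. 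The step I expect to be the main obstacle is the control of the innermost bubble over the time span $\mu_{k,n}\gg\lambda_{k,n}$: Proposition \ref{P:lin_NL} is of no help here since $W\notin L^8(\RR^{1+3})$, so one is forced to run the near‑$W$ modulation analysis and to invoke Proposition \ref{P:no_ss} precisely to prevent the bubble from either spreading to scale $\mu_{k,n}$ or leaking energy into the region $\{|x|\gtrsim t_n^{k+1}-t_n\}$.
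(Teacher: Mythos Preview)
Your overall framework is right: induction on $k$, base case via Corollary~\ref{C:devt}, and at each step advance the time by an amount intermediate between $\lambda_{k,n}$ and $\lambda_{k+1,n}$ so that the innermost bubble can be cut away. The gap is exactly where you locate it yourself: the control of the $k$-th bubble on a time span $\mu_{k,n}\gg\lambda_{k,n}$. The argument you sketch there---modulation theory from \cite{DuKeMe11a} combined with Proposition~\ref{P:no_ss} ``in renormalized variables''---is not a proof. Proposition~\ref{P:no_ss} is a statement about the singular part $a$ of the specific solution $u$, not about the auxiliary solutions $u_n^k$; and the modulation analysis of \cite{DuKeMe11a} requires a smallness hypothesis on the deviation from $W$ that you have not established for $u_n^k$ over the long rescaled interval $[0,\mu_{k,n}/\lambda_{k,n}]$. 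The heuristic ``a radiation would eventually land in the self-similar region'' does not, as stated, rule out that the bubble changes scale or leaks energy on the intermediate time interval.

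The paper avoids this difficulty entirely, and the device is worth knowing. You are right that Proposition~\ref{P:lin_NL} fails for $\theta_n=\mu_{k,n}$ because $\|W\|_{L^8((0,\mu_{k,n}/\lambda_{k,n})\times\RR^3)}\to\infty$; but for each \emph{fixed} $A>0$ the interval $[0,A\lambda_{k,n}]$ gives rescaled time in $[0,A]$, on which $\|W\|_{L^8}<\infty$, so Proposition~\ref{P:lin_NL} \emph{does} apply and yields the full expansion
\[
u_n^k(t)=v(t+t_n^k)+\sum_{j=k}^{J_0}\frac{\iota_j}{\lambda_{j,n}^{1/2}}W\Big(\frac{\cdot}{\lambda_{j,n}}\Big)+w_n(t_n^k-t_n+t)+r_n^k(t),
\]
with $\sup_{|t|\le A\lambda_{k,n}}\|(r_n^k,\partial_t r_n^k)(t)\|_{\hdot\times L^2}\to 0$ as $n\to\infty$, for each fixed $A$. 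One then \emph{diagonalizes}: for each $p\in\NN$ choose $N_p$ so that $\|r_n^k(p\lambda_{k,n})\|\le 2^{-p}$ for $n\ge N_p$, and extract an increasing $n_p\to\infty$ with this bound and with $p\lambda_{k,n_p}/\lambda_{k+1,n_p}\to 0$. Setting $t_{n_p}^{k+1}=t_{n_p}^k+p\lambda_{k,n_p}$, the expansion above at $t=p\lambda_{k,n_p}$ is now exact up to $o_p(1)$. Finally, cut the bubble sum by $\varphi(x/(p\lambda_{k,n_p}))$ with $\varphi$ a fixed cutoff equal to $1$ for $|x|\ge 1$: since $\lambda_{k,n_p}\ll p\lambda_{k,n_p}\ll\lambda_{k+1,n_p}$, the cutoff kills the $k$-th bubble and leaves the others intact in $\hdot$, producing $(u_{0,n_p}^{k+1},u_{1,n_p}^{k+1})$ that agrees with $(u,\partial_t u)(t_{n_p}^{k+1})$ for $|x|\ge t_{n_p}^{k+1}-t_{n_p}$ and satisfies \eqref{devt_lemma} at level $k+1$. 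No modulation analysis, and no appeal to Proposition~\ref{P:no_ss}, is needed in the inductive step.
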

By convention, the sum in \eqref{devt_lemma} is $0$ when $k=J_0+1$.

Indeed, let us first assume Lemma \ref{L:induction} and prove Theorem \ref{T:bup_sum}.
\begin{proof}[Proof of Theorem \ref{T:bup_sum}]
 By \eqref{devt_lemma} in Lemma  \ref{L:induction} with $k=J_0+1$, we have
$$\lim_{n\to \infty}
\left\|\left(u^{J_0+1}_{0,n},u^{J_0+1}_{1,n}\right) -\left(v\left(t_n^{J_0+1}\right),\partial_t v\left(t_n^{J_0+1}\right)\right)-\left(w_n\left(t_n^{J_0+1}-t_n\right),\partial_t w_n\left(t_n^{J_0+1}-t_n\right)\right)\right\|_{\hdot\times L^2}=0,$$
where $t_n<t_n^{J_0+1}\leq 1$. Denote by $u_n^{J_0+1}$ the solution with initial condition $(u_{0,n}^{J_0+1},u_{1,n}^{J_0+1})$ at $t=0$. By Proposition \ref{P:lin_NL}, for $t\in[0,s_0]$ ($s_0>0$ is chosen small so that $1+s_0$ is in the domain of $v$),
\begin{equation}
\label{approx_unJ}
 u_{n}^{J_0+1}(t)=v\left(t_n^{J_0+1}+t\right)+w_n\left(t_n^{J_0+1}+t-t_n\right)+\tilde{r}_n(t),
\end{equation} 
where:
$$ \lim_{n\to \infty} \sup_{t\in [0,s_0]}\left(\|\tilde{r}_n(t)\|_{\hdot}+\|\partial_t \tilde{r}_n(t)\|_{L^2}\right)=0.$$
By finite speed of propagation and \eqref{utn-un},
\begin{equation}
\label{FSP_unJ}
\left(u_n^{J_0+1}(t,x),\partial_t u_n^{J_0+1}(t,x)\right)=\left(u(t_n^{J_0+1}+t,x),\partial_t u(t_n^{J_0+1}+t,x)\right)
\end{equation} 
for $0<t<1-t_n^{J_0+1}$, $|x|\geq t+t_n^{J_0+1}-t_n$.

By Proposition \ref{P:linear_behavior}, for all $t\in \RR$,
\begin{equation}
 \label{asymp_w}
\int_{|x|\geq |t|}\left[|\nabla w_n(t,x)|^2+(\partial_t w_n(t,x))^2\right]\,dx\geq \frac{1}{2}\int |\nabla w_{0,n}(x)|^2\,dx.
\end{equation} 
Indeed, this holds, according to Proposition \ref{P:linear_behavior}, for all $t\geq 0$ or for all $t\leq 0$. From the fact that $\partial_t w_{n\restriction t=0}=0$, we deduce that $w_n(t)=w_n(-t)$ and thus that $w_n$ satisfies \eqref{asymp_w} for all $t$.

Combining \eqref{approx_unJ}, \eqref{FSP_unJ} and \eqref{asymp_w} with $t=\frac{1-t_n^{J_0+1}}{2}$, we get
\begin{equation}
\label{bound_by_zero}
\int_{|x|\geq \frac{1+t_n^{J_0+1}}{2}-t_n} \left(\left|\nabla a\left(\frac{t_n^{J_0+1}+1}{2}\right)\right|^2+\left|\partial_t a\left(\frac{t_n^{J_0+1}+1}{2}\right)\right|^2\right)\,dx\geq \frac{1}{2}\int |\nabla w_{0,n}(x)|^2\,dx+o_n(1).
\end{equation} 
As $t_n<t_n^{J_0+1}$, we have
$$ \frac{1+t_n^{J_0+1}}{2}-t_n>\frac{1-t_n^{J_0+1}}{2}=1-\frac{1+t_n^{J_0+1}}{2},$$
thus the left-hand term in \eqref{bound_by_zero} is $0$, which shows that 
$$\lim_{n\to\infty}\int |\nabla w_{0,n}(x)|^2\,dx=0,$$
which conclude the proof of \eqref{bup_devt}. 

It remains to show \eqref{quant_energy}. By \eqref{bup_devt}, \eqref{bup_lambda_jn},
$$\lim_{n\to\infty} E(a(t_n),\partial_{t}a(t_n))=J_0E[W].$$
Using that $E(a(t),\partial_ta(t))$ has a limit as $t\to 1$ (see e.g. section $3$ of \cite{DuKeMe11a}), we get
$$\lim_{t\to 1^-} E(a(t),\partial_{t}a(t))=J_0E[W],$$
hence \eqref{quant_energy}, which concludes the proof of Theorem \ref{T:bup_sum}.
\end{proof}

\begin{proof}[Proof of Lemma \ref{L:induction}]
We show Lemma \ref{L:induction} by induction.

For $k=1$, \eqref{devt_lemma} follows from Corollary \ref{C:devt} with $t_n^1=t_n$, $(u_{0,n}^1,u_{1,n}^1)=(u(t_n),\partial_tu(t_n))$. 

We fix $k\in \{1,\ldots J_0\}$ and assume that the statement of the lemma holds for this $k$. Let $u_n^k$ be the solution of \eqref{CP} with initial condition $(u_{0,n}^k,u_{1,n}^k)$ at $t=0$. By finite speed of propagation and \eqref{utn-un},
$$ \left(u\left(t_n^k+t,x\right),\partial_t u\left(t_n^k+t,x\right)\right)=\left(u_n^k\left(t,x\right),\partial_t u_n^k\left(t,x\right)\right),\text{ for }|x|\geq |t|+t_n^k-t_n.$$
Furthermore by \eqref{devt_lemma} and Proposition \ref{P:lin_NL}, we get that for all $A>0$, for all $t\in [-A\lambda_{k,n},A\lambda_{k,n}]$,
\begin{equation}
\label{dev_unk}
 u_n^k(t)=v(t+t_n^k)+\sum_{j=k}^{J_0}\frac{1}{\lambda_{j,n}^{1/2}}W\left(\frac{x}{\lambda_{j,n}}\right)+w_n\left(t_n^k-t_n+t\right)+r_n^k(t)
\end{equation}
where
$$ \lim_{n\to \infty} \sup_{-A\lambda_{k,n}\leq t\leq A\lambda_{k,n}} \left(\left\|r_n^k(t)\right\|_{\hdot}+\left\|\partial_t r_n^k(t)\right\|_{L^2}\right)=0.$$
As a consequence, for all $p\geq 1$, there exists $N_p$ such that
$$\forall n\geq N_p,\quad \left\|r_n^{k}\left(p\lambda_{k,n}\right)\right\|_{\hdot}+\left\|\partial_t r_n^k\left(p\lambda_{k,n}\right)\right\|_{L^2}\leq \frac{1}{2^p}.$$
This yields an increasing sequence of integer $\{n_p\}_p$ such that $n_p\to +\infty$ and
\begin{equation}
\label{small_r}
\lim_{p\to \infty} \left\|r_{n_p}^k\left(p\lambda_{k,n_p}\right)\right\|_{\hdot}+\left\|\partial_t r_{n_p}^k\left(p\lambda_{k,{n_p}}\right)\right\|_{L^2}=0. 
\end{equation} 
In view of \eqref{ordre_lambda}, we can also chose $n_p$ large enough so that
\begin{equation*}
 \lim_{p\to\infty} \frac{p\lambda_{k,n_p}}{\lambda_{k+1,n_p}}=0.
\end{equation*} 
Coming back to \eqref{dev_unk}, we obtain
\begin{align*}
 u_{n_p}^k\left(p\lambda_{k,n_p}\right)&=v\left(t_{n_p}^k+p\lambda_{k,n_p}\right)+\sum_{j=k}^{J_0}\frac{1}{\lambda_{j,n_p}^{1/2}}W\left(\frac{\cdot}{\lambda_{j,n_p}}\right)+w_{n_p}\left(t_{n_p}^k-t_{n_p}+p\lambda_{k,n_p}\right)+r_{n_p}^k\left(p\lambda_{k,n_p}\right)\\
\partial_t u_{n_p}^k\left(p\lambda_{k,n_p}\right)&=\partial_t v\left(t_{n_p}^k+p\lambda_{k,n_p}\right)+\partial_tw_{n_p}\left(t_{n_p}^k-t_{n_p}+p\lambda_{k,n_p}\right)+\partial_tr_{n_p}^k\left(p\lambda_{k,n_p}\right).
\end{align*} 
Let $\varphi\in C^{\infty}(\RR^3)$ be a radial function such that $\varphi(x)=1$ for $|x|\geq 1$ and $\varphi(x)=0$ for $|x|\leq 1/2$. Let $t_{n_p}^{k+1}=t_{n_p}^k+p\lambda_{k,n_p}$. Define:
\begin{align*}
 u_{0,n_p}^{k+1}&=v\left(t_{n_p}^{k+1}\right)+\varphi\left(\frac{x}{p\lambda_{k,n_p}}\right)\sum_{j=k}^{J_0}\frac{1}{\lambda_{j,n_p}^{1/2}}W\left(\frac{\cdot}{\lambda_{j,n_p}}\right)+w_{n_p}\left(t_{n_p}^{k+1}-t_{n_p}\right)+r_{n_p}^k\left(p\lambda_{k,n_p}\right)\\
u_{1,n_p}^{k+1}&=\partial_t v\left(t_{n_p}^{k+1}\right)+\partial_tw_{n_p}\left(t_{n_p}^{k+1}-t_{n_p}\right)+\partial_tr_{n_p}^k\left(p\lambda_{k,n_p}\right).
\end{align*} 
Using that, as $p\to \infty$, 
$$\lambda_{k,n_p}\ll p\lambda_{k,n_p}\ll \lambda_{k+1,n_p}\ll\ldots\ll \lambda_{J_0,n_p},$$
we get, in view of \eqref{small_r},
\begin{multline*}
\lim_{p\to\infty}
\left\|
 u_{0,n_p}^{k+1}-v\left(t_{n_p}^{k+1}\right)-\sum_{j={k+1}}^{J_0}\frac{1}{\lambda_{j,n_p}^{1/2}}W\left(\frac{\cdot}{\lambda_{j,n_p}}\right)-w_{n_p}\left(t_{n_p}^{k+1}-t_{n_p}\right)\right\|_{\hdot}\\
+
\left\|u_{1,n_p}^{k+1}-\partial_t v\left(t_{n_p}^{k+1}\right)-\partial_tw_{n_p}\left(t_{n_p}^{k+1}-t_{n_p}\right)\right\|_{L^2}=0.
\end{multline*} 
This yields \eqref{devt_lemma} at the level $k+1$. By the definition of $\left(u_{0,n_p}^{k+1},u_{1,n_p}^{k+2}\right)$,
$$ u_{0,n_p}^{k+1}(x)= u_{n_p}^k\left(p\lambda_{k,n_p},x\right),\quad u_{1,n_p}^{k+1}(x)=\partial_t u_{n_p}^k\left(p\lambda_{k,n_p},x\right),\quad |x|\geq p\lambda_{k,n_p}.$$
By finite speed of propagation, \eqref{utn-un} and the definition of $t_n^{k+1}$,
$$ u_{n_p}^k\left(p\lambda_{k,n_p},x\right)=u\left(t_{n_p}^{k+1},x\right) ,\quad \partial_t u_{n_p}^k\left(p\lambda_{k,n_p},x\right)=\partial_tu\left(t_{n_p}^{k+1},x\right),\quad |x|\geq t_{n_p}^{k+1}-t_{n_p}.$$
Noting that $t_{n_p}^{k+1}-t_{n_p}=t_{n_p}^k-t_{n_p}+p\lambda_{n_p}\geq p\lambda_{n_p}$ we obtain \eqref{utn-un} at the level $k+1$.
\end{proof}
\section{The case of only one profile}
\label{S:one_profile}
In this section we show Theorems \ref{T:bup_one} and \ref{T:global_one}. Again, the proofs are very similar and we will focus on the proof of Theorem \ref{T:bup_one}.

Let $u$ be as in Theorem \ref{T:bup_one}. By Theorem \ref{T:bup_sum} and the assumption that $J_0=1$, changing $u$ into $-u$ if needed, there exist sequences $t_n\to 1$, $t_n<1$ and $\{\lambda_n\}_n$, such that
$$\lim_{n\to \infty} \left\|\left(a(t_n)-\frac{1}{\lambda_n^{1/2}}W\left(\frac{\cdot}{\lambda_n}\right),\partial_t a(t_n)\right)\right\|_{\hdot\times L^2}=0,$$
where $\lim_{n\to\infty}\frac{\lambda_n}{1-t_n}=0$.
By \eqref{quant_energy},
\begin{equation}
\label{Eat}
\lim_{t\to 1} E(a(t),\partial_t a(t))=E(W,0).
\end{equation}
Let $\delta_0>0$ be a small parameter. 
If the conclusion of Theorem \ref{T:bup_one} does not hold we get, in view of the variational characterization of $W$ (\cite{Aubin76}, \cite{Talenti76}), that there exists a sequence $\{t_n'\}_n$ such that $t_n'>t_n$,
\begin{equation}
\label{between_tn_t'n}
\left|\int_{\RR^3} |\nabla a(t)|^2+(\partial_t a(t))^2-\int_{\RR^3} |\nabla W|^2\right|<\delta_0\text{ for }t_n<t<t_n', 
\end{equation} 
and
\begin{equation}
 \label{at_t'n}
\left|\int_{\RR^3} |\nabla a(t_n')|^2+(\partial_t a(t_n'))^2-\int_{\RR^3} |\nabla W|^2\right|=\delta_0. 
\end{equation} 
Consider, after extraction, a profile decomposition $\left\{U_{\lin}^j,\lambda_{j,n},t_{j,n}\right\}$ for the sequence $(a(t_n'),\partial_t a(t_n'))$. 
Note that by \eqref{at_t'n}, the orthogonal expansion of the $\hdot\times L^2$ norm and Claim \ref{C:variational}, there exists a constant $c_0>0$ such that for all $j$,
\begin{equation}
\label{positive_energy}
E(U^j)\geq c_0\left(\|\nabla U^j_{\lin}(0)\|^2_{L^2}+\|\partial_t U^j_{\lin}(0)\|^2_{L^2}\right)\quad \text{and}\quad E(w_n^j)\geq c_0\left(\|\nabla w_{0,n}^{j}\|_{L^2}^2+\|w_{1,n}^j\|_{L^2}^2\right).
\end{equation} 

\EMPH{Step 1}
We first show that there is only one nonzero profile in the profile decomposition, and that
$$ \forall J\geq 1,\quad \lim_{n\to \infty} \left(\left\|\nabla w_{0,n}^J\right\|_{L^2}+\left\|w_{1,n}^J\right\|_{L^2}\right)=0$$

Indeed, if this does not hold we get by \eqref{Eat} and \eqref{positive_energy} that 
\begin{equation}
\label{subc_energy}
E\left(U_{0}^j,U_1^j\right)<E(W,0) 
\end{equation} 
for all $j$. By the result of \cite{KeMe08}, we obtain that for every $j$, the nonlinear profile $U^j$ scatters in both time directions or blows up in both time directions. Scattering for all $j$ would imply that the solution $u$ is defined after $t=1$, a contradiction. Thus one profile at least must blow up in both time directions, say $U^1$. We can assume $t_n^1=0$ for all $n$. By \eqref{between_tn_t'n},
$$ \sup_{T^-(U^1)<t<0}\left(\left\|\nabla U^1(t)\right\|_{L^2}^2+\left\|\partial_t U^1\right\|^2_{L^2}\right)\leq \|\nabla W\|_{L^2}^2+\delta_0.$$
This shows that the blow-up of $U^1$ for negative time is a small type II blow-up in the sense of Theorem 1 of \cite{DuKeMe11a}. The conclusion of this theorem implies 
$$ E(U^1_0,U^1_1)\geq E(W,0),$$
contradicting \eqref{subc_energy}. 

\EMPH{Step 2: contradiction}
By Step 1, we have only one nonzero profile, say $U^1_{\lin}$, and the dispersive part $(w_{0,n}^1,w_{1,n}^1)$ tends to $0$ in the energy space. Furthermore, scattering of $U^1$ in one time direction is excluded.  Indeed, if it scatters for positive time, we get that $u$ is defined beyond $t=1$, a contradiction. Furthermore, \eqref{Eat} and \eqref{between_tn_t'n} imply that $U^1$ is close to the manifold $\{\pm \mu^{3/2}W(\mu \cdot), \mu>0\}$ for negative time, which shows that $U^1$ cannot scatter backward in time. This shows that $-t_{1,n}/\lambda_{1,n}$ is bounded, and that we can assume that $t_{1,n}=0$ for all $n$. Finally, we obtain
\begin{equation*}
 \lim_{n\to\infty}\left\|\left(u(t'_n)-v(t_n')-\frac{1}{\lambda_{1,n}^{1/2}}U^1_0\left(\frac{x}{\lambda_{1,n}}\right),\partial_t u(t'_n)-\partial_t v(t_n')-\frac{1}{\lambda_{1,n}^{3/2}}U^1_1\left(\frac{x}{\lambda_{1,n}}\right)\right)
\right\|_{\hdot\times L^2}=0.
\end{equation*} 
By \eqref{Eat},
\begin{equation}
 \label{energyU1}
E(U^1_0,U^1_1)=E(W,0).
\end{equation} 
By \eqref{at_t'n},
\begin{equation}
\label{U1pm}
\|\nabla U^1_0\|^2_{L^2}+\|U^1_1\|_{L^2}^2=\|\nabla W\|_{L^2}^2\pm\delta_0.
\end{equation} 
We distinguish two cases.

If the sign in \eqref{U1pm} is a minus sign, then by the classification of solutions at the critical energy from \cite{DuMe08}, we obtain that $U^1$ scatters in at least one of the time directions, a contradiction.

Next assume that the sign in \eqref{U1pm} is a plus sign. In view of \eqref{between_tn_t'n},
$$ \forall t\in (T_-(U^1),0),\quad \|\nabla U^1(t)\|_{L^2}^2+\|\partial_tU^1(t)\|_{L^2}^2\leq \|\nabla W\|_{L^2}^2+\delta_0$$
This show by the rigidity Theorem \ref{T:rigidityW+} in Section \ref{S:W+} that $U^1=W^+$ (up to a time translation). By Theorem \ref{T:blowupW+}, $W^+$ blows-up for positive time with type I blow-up, which contradicts the fact (consequence of \eqref{bound_intro}) that $U^1$ is bounded. This concludes the proof of Theorem \ref{T:bup_one}.

\section{Properties of $W^+$}
\label{S:W+}
Recall from \cite{DuMe08} that there exists a solution $W^+$ of \eqref{CP} such that
\begin{gather}
\label{energieW+}
 E[W^+]=E[W],\quad \int_{\RR^3} |\nabla W^+(0)|^2>\int_{\RR^3} |\nabla W|^2\\
\label{T+W+}
T_+(W^+)=1\\
\label{T-W+}
T_-(W^+)=-\infty\quad\text{and}\quad  \exists C>0,\; \forall t\leq 0,\;\|W^+(t)-W\|_{\hdot}+\|\partial_tW^+(t)\|_{L^2}\leq Ce^{t/C}.
\end{gather}
Note that \eqref{energieW+} implies, by Claim \ref{C:variational} that $\int |\nabla W^+(t)|^2>\int|\nabla W|^2$ for all $t\in (-\infty,1)$. The properties \eqref{energieW+} and \eqref{T-W+} characterize $W^+$ up to time translation (see \cite[Lemma 6.4]{DuMe08}):
\begin{prop}
 \label{P:exp_uniq_W+}
Let $u$ be a solution of \eqref{CP} (possibly not radial) such that
\begin{gather}
\label{energieW+_car}
 E[u]=E[W],\quad \int_{\RR^3} |\nabla u(0)|^2>\int_{\RR^3} |\nabla W|^2\\
\label{T-W+_car}
T_-(u)=-\infty\quad\text{and}\quad  \exists C>0,\; \forall t\leq 0,\; \|u(t)-W\|_{\hdot}+\|\partial_tu(t)\|_{L^2}\leq Ce^{t/C}.
\end{gather}
Then $T_+(u)<+\infty$ and, denoting $t_0=1-T^+(u)$,
$$ u(t)=W^+(t+t_0).$$
\end{prop}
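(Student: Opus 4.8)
The plan is to recognise this as the uniqueness statement underlying the construction of $W^+$ in \cite{DuMe08}: I would show that \eqref{energieW+_car}--\eqref{T-W+_car} force $u$ to be a time translate of $W^+$, and then read off the translation from the blow-up time using \eqref{T+W+}. First set $h(t)=u(t)-W$; by \eqref{T-W+_car}, $\|h(t)\|_{\hdot}+\|\partial_t h(t)\|_{L^2}\le Ce^{t/C}$ for $t\le 0$, and (using $-\Delta W=W^5$) $h$ solves
\begin{equation*}
\partial_t^2 h+\LLL h=N(h),\qquad \LLL:=-\Delta-5W^4,\qquad N(h)=(W+h)^5-W^5-5W^4h=O(h^2),
\end{equation*}
where $\LLL$ is the operator obtained by linearising \eqref{CP} at $W$. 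I would then invoke the spectral picture of the self-adjoint operator $\LLL$ from \cite{DuMe08}: a single negative eigenvalue $-\omega^2$ ($\omega>0$) with positive eigenfunction $\YYY$, kernel spanned by the symmetry directions $\Lambda W=\tfrac12 W+x\cdot\nabla W$ and $\partial_{x_1}W,\partial_{x_2}W,\partial_{x_3}W$ (the last three playing a role only in the non-radial case), and the rest of the spectrum contained in $(0,+\infty)$.

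The next step is to extract the leading asymptotics of $h$ as $t\to-\infty$. Pairing the equation with $\YYY$ gives $f''=\omega^2 f+O(\|h\|^2)$ for $f(t)=\langle h(t),\YYY\rangle$, and exponential decay of $f$ eliminates the $e^{-\omega t}$ branch, so $f(t)=c\,e^{\omega t}+O(e^{(\omega+\delta)t})$ for a constant $c$ and some $\delta>0$. Modulating out the kernel directions (which, being forced to decay while growing at most linearly at the linear level, enter only at higher order) and running an energy estimate on the dispersive part --- the modulation-plus-bootstrap scheme of \cite{DuMe08} --- upgrades this to
\begin{equation*}
\big\|\big(h(t),\partial_t h(t)\big)-c\,e^{\omega t}\big(\YYY,\omega\YYY\big)\big\|_{\hdot\times L^2}=O\big(e^{(\omega+\delta)t}\big)\qquad(t\to-\infty).
\end{equation*}
If $c=0$, then $(h,\partial_t h)$ decays faster than $e^{\omega t}$, and a unique-continuation/Gronwall argument in a weighted space (again from \cite{DuMe08}) forces $h\equiv 0$, i.e. $u=W$ --- excluded by $\|\nabla u(0)\|_{L^2}^2>\|\nabla W\|_{L^2}^2$; so $c\ne 0$. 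To pin the sign of $c$ I would use $E[u]=E[W]$: as in the excerpt's remark on $W^+$, Claim \ref{C:variational} and energy trapping give $\|\nabla u(t)\|_{L^2}^2>\|\nabla W\|_{L^2}^2$ on all of $I_{\max}(u)$, while $\|\nabla u(t)\|_{L^2}^2-\|\nabla W\|_{L^2}^2=2\langle W^5,h(t)\rangle+\|\nabla h(t)\|_{L^2}^2\sim 2c\,e^{\omega t}\langle W^5,\YYY\rangle$ with $\langle W^5,\YYY\rangle>0$; hence $c$ has a definite sign. Since $u(\cdot)\mapsto u(\cdot+s)$ multiplies $c$ by $e^{\omega s}$, after a time translation I may normalise $|c|$ to $1$.

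Finally, the fixed-point/stable-manifold construction of $W^+$ in \cite{DuMe08} furnishes a \emph{unique} solution of \eqref{CP} with that normalised asymptotic profile, namely $W^+$; therefore $u(t)=W^+(t+s)$ for some $s\in\RR$. By \eqref{T+W+}, $T_+(u)=T_+(W^+)-s=1-s<+\infty$, so $T_+(u)<\infty$ and $s=1-T_+(u)=t_0$, which is the assertion. \textbf{The main obstacle} is precisely this rigidity --- essentially reproving \cite[Lemma 6.4]{DuMe08}: controlling the nonlinear interaction over the whole half-line $(-\infty,0]$, handling the non-exponential center directions coming from scaling and space translations, and excluding every faster-than-$e^{\omega t}$ solution except $W$ in the borderline case $c=0$. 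In practice I would quote these ingredients from \cite{DuMe08} rather than redo them.
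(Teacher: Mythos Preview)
Your proposal is correct and matches the paper's treatment: the paper does not supply its own proof here but simply cites \cite[Lemma~6.4]{DuMe08}, and what you have written is precisely an outline of that argument (spectral decomposition of $\LLL$, extraction of the $e^{\omega t}\YYY$ asymptotics, sign determination via the supercritical condition $\|\nabla u\|_{L^2}^2>\|\nabla W\|_{L^2}^2$, and the contraction/uniqueness from the construction of $W^+$). Your final paragraph already acknowledges that the substantive work is in \cite{DuMe08}, which is exactly how the paper handles it.
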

In Subsection \ref{SS:rigidity}, we deduce from Proposition \ref{P:exp_uniq_W+} a stronger uniqueness statement. In Subsection \ref{SS:typeI}, we show that the blow-up of $W^+$ at $t=1$ is of type $I$, i.e.
$$ \limsup_{t\to 1} \|(W^+(t),\partial_tW^+(t))\|_{\hdot\times L^2}=+\infty.$$
This section uses some of the results of our previous works \cite{KeMe08}, \cite{DuMe08} and \cite{DuKeMe10P}, but is completely independent of the preceding sections.
\subsection{A rigidity theorem}
\label{SS:rigidity}
In this subsection we show the following rigidity result:
\begin{theoint}
 \label{T:rigidityW+}
There exists a small $\delta_0>0$ such that the following holds. Let $u$ be a solution of \eqref{CP} (not necessarily radial) such that
\begin{gather}
\label{energyW}
E(u_0,u_1)=E(W,0)\\
\label{nablaW}
 \forall t\in (T_-(u),0),\quad \|\nabla W\|_{L^2}^2<\|\nabla u(t)\|_{L^2}^2+\frac 12\|\partial_t u(t)\|_{L^2}^2\leq \|\nabla W\|_{L^2}^2+\delta_0.
\end{gather}
Then, up to change of sign, scaling and time translation, $u=W^+$.
\end{theoint}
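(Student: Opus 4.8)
The plan is to reduce Theorem \ref{T:rigidityW+} to Proposition \ref{P:exp_uniq_W+}: it suffices to show that any $u$ satisfying \eqref{energyW}--\eqref{nablaW}, with $\delta_0$ small, is globally defined backward in time and converges to $W$ exponentially as $t\to-\infty$, after applying a sign change, a scaling and a space translation. Once this is known, Proposition \ref{P:exp_uniq_W+} applies to a suitable time translate of $u$ (any $t_1<0$ satisfies $\|\nabla u(t_1)\|_{L^2}^2>\|\nabla W\|_{L^2}^2$ by \eqref{nablaW}, so the supercriticality hypothesis is met) and yields $u=W^+$ up to change of sign, scaling and space--time translation.

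First I would extract the variational information. By \eqref{nablaW} one has $\|\partial_tu(t)\|_{L^2}^2<2\delta_0$ and $\big|\,\|\nabla u(t)\|_{L^2}^2-\|\nabla W\|_{L^2}^2\,\big|<\delta_0$ for $t\in(T_-(u),0)$; combined with $E(u(t),\partial_tu(t))=E(W,0)$ and the rigidity of the sharp Sobolev inequality (Aubin--Talenti), this gives that $\int|u(t)|^6$ is within $o_{\delta_0}(1)$ of $\int W^6$, hence that there are $\iota(t)\in\{\pm1\}$, $\mu(t)>0$, $y(t)\in\RR^3$ with
\[
\big\|\nabla\big(u(t)-\iota(t)\mu(t)^{1/2}W(\mu(t)(\cdot-y(t)))\big)\big\|_{L^2}+\|\partial_tu(t)\|_{L^2}\le\eps(\delta_0),\qquad t\in(T_-(u),0),
\]
with $\eps(\delta_0)\to0$ as $\delta_0\to0$. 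Fixing $\iota$ and choosing $\mu(t),y(t)$ by standard orthogonality conditions, write $u(t)=\iota(W_{\mu(t),y(t)}+v(t))$ with $W_{\mu,y}(x)=\mu^{1/2}W(\mu(x-y))$ and $v(t)$ orthogonal in $\hdot$ to the null space of $L=-\Delta-5W^4$, spanned by $\Lambda W:=\tfrac12W+x\cdot\nabla W$ and $\partial_{x_1}W,\partial_{x_2}W,\partial_{x_3}W$. Elliptic and modulation estimates give $v$ small in $\hdot$, $C^1$ dependence of $\mu,y$, and $\big|\mu'(t)/\mu(t)\big|+|y'(t)|\mu(t)\lesssim d(t)$, where $d(t):=\big(\|v(t)\|_{\hdot}^2+\|\partial_tu(t)\|_{L^2}^2\big)^{1/2}$. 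The decisive use of the energy constraint \eqref{energyW} is the coercivity estimate obtained by expanding $E(u(t),\partial_tu(t))-E(W,0)=0$ to second order and using that $\langle L\cdot,\cdot\rangle$ is positive definite on the orthogonal complement of $\vect(\YYY,\Lambda W,\partial_{x_1}W,\partial_{x_2}W,\partial_{x_3}W)$, where $\YYY$ spans the unique negative eigenspace of $L$ (eigenvalue $-k_0^2<0$): it yields $d(t)^2\lesssim |a_+(t)|^2+|a_-(t)|^2+d(t)^3$, where $a_\pm(t)$ are the coordinates of the rescaled pair $(v(t),\partial_tu(t))$ along the two eigenmodes $\YYY_\pm$ of the linearized flow; since $d$ is small, absorbing the cubic term gives $d(t)\approx|a_+(t)|+|a_-(t)|$.

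The third step is the dynamical analysis. Projecting the equation for $(v,\partial_tu)$ onto $\YYY_\pm$ and inserting the modulation equations yields, for some $\zeta_0>0$, $a_\pm'(t)=\pm\zeta_0\mu(t)^{-1}a_\pm(t)+O\!\big(\mu(t)^{-1}d(t)^2\big)$. Since \eqref{nablaW} forces $d(t)\le\eps(\delta_0)$ on all of $(T_-(u),0)$, the "no return"/ejection analysis of \cite{DuMe08}, \cite{DuKeMe10P} shows this is only possible if $T_-(u)=-\infty$ and $d(t)\to0$ exponentially as $t\to-\infty$ (in the time variable $\int\mu^{-1}$, equivalently in $t$ once $\mu$ is shown to converge); the alternative $d\equiv0$, which would make $u$ a fixed symmetry image of $W$, is excluded by the strict inequality in \eqref{nablaW}. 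The exponential decay of $d$ then forces, via the modulation bounds and — in the non-radial case — the vanishing of the conserved momentum $\int\partial_tu\cdot\nabla u$ (which holds since $\partial_tu(t)\to0$), that $\mu(t)\to\mu_\infty\in(0,+\infty)$, $y(t)\to y_\infty$ and $\iota(t)\equiv\iota_\infty$, all exponentially, so $u(t)\to\iota_\infty W_{\mu_\infty,y_\infty}$ exponentially in $\hdot\times L^2$. After applying the sign, scaling and space-translation symmetries we may assume $u(t)\to W$ exponentially, and Proposition \ref{P:exp_uniq_W+} finishes the proof.

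I expect the main obstacle to be this third step. One must rule out the ejection scenario — in which one of $a_\pm$ fails to decay and the solution leaves the small neighborhood of the soliton manifold as $t$ decreases — using \emph{only} the hypothesis that $u$ stays close for all $t<0$, and then upgrade exponential decay of the distance $d(t)$ to genuine exponential convergence of $u(t)$ to a \emph{fixed} ground state, i.e. convergence of the modulation parameters $\mu(t),y(t)$ themselves so that the hypotheses of Proposition \ref{P:exp_uniq_W+} are literally met. This is precisely where the Lyapunov/contraction machinery for threshold solutions from \cite{DuMe08}, \cite{DuKeMe10P} is needed, and carrying it over here — in particular keeping track of the extra null directions produced by space translations in the non-radial setting — is the technical heart of the argument.
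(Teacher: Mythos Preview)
Your overall reduction to Proposition~\ref{P:exp_uniq_W+} is exactly the paper's strategy, and your first step (modulation directly from the hypothesis \eqref{nablaW} via Aubin--Talenti rigidity, coercivity of the linearized energy on the orthogonal complement of the null space and the negative direction) is correct and in fact matches the paper's Step~4.

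Where you diverge is in the dynamical part. You propose an $a_\pm$ ODE/ejection analysis in the style of Nakanishi--Schlag to force $T_-(u)=-\infty$ and exponential decay of $d(t)$. The paper does \emph{not} do this. Instead it first establishes, via a profile decomposition, that the backward trajectory is precompact up to scaling and translation (Step~1); the key input here is the classification of small type~II blow-up from \cite{DuKeMe10P}, which gives $E[U^{j_0}]\ge E[W]$ for any blow-up profile and thus rules out a nontrivial splitting at energy $E[W]$. Compactness then yields $T_-=-\infty$ (Step~2) and makes the error terms in a localized virial identity small, from which the paper extracts first $\frac1T\int_T^0 d(t)\,dt\to0$ (Step~3) and then the crucial integrated estimate $\int_\sigma^\tau d\le C(\max|X|+1/\mu)(d(\sigma)+d(\tau))$ (Step~5). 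A bootstrap on this inequality simultaneously gives boundedness of $1/\mu$ and $|X|$ and exponential decay of $\int_{-\infty}^\tau d$, hence of $d$ itself and of $|\mu-\mu_\infty|+|X-X_\infty|$.

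Your sketch leaves exactly these two points open: why $T_-(u)=-\infty$, and why $\mu(t)$ stays bounded so that exponential decay in the rescaled time $\int\mu^{-1}$ is exponential in $t$. You defer both to ``the no-return/ejection analysis of \cite{DuMe08}, \cite{DuKeMe10P}'', but those references do not contain an $a_\pm$ ejection lemma; they use precisely the compactness-plus-virial route above. An $a_\pm$ argument can in principle be made to work here, but it is not a citation --- you would have to supply an independent proof that finite backward blow-up is impossible under \eqref{nablaW} (for instance by invoking the small type~II description of \cite{DuKeMe10P} at threshold energy and arguing the regular part must vanish, then deriving a contradiction), and a separate mechanism to prevent $\mu(t)\to0$ or $\infty$. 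The paper's virial inequality handles both at once, which is why it is set up that way.
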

Let us mention that Theorem \ref{T:rigidityW+} also holds in space dimension $N=4,5$ for the corresponding energy critical wave equation (replacing the coefficient $1/2$ in front of $\|\partial_t u(t)\|^2$ by $(N-2)/2)$). To simplify the exposition, we only write the proof in space dimension $3$.
\begin{proof}[Sketch of proof]
The proof of Theorem \ref{T:rigidityW+} is very close to the proof of Theorem 2 (a) in \cite{DuMe08}, and we only sketch it.  It is divided into 5 steps. The main difference with respect to the proof in \cite{DuMe08} is the use of the description of small type II blow-up solutions (from \cite{DuKeMe10P}) in Step 1.

\EMPH{Step 1. Compactness}
We first show that there exist $\lambda(t),x(t)$ defined for $t\in (T_-(u),0]$ such that 
$$K_-=\left\{ \frac{1}{\lambda(t)^{1/2}}u\left(t,\frac{\cdot}{\lambda(t)}+x(t)\right), \frac{1}{\lambda(t)^{3/2}}\partial_t u\left(t,\frac{\cdot}{\lambda(t)}+x(t)\right),\; t\in (T_-(u),0]\right\}$$
has compact closure in $\hdot\times L^2$. This follows from the compactness argument introduced in \cite{KeMe06}, \cite{KeMe08}. It is sufficient to prove that for any sequence $t_n\to T_-(u)$, there exists a subsequence (still denoted by $\{t_n\}_n$) and sequences $\{\lambda_n\}_n$, $\{x_n\}_n$ of parameters such that 
$$ \left(\frac{1}{\lambda_n^{1/2}}u\left(t_n,\frac{\cdot}{\lambda_n}+x_n\right),\frac{1}{\lambda_n^{3/2}}\partial_t u\left(t_n,\frac{\cdot}{\lambda_n}+x_n\right)\right) $$
converges in $\hdot\times L^2$.  Note that by \eqref{energyW}, \eqref{nablaW} $u$ does not scatter in any time direction. Indeed, assume for example that $u$ scatters forward in time. Then 
$$\lim_{t\to+\infty}\|u(t)\|_{L^6}=0.$$
Hence by \eqref{nablaW} and conservation of energy, $E[u]\geq \frac{1}{2}\|\nabla W\|^2_{L^2}=\frac{3}{2}E[W]$ a contradiction with \eqref{energyW}.

For a given sequence $t_n\to T_-(u)$ consider, after extraction, a profile decomposition:
\begin{multline*}
 (u(t_n),\partial_tu(t_n))=\sum_{j=1}^J \left(\frac{1}{\lambda_{j,n}^{1/2}}U^{j}_{\lin}\left(\frac{-t_{j,n}}{\lambda_{j,n}},\frac{\cdot-x_{j,n}}{\lambda_{j,n}}\right),\frac{1}{\lambda_{j,n}^{3/2}}\partial_t U^{j}_{\lin}\left(\frac{-t_{j,n}}{\lambda_{j,n}},\frac{\cdot-x_{j,n}}{\lambda_{j,n}}\right)\right)+(w_{0,n}^J,w_{1,n}^J).
\end{multline*}
where $(w_{0,n}^J,w_{1,n}^J)$, $\lambda_{j,n}$ and $x_{j,n}$ satisfy the pseduo-orthogonality property:
\begin{equation*}
j\neq k\Longrightarrow \lim_{n\to \infty} \frac{\lambda_{j,n}}{\lambda_{k,n}}+\frac{\lambda_{k,n}}{\lambda_{j,n}}+\frac{|t_{j,n}-t_{k,n}|}{\lambda_{j,n}}+\frac{\left|x_{j,n}-x_{k,n}\right|}{\lambda_{j,n}}=+\infty.
\end{equation*} 
and \eqref{small_w}. As usual, we denote by $U^j$ the corresponding nonlinear profiles. As $u$ does not scatter, there is at least one nonzero profile, say $U^1_{\lin}$. We must show
\begin{equation}
 \label{conclusion_step1}
j\geq 2\Longrightarrow U^j_{\lin}=0\;\text{and}\;\lim_{n\to\infty}\left\|(w_{0,n}^j,w_{1,n}^j)\right\|_{\hdot\times L^2}=0.
\end{equation} 
By Claim \ref{C:variational}, there is a constant $c>0$ such that for all $j$,
$$ E\left( U^j_{\lin}\left(\frac{-t_{j,n}}{\lambda_{j,n}}\right),\partial_t U^j_{\lin}\left(\frac{-t_{j,n}}{\lambda_{j,n}}\right)\right)\geq c\left\|\left(U^j_{\lin}\left(\frac{-t_{j,n}}{\lambda_{j,n}}\right),\partial_t U^j_{\lin}\left(\frac{-t_{j,n}}{\lambda_{j,n}}\right)\right)\right\|_{\hdot\times L^2}^2$$
and
$$ E\left(w_{0,n}^j,w_{1,n}^j\right)\geq c\|(w_{0,n}^j,w_{1,n}^j)\|_{\hdot\times L^2}^2.$$
Thus, in view of the expansion of the energy \eqref{pythagore2}, if \eqref{conclusion_step1} does not hold, there exists $\eps>0$ (independent of $j$) such that
$$ \forall j\geq 2,\quad E[U^j]\leq E[W]-\eps,\quad E(w_{0,n}^j,w_{1,n}^j)\leq E[W]-\eps.$$
In view of \eqref{nablaW}, there is at most one index $j_0$ such that  $\|\nabla U^{j_0}(-t_{j_0,n}/\lambda_{j_0,n})\|_{L^2}^2>\|\nabla W\|^2_{L^2}$. Assume that such an index exists. By \cite{KeMe08}, $U^{j_0}$ blows-up in finite time. By Proposition \ref{P:lin_NL}, \eqref{energyW}, \eqref{nablaW} and expansions as in \eqref{pythagore1a},\eqref{pythagore1b} and \eqref{pythagore2}, $U^{j_0}$ is a type II blow-up solution that satisfies 
$$ \sup_{t\in I_{\max}(U^{j_0})} \|\nabla U^{j_0}(t)\|_{L^2}^2+\frac{1}{2}\|\nabla U^{j_0}(t)\|_{L^2}^2\leq \|\nabla W\|^2+\delta_0,\quad E[U^{j_0}]< E[W].$$
This contradicts the fact that the main theorem of \cite{DuKeMe10P} implies that $E[U^{j_0}]\geq E[W]$ showing that for all $j$,
$$ \|\nabla U^{j}(-t_{j,n}/\lambda_{j,n})\|_{L^2}<\|\nabla W\|_{L^2}.$$
Let us mention that the main theorem of \cite{DuKeMe10P} only holds in space dimension $3$ and $5$. To prove Theorem \ref{T:rigidityW+} in dimension $4$, one should use \cite[Corollary 3.2]{DuKeMe10P}, which also implies that $E[U^{j_0}]\geq E[W]$ and holds in space dimensions $3$, $4$ and $5$.

By \cite{KeMe08}, all profiles $U^j$ scatter. This shows that $u$ scatters, a contradiction. We have shown \eqref{conclusion_step1}, i.e.
$$ (u(t_n),\partial_t u(t_n))=\left( \frac{1}{\lambda_{1,n}^{1/2}}U^1_{\lin}\left(\frac{-t_{1,n}}{\lambda_{1,n}},\frac{\cdot-x_{1,n}}{\lambda_{1,n}}\right), \frac{1}{\lambda_{1,n}^{3/2}}\partial_tU^1_{\lin}\left(\frac{-t_{1,n}}{\lambda_{1,n}},\frac{\cdot-x_{1,n}}{\lambda_{1,n}}\right)\right)+o_n(1) \text{ in }\hdot\times L^2.$$
It remains to notice that the fact that $u$ does not scatter in any time direction implies that $-t_{1,n}/\lambda_{1,n}$ is bounded, which shows the desired compactness property and concludes Step 1.

\EMPH{Step 2. Global existence for negative time} It follows from Step 1 and \cite[Proposition 4.5]{DuKeMe10P} that $T_-(u)=-\infty$. We omit the proof and refer the reader to \cite{DuKeMe10P}. Let us mention that the main ingredients of this proof are the impossibility of self-similar blow-up, shown in \cite{KeMe08}, and monotonicity formulas as \eqref{virial} below.

\EMPH{Step 3. Convergence for a subsequence}
We introduce
$$ \dd(t)=\int |\nabla u(t)|^2-\int |\nabla W|^2+\frac{1}{2}\int (\partial_t u(t))^2.$$
Then one can show:
$$ \lim_{T\to -\infty} \frac{1}{T}\int_{T}^0\dd(t)\,dt=0.$$
This follows from the following properties, shown in \cite[Proposition 2.8]{DuMe08}:
$$ \lim_{t\to -\infty}|t|\lambda(t)=+\infty,\quad \lim_{t\to -\infty} \frac{x(t)}{t}=0,$$
and from the identity:
\begin{multline}
 \label{virial}
\frac{d}{dt} \int_{\RR^3} u\partial_t u\varphi\left(\frac{x}{R}\right)=\int_{\RR^3} (\partial_t u)^2-\int |\nabla u|^2+\int u^6+O\left(\int_{|x|\geq R} (\partial_tu)^2+|\nabla u|^2+|u|^6\right)\\
\geq 2d(t)+O\left(\int_{|x|\geq R} (\partial_tu)^2+|\nabla u|^2+|u|^6\right),
\end{multline} 
where $\varphi\in C_0^{\infty}(\RR^3)$, $\varphi(x)=1$ for $|x|\leq 1$. We omit again the details, refering to Subsection 3.1 of \cite{DuMe08}.

\EMPH{Step 4. Modulation}
Let $t\leq 0$. By Claim 3.6 and Lemma 3.7 in \cite{DuMe08}, the property $\dd(t)<\delta_0$ implies (if $\delta_0>0$ is small enough) that there exists $\mu(t)>0$, $X(t)\in  \RR^3$ such that
$$\frac{1}{\mu(t)^{1/2}}u\left(t,\frac{\cdot}{\mu(t)}+X(t)\right)\in \left\{\partial_{x_1}W,\partial_{x_2}W,\partial_{x_3}W,\frac{1}{2}W+x\cdot \nabla W\right\}^{\bot},$$
where the orthogonality is to be understood in the sense of the $\hdot$ scalar product. Furthermore, chosing $\alpha(t)$ such that
$$ f(t)=\frac{1}{\mu(t)^{1/2}}u\left(t,\frac{\cdot}{\mu(t)}+X(t)\right)-(1+\alpha(t))W \in \left\{W,\partial_{x_1}W,\partial_{x_2}W,\partial_{x_3}W,\frac{1}{2}W+x\cdot \nabla W\right\}^{\bot},$$
we have the following estimates, uniformly for $t\leq 0$:
\begin{gather}
 \label{est_modul1}
|\alpha(t)|\approx \|\nabla(\alpha(t) W+f(t))\|_{L^2}\approx \|\nabla f(t)\|_{L^2}+\|\partial_t u(t)\|_{L^2}\approx \dd(t)\\
\label{est_modul2}
\left|\frac{\alpha'}{\mu}\right|+\left|\frac{\mu'}{\mu^2}\right|+\left|X'(t)\right|\lesssim \dd(t).
\end{gather}
It is easy to see that the compactness of $\overline{K}_-$ implies that the following set has compact closure in $\hdot\times L^2$:
$$\widetilde{K}_-=\left\{ \frac{1}{\mu(t)^{1/2}}u\left(t,\frac{\cdot}{\mu(t)}+X(t)\right), \frac{1}{\mu(t)^{3/2}}\partial_t u\left(t,\frac{\cdot}{\mu(t)}+X(t)\right),\; t\in (T_-(u),0]\right\}.$$

\EMPH{Step 5. Exponential convergence and conclusion}
In this step we show that (after a change of sign if necessary) there exist $\mu_{\infty}>0$, $X_{\infty}\in \RR^3$ and $C>0$ such that
\begin{equation}
\label{exp_CV} 
t\leq 0\Longrightarrow \left\|\left((u(t),\partial_t u(t))-\left(\frac{1}{\mu_{\infty}^{1/2}}W\left(\frac{\cdot-X_{\infty}}{\mu_{\infty}} \right),0\right) \right)\right\|_{\hdot\times L^2}\leq Ce^{t/C}.
\end{equation} 
This will conclude the proof according to Proposition \ref{P:exp_uniq_W+}. The arguments are again very close to the ones of \cite{DuMe08} and we will only sketch them, refering the reader to Subsection 3.3 of that paper for the details. 

By a careful analysis of the remainder term in the identity \eqref{virial}, using the modulation theory described in Step 4, one can show (see \cite[Lemma 3.8]{DuMe08}):
\begin{equation}
\label{virial_type}
 \sigma<\tau\leq 0\Longrightarrow \int_{\sigma}^{\tau} \dd(t)\,dt\leq C\left(\max_{\sigma\leq t\leq \tau} |X(t)|+\frac{1}{\mu(t)}\right)(\dd(\sigma)+\dd(\tau)).
\end{equation} 
Furthermore, integrating \eqref{est_modul2} between $\sigma$ and $\tau$, we get
\begin{equation}
 \label{control_param}
 \sigma<\tau\leq 0\Longrightarrow |X(\sigma)-X(\tau)|+\left|\frac{1}{\mu(\sigma)}-\frac{1}{\mu(\tau)}\right|\leq \int_{\sigma}^{\tau} \dd(t)\,dt.
\end{equation} 
According to Step 3, there exists a decreasing sequence $\tau_n\to -\infty$ such that 
$$ \lim_{n\to \infty} \dd(\tau_n)=0.$$
We first note that \eqref{virial_type} and \eqref{control_param} imply that $|X(t)|+\frac{1}{\mu(t)}$ is bounded on $(-\infty,0]$. Indeed, let $m\gg 1$ and, for $n\geq m$, $M_n=\max\{|X(t)|+1/\mu(t),\; t\in [\tau_n,\tau_m]\}$. Combining \eqref{virial_type} and \eqref{control_param}, we get for $t\in [\tau_n,\tau_m]$,
$$ |X(t)|+\frac{1}{\mu(t)}\leq C\int_{\tau_n}^{\tau_m} \dd(t)\,dt+|X(\tau_m)|+\frac{1}{\mu(\tau_m)}\leq C\,M_n(\dd(\tau_m)+\dd(\tau_n))+|X(\tau_m)|+\frac{1}{\mu(\tau_m)}.$$
 Hence
$$ M_n \leq CM_n(d(\tau_m)+d(\tau_n))+|X(\tau_m)|+\frac{1}{\mu(\tau_m)},$$
which implies the boundedness of $|X(t)|+1/\mu(t)$, fixing $m$ large (so that $C\dd(\tau_m)\leq 1/2$) and letting $n\to \infty$.

Coming back to \eqref{virial_type}, we obtain the diffential inequality $\int_{-\infty}^{\tau}\dd(t)\,dt\leq C\dd(\tau)$ and thus
\begin{equation}
\label{int_exp}
\int_{-\infty}^{\tau}\dd(t)\,dt\leq Ce^{c\tau}. 
\end{equation} 
Together with \eqref{control_param}, we get that there exist $X_{\infty}\in \RR^3$, $\ell_{\infty}\geq 0$ such that
$$ |X_{\infty}-X(\tau)|+\left|\frac{1}{\mu(\tau)}-\ell_{\infty}\right|\leq Ce^{\tau/C},\quad \tau\leq 0.$$
After a space translation, one may assume $X_{\infty}=0$. The inequality
$$ |X(\tau)|+\left|\frac{1}{\mu(\tau)}-\ell_{\infty}\right|\leq C\max_{-\infty\leq t\leq \tau_n}\left(|X(t)|+\frac{1}{\mu(t)}\right)d(\tau_n),\quad \tau\leq\tau_n$$
which follows from \eqref{virial_type} and \eqref{control_param} implies that we cannot have $\ell_{\infty}=0$. Hence, letting $\mu_{\infty}=1/\ell_{\infty}$
\begin{equation}
\label{exp_modul}
|X(\tau)|+|\mu(\tau)-\mu_{\infty}|\leq Ce^{\tau/C},\quad \tau\leq 0. 
\end{equation} 
It remains to show that $\dd(t)\leq Ce^{tC}$ for $t\leq 0$. This follows from \eqref{int_exp} and the estimates $|\alpha(t)|\approx \dd(t)$ and $|\alpha'(t)|\lesssim \dd(t)$ in \eqref{est_modul1}, \eqref{est_modul2}. In view of \eqref{est_modul1} and \eqref{exp_modul}, this shows \eqref{exp_CV} and concludes the proof of the Theorem.
\end{proof}
We conclude this section by giving a corollary to Theorem \ref{T:rigidityW+} and the main result of \cite{KrNaSc10P}, which completes Theorem 2 of \cite{DuMe08}.
\begin{corol}
 \label{C:classification}
Let $u$ be a radial solution of \eqref{CP} such that $E[u]=E[W]$. Assume that $\int |\nabla u_0|^2>\int |\nabla W|^2$. Then $u$ blows up in finite time in both time directions or $u=W^+$ up to the symmetries of the equation.
\end{corol}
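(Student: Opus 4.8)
To establish Corollary \ref{C:classification}, the plan is to show that a radial solution $u$ with $E[u]=E[W]$ and $\|\nabla u_0\|_{L^2}^2>\|\nabla W\|_{L^2}^2$ that does \emph{not} blow up in finite time in both time directions must coincide with $W^+$ up to the symmetries of \eqref{CP}. The main idea is to reduce to the rigidity statement Theorem \ref{T:rigidityW+}, using the trichotomy of \cite{KrNaSc10P} to supply the quantitative bound for large negative times that Theorem \ref{T:rigidityW+} requires.

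First I would make two reductions. Since such a $u$ is global in at least one time direction and \eqref{CP} is invariant under $t\mapsto-t$, it suffices to treat the case $T_-(u)=-\infty$ (the case $T_+(u)=+\infty$ follows by applying the conclusion to $u(-\cdot)$, time reversal being one of the symmetries of the equation). Second, I would record the variational dichotomy at the threshold: from $E[u]=E[W]$, $\|\nabla u_0\|_{L^2}^2>\|\nabla W\|_{L^2}^2$, Claim \ref{C:variational} and the sharp Sobolev inequality (\cite{Aubin76}, \cite{Talenti76}), one gets $\|\nabla u(t)\|_{L^2}^2>\|\nabla W\|_{L^2}^2$ for all $t\in I_{\max}(u)$; indeed an equality $\|\nabla u(t_1)\|_{L^2}^2=\|\nabla W\|_{L^2}^2$ at some $t_1$, together with $E(u(t_1),\partial_tu(t_1))=E(W,0)$, would force $\partial_tu(t_1)=0$ and $u(t_1)$ to be an Aubin--Talenti extremal, hence $u$ stationary, contradicting $\|\nabla u_0\|_{L^2}^2>\|\nabla W\|_{L^2}^2$. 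In particular $u$ cannot scatter backward, since scattering would give $\|u(t)\|_{L^6}\to0$ and hence $E[u]=\frac12\lim_{t\to-\infty}\bigl(\|\nabla u(t)\|_{L^2}^2+\|\partial_tu(t)\|_{L^2}^2\bigr)\ge\frac12\|\nabla W\|_{L^2}^2=\frac32E(W,0)>E(W,0)$.

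Then I would apply the result of \cite{KrNaSc10P} to the radial solution $u(-\cdot)$: it is globally defined forward, so case \eqref{I:Bup} is impossible, and $u$ does not scatter backward, so case \eqref{I:scat} is impossible; thus case \eqref{I:W} holds, i.e.\ $(u(t),\partial_tu(t))$ lies in a small $\hdot\times L^2$-neighborhood of $\SSS$ for all sufficiently negative $t$. Since $E[u]=E[W]<E[W]+\eta_0$ for every $\eta_0>0$, and the size of this neighborhood can be taken to shrink with $\eta_0$, there is $T_0>0$ with $\|\nabla u(t)\|_{L^2}^2+\frac12\|\partial_tu(t)\|_{L^2}^2\le\|\nabla W\|_{L^2}^2+\delta_0$ for all $t\le-T_0$, $\delta_0$ being the constant of Theorem \ref{T:rigidityW+}. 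Finally I would set $v(t,x)=u(t-T_0,x)$: then $T_-(v)=-\infty$, $E[v]=E[W]$, and for all $t\in(T_-(v),0)$ the preceding estimates give $\|\nabla W\|_{L^2}^2<\|\nabla v(t)\|_{L^2}^2+\frac12\|\partial_tv(t)\|_{L^2}^2\le\|\nabla W\|_{L^2}^2+\delta_0$, so Theorem \ref{T:rigidityW+} applies and yields $v=W^+$ up to change of sign, scaling and time translation; hence the same holds for $u$, which in particular completes Theorem 2 of \cite{DuMe08}.

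The step I expect to require the most care is the interface between \cite{KrNaSc10P} and Theorem \ref{T:rigidityW+}: the former controls $u$ only for large negative times and only up to a neighborhood of $\SSS$, whereas the latter needs the bound $\|\nabla u(t)\|_{L^2}^2+\frac12\|\partial_tu(t)\|_{L^2}^2\le\|\nabla W\|_{L^2}^2+\delta_0$ on the \emph{whole} interval $(T_-(u),0)$. Both gaps close cheaply --- quantifying ``neighborhood'' in the $\hdot\times L^2$ distance and choosing $\eta_0$ small for the first, translating time by $T_0$ for the second --- so no genuinely new analytic difficulty arises; the substance lies in Theorem \ref{T:rigidityW+} and \cite{KrNaSc10P}.
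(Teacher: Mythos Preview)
Your proof is correct and follows essentially the same approach as the paper: both use the trichotomy from \cite{KrNaSc10P}, exclude scattering via the energy argument (exactly as in Step 1 of the proof of Theorem \ref{T:rigidityW+}), and then feed the resulting proximity to $\SSS$ into Theorem \ref{T:rigidityW+} by choosing the energy perturbation small enough. You are slightly more explicit than the paper about the time translation needed to pass from ``for large negative $t$'' to ``for all $t\le 0$'', which is a welcome clarification.
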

\begin{remark}
 In Theorem 2, c) of \cite{DuMe08}, the same result is stated for (possibly nonradial) solutions such that $u_0\in L^2$. 
\end{remark}
\begin{remark}
 The radial restriction comes from \cite{KrNaSc10P}. If the main result of \cite{KrNaSc10P} holds without radiality assumption, then Corollary \ref{C:classification} holds for general nonradial solutions of \eqref{CP}.
\end{remark}
\begin{remark}
For simplicity, we stated Corollary \ref{C:classification} in space dimension $3$. Since the proof of Theorem \ref{T:rigidityW+} can be adapted to space dimensions $4$ and $5$, and the main Theorem of \cite{KrNaSc10P} also holds in dimension $5$, we can conclude that Corollary \ref{C:classification} holds for the energy-critical equation in dimension $5$ also. Similarly, the generalization of \cite{KrNaSc10P} to dimension $4$ would imply Corollary \ref{C:classification} in dimension $4$.
\end{remark}
\begin{proof}
 Let 
$$\SSS=\left\{\left(\frac{\iota}{\lambda^{1/2}}W\left(\frac{\cdot}{\lambda}\right),0\right),\quad \lambda>0,\; \iota=\pm 1\right\}.$$
Let $\eta>0$. By the main theorem of \cite{KrNaSc10P}, there exists a neighborhood $\BBB$ of $\SSS$ in $\hdot\times L^2$, within $O(\sqrt{\eta})$ distance in $\hdot\times L^2$ such that for any radial solution $u$ of \eqref{CP} satisfying $E[u]<E[W]+\eta$, one of the following holds:
\begin{gather}
\label{scattering}
 T_+(u)=+\infty \text{ and }u \text{ scatters forward in time.}\\
\label{close_to_B}
T_+(u)=+\infty\text{ and }(u,\partial_tu)(t)\in \BBB \text{ for large positive }t\\
\label{blowup}
\text{ or }T_+(u)<\infty.
\end{gather}
Let $u$ be a solution of \eqref{CP} such that $E[u]=E[W]$ and $\int |\nabla u_0|^2>\int |\nabla W|^2$. Then by Claim \ref{C:variational},
$\int|\nabla u(t)|^2>\int |\nabla W|^2$ for all $t$ in the domain of existence of $u$. In particular, by the same argument as in Step 1 of the the proof of Theorem \ref{T:rigidityW+}, \eqref{scattering} is excluded. 

Assume that \eqref{close_to_B} holds. Then
$$ \forall t\geq 0,\quad \int |\nabla W|^2<\int |\nabla u(t)|^2<\int |\nabla W|^2+C\sqrt{\eta}.$$
Taking $\eta$ small enough, we get by Theorem \ref{T:rigidityW+} that $u=W^+$ up to the symmetries of the equation. The same argument for negative times implies that $u$ blows up in both time directions or $u=W^+$ modulo symmetries, concluding the proof.
\end{proof}

\subsection{Type I blow-up}
\label{SS:typeI}
The purpose of this subsection is to show that the blow-up of $W^+$ is of type I, i.e:
\begin{theoint}
\label{T:blowupW+}
$$ \limsup_{t\to 1^-} \|\nabla W^+(t)\|_{L^2}^2+\|\partial_t W^+(t)\|_{L^2}^2=+\infty.$$
\end{theoint}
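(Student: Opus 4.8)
The plan is to argue by contradiction: suppose the blow-up of $W^+$ at $t=1$ is of type II, i.e. $M:=\sup_{0\le t<1}\bigl(\|\nabla W^+(t)\|_{L^2}^2+\|\partial_t W^+(t)\|_{L^2}^2\bigr)<\infty$. Since $W^+$ is radial, Theorem~\ref{T:bup_sum} applies to it (its proof, carried out in \S\ref{S:expansion}, does not depend on the present section): it produces the weak limit $(v_0,v_1)$ of $(W^+(t),\partial_t W^+(t))$ as $t\to 1$, the regular part $v$ (the solution of \eqref{CP} with data $(v_0,v_1)$ at $t=1$), the singular part $a=W^+-v$ supported in $\{|x|\le 1-t\}$, and an integer $J_0\ge 1$ with $\int_{|x|\le 1-t}\bigl(\tfrac12|\nabla W^+(t)|^2+\tfrac12(\partial_t W^+(t))^2-\tfrac16(W^+(t))^6\bigr)dx\to J_0\,E(W,0)$ as $t\to1$. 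Because $v$ is continuous in $\hdot\times L^2$ near $t=1$ while the ball $B_{1-t}$ shrinks to a point, the cross terms in the energy decomposition $E[W^+]=\int_{|x|\le1-t}(\dots)+\int_{|x|>1-t}(\dots)$ vanish in the limit, and conservation of energy together with \eqref{energieW+} forces $E(v_0,v_1)=E(W,0)-J_0E(W,0)=(1-J_0)E(W,0)\le 0$, with equality exactly when $J_0=1$.

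Next I would rule out $v\equiv 0$. If $v\equiv 0$ then $a=W^+$ is itself supported in $B_{1-t}$ for $t$ close to $1$, hence, propagating the support backward by finite speed of propagation, for every $t\in(-\infty,1)$. Then $\int_{|x|>1-t}|\nabla W^+(t)|^2=0$, so $\|W^+(t)-W\|_{\hdot}^2\ge \int_{|x|>1-t}|\nabla W|^2$. Since $W(x)\sim |x|^{-1}$, one has $\int_{|x|>R}|\nabla W|^2\sim c\,R^{-1}$, so $\|W^+(t)-W\|_{\hdot}\gtrsim (1-t)^{-1/2}$ as $t\to-\infty$, contradicting the exponential convergence $\|W^+(t)-W\|_{\hdot}\le Ce^{t/C}$ of \eqref{T-W+}. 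Hence $v\not\equiv 0$.

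Consequently $E(v_0,v_1)\le 0<E(W,0)$, and an application of Claim~\ref{C:variational} (which forces $\|\nabla v_0\|_{L^2}^2>\sqrt3\|\nabla W\|_{L^2}^2$ when $E(v_0,0)<0$, the borderline case $E(v_0,v_1)=0$ being handled by the first assertion of that Claim together with $v\not\equiv0$) gives $\|\nabla v_0\|_{L^2}^2>\|\nabla W\|_{L^2}^2$: the radial solution $v$ lies strictly above the ground–state threshold while having strictly smaller energy than $W$. By the Kenig–Merle blow-up criterion \cite{KeMe08}, $v$ then blows up in finite time in \emph{both} time directions; in particular $T_-(v)=:T_*\in(-\infty,1)$.

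The last step — which I expect to be the crux — is to turn this into a contradiction. The key structural fact is that $a=W^+-v$ is supported in $B_{1-t}\subset B_{1-T_*}$ for $t\in(T_*,1)$, so the singular part $-a$ of $v$ at its own blow-up time $T_*$ is \emph{compactly supported}, while on the other side $W^+$ is globally defined and bounded for $t<T_*$ and converges exponentially to $W$ as $t\to-\infty$. I would exploit this compact support through the "channels of energy'' phenomenon for compactly supported solutions (in the spirit of Lemma~\ref{L:channel_for_compact}, or of Proposition~\ref{P:linear_behavior} after the usual reduction to the free flow), showing that a fixed amount of energy must escape to spatial infinity out of $v$ near $T_*$ — which the boundedness of $W^+$ and finite speed of propagation forbid; alternatively one re-runs Theorem~\ref{T:bup_sum} (equivalently, the profile decomposition/quantization of a compactly supported singular part) for $v$ at $T_*$, producing a further regular part $v'$ with $E[v']=(1-J_0-J_0')E(W,0)<0$ and $v'\not\equiv0$ by the same support argument, and iterates; the nested decomposition forces $E$ to decrease by at least $E(W,0)$ at each stage while an energy identity keeps track of everything relative to $E[W^+]=E[W]$, and one extracts a numerical contradiction. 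The genuine work lies in organizing this bookkeeping so that the compact-support/channels information at $T_*$ is manifestly incompatible with $W^+$ bounded and $W^+(t)\to W$; the case $J_0\ge 2$ and the interaction terms between $v$ and $a$ near $T_*$ are the delicate points.
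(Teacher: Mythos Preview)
Your approach is genuinely different from the paper's, and the first two-thirds of it is correct and interesting: the application of Theorem~\ref{T:bup_sum} (whose proof is indeed independent of \S\ref{S:W+}), the energy quantization $E(v_0,v_1)=(1-J_0)E(W,0)\le 0$, the exclusion of $v\equiv 0$ via the exponential convergence \eqref{T-W+}, and the Kenig--Merle conclusion that $v$ blows up in finite time in both directions --- all of this is sound.

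The gap is exactly where you locate it, and it is more serious than a bookkeeping issue. Once you know $T_-(v)=T_*>-\infty$, you have \emph{no information on the type of blow-up of $v$ at $T_*$}. Your iteration idea (``re-run Theorem~\ref{T:bup_sum} for $v$ at $T_*$'') requires $v$ to be type~II there, but nothing you have established forces this: the Kenig--Merle result \cite{KeMe08} for $E<E(W,0)$, $\|\nabla v_0\|_{L^2}>\|\nabla W\|_{L^2}$ gives only finite-time blow-up, not type~I, and the paper does not prove (nor cite) a type~I result at that level of generality. If $v$ is type~I at $T_*$ the profile machinery is unavailable, and the channels-of-energy heuristic you sketch does not straightforwardly apply either, since $v$ is not compactly supported (it agrees with $W^+$ outside $\{|x|\le 1-t\}$) and the singular cone at $T_*$ sits strictly inside the cone $\{|x|\le 1-t\}$. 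Even if $v$ were type~II and you could iterate, the energies would simply march to $-\infty$ with no lower barrier to hit; the ``numerical contradiction'' you hope for does not materialize from the identities alone.

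The paper's argument sidesteps this entirely. It never invokes Theorem~\ref{T:bup_sum} or the regular/singular decomposition of $W^+$. Instead it proves a monotonicity statement: $\partial_t W^+(t,r)\ge 0$ on the whole domain (Proposition~\ref{P:conclu}). This is obtained by approximating $W^+$ by the solutions $u_n$ with data $((1+1/n)W,0)$, showing via a maximum-principle argument for the radial wave equation (Lemma~\ref{L:positivity1}) that $\partial_t u_n>0$, and then using a profile decomposition together with Theorem~\ref{T:rigidityW+} to identify the limit of $u_n$ (after time translation) with $W^+$. The monotonicity then feeds into Proposition~\ref{P:typeI}: for a type~II blow-up with $\partial_t u\ge 0$, dominated convergence forces $u(t)\to v_0$ in $L^6$, which is incompatible with the existence of a nontrivial concentrating profile. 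This is the key mechanism your argument is missing; without either monotonicity or an independent type~I result for the regular part $v$, the contradiction does not close.
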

We divide the proof into a few propositions:
\begin{prop}
\label{P:typeI}
 Let $u$ be a radial solution of \eqref{CP} such that $T_+=T_+(u)<\infty$ and 
$$\forall t\in [T_+-\eps,T_+), \quad \forall r>0, \quad \partial_t u(t,r)\geq 0.$$
 Then the blow-up of $u$ is of type I.
\end{prop}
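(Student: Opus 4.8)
The plan is to argue by contradiction: suppose $u$ is \emph{not} a type I blow-up, i.e.\ $\sup_{0\le t<T_+}\big(\|\nabla u(t)\|_{L^2}+\|\partial_t u(t)\|_{L^2}\big)<\infty$, and derive a contradiction from the monotonicity hypothesis $\partial_t u(t,r)\ge 0$. After rescaling assume $T_+=1$. As recalled in \S\ref{SS:self_simF} (a result of \cite{DuKeMe11a}, independent of the preceding sections), $(u(t),\partial_t u(t))$ has a weak limit $(v_0,v_1)$ in $\hdot\times L^2$ as $t\to 1$; if $v$ denotes the solution of \eqref{CP} with data $(v_0,v_1)$ at $t=1$, then $a:=u-v$ is well defined for $t$ close to $1$ and $\supp\big(a(t),\partial_t a(t)\big)\subset\{|x|\le 1-t\}$.

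The first step is to use the monotonicity to show $u(t)\to v_0$ in $L^6$. For a.e.\ $x$ (in particular $x\neq 0$), once $1-t<|x|$ we have $u(t,x)=v(t,x)$; since $v(t)\to v_0$ in $\hdot$, some subsequence $v(t_k,x)$ converges a.e.\ to $v_0(x)$, and since $\partial_t u(t)\ge 0$ in $L^2$ gives $u(t_2)-u(t_1)=\int_{t_1}^{t_2}\partial_t u(s)\,ds\ge 0$ a.e., the map $t\mapsto u(t,x)$ is nondecreasing on $[1-\eps,1)$, so the full limit exists: $u(t,x)\to v_0(x)$ a.e., with $u(t,x)\le v_0(x)$ for all $t\in[1-\eps,1)$. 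Hence $0\le v_0-u(t)\le v_0-u(1-\eps)\in L^6$, this quantity decreases pointwise to $0$, and dominated convergence yields $u(t)\to v_0$ in $L^6$, so $\|a(t)\|_{L^6}\to 0$. Combining this with conservation of energy ($\|\nabla u(t)\|_{L^2}^2+\|\partial_t u(t)\|_{L^2}^2=2E[u]+\tfrac13\|u(t)\|_{L^6}^6\to 2E[u]+\tfrac13\|v_0\|_{L^6}^6$), with the fact that the support of $a(t)$ shrinks to the origin while $(v,\partial_t v)(t)\to(v_0,v_1)$ in the energy space (so all cross terms with $v$, including $\int u^6$ on the cone, tend to $0$), one obtains that $\|\nabla a(t)\|_{L^2}^2+\|\partial_t a(t)\|_{L^2}^2\to L_a:=2\big(E[u]-E(v_0,v_1)\big)\ge 0$; the local energy identity on the backward light cone shows in addition that $e(t):=\int_{|x|\le 1-t}\big(\tfrac12|\nabla u|^2+\tfrac12(\partial_t u)^2-\tfrac16u^6\big)$ is nonincreasing up to the negligible mantle term $\int_{|x|=1-t}v^6$, and that $e(t)\to\tfrac12L_a$.

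Now distinguish two cases. If $L_a=0$, then $(u(t),\partial_t u(t))=(v(t),\partial_t v(t))+(a(t),\partial_t a(t))\to(v_0,v_1)$ strongly in $\hdot\times L^2$, so by local well-posedness $u$ extends as a solution past $t=1$, contradicting the maximality of $T_+$. If $L_a>0$, one reaches a contradiction by a channel-of-energy argument on the rescaled singular part. Set $\tau_k=1-t_k\downarrow 0$ and $\tilde a_k(s,y)=\tau_k^{1/2}a\big(1-\tau_k(1-s),\tau_k y\big)$; then $\tilde a_k$ is supported in $\{|y|\le 1-s\}$, its energy norm at $s=0$ tends to $L_a$, and it solves $\partial_s^2\tilde a_k-\Delta_y\tilde a_k=\tilde N_k$ with $\|\tilde N_k(s)\|_{L^{6/5}_y}=\|(u^5-v^5)(\cdot)\|_{L^{6/5}_x}\lesssim\|a(\cdot)\|_{L^6}\to 0$ uniformly in $s$, so the $\tilde a_k$ are asymptotically free waves confined to a shrinking cone. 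Passing to a weak limit $(\alpha_0,\alpha_1)$ of the data at $s=0$: if $(\alpha_0,\alpha_1)\neq 0$, Proposition \ref{P:linear_behavior} (the exterior energy bound \eqref{out_positive}) forces the corresponding free evolution to keep a fixed positive amount of energy in $\{|y|\ge|s|\}$ for all $s$ of one sign, which is impossible for a solution supported in $\{|y|\le 1-s\}$; if the weak limit vanishes, the energy $L_a$ must either escape to the self-similar boundary $|y|\to 1$ — excluded by the monotonicity of $e(t)$, which forces the mantle flux to be asymptotically $0$ — or concentrate at smaller scales, and iterating the extraction produces, after renormalization, a nonzero solution of \eqref{CP} with compact trajectory in self-similar variables, i.e.\ a compact self-similar blow-up, which is impossible by \cite{KeMe08}. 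In all cases $L_a=0$, and we conclude as above.

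The main obstacle is the exclusion of the case $L_a>0$, that is, the proof that the confined singular part can carry no energy; at its core this is the same mechanism as the impossibility of compact self-similar blow-up in \cite{KeMe08}. The delicate points are to control, in Proposition \ref{P:linear_behavior}, into which of the two time directions the exterior channel opens (so that it is incompatible with confinement to $\{|y|\le 1-s\}$ and not to its complement), and to treat the case of a vanishing weak limit, where a concentration–compactness argument at the scale of the cone is needed. The hypothesis $\partial_t u\ge 0$ enters only through the step $u(t)\to v_0$ in $L^6$ (hence $\|a(t)\|_{L^6}\to 0$), which is exactly what makes the singular part asymptotically linear and brings the problem within reach of the channel estimates.
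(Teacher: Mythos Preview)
Your Step 1 (the $L^6$ convergence $u(t)\to v_0$) is exactly the paper's Step 1 and is correct. The case $L_a=0$ is also fine. The problem is your handling of $L_a>0$, where there is a genuine gap.

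First, when you apply \eqref{out_positive} to the linear evolution of $(\tilde a_k(0),\partial_s\tilde a_k(0))$, you only know the exterior energy bound holds for one time direction, and you have no control over which one. If the channel opens for $s\ge 0$ your argument works (the support $\{|y|\le 1-s\}$ eventually misses $\{|y|\ge s\}$). But if it opens for $s\le 0$, the support of $\tilde a_k(s)$ is $\{|y|\le 1+|s|\}$, which always meets $\{|y|\ge |s|\}$; what you would then need is that the energy of $a$ in the self-similar annulus $\tau_k|s|\le |x|\le \tau_k(1+|s|)$ at time $1-\tau_k(1+|s|)$ tends to $0$. That is precisely the content of Proposition~\ref{P:no_ss}, which is a substantial result, and Section~\ref{S:W+} is explicitly stated to be independent of it. Second, your treatment of the vanishing weak limit (``iterating the extraction produces a compact self-similar blow-up'') is not an argument: no mechanism is given that would produce, from concentration at smaller scales, a solution with compact trajectory in self-similar variables. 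Finally, the claim that $e(t)$ is nonincreasing ``up to the negligible mantle term $\int_{|x|=1-t}v^6$'' is not justified: for a focusing equation the flux has a $-\tfrac16 u^6$ contribution, and the radial Sobolev bound $|v(t,r)|\lesssim r^{-1/2}$ only gives $(1-t)^2 v(t,1-t)^6\lesssim (1-t)^{-1}$, which is not integrable.

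The paper's Step 2 avoids all of this by using the $L^6$ convergence \emph{directly}, via a profile decomposition rather than a channel argument. Take any $t_n\to 1$ and expand $(u(t_n)-v_0,\partial_t u(t_n)-v_1)$ into profiles $\{U^j_{\lin};\lambda_{j,n},t_{j,n}\}$. Since $u$ blows up, at least one nonlinear profile, say $U^1$, does not scatter forward; choose $T_1$ in its domain with $U^1(T_1)\neq 0$. By Proposition~\ref{P:lin_NL} one can evolve the decomposition to the time $t_n'=t_n+t_{1,n}+\lambda_{1,n}T_1\to 1$, and the Pythagorean expansion of the $L^6$ norm then gives
\[
\liminf_{n\to\infty}\|u(t_n')-v_0\|_{L^6}\ \ge\ \|U^1(T_1)\|_{L^6}\ >\ 0,
\]
contradicting Step 1. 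This is a two-line argument once Step 1 is in hand; it uses only the profile machinery of Section~\ref{S:preliminaries} and none of the self-similar or channel-of-energy analysis. The moral is that once you have $\|u(t)-v_0\|_{L^6}\to 0$, the $L^6$ orthogonality of profiles is the right tool, not the $\hdot\times L^2$ exterior-energy machinery.
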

\begin{proof}
 Assume that the blow-up of $u$ is a type II blow-up. We will reach a contradiction in two steps, using the general description of blow-up type II solutions given in \cite[Section 3]{DuKeMe11a}. 

\EMPH{Step 1: convergence of $u$ in $L^6$}
As in the beginning of subsections \ref{SS:self_simF}, we 
denote by $(v_0,v_1)$ the weak limit, in $\hdot\times L^2$ of $(u(t),\partial_t u(t))$ as $t\to T_+$, and by $v$ the solution of \eqref{CP} with initial condition $(v_0,v_1)$ at $t=T_+$. We have (taking a smaller $\eps>0$ if necessary),
\begin{equation*}
 u\in C^0\Big((T_+-\eps,T_+),\hdot\Big),\quad
 v\in C^0\Big((T_+-\eps,T_+],\hdot\Big).
\end{equation*} 
and thus, by one-dimensional Sobolev embeddings, and using that $u$ and $v$ are radial functions,
\begin{equation*}
 u\in C^0\Big((T_+-\eps,T_+)\times \left(\RR^3\setminus\{0\}\right)\Big),\quad 
 v\in C^0\Big((T_+-\eps,T_+]\times \left(\RR^3\setminus\{0\}\right)\Big).
\end{equation*} 
By finite speed of propagation, $u-v$ is supported in the set $\left\{|x|\leq T_+-t\right\}$, which implies
$$ x\neq 0\Longrightarrow \lim_{t\to T_+} u(t,x)=v_0(x).$$
As $\partial_t u\geq 0$, we obtain 
$$ \forall x\neq 0,\;\forall t\in (T_+-\eps,T_+),\quad u(T_+-\eps,x)\leq u(t,x)\leq v_0(x).$$
Using that $v_0\in L^6$, we get by dominated convergence:
\begin{equation}
 \label{u_to_v}
\lim_{t\to T_+} \left\|u(t)-v_0\right\|_{L^6}=0.
\end{equation} 

\EMPH{Step 2: profile decomposition and contradiction}
Let $t_n\to T_+$. Consider, after extraction of a subsequence, a profile decomposition for the sequence $\left(u(t_n)-v_0,\partial_t u(t_n)-v_1\right)$:
\begin{equation}
\label{decompo_profil2}
\left\{\begin{aligned}
 u(t_n)-v_0&=\sum_{j=1}^J \frac{1}{\lambda_{j,n}^{\frac{1}{2}}}U_{\lin}^j\left(\frac{-t_{j,n}}{\lambda_{j,n}},\frac{x}{\lambda_{j,n}}\right)+w_{0,n}^J(x),\\
\partial_t u(t_n)-v_1&=\sum_{j=1}^J \frac{1}{\lambda_{j,n}^{\frac{3}{2}}}\partial_t U_{\lin}^j\left(\frac{-t_{j,n}}{\lambda_{j,n}},\frac{x}{\lambda_{j,n}}\right)+w_{1,n}^J(x),
\end{aligned}\right.
\end{equation}
where, the solution $w_n^J$ the linear wave equation \eqref{lin_wave} with initial conditions $\left(w_{0,n}^J,w_{1,n}^J\right)$ satisfies \eqref{small_w}.

Consider, for every $j$, the nonlinear profile $U^j$ associated to $\left(U^j_{\lin},\left\{-t_{j,n}/\lambda_{j,n}\right\}_n\right)$. 
Let $\JJJ$ be the set of indices $j$ such that $U^j$ does not scatter forward in time. The set $\JJJ$ if finite and not empty (if $\JJJ$ is empty, $u$ scatters forward in time, a contradiction). For $j\in \JJJ$, one may assume
 \begin{gather}
\label{Uj0}
t_{j,n}=0\text{ or}\\
\label{m_infty}
\lim_{n\to+\infty}\frac{-t_{j,n}}{\lambda_{j,n}}=-\infty.
\end{gather}
For $j\in \JJJ$, chose $T_j$ in the domain of definition of $U^j$ and such that 
\begin{equation}
\label{choice_Tj}
 U^j(T_j)\neq 0.
\end{equation} 
If \eqref{Uj0} holds, we can also assume $T_j>0$. Note that this implies in both cases \eqref{Uj0} or \eqref{m_infty} that for any $j$ in $\JJJ$, for large $n$,
\begin{equation}
\label{positive_time}
 t_{j,n}+\lambda_{j,n}T_j>0.
\end{equation} 
Reordering the profiles and extracting subsequences, we may assume:
\begin{equation*}
1\in \JJJ\text{ and}\quad \forall n,\; t_{1,n}+\lambda_{1,n}T_1=\min_{j\in \JJJ} \left(t_{j,n}+\lambda_{j,n}T_{j}\right).
\end{equation*}  
We will show that $U^1(T_1)=0$, which will contradicts \eqref{choice_Tj} for $j=1$.

By Proposition \ref{P:lin_NL}, $t_n+t_{1,n}+\lambda_{1,n}T_1<T_+(u)$ for large $n$ and 
\begin{equation}
\label{approximation}
\left\{\begin{aligned}
 u(t_n+t_{1,n}+\lambda_{1,n}T_1)&=v_0+\frac{1}{\lambda_{1,n}^{\frac{1}{2}}}U^1\left(T_1,\frac{\cdot}{\lambda_{1,n}}\right) \\&
\qquad+\sum_{j=2}^J \frac{1}{\lambda_{j,n}^{\frac{1}{2}}}U^j\left(\frac{t_{1,n}+\lambda_{1,n}T_1-t_{j,n}}{\lambda_{j,n}},\frac{\cdot}{\lambda_{j,n}}\right)
+\tilde{w}_{0,n}^J,\\
\partial_t u(t_n+t_{1,n}+\lambda_{1,n}T_1)&=v_1+\frac{1}{\lambda_{1,n}^{\frac{3}{2}}}\partial_t U^1\left(T_1,\frac{\cdot}{\lambda_{1,n}}\right)\\
&\qquad+\sum_{j=2}^J \frac{1}{\lambda_{j,n}^{\frac{3}{2}}}\partial_t U^j\left(\frac{t_{1,n}+\lambda_{1,n}T_1-t_{j,n}}{\lambda_{j,n}},\frac{\cdot}{\lambda_{j,n}}\right)+ \tilde{w}_{1,n}^J,
\end{aligned}\right.
\end{equation}
where the solution $\tilde{w}_n^J$ of the linear wave equation with initial conditions $\left(\tilde{w}_{0,n}^J,\tilde{w}_{1,n}^J\right)$ satisfies \eqref{small_w}.
As a consequence of \eqref{positive_time} for $j=1$, we get (using that for large $n$, $t_n+t_{1,n}+\lambda_{1,n}T_1<T_+$),
\begin{equation}
\label{temps_OK}
 \lim_{n\to \infty}t_n+t_{1,n}+\lambda_{1,n}T_1=T_+.
\end{equation} 
In view of \eqref{approximation}  and using the pseudo-orthogonality of the sequences of parameters, we get, by the expansion of the $L^6$ norm:
$$ \liminf_{n\to\infty}\left\| u(t_n')-v_0\right\|_{L^6}\geq \left\|U^1(T_1)\right\|_{L^6},$$
where 
$ t_n'=t_n+t_{1,n}+\lambda_{1,n}T_1\underset{n\to \infty}{\longrightarrow} T_+.$
This contradicts \eqref{u_to_v}, unless $U^1(T_1)=0$, concluding the proof. 
\end{proof}
We next prove a result on positive solutions of \eqref{CP}. In this lemma, it is not necessary to assume that the solution $u$ is in the energy space.
\begin{lemma}
 \label{L:positivity1}
Let $u$ be a $C^3$, radial solution of \eqref{CP} such that
\begin{gather}
\label{u0_positiveP}
 \forall r>0,\quad u_0(r)\geq 0\\
\label{u0_increasesP}
r\mapsto ru_0(r)\text{ is nondecreasing on }(0,+\infty)\\
\label{u1_positiveP}
\forall r>0,\quad u_1(r)\geq 0\\
\label{u1_increasesP}
r\mapsto ru_1(r)\text{ is nondecreasing on }(0,+\infty)\\
\label{derivee_2}
\forall r>0,\quad \Delta u_0(r)+u_0^5(r)>0.
\end{gather}
Then
\begin{equation}
\label{u_positiveP}
 \forall t\in (0,T_+(u)),\;\forall r>0,\quad u(t,r)>0\text{ and }\partial_tu(t,r)>0.
\end{equation}
\end{lemma}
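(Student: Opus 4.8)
The plan is to reduce to the one‑dimensional semilinear wave equation satisfied by $w=ru$ and then propagate positivity using the nonnegativity of the d'Alembert kernel. Since $u$ is $C^3$ and radial, $u(t,\cdot)$ is an even $C^3$ function of $r\in\RR$, so $w(t,r)=ru(t,r)$ is an odd $C^3$ function of $r$ with $w(t,0)=0$; and $\partial_r^2 w=r\Delta u$, hence
\[
\partial_t^2 w-\partial_r^2 w=r\big(\partial_t^2 u-\Delta u\big)=ru^5=u^4 w.
\]
Differentiating in $t$ and writing $v:=\partial_t w=r\,\partial_t u$ (so $\partial_t u=v/r$), one gets the algebraic identity
\[
\partial_t^2 v-\partial_r^2 v=\partial_t(u^4 w)=4u^3\,\tfrac{v}{r}\,(ru)+u^4 v=5u^4 v,
\]
so $v$ is an odd $C^2$ solution of a \emph{linear} $1$D wave equation with the nonnegative potential $5u^4$. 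Its Cauchy data are $\phi_0:=v(0,\cdot)=ru_1$ and, from the equation for $w$ at $t=0$ together with $\partial_r^2 w_0=r\Delta u_0$, $\phi_1:=\partial_t v(0,\cdot)=\partial_r^2 w_0+u_0^4 w_0=r(\Delta u_0+u_0^5)$. By \eqref{u1_positiveP}--\eqref{u1_increasesP} the odd function $\phi_0$ is nonnegative and nondecreasing on $(0,+\infty)$; by \eqref{derivee_2} the odd function $\phi_1$ is strictly positive on $(0,+\infty)$.

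The elementary ingredient is: if $h$ is odd and $h\ge 0$ on $(0,+\infty)$, then $\int_{r-c}^{r+c}h(s)\,ds\ge 0$ for all $r,c\ge 0$ (split the integral at $0$ and use oddness). Consequently the free solution $\Phi(t,r)=\tfrac12\big(\phi_0(r+t)+\phi_0(r-t)\big)+\tfrac12\int_{r-t}^{r+t}\phi_1$ of $\partial_t^2\Phi-\partial_r^2\Phi=0$ with data $(\phi_0,\phi_1)$ satisfies $\Phi\ge 0$ on $\{t\ge 0,\ r\ge 0\}$, and in fact $\Phi(t,r)>0$ for $t>0$, $r>0$: if $r\ge t$ the first term is a sum of values of $\phi_0$ at nonnegative points, hence $\ge 0$, while $\tfrac12\int_{r-t}^{r+t}\phi_1>0$ since $\phi_1>0$ on $(0,\infty)$; if $0<r<t$ the first term equals $\tfrac12\big(\phi_0(r+t)-\phi_0(t-r)\big)\ge 0$ because $r+t>t-r>0$ and $\phi_0$ is nondecreasing on $(0,\infty)$, and $\tfrac12\int_{r-t}^{r+t}\phi_1=\tfrac12\int_{t-r}^{t+r}\phi_1>0$ by oddness.

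Next I would use the Duhamel representation
\[
v(t,r)=\Phi(t,r)+\tfrac12\int_0^t\int_{r-(t-\tau)}^{r+(t-\tau)}5u^4(\tau,s)\,v(\tau,s)\,ds\,d\tau
\]
and run the monotone Picard iteration $v^{(0)}=\Phi$, $v^{(n+1)}=\Phi+\tfrac12\int_0^t\int 5u^4\,v^{(n)}\,ds\,d\tau$. The $v^{(n)}$ are odd in $r$; since $\Phi\ge 0$ on $\{r\ge 0\}$ and $5u^4\ge 0$, the elementary fact above gives inductively $\Phi\le v^{(n)}\le v^{(n+1)}$ on $\{r\ge 0\}$ (each Duhamel correction of a function that is odd and $\ge 0$ on $\{r>0\}$ is again $\ge 0$ on $\{r>0\}$). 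By finite speed of propagation everything is, for each fixed point, localized to a compact backward light cone contained in $[0,T_+(u))\times\RR$ on which $u$, hence $5u^4$, is bounded, so $v^{(n)}$ converges uniformly there to the unique $C^2$ solution $v$. Therefore $v\ge\Phi$, whence $v(t,r)\ge\Phi(t,r)>0$, i.e. $\partial_t u(t,r)=v(t,r)/r>0$, for all $t\in(0,T_+(u))$, $r>0$.

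Finally, for $t\in(0,T_+(u))$ and $r>0$,
\[
u(t,r)=u_0(r)+\int_0^t\partial_t u(\tau,r)\,d\tau>u_0(r)\ge 0
\]
by \eqref{u0_positiveP}, which gives \eqref{u_positiveP}. (Alternatively, $u\ge 0$ follows directly by running the same positivity‑preservation argument on $w$ itself: its free evolution, with data $(ru_0,ru_1)$, is $\ge 0$ for $t,r>0$, this time using that $r\mapsto ru_0$ is nondecreasing, \eqref{u0_increasesP}.) I expect the only genuinely nonroutine point to be this positivity‑preservation step: one must check that the monotone iteration is legitimate — uniform convergence on the compact cone, oddness of the iterates preserved — and that the odd‑extension bookkeeping is precisely what makes the d'Alembert source integral $\int_{r-c}^{r+c}$ nonnegative whenever the integrand is odd in $r$ and $\ge 0$ on $\{r>0\}$.
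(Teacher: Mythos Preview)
Your proof is correct and rests on the same basic ingredient as the paper's --- the reduction $w=ru$ to a one-dimensional wave equation and the nonnegativity of the d'Alembert kernel on odd data that are $\ge 0$ on $(0,\infty)$ --- but the execution is genuinely different. The paper first isolates a linear positivity lemma (for $\partial_t^2 u-\Delta u=f$ with $f\ge 0$, $u_0\ge 0$, $ru_0$ nondecreasing, $u_1\ge 0$), then runs a continuity argument: positivity of $u$ and $\partial_tu$ holds for small $t>0$ because $\partial_t^2u(0)=\Delta u_0+u_0^5>0$; at the infimum $t_1$ of the bad set the forcings $u^5$ and $5u^4\partial_tu$ are still $\ge 0$ on $[0,t_1)$, so the lemma gives strict positivity at $t_1$, a contradiction. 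You instead bypass the bootstrap by observing that the potential $5u^4$ in the equation for $v=r\partial_tu$ is \emph{automatically} nonnegative regardless of the sign of $u$, so a monotone Picard iteration on that single linear equation gives $v\ge\Phi>0$ directly; then $u>0$ follows by integrating $\partial_tu$. Your route is slicker here precisely because the power is even; the paper's continuity scheme would also handle nonlinearities (say $|u|^{p-1}u$ with $p$ even or general powers) where the sign of the linearized potential is tied to the sign of $u$ and must be propagated jointly.
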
 

We first prove the following positivity lemma for the linear wave equation:
\begin{lemma}
 \label{L:positivity}
Let $L>0$, $f$  a continuous radial function on $(0,L)\times \RR^3$, and 
$u$ a $C^2$, radial solution of 
\begin{equation*}
\begin{gathered}
 \partial_t^2u-\Delta u=f\\
u_{\restriction t=0}=u_0,\quad \partial_t u_{\restriction t=0}=u_1,
\end{gathered}
\end{equation*}
defined for $t\in [0,L)$ such that
\begin{gather}
\label{u0_positive}
 \forall r\in (0,L),\quad u_0(r)\geq 0\\
\label{u0_increases}
r\mapsto ru_0(r)\text{ is nondecreasing on }(0,L)\\
\label{u1_positive}
\forall r\in (0,L),\quad u_1(r)\geq 0\\
\label{f_positive}
\forall t\in (0,L),\; \forall r,\; 0<r<L-t\Longrightarrow f(t,r)\geq 0.
\end{gather}
Then
\begin{equation}
\label{u_positive}
 \forall t\in (0,L),\; \forall r,\; 0<r<L-t\Longrightarrow u(t,r)\geq 0.
\end{equation}
Furthermore if one of the inequalities in \eqref{u0_positive} or \eqref{u1_positive} is strict, or if the function $t\mapsto r u_0(r)$ is strictly increasing on $(0,L)$, then \eqref{u_positive} holds with a strict inequality.
\end{lemma}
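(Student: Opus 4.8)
The plan is to reduce the statement to the one–dimensional wave equation on a half–line and then read off the sign of the solution from d'Alembert's formula together with the reflection principle. First I would set $w(t,r)=r\,u(t,r)$; since $u$ is radial and $C^2$ this is a $C^2$ function that vanishes at $r=0$, and using $\Delta u=\frac1r\partial_r^2(ru)$ for radial functions on $\RR^3$ it solves
\[
\partial_t^2 w-\partial_r^2 w=g,\qquad g(t,r):=r\,f(t,r),\qquad w(t,0)=0,
\]
with initial data $w_0:=r\,u_0$, $w_1:=r\,u_1$. Then \eqref{u0_positive}--\eqref{u0_increases} say that $w_0\ge 0$ is nondecreasing on $(0,L)$ with $w_0(0^+)=0$, \eqref{u1_positive} says $w_1\ge 0$ on $(0,L)$, and \eqref{f_positive} says $g(\tau,s)\ge 0$ whenever $0<s<L-\tau$; the goal becomes $w(t,r)\ge 0$ for $0<r<L-t$.

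Next I would extend $w_0$, $w_1$ and $g(\tau,\cdot)$ oddly to $\RR$ — the regularity of $u$ gives $w_0(0)=w_0''(0)=w_1(0)=g(\tau,0)=0$, so these extensions are admissible — and use the representation, valid for $0<r<L-t$ by uniqueness of $C^2$ solutions and finite speed of propagation,
\[
w(t,r)=\tfrac12\big[\tilde w_0(r+t)+\tilde w_0(r-t)\big]+\tfrac12\int_{r-t}^{r+t}\tilde w_1(s)\,ds+\tfrac12\int_0^t\!\!\int_{r-(t-\tau)}^{r+(t-\tau)}\tilde g(\tau,s)\,ds\,d\tau .
\]
The point of the hypothesis $r<L-t$ is that it forces $r+t<L$, so every argument of $\tilde w_0,\tilde w_1$ lies in $(-L,L)$ and every point $(\tau,s)$ in the integration region of the Duhamel term satisfies $|s|\le r+(t-\tau)<L-\tau$; hence all values entering the formula are evaluated exactly where the sign hypotheses hold.

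Then I would prove positivity by splitting into the cases $t\le r$ and $t>r$. If $0<t\le r$, the interval $[r-t,r+t]$ lies in $(0,L)$ and the triangular integration region of the last term lies in $\{s>0\}$, so each of the three terms is $\ge 0$ directly from $w_0,w_1,g\ge 0$. If $t>r$, I would use oddness: $\tilde w_0(r+t)+\tilde w_0(r-t)=w_0(r+t)-w_0(t-r)\ge 0$ because $0\le t-r<t+r<L$ and $w_0$ is nondecreasing; $\int_{r-t}^{r+t}\tilde w_1=\int_{t-r}^{t+r}w_1\ge 0$ after cancelling the reflected part; and for each $\tau$ the inner integral of the Duhamel term reduces (by the same cancellation when $r<t-\tau$) to $\int_{|(t-\tau)-r|}^{(t-\tau)+r}g(\tau,s)\,ds\ge 0$, using $(t-\tau)+r<L-\tau$. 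This gives $w(t,r)\ge 0$, i.e.\ \eqref{u_positive}. For the strict statement: if $u_1>0$ on $(0,L)$ the $w_1$–integral runs over an interval of positive length contained in $(0,L)$, hence is strictly positive while the other terms stay $\ge 0$; if $r\mapsto r\,u_0(r)$ is strictly increasing then $w_0(r+t)>w_0(0^+)=0$, and when $t>r$ also $w_0(r+t)>w_0(t-r)$, so the first term is strictly positive; if $u_0>0$ one argues likewise in the regime $t\le r$ (and in the application, Lemma~\ref{L:positivity1}, one has enough such strict hypotheses at hand).

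The step I expect to require the most care is the bookkeeping of the odd reflection in the Duhamel term when $t>r$: one must check that the reflected sections of the backward light cone of $(t,r)$ remain inside the set $\{0<s<L-\tau\}$ where $f\ge 0$, which is precisely why the conclusion is restricted to $0<r<L-t$ and not to all $r>0$. Everything else is routine once this representation formula and its reflection structure are in place.
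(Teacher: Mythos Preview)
Your proposal is correct and follows essentially the same route as the paper: reduce to the one-dimensional wave equation via $w=ru$, extend oddly, apply d'Alembert, and verify the sign of each term by splitting on whether $r-t$ is positive or negative. Your treatment is in fact somewhat more explicit than the paper's (which just writes down the representation formula and checks terms $(a)$, $(b)$, $(c)$); your honest remark that the strict case with only $u_0>0$ is not fully handled for $t>r$ matches the paper, which simply asserts ``the same proof shows'' without further detail.
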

\begin{proof}
Consider the functions defined for $(t,s)\in (0,L)\times \RR$ by
\begin{equation}
\label{def_odd_f}
v(t,s)=su(t,|s|),\; v_0(s)= su_0(|s|),\; v_1(s)=su_1(t,|s|),\; F(t,s)=sf(t,|s|).
\end{equation} 
Note that all these functions are odd in $s$.
Then
\begin{equation*}
\partial_t^2 v-\partial_s^2 v=F,\quad v_{\restriction t=0}=v_0,\; \partial_t v_{\restriction t=0}=v_1.
\end{equation*}
Thus, for $t\in (0,L)$, $s\in \RR$,
\begin{equation}
\label{formula_v}
 2v(t,s)=\underbrace{v_0(t+s)-v_0(t-s)}_{(a)}+\underbrace{\int_{t-s}^{t+s} v_1(\rho)\,d\rho}_{(b)}+\underbrace{\int_0^t\int_{t-\tau-s}^{t-\tau+s}F(\tau,\rho)\,d\rho\,d\tau}_{(c)}.
\end{equation}  
We check that for $0<t<L$, $0<s<L-t$, each term $(a)$, $(b)$ or $(c)$ is nonnegative.

If $t-s\geq 0$, then $(a)=(t+s)u_0(t+s)-(t-s)u_0(t-s)\geq 0$ by \eqref{u0_increases}. If $t-s<0$ then $(a)=(t+s)u_0(t+s)+(s-t)u_0(s-t)\geq 0$ by \eqref{u0_positive}.

Distinguishing between the cases $t-s>0$ and $t-s<0$, we get the following formula for $(b)$:
\begin{equation*}
 (b)=\int_{|t-s|}^{t+s} \rho u_1(\rho)\,d\rho,
\end{equation*}
which is positive by \eqref{u1_positive}.

Similarly,
\begin{equation*}
 (c)=\int_0^t\int_{t-\tau-s}^{t-\tau+s} F(t,\rho)\,d\rho=\int_0^t\int_{|t-\tau-s|}^{t-\tau+s}\rho f(\tau,\rho)\,d\rho\,d\tau,
\end{equation*}
and $(c)$ is positive by \eqref{f_positive}. Hence \eqref{u_positive}. 

If one of the inequalities in \eqref{u0_positive} or \eqref{u1_positive} is strict, or if the fonction $r\mapsto r u_0(r)$ is strictly increasing on $(0,L)$, the same proof shows that the inequality in \eqref{u_positive} is strict. 
\end{proof}
\begin{proof}[Proof of Lemma \ref{L:positivity1}]
Let $L>0$. We will show that \eqref{u_positiveP} holds for $t\in (0,T_+)$, $r\in (0,L-t)$. As $L$ is arbitrary, the proposition will follow.

As $u_{\restriction t=0}(r)\geq 0$, $\partial_t u_{\restriction t=0}(r)\geq 0$ and 
$\partial_t^2 u_{\restriction t=0}(r)>0$ for all $r\in (0,L]$, we get that there exists a small $t_0>0$ such that
\begin{equation}
\label{smallt}
 \forall t\in (0,t_0),\; \forall r\in (0,L],\quad u(t,r)>0\text{ and }\partial_tu(t,r)>0.
\end{equation} 
We argue by contradiction. Assume that
$$ \NNN=\Big\{t\in (0,T_+(u_0)),\; \exists r\in (0,L-t)\text{ s.t. } u(t,r)\leq 0\text{ or }\partial_tu(t,r)\leq 0\Big\}$$
is not empty. Let $t_1=\inf \NNN$. Then by \eqref{smallt}, $t_1>0$. Let $w=\partial_t u$. Then
\begin{align*}
 \partial_t^2 u-\Delta u=u^5,\quad u_{\restriction t=0}=u_0,\; \partial_t u_{\restriction t=0}=u_1\\
 \partial_t^2 w-\Delta w=5u^4w,\quad w_{\restriction t=0}=u_1,\; \partial_t w_{\restriction t=0}=\Delta u_0+u_0^5.
\end{align*}
By Lemma \eqref{L:positivity}, the definition of $\NNN$ and assumptions \eqref{u0_positiveP}, \eqref{u0_increasesP}, \eqref{u1_positiveP}, \eqref{u1_increasesP} and \eqref{derivee_2}
$$ \forall r\in (0,L-t_1),\quad u(t_1,r)>0,\quad \partial_t u(t_1,r)=w(t_1,r)>0.$$
By continuity of $u$ and $\partial_t u$, there exists $t_2>t_1$ such that
$$ \forall r\in (0,L-t_2),\quad u(t_2,r)>0,\quad \partial_t u(t_2,r)>0,$$
contradicting the fact that $t_1=\inf \NNN$. The proof is complete.
\end{proof}
In view of Proposition \ref{P:typeI},
to conclude the proof of Theorem \ref{T:blowupW+}, it is sufficient to show the following result:
\begin{prop}
\label{P:conclu}
The solution $W^+$ satisfies
$$ W^+(t,x)\geq 0,\quad \partial_t W^+(t,x)\geq 0$$
on its domain of definition.
\end{prop}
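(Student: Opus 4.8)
We must establish the two sign conditions of Proposition~\ref{P:conclu}; by Proposition~\ref{P:typeI} and $T_+(W^+)=1<\infty$ they force the blow-up of $W^+$ to be of type~I, i.e. Theorem~\ref{T:blowupW+}. The tool is the positivity Lemma~\ref{L:positivity1} and its linear ingredient Lemma~\ref{L:positivity}, but Lemma~\ref{L:positivity1} cannot be applied to $W^+$ directly: a nonnegative radial function $u_1$ with $r\mapsto ru_1(r)$ nondecreasing and $u_1\not\equiv 0$ is never in $L^2(\RR^3)$, so hypotheses \eqref{u1_positiveP}--\eqref{u1_increasesP} would force $\partial_tW^+\equiv 0$; this is exactly why the remark preceding Lemma~\ref{L:positivity1} permits solutions outside the energy space. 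The plan is instead to run the positivity argument from $t=-\infty$, using the sharp description of $W^+$ near $-\infty$ coming from the construction in \cite{DuMe08}: with $k_0=1/C$ as in \eqref{T-W+} one has, as $t\to-\infty$,
\begin{equation*}
W^+(t)=W+e^{k_0t}\,\YYY+O\big(e^{2k_0t}\big),
\end{equation*}
together with the corresponding expansions for $\partial_tW^+$, $\partial_t^2W^+$ and $\Delta W^+$, where $\YYY>0$ is the radial, exponentially decaying ground state of the linearized operator, $(-\Delta-5W^4)\YYY=-k_0^2\YYY$. Hence, for $t_0$ negative enough: $W^+(t_0,\cdot)>0$; $r\mapsto rW^+(t_0,r)$ is strictly increasing on $(0,\infty)$ (because $\partial_r(rW(r))=(1+r^2/3)^{-3/2}>0$, while the correction has a derivative decaying exponentially in $r$, hence dominated everywhere once $t_0\ll 0$); $\partial_tW^+(t_0,\cdot)=k_0e^{k_0t_0}\YYY+O(e^{2k_0t_0})>0$; and $\Delta W^+(t_0,\cdot)+W^+(t_0,\cdot)^5=\partial_t^2W^+(t_0,\cdot)=k_0^2e^{k_0t_0}\YYY+O(e^{2k_0t_0})>0$. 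The only condition that \emph{fails} is that $r\mapsto r\partial_tW^+(t_0,r)$ be nondecreasing on all of $(0,\infty)$: it holds only on the bounded interval where $r\mapsto r\YYY(r)$ increases.

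For the nonnegativity of $W^+$ one uses only the conditions that do hold. From the Duhamel formula based at $t_0$,
\begin{equation*}
W^+(t)=S_{\lin}(t-t_0)\big(W^+(t_0),\partial_tW^+(t_0)\big)+\int_{t_0}^t\frac{\sin\!\big((t-s)\sqrt{-\Delta}\big)}{\sqrt{-\Delta}}\,\big(W^+(s)\big)^5\,ds,
\end{equation*}
and two facts valid in $\RR^3$ --- the retarded propagator $g\mapsto\frac{\sin(\tau\sqrt{-\Delta})}{\sqrt{-\Delta}}g$ ($\tau>0$) maps nonnegative radial functions to nonnegative ones (it is $\tau$ times a spherical mean), and, by Lemma~\ref{L:positivity} with vanishing source, $S_{\lin}(\tau)(f,g)>0$ for $\tau>0$ when $f>0$, $r\mapsto rf(r)$ strictly increasing and $g\ge 0$ --- a continuity/bootstrap argument in $t$ gives $W^+>0$ on $(t_0,1)\times(0,\infty)$: the set of $T\in(t_0,1)$ with $W^+>0$ on $(t_0,T]$ is nonempty, closed (by continuity, the integral term being then $\ge 0$ and the free term $>0$), and open (the free term stays strictly positive, and $W^+(t)-W$ decays faster than $W$ at spatial infinity). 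Letting $t_0\to-\infty$ gives $W^+>0$ on its whole domain.

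For the nonnegativity of $\partial_tW^+$, set $w:=\partial_tW^+$, so $\partial_t^2w-\Delta w=5(W^+)^4w$. The argument above would need $r\mapsto rw(t_0,r)$ nondecreasing, which is false; instead move the base point to $-\infty$. Since $\|(w(t),\partial_tw(t))\|_{\hdot\times L^2}\to0$ as $t\to-\infty$ by \eqref{T-W+} and $\Sbf_{\lin}$ is an isometry of $\hdot\times L^2$, the free part disappears and
\begin{equation*}
w(t)=\int_{-\infty}^t\frac{\sin\!\big((t-s)\sqrt{-\Delta}\big)}{\sqrt{-\Delta}}\,\big(5(W^+(s))^4\,w(s)\big)\,ds .
\end{equation*}
By the previous step the kernel $5(W^+(s))^4$ is nonnegative, so ``$w(s)\ge0$ for all $s<t$'' forces $w(t)\ge0$; the bootstrap starts from $w(s)=k_0e^{k_0s}\YYY+O(e^{2k_0s})>0$ for $s\ll0$ and is closed using that at $s\ll0$ the integrand is strictly positive everywhere, hence contributes strictly positively to $w(t,x)$ for each $x$, the contribution of $w(s)$ for $s$ near the top of the interval being controlled at large $|x|$ by finite speed of propagation and the exponential smallness of $W^+(t_0)-W$. (Alternatively one first proves $\partial_t^2W^+=\Delta W^++(W^+)^5\ge0$ by the same $-\infty$-based bootstrap, the source term $20(W^+)^3(\partial_tW^+)^2$ being nonnegative once $W^+\ge0$, and then integrates in $t$ using $\partial_tW^+(t)\to0$.)

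The step I expect to be the main obstacle is the nonnegativity of $\partial_tW^+$: the monotonicity hypothesis \eqref{u1_increasesP} is genuinely violated by $\partial_tW^+(t_0,\cdot)$ away from the origin, so Lemma~\ref{L:positivity1} is unavailable for $w$ based at $t_0$; the $-\infty$-based representation repairs this, but one must then control the sign of $\partial_tW^+$ at spatial infinity along the bootstrap, which is precisely where the precise asymptotics of \cite{DuMe08} and finite speed of propagation are needed.
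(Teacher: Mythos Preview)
Your approach is genuinely different from the paper's, and it is worth contrasting the two.

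The paper does \emph{not} attempt to verify the hypotheses of Lemma~\ref{L:positivity1} (or any variant) directly on $W^+$. Instead it introduces the approximating family $u_n$ with initial data $\big((1+\tfrac1n)W,\,0\big)$. Because the initial velocity is identically zero, \emph{all} the hypotheses of Lemma~\ref{L:positivity1}---including the monotonicity condition \eqref{u1_increasesP} on $r u_1(r)$ that you correctly identify as the obstruction---are trivially satisfied, and one obtains $u_n(t,r)\geq W(r)>0$ and $\partial_t u_n(t,r)>0$ for all $t\in(0,T_+(u_n))$. Since $E[u_n]<E[W]$ and $\int|\nabla u_n(0)|^2>\int|\nabla W|^2$, \cite{KeMe08} forces finite-time blow-up, which by Proposition~\ref{P:typeI} is of type~I. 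One then takes $t_n$ to be the exit time from a $\delta_0$-neighborhood of $W$, runs a profile decomposition on $(u_n(t_n),\partial_tu_n(t_n))$, and shows there is a single profile $U^1$ with $E[U^1]=E[W]$, $T_-(U^1)=-\infty$, and $U^1$ close to $W$ for $t\leq 0$; the rigidity Theorem~\ref{T:rigidityW+} then identifies $U^1$ with $W^+$ up to symmetries. The sign conditions pass to the limit.

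Your route---working directly with $W^+$ via its asymptotic expansion at $-\infty$---may be made to work, but it demands more than the paper provides. You need the expansion $W^+(t)=W+e^{k_0t}\YYY+O(e^{2k_0t})$ in topologies strong enough to control $\partial_t^2W^+$ pointwise, and the positivity of the linearized ground state $\YYY$; neither is stated here (they come from \cite{DuMe08}). More delicately, your bootstrap for $\partial_tW^+\geq 0$ via the Duhamel representation from $-\infty$ requires justifying convergence of that integral and its pointwise equality with $w(t)$, and then extracting \emph{strict} positivity at each step uniformly in $r$---despite the strong Huygens principle pushing the contributions from $s\ll 0$ onto spheres of large radius where $(W^+)^4\YYY$ is exponentially small. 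None of this is impossible, but the paper's approximation argument bypasses every one of these issues: zero initial velocity makes Lemma~\ref{L:positivity1} apply cleanly, and the remaining work is the (independently needed) identification of the limit via rigidity.
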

\begin{proof}
Consider the sequence $\left\{u_n\right\}_n$ of solutions of \eqref{CP} with initial data $(u_n^0,u^1_n)$, where
\begin{equation*}
 u_n^0=\left(1+\frac{1}{n}\right)W,\quad u_n^1=0.
\end{equation*} 
We will show that $u_n$ converges, up to a time translation, to $W^+$. We start by analyzing $u_n$.
\EMPH{Step 1. Properties of $u_n$}
We have 
$$ E[u_n]=\frac{1}{2}\left(1+\frac 1n\right)^2\int |\nabla W|^2-\frac 16 \left(1+\frac 1n\right)^6\int W^6.$$
Using the equality $\int |\nabla W|^2=\int W^6$, we get
\begin{equation}
\label{lower_energy}  
E[u_n]-E[W]=\left[\frac{1}{2}\left(1+\frac 1n\right)^2-\frac 16 \left(1+\frac 1n\right)^6-\frac{1}{3} \right] \int |\nabla W|^2<0.
\end{equation} 
Furthermore
$$ \int |\nabla u^0_n|^2>\int |\nabla W|^2.$$
As a consequence, by \cite{KeMe08}, $u_n$ blows-up in finite time in both time directions.

We next show
\begin{equation}
\label{un_positive}
 \forall t\in (0,T_+(u_n)),\; \forall r>0,\quad \partial_t u_n(t,r)>0,\quad u_n(t,r)\geq W.
\end{equation} 
Indeed, we can check that $u_n$ satisfies the assumptions of Lemma \ref{L:positivity1}. Obviously, $u^0_n\geq 0$ and $u^1_n\geq 0$. Furthermore,
$$ ru^0_n(r)=\left(1+\frac 1n\right) \frac{1}{\left(r^{-2}+\frac{1}{3}\right)^{\frac 12}}$$
is an increasing function of $r$, and $r u^1_n(r)=0$. Finally,
$$ \Delta u^0_n+\left(u^0_n\right)^5=-\left(1+\frac 1n\right)W^5+\left(1+\frac 1n\right)^5W^5>0,$$
and \eqref{un_positive} follows by Lemma \ref{L:positivity1}.

Fix a small parameter $\delta_0>0$. 
By Proposition \ref{P:typeI}, the blow-up of $u_n$ for positive time is of type I.  As a consequence, 
\begin{equation*}
t_n=\inf\left\{t>0, \int |\nabla(W-u_n(t))|^2+\int (\partial_t u_n(t))^2>\delta_0\right\}
\end{equation*} 
is well-defined (and, for large $n$, strictly positive). We have
\begin{gather}
\label{before_tn}
\forall t\in [0,t_n),\quad \left\|W-u_n(t)\right\|^2_{\hdot}+\left\|\partial_t u_n(t)\right\|^2_{L^2}<\delta_0\\
\label{at_tn}
\left\|W-u_n(t_n)\right\|^2_{\hdot}+\left\|\partial_t u_n(t_n)\right\|^2_{L^2}=\delta_0.
\end{gather}

\EMPH{Step 2. Convergence of $u_n(t_n)$}

After extraction of a subsequence, we can assume that the sequence $(u_n(t_n),\partial_t u_n(t_n))$ admits a profile decomposition $\left\{U^j_{\lin};t_{j,n},\lambda_{j,n}\right\}$. Reordering the profiles, we assume
\begin{equation}
\label{U1big}
 \left\|(U_0^1,U_1^1)\right\|_{\hdot\times L^2}=\max_j \left\|(U_0^j,U_1^j)\right\|_{\hdot\times L^2}.
\end{equation}
Then
\begin{equation}
\label{bound_below_U1}
\int \left|\nabla U_0^1\right|^2+\int\left(U_1^1\right)^2\geq \frac{2}{3}\int |\nabla W|^2.
\end{equation}
If not, by \cite{KeMe08}, all the profiles would scatter in both time directions contradicting the fact that $u_n$ blows up. By \eqref{at_tn} and \eqref{bound_below_U1} and 
the expansion of the $\hdot$ norm,
\begin{equation}
\label{bound_above_Uj}
\forall j\geq 2,\quad \int \left|\nabla U_0^j\right|^2+\int\left(U_1^j\right)^2\leq \frac{1}{3}\int |\nabla W|^2+\delta_0.
\end{equation} 
As a consequence, by \cite{KeMe08} again, all the nonlinear profiles $(U^j)_{j\geq 2}$ scatter in both time directions.
By \eqref{at_tn}, there exists a constant $C>0$ such that $C^{-1}\leq \lambda_{1,n}\leq C$ for all $n$. Rescaling $U^1$, we will assume
\begin{equation*}
 \lambda_{1,n}=1.
\end{equation*}
 We will prove that $U^1$ satisfies the following properties:
\begin{gather}
\label{energy_U1}
 E[U^1]=E[W]\\
\label{limite_un}
\lim_{n\to\infty}\left\|(u_n(t_n),\partial_t u_n(t_n))-(U^1_0,U^1_1)\right\|_{\hdot\times L^2}=0\\
\label{T-U1}
T^-(U^1)=-\infty,\\
\label{U1_close_W}
\forall t\leq 0,\quad \int \left|\nabla\left(U^1(t)-W\right)\right|^2+\int (\partial_t U^1(t))\leq \delta_0\\
\label{U1_not_W}
\int \left|\nabla\left(U^1(0)-W\right)\right|^2+\int (\partial_t U^1(0))=\delta_0\\
\label{U1_increases}
\forall t\in\left(-\infty, T_+(U^1)\right),\quad \partial_tU^1(t)\geq 0 \text{ and } U^1(t)\geq W\\
\label{U1_supercritical}
\forall t\in\left(-\infty,T_+(U^1)\right),\quad \int \left|\nabla U^1(t)\right|^2>\int \left|\nabla W\right|^2.
\end{gather}
We first show
\begin{equation}
\label{U1_close_W_bis}
 \forall t\in (T^-(U^1),0],\quad \int \left|\nabla\left(U^1(t)-W\right)\right|^2+\int (\partial_t U^1(t))^2\leq \delta_0.
\end{equation}
Indeed $t_n\to +\infty$. If not, by continuity of the flow, a subsequence of $\{(u_n(t_n),\partial_tu_n(t_n)\}_n$ would converge to $(W,0)$ in the energy space, a contradiction. Furthermore if $\eps>0$, then for $t\in \left[T^-(U^1)+\eps,0\right]$ we have
\begin{equation*}
\left\{\begin{aligned}
 u_n(t_n+t,x)-W&=U^1(t,x)-W+\sum_{j=2}^J \frac{1}{\lambda_{j,n}^{\frac{1}{2}}}U^j\left(\frac{t-t_{j,n}}{\lambda_{j,n}},\frac{x}{\lambda_{j,n}}\right)
+\tilde{w}_{n}^J(t,x),\\
\partial_t u_n(t_n+t)&=\sum_{j=1}^J \frac{1}{\lambda_{j,n}^{\frac{3}{2}}}\partial_t U^j\left(\frac{t-t_{j,n}}{\lambda_{j,n}},\frac{x}{\lambda_{j,n}}\right)+ \partial_t\tilde{w}_{n}^J(t,x),
\end{aligned}\right.
\end{equation*}
with
$$\lim_{J\to +\infty}\limsup_{n\to \infty} \lf\|\tilde{w}_n^J\rg\|_{L^{8}\lf(T^-(U^1)+\eps,0\rg)}=0.$$
Expanding the $\hdot$ norm, we get that if $t\in \lf(T^-(U^1)+\eps,0\rg)$, 
$$ \liminf_{n\to\infty} \left(\int |\nabla (u_n(t_n+t)-W)|^2+\int (\partial_t u_n(t_n+t))^2\right)\geq \int \left|\nabla (U^1(t)-W)\right|^2+\int \left(\partial_t U^1(t)\right)^2,$$
and \eqref{U1_close_W_bis} follows from \eqref{before_tn}. 

We get as a consequence that $T^-(U^1)=-\infty$. Indeed if $T^-(U^1)$ is finite \eqref{U1_close_W_bis} implies
$$ \forall t\in (T^-(U^1),0],\quad \int \left|\nabla U^1(t)\right|^2+\int (\partial_t U^1(t))^2\leq \int |\nabla W|^2+C\sqrt{\delta_0},
$$
and $U^1$ is (if $\delta_0$ is small enough)  a small blow-up type $II$ solution in the sense of \cite{DuKeMe11a}. But \eqref{U1_close_W_bis} contradicts the description of this type of solutions given in \cite{DuKeMe11a}. Thus $T^-(U^1)=-\infty$ (i.e. \eqref{T-U1} holds) and \eqref{U1_close_W_bis} implies \eqref{U1_close_W}.

By \eqref{bound_above_Uj}, the energies of all profiles $U^j$ and of the remainder $w_n^J$ are positive. 
By the Pythagorean expansion of the energy, $E[U^1]\leq E[W]$. By \cite{KeMe08}, $E[U^1]<E[W]$ is excluded: in this case the solution blows up in finite time in both time directions (excluded because $T^-(U^1)=-\infty$) or scatters in both time directions (excluded by \eqref{U1_close_W}). Hence \eqref{energy_U1} holds.

Again by the Pythagorean expansion of the energy and  using that 
$$ \lim_{n\to \infty}E[u_n]=E[W],$$
we get
$$\forall j\geq 2,\quad U^j=0\text{ and }\limsup_{n\to \infty}\|\nabla w^J_{0,n}\|^2_{L^2}+\|w^J_{1,n}\|^2_{L^2}=0.$$
Similarly,
$$
\lim_{J\to \infty} \limsup_{n\to \infty}\|\nabla w^J_{0,n}\|^2_{L^2}+\|w^J_{1,n}\|^2_{L^2}=0.$$  
In other words \eqref{limite_un} holds.

The equality \eqref{U1_not_W} is then a direct consequence of \eqref{at_tn}. The inequalities \eqref{U1_increases} follow from \eqref{un_positive} and \eqref{limite_un}.

Finally as $\int |\nabla u_n(t_n)|^2>\int |\nabla W|^2$ for all $n$, we have $\int |\nabla U^1(0)|^2\geq \int |\nabla W|^2$. By the variational characterisation of $W$ given by Aubin \cite{Aubin76} and Talenti \cite{Talenti76}, the equalities $E[U^1]=E[W]$ and $\int |\nabla U^1(0)|^2=\int |\nabla W|^2$ would imply $U^1=W$, which is excluded by \eqref{U1_not_W}. Hence $\int |\nabla U^1(0)|^2>\int |\nabla W|^2$. As $E[U^1]=E[W]$, we know from \cite{DuMe08} that the sign of $\int |\nabla U^1(t)|^2-\int |\nabla W|^2$ does not change. Hence \eqref{U1_supercritical}.

\EMPH{Step 3: conclusion}
By Theorem \ref{T:rigidityW+}, modulo symmetries, $U^1=W^+$. In view of Proposition \ref{P:typeI} the blow-up of $W^+$ is of type I, which concludes the proof of Theorem \ref{T:blowupW+}.
\end{proof}

\begin{remark}
As a consequence of the proof above and of \cite{KeMe08} we obtain a complete description of the family of solutions $U_{c}$, $c>0$, with initial data $(cW,0)$. If $c\neq 1$, an  explicit computation shows that $E[U_c]<E[W]$. Hence, by \cite{KeMe08}, if $0<c<1$, $U_c$ scatters in both time directions. If $c=1$, $U_c$ is the stationary solution $W$. If $c>1$, $U_c$ blows-up in both time directions and, by Proposition \ref{P:typeI} and Lemma \ref{L:positivity1}, the blow-up is of type I. 
\end{remark}

\appendix
\section{Pseudo-orthogonality of profiles}
\label{A:quasi_ortho}
In this appendix we show that
\begin{equation}
\label{quasi_ortho1}
\lim_{n\to+\infty}\int_{S_n} \nabla U_{\lin}^j(\sigma_n,y)\cdot \nabla w_{n}^J(\sigma_n,y)\,dy=0, 
\end{equation} 
where (using the notations of Subsection \ref{SS:self_simF}), $\sigma_n\in I_n^+$, $2\leq j\leq J$ and $S_n=\{\sigma_n+r_0-\eps_0\leq |y|\leq \sigma_n+r_0\}$.

The proofs of the analog of \eqref{quasi_ortho1} for the time derivatives and of the fact that the other integrals in \eqref{quasi_ortho} tend to $0$ are similar, but easier, and we leave them to the reader. We will use the following result, which follows immediately from the dominated convergence theorem:
\begin{lemma}
\label{L:weak_CV}
 Let $f_n\rightharpoonup 0$ in $L^2(\RR)$, and consider $I_n$ a sequence of interval such that the sequence of characteristic functions $\{\chi_{I_n}\}_n$ tends pointwise to $\chi_I$ (where $I$ is an interval). Then
$$ \chi_{I_n} f_n \rightharpoonup 0\text{ in }L^2(\RR)\text{ as }n\to\infty.$$
\end{lemma}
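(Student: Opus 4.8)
The plan is to test against an arbitrary $g\in L^2(\RR)$ and reduce the weak convergence $\chi_{I_n}f_n\rightharpoonup 0$ to the combination of the given weak convergence $f_n\rightharpoonup 0$ with a \emph{strong} convergence of the truncated test functions. Concretely, fix $g\in L^2(\RR)$ and write
$$ \int_{\RR} (\chi_{I_n}f_n)\,g\,dx=\int_{\RR} f_n\,(\chi_{I_n}g)\,dx; $$
the goal is to show this tends to $0$.

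The first step is to establish that $\chi_{I_n}g\to\chi_I g$ \emph{strongly} in $L^2(\RR)$. This is exactly the dominated convergence statement alluded to in the text: the hypothesis $\chi_{I_n}\to\chi_I$ pointwise gives $\chi_{I_n}(x)g(x)\to\chi_I(x)g(x)$ for a.e.\ $x$, and since $|\chi_{I_n}g-\chi_I g|^2\le 4|g|^2\in L^1(\RR)$, the dominated convergence theorem yields $\|\chi_{I_n}g-\chi_I g\|_{L^2}\to 0$.

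The second step is the splitting
$$ \int_{\RR} f_n\,(\chi_{I_n}g)\,dx=\int_{\RR} f_n\,(\chi_{I_n}g-\chi_I g)\,dx+\int_{\RR} f_n\,(\chi_I g)\,dx. $$
The last term tends to $0$ because $\chi_I g\in L^2(\RR)$ is a fixed test function and $f_n\rightharpoonup 0$ weakly. For the first term, Cauchy--Schwarz bounds it by $\|f_n\|_{L^2}\,\|\chi_{I_n}g-\chi_I g\|_{L^2}$; the factor $\|f_n\|_{L^2}$ is bounded (a weakly convergent sequence in a Hilbert space is norm-bounded, by the uniform boundedness principle), while $\|\chi_{I_n}g-\chi_I g\|_{L^2}\to 0$ by Step~1. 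Hence the whole expression tends to $0$, and since $g\in L^2(\RR)$ was arbitrary, $\chi_{I_n}f_n\rightharpoonup 0$.

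There is no genuine obstacle here: the argument is just the standard principle that the product of a weakly convergent sequence and a strongly convergent sequence converges weakly (to the product of the limits), specialized to the present setting. The only point requiring a moment's care is the uniform bound on $\|f_n\|_{L^2}$ used to control the ``strong $\times$ weak'' error term, which is automatic for weakly convergent sequences; everything else reduces to dominated convergence.
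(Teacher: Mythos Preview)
Your proof is correct and is exactly the standard elaboration of what the paper asserts: the paper simply states that the lemma ``follows immediately from the dominated convergence theorem,'' and your argument spells out precisely how---by pairing with $g\in L^2$, using dominated convergence to get $\chi_{I_n}g\to\chi_I g$ strongly, and then combining this with the weak convergence of $f_n$ and its automatic norm-boundedness.
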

Note that in the Lemma, $I$ might be empty or reduced to a single point.

Using that $(\partial_t^2-\partial_r^2)(r\,w_n^J)=0$, we get that
$$ w_n^J(\tau,r)=\frac{1}{r}\left[G_n^J(\tau+r)-G_n^J(\tau-r)\right],$$
where $G_n^J(s)=\frac{s}{2}w_{0,n}^J(|s|)+\frac{1}{2}\int_0^{|s|} \sigma w_{1,n}^J(\sigma)\,d\sigma.$
Similarly $U^j_{\lin}(\tau,r)=\frac{1}{2}\left[F^j(\tau+r)-F^j(\tau-r)\right]$, where $F^j$ is defined as $G_n^J$ from the initial data of $U^j_{\lin}$.

\EMPH{Step 1}
We first check:
\begin{multline}
\label{reduced_quasi_ortho}
 \int_{\sigma_n+r_0-\eps_0}^{\sigma_n+r_0}\partial_r U^j_{\lin,n}(\sigma_n,r)\partial_r w_n^J(\sigma_n,r)r^2dr
\\=\frac{1}{4}
 \int_{\sigma_n+r_0-\eps_0}^{\sigma_n+r_0}\frac{1}{\lambda_{j,n}^{1/2}}\partial_s F^j\left( \frac{\sigma_n-\tau_{j,n}-r}{\lambda_{j,n}} \right)\partial_s G_n^J(\sigma_n-r)\,dr+o_n(1).
\end{multline} 
Indeed, by integration by parts,
\begin{multline*}
  \int_{\sigma_n+r_0-\eps_0}^{\sigma_n+r_0}\partial_r U^j_{\lin,n}(\sigma_n,r)\partial_r w_n^J(\sigma_n,r)r^2dr
\\= \int_{\sigma_n+r_0-\eps_0}^{\sigma_n+r_0}\partial_r \left(rU^j_{\lin,n}(\sigma_n,r)\right)\partial_r (rw_n^J(\sigma_n,r))dr+\left[rU^j_{\lin,n}(\sigma_n,r)w_n^J(\sigma_n,r)\right]^{\sigma_n+r_0}_{\sigma_n+r_0-\eps_0},
\end{multline*} 
and the boundary terms tend to $0$ by standard arguments, using that $w_n^J(\sigma_n)$ tends weakly to $0$ in $\hdot$.
Thus \eqref{reduced_quasi_ortho} will follow from:
\begin{equation}
 \label{2_zero_limits}
\lim_{n\to\infty} \int_{\sigma_n+r_0-\eps_0}^{\sigma_n+r_0}\left(\partial_s F^j\left(  \frac{\sigma_n+r-\tau_{j,n}}{\lambda_{j,n}}\right)\right)^2\frac{dr}{\lambda_{j,n}}=\lim_{n\to\infty} 
\int_{\sigma_n+r_0-\eps_0}^{\sigma_n+r_0} \left( \partial_s G_n^J(\sigma_n+r) \right)^2\,dr=0.
\end{equation} 
It is easy to show that the first limit is $0$, using the fact that $\partial_sF^j\in L^2(\RR)$, that $(2\sigma_n+r_0-\eps_0)/\lambda_{j,n})\to \infty$ as $n\to \infty$, and the change of variable $s=\frac{\sigma_n+r-\tau_{j,n}}{\lambda_{j,n}}$. To get that the second limit is also $0$, we write
\begin{multline*}
 \int_{\sigma_n+r_0-\eps_0}^{\sigma_n+r_0} \left( \partial_s G_n^J(\sigma_n+r) \right)^2\,dr\\
\leq \int_{\sigma_n+r_0-\eps_0}^{\sigma_n+r_0} \left(\frac{\partial}{\partial r}\left((\sigma_n+r)w_{0,n}^J(\sigma_n+r)\right)\right)^2\,dr+\int_{\sigma_n+r_0-\eps_0}^{\sigma_n+r_0} \left((\sigma_n+r)w_{1,n}^J(\sigma_n+r)\right)^2\,dr\\
=\int_{2\sigma_n+r_0-\eps_0}^{2\sigma_n+r_0} \left(\frac{\partial}{\partial s}\left(s\,w_{0,n}^J(s)\right)\right)^2\,ds+\int_{2\sigma_n+r_0-\eps_0}^{2\sigma_n+r_0} \left(s\,w_{1,n}^J(s)\right)^2\,ds.
\end{multline*}
This last line goes to $0$ as $n$ goes to infinity,
because, as $\sigma_n\in I_n^+$, $2\sigma_n+r_0-\eps_0\geq 1$, and 
$$ \lim_{n\to+\infty} \int_{r\geq 1} (\partial_rw_{0,n}^J(r))^2+(w_{1,n}^J(r))^2r^2\,dr=0.$$
This completes Step 1.

\EMPH{Step 2: end of the proof}
We have
\begin{multline}
\label{theend}
\int_{\sigma_n+r_0-\eps_0}^{\sigma_n+r_0} \frac{1}{\lambda_{j,n}^{1/2}} \partial_sF^j\left( \frac{\sigma_n-\tau_{j,n}-r}{\lambda_{j,n}} \right)\,\partial_s G_n^J(\sigma_n-r)\,dr\\
=\int_{\frac{r_0-\eps_0+\tau_{j,n}}{\lambda_{j,n}}}^\frac{r_0+\tau_{j,n}}{\lambda_{j,n}} \partial_s F^j(-\theta)\,\lambda_{j,n}^{1/2} \partial_sG_n^J(\tau_{j,n}-\lambda_{j,n}\theta)\,d\theta.
\end{multline}
Using that $\partial_sG_n^J(\tau-r)=\partial_r(rw_n^J(\tau,r))-\partial_{\tau}(rw_n^J(\tau,r))$, we get by \eqref{weak_CV_wJ} that $\lambda_{j,n}^{1/2} \partial_sG_n^J(\tau_{j,n}-\lambda_{j,n}\theta)$ tends weakly to $0$ in $L^2(\RR,d\theta)$, which implies by Lemma \ref{L:weak_CV} that the last integral in \eqref{theend} goes to $0$, concluding the proof.

\bibliographystyle{acm}
\bibliography{toto}
\end{document}